\documentclass[]{amsart}
\usepackage{}
\usepackage{amsfonts}
\usepackage{latexsym,bm}
\usepackage{amssymb}
\usepackage{amsmath}
\usepackage{amsthm}
\usepackage{mathrsfs}
\usepackage[all]{xy}
\usepackage{colortbl,caption}
\usepackage{bbm}
\numberwithin{equation}{section}

\theoremstyle{plain}
\newtheorem{thm}{Theorem}[section]
\newtheorem{lem}[thm]{Lemma}
\newtheorem{prop}[thm]{Proposition}
\newtheorem{cor}[thm]{Corollary}
\newtheorem{lem-defi}[thm]{Lemma-Definition}
\newtheorem{prop-defi}[thm]{Proposition-Definition}

\theoremstyle{remark}
\newtheorem{rmk}[thm]{Remark}
\newtheorem{set}[thm]{Setting}
\newtheorem{e.g.}[thm]{Example}
\theoremstyle{definition}
\newtheorem{defi}[thm]{Definition}
\newtheorem{nota}[thm]{Notation}
\newtheorem{conv}[thm]{Convention}
\newtheorem{assu}[thm]{Assumption}
\newtheorem{cons}[thm]{Construction}






\topmargin-0.1in
\textwidth6.5in
\textheight8.5in
\oddsidemargin0in
\evensidemargin0in



\newcommand{\mcA}{{\mathcal A}}

\newcommand{\mcC}{{\mathcal C}}
\newcommand{\mcD}{{\mathcal D}}
\newcommand{\mcE}{{\mathcal E}}
\newcommand{\mcF}{{\mathcal F}}

\newcommand{\mcH}{{\mathcal H}}
\newcommand{\mcI}{{\mathcal I}}

\newcommand{\K}{{\mathcal K}}

\newcommand{\mcL}{{\mathcal L}}

\newcommand{\mcN}{{\mathcal N}}
\newcommand{\mcO}{{\mathcal O}}
\newcommand{\mcP}{{\mathcal P}}

\newcommand{\mcS}{{\mathcal S}}

\newcommand{\mcV}{{\mathcal V}}
\newcommand{\mcW}{{\mathcal W}}
\newcommand{\mcX}{{\mathcal X}}
\newcommand{\mcY}{{\mathcal Y}}
\newcommand{\mcZ}{{\mathcal Z}}


\newcommand{\C}{{\mathbb C}}

\newcommand{\mbF}{{\mathbb F}}
\newcommand{\mbG}{{\mathbb G}}

\newcommand{\mbN}{{\mathbb N}}

\newcommand{\mbP}{{\mathbb P}}
\newcommand{\mbQ}{{\mathbb Q}}
\newcommand{\mbR}{{\mathbb R}}

\newcommand{\mbZ}{{\mathbb Z}}


\newcommand{\msD}{{\mathscr D}}
\newcommand{\msE}{{\mathscr E}}


\newcommand{\mfg}{{\mathfrak g}}
\newcommand{\mfh}{{\mathfrak h}}

\newcommand{\mfl}{{\mathfrak l}}
\newcommand{\mfm}{{\mathfrak m}}
\newcommand{\mfn}{{\mathfrak n}}

\newcommand{\mfp}{{\mathfrak p}}
\newcommand{\mfq}{{\mathfrak q}}


\newcommand{\bC}{{\textbf C}}

\newcommand{\bS}{{\textbf S}}

\newcommand{\bZ}{{\textbf Z}}


\newcommand{\Pic}{\text{Pic}}
\newcommand{\Aut}{\text{Aut}}

\newcommand{\ad}{\text{ad}}
\newcommand{\Symb}{\text{Symb}}
\newcommand{\ev}{\text{ev}}
\newcommand{\aut}{{\rm aut}}

\newcommand{\wcF}{\widetilde{\mathcal{F}}}

\newcommand{\wpsi}{\widetilde{\psi}}
\newcommand{\wv}{\widetilde{v}}

\newcommand{\wbF}{\widetilde{\mathbb{F}}}
\newcommand{\wcI}{\widetilde{\mathcal{I}}}
\newcommand{\wtheta}{\widetilde{\theta}}

\newcommand{\Hom}{\text{Hom}}
\newcommand{\rank}{\text{rank}}
\newcommand{\ba}{\bar{\alpha}}
\newcommand{\bb}{\bar{\beta}}

\newcommand{\wbZ}{\widehat{\textbf{Z}}}
\newcommand{\wbZa}{\widehat{\textbf{Z}}^{\alpha}}
\newcommand{\wcC}{\widehat{\mathcal{C}}}
\newcommand{\wcCa}{\widehat{\mathcal{C}}^\alpha}

\begin{document}

\title[Fano deformation rigidity]{Fano deformation rigidity of rational homogeneous spaces of submaximal Picard numbers}
\author{Qifeng Li}
\maketitle

\begin{abstract}
We study the question whether rational homogeneous spaces are rigid under Fano deformation. In other words, given any smooth connected family $\pi:\mcX\rightarrow\mcZ$ of Fano manifolds, if one fiber is biholomorphic to a rational homogeneous space $S$, whether is $\pi$ an $S$-fibration? The cases of Picard number one were studied in a series of papers by J.-M. Hwang and N. Mok. For higher Picard number cases, we notice that the Picard number of a rational homogeneous space $G/P$ satisfies $\rho(G/P)\leq \rank(G)$. Recently A. Weber and J. A. Wi\'{s}niewski proved that rational homogeneous spaces $G/P$ with Picard numbers $\rho(G/P)=\rank(G)$ (i.e. complete flag manifolds) are rigid under Fano deformation. In this paper we show that the rational homogeneous space $G/P$ is rigid under Fano deformation, providing that $G$ is a simple algebraic group of type $ADE$, the Picard number $\rho(G/P)=\rank(G)-1$ and $G/P$ is not biholomorphic to $\mbF(1, 2, \mbP^3)$ or $\mbF(1, 2, Q^6)$. The variety $\mbF(1, 2, \mbP^3)$ is the set of flags of projective lines and planes in $\mbP^3$, and $\mbF(1, 2, Q^6)$ is the set of flags of projective lines and planes in 6-dimensional smooth quadric hypersurface. We show that $\mbF(1, 2, \mbP^3)$ have a unique Fano degeneration, which is explicitly constructed. The structure of possible Fano degeneration of $\mbF(1, 2, Q^6)$ is also described explicitly.
To prove our rigidity result, we firstly show that the Fano deformation rigidity of a homogeneous space of type $ADE$ can be implied by that property of suitable homogeneous submanifolds. Then we complete the proof via the study of Fano deformation rigidity of rational homogeneous spaces of small Picard numbers. As a byproduct, we also show the Fano deformation rigidity of other manifolds such as $\mbF(0, 1, \ldots, k_1, k_2, k_2+1, \ldots, n-1, \mbP^n)$ and $\mbF(0, 1, \ldots, k_1, k_2, k_2+1, \ldots, n, Q^{2n+2})$ with $0\leq k_1<k_2\leq n-1$.
\end{abstract}

\smallskip

Keywords: Fano deformation rigidity, Symbol algebras, Minimal rational curves.

\smallskip

MSC2010: 14M15, 14D06, 53B15.

\tableofcontents


\section{Introduction}

We work over the field $\C$ of complex numbers. A Fano manifold $M$ is said to be rigid under Fano deformation if any smooth connected family $\pi: \mcX\rightarrow\mcZ$ of Fano manifolds with $M$ being a fiber must be an $M$-fibration. If the fiber $\mcX_z:=\pi^{-1}(z)$ at some point $z\in\mcZ$ is not biholomorphic to $M$, we say $\mcX_t$ is a Fano degeneration of $M$.

Our interest in this paper is the Fano deformation rigidity of rational homogeneous spaces. The Fano deformation rigidity of rational homogeneous spaces of Picard number one is studied by J.-M. Hwang and N. Mok in \cite{Hw97}\cite{HM98}\cite{HM02}\cite{HM05}. Among the rational homogeneous spaces of Picard number one, $\mbF(1, Q^5)$ is the only variety that is not rigid under Fano deformation, where $\mbF(1, Q^5)$ is the family of projective lines on a 5-dimensional smooth quadric hypersurface. Moreover, the variety $\mbF(1, Q^5)$ has a unique Fano degeneration, see \cite{PP10} and \cite{HL}. In particular, we have

\begin{thm}\cite{Hw97}\cite{HM98}\cite{HM02}\cite{HM05}\label{thm. intro Picard numbe one}
Let $\bS$ be a rational homogeneous space of Picard number one. If $\bS\ncong\mbF(1, Q^5)$, then $\bS$ is rigid under Fano deformation.
\end{thm}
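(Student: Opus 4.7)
The plan is to follow the Hwang--Mok approach via varieties of minimal rational tangents (VMRT). Let $\pi:\mcX\rightarrow\mcZ$ be a smooth connected family of Fano manifolds with $\mcX_0\cong \bS = G/P$ for some $0\in\mcZ$, where $\rho(\bS)=1$. Since each $\mcX_t$ is Fano, $h^{2,0}(\mcX_t)=0$, so $h^{1,1}(\mcX_t)$ is constant in the family by invariance of Hodge numbers in smooth proper families; hence $\rho(\mcX_t)=1$ for every $t$ near $0$. I would then take $\K_0$ to be the minimal rational component of lines on $\mcX_0=\bS$, and use the deformation theory of minimal rational curves on Fano manifolds to extend it to a relative minimal rational component $\K\rightarrow \mcZ$ after shrinking $\mcZ$. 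For a general point $x\in\mcX_t$, this yields a well-defined VMRT $\mcC_{x,t}\subset\mbP(T_x\mcX_t)$.

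The key step is to establish that $\mcC_{x,t}$ is projectively equivalent to $\mcC_{x,0}$ as a subvariety of projective space. Through a relative Hilbert scheme argument, $\mcC_{x,t}$ specializes flatly to $\mcC_{x,0}$ as $t\to 0$, so what one needs is the projective rigidity of $\mcC_{x,0}$, i.e.\ that every flat degeneration of $\mcC_{x,0}$ inside projective space is projectively equivalent to $\mcC_{x,0}$. One then verifies this rigidity case by case across the classification of rational homogeneous $\bS$ with $\rho(\bS)=1$. For the long-root cases (including the classical, minuscule, and adjoint varieties of $ADE$-type) the VMRT is itself a rational homogeneous subvariety of projective space, and the rigidity is handled via linear rigidity together with second fundamental form computations. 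For the exceptional short-root cases such as those coming from $F_4$, the rigidity is obtained from Tanaka prolongation of the differential system cut out by the distribution spanned by tangent directions to lines.

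Once projective equivalence of the VMRTs holds on every fibre, the Cartan--Fubini extension theorem of Hwang--Mok converts a local isomorphism of VMRT-structures between a neighbourhood in $\mcX_t$ and one in $\bS$ into a global biholomorphism $\mcX_t\cong \bS$, where the global conclusion relies on $\rho(\bS)=1$ and on the fact that minimal rational curves sweep out all of $\mcX_t$. This gives that $\pi$ is an $\bS$-fibration. The main obstacle, and the heart of the series \cite{Hw97,HM98,HM02,HM05}, is the case-by-case projective rigidity of $\mcC_{x,0}$; it is exactly this input that fails for $\bS=\mbF(1,Q^5)=G_2/P_2$, whose VMRT is a rational normal curve admitting a genuine flat degeneration in projective space, and this failure is reflected in the existence of a nontrivial Fano degeneration of $\mbF(1,Q^5)$ (see \cite{PP10,HL}). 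Excluding this single case is therefore not a technical but a geometric necessity.
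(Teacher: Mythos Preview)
The paper does not give its own proof of this theorem; it is cited from \cite{Hw97,HM98,HM02,HM05} and used only as background. So there is no in-paper argument to compare your sketch against. Your outline is a fair high-level summary of the Hwang--Mok strategy: extend a minimal rational component across the family, control the VMRT on each fibre case by case, and then use Cartan--Fubini extension together with prolongation theory to upgrade local equivalence of VMRT-structures to a global biholomorphism.

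There is, however, a concrete error in your treatment of the excluded case. The variety $\mbF(1,Q^5)$ is the orthogonal Grassmannian $OG(2,7)=B_3/P_{\alpha_2}$, which is $7$-dimensional; it is \emph{not} $G_2/P_2$. (The $5$-dimensional adjoint variety $G_2/P_2$ is a homogeneous contact manifold, hence already covered by \cite{Hw97}.) Consequently the VMRT of $\mbF(1,Q^5)$ is not a rational normal curve but the surface $\mbP^1\times Q^1\cong\mbP^1\times\mbP^1$ embedded in $\mbP(\mfg_{-1})\cong\mbP^5$ by $|\mcO(1,2)|$: one factor records a point on the line, the other is the conic of isotropic normal directions. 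The Fano degeneration exhibited in \cite{PP10} and studied in \cite{HL} is a $7$-dimensional $G_2$-horospherical two-orbit variety, not a $G_2/P$. Your intuition that the VMRT-based machinery is exactly what breaks down here is correct, but the identification of the exceptional space and the description of its VMRT need to be fixed.
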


To our knowledge the first result on higher Picard number cases is due to J. A. Wi\'{s}niewski \cite{Wis91}.

\begin{thm}\cite{Wis91}\label{thm. intro (Am, a1am)}
The variety $F(1, n, \C^{n+1})$ is rigid under Fano deformation, where $F(1, n, \C^{n+1})$ is the set of flags of $1$-dimensional and $n$-dimensional vector subspaces in $\C^{n+1}$.
\end{thm}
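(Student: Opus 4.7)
The plan is to analyze the extremal ray structure of an arbitrary fiber and match it to that of $F:=F(1,n,\C^{n+1})$, which has Picard number $2$ and admits two $\mbP^{n-1}$-bundle projections $p_1,p_2\colon F\to\mbP^n$. By Ehresmann fibration and Kodaira stability, every fiber $\mcX_0$ of $\pi$ is a smooth Fano manifold of dimension $2n-1$, sharing the Betti and Picard numbers of $F$; in particular $\rho(\mcX_0)=2$, so $\mcX_0$ admits exactly two elementary Mori contractions $\phi_i\colon\mcX_0\to Y_i$ ($i=1,2$).

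First I would show both $\phi_i$ are of fiber type. The minimal rational curves in the two rulings on $F$ are free and sweep out $F$; by the stability of free rational curves in a smooth family of Fano manifolds, they deform to two covering families of rational curves on $\mcX_0$ whose numerical classes span $N_1(\mcX_0)\otimes\mbR$. Consequently both extremal rays are generated by covering families, so neither $\phi_i$ can be birational.

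Second I would show both contractions are smooth $\mbP^{n-1}$-bundles over $\mbP^n$. Since the length $\ell(R_i)$ equals the anticanonical degree of a deformed minimal rational curve, $\ell(R_i)=n$. Wi\'{s}niewski's length/dimension inequality yields $\dim F_i\geq\ell(R_i)-1=n-1$ for every fiber $F_i$ of $\phi_i$. Combined with $\dim\mcX_0=2n-1$ and $\rho(\mcX_0)=2$ (which rules out a contraction to a point), both $\phi_i$ are forced to be equidimensional with $\dim F_i=n-1$ and $\dim Y_i=n$. Since the fiber dimension attains the lower bound in the length inequality, the projective-bundle criterion for equidimensional elementary contractions (Wi\'{s}niewski; compare Fujita) identifies each $F_i$ with $\mbP^{n-1}$ and makes $\phi_i$ a Zariski-locally trivial $\mbP^{n-1}$-bundle over a smooth base. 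The base $Y_i$ is then smooth, Fano of dimension $n$ with $\rho(Y_i)=1$, and inherits a family of rational curves (push-forwards of minimal curves of the second ruling) of anticanonical degree $n+1$, so by Kobayashi--Ochiai one concludes $Y_i\cong\mbP^n$.

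Finally, $\mcX_0$ carries two $\mbP^{n-1}$-bundle structures over $\mbP^n$, so $\mcX_0\cong\mbP(E_i)$ for a rank-$n$ vector bundle $E_i$ on $\mbP^n$. Restricting $E_1$ to any line $\ell\subset\mbP^n$ and reading off the splitting type from the second projection (lines in a fiber of $p_1$ on $F$ become sections of $p_2$) shows $E_1$ is uniform; a Chern-class computation pinned down by the second $\mbP^{n-1}$-bundle structure then forces $E_1\cong T_{\mbP^n}$ up to twist, whence $\mcX_0\cong\mbP(T_{\mbP^n})\cong F$. I expect the genuine obstacle to lie in the second step: ruling out equidimensional elementary contractions whose fibers are not $\mbP^{n-1}$ requires the full strength of Wi\'{s}niewski's extremal-ray technology developed in \cite{Wis91}, and it is exactly there that this rigidity result ties into his broader classification program.
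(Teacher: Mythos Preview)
The paper does not prove this theorem; it is cited from \cite{Wis91} as a known input, so there is no in-paper proof to compare against. Your overall architecture --- reduce to two elementary fiber-type contractions of length $n$, identify each as a $\mbP^{n-1}$-bundle over $\mbP^n$, then recognize the total space as $\mbP(T_{\mbP^n})$ --- is indeed in the spirit of Wi\'{s}niewski's extremal-ray methods and is the right shape for such an argument.

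The gap is in your second step, but not quite where you flagged it. You assert that the length inequality $\dim F_i \geq \ell(R_i)-1 = n-1$ together with $\rho(\mcX_0)=2$ forces both $\phi_i$ to be equidimensional with fiber dimension exactly $n-1$. This does not follow: the length inequality is only a lower bound, and nothing so far excludes one $\phi_i$ having general or special fibers of dimension $n$. What does help is to play the two contractions against each other: since any $\phi_1$-fiber and any $\phi_2$-fiber meet at most in finitely many points (a common curve would have class in $R_1\cap R_2=0$), one gets $\dim F_1(x)+\dim F_2(x)\leq 2n-1$ at every point $x\in\mcX_0$. Combined with the lower bound $n-1$ on each, a short argument shows that at least one $\phi_i$ is equidimensional, hence (by the $\ell=\dim F+1$ criterion you cite) a $\mbP^{n-1}$-bundle over a smooth $n$-fold, which by rigidity of $\mbP^n$ must be $\mbP^n$. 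Pinning down the second contraction and excluding jumping fibers for it still requires additional work; this, rather than ``equidimensional fibers failing to be $\mbP^{n-1}$'', is where the genuine effort lies. Your final step is fine in spirit, though invoking the uniform-bundle classification is heavier than necessary: the mere existence of a second $\mbP^{n-1}$-bundle structure on $\mbP(E)\to\mbP^n$ already forces $E\cong T_{\mbP^n}$ up to twist by an elementary argument.
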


A rational homogeneous space is denoted by $G/P$, where $G$ is a semisimple algebraic group and $P$ is a parabolic subgroup of $G$. The Picard number of $G/P$ satisfies that  $\rho(G/P)\leq \rank(G)$, where $\rank(G)$ is the dimensional of any maximal torus of $G$. Recently A. Weber and J. A. Wi\'{s}niewski \cite{WW17} verified Fano deformation rigidity of the cases with $\rho(G/P)=\rank(G)$. More precisely,

\begin{thm}\cite{WW17}\label{thm. intro complete flag mfds}
The rational homogenous space $G/B$ is rigid under Fano deformation, where $G$ is a semisimple algebraic group and $B$ is a Borel subgroup.
\end{thm}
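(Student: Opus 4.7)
My plan is to reduce the statement to a structural characterization of complete flag manifolds and then to appeal to deformation invariance of Mori-theoretic data. Let $\pi:\mcX\rightarrow\mcZ$ be a smooth connected family of Fano manifolds such that some fiber $\mcX_0$ is biholomorphic to $G/B$; after shrinking $\mcZ$ we may assume $\mcZ$ is a small disc around $0$. The goal is to prove $\mcX_z\cong G/B$ for $z$ near $0$, which by connectedness of $\mcZ$ suffices.

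First I would analyze the Mori structure of the central fiber. The complete flag manifold $G/B$ has Picard number $n=\rank(G)$, and its Mori cone is simplicial with extremal rays $R_1,\ldots,R_n$ corresponding to the simple roots; each elementary contraction $G/B\rightarrow G/P_i$ (to the partial flag manifold associated to a maximal parabolic $P_i$) is a smooth $\mbP^1$-bundle, and the minimal rational curves generating $R_i$ are the Schubert lines of length one. Since Fano families have constant Picard number and since extremal rays deform in families of Fano manifolds (Wi\'sniewski), after shrinking $\mcZ$ the extremal rays $R_1,\ldots,R_n$ extend to relative extremal rays of $\pi$, and for each $i$ there exists a relative contraction $\Phi_i:\mcX\rightarrow\mcY_i$ over $\mcZ$ whose restriction to $\mcX_0$ is the $\mbP^1$-bundle $G/B\rightarrow G/P_i$.

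Next I would upgrade each $\Phi_i$ from a contraction into a $\mbP^1$-bundle structure on every fiber. By upper-semicontinuity of fiber dimension and the fact that $\Phi_i|_{\mcX_0}$ is equidimensional of relative dimension one, after further shrinking the base each $\Phi_i|_{\mcX_z}$ is an equidimensional contraction with one-dimensional fibers from a smooth Fano manifold; such contractions are necessarily smooth $\mbP^1$-bundles (Fujita--Ando). Thus every fiber $\mcX_z$ carries $n$ distinct $\mbP^1$-bundle structures corresponding to $n$ extremal contractions, with incidence/intersection numbers between the minimal rational curve classes and divisors constant in the family (these are discrete topological invariants), hence agreeing with the Cartan matrix data of $G$.

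The main obstacle, and the final step, is the characterization theorem: a smooth Fano manifold of Picard number $\rank(G)$ all of whose elementary extremal contractions are $\mbP^1$-bundles, and whose associated numerical incidence matches that of $G$, must itself be isomorphic to $G/B$. This is the content of the flag-type classification developed by Occhetta--Sol\'a Conde--Wi\'sniewski, proved via iterated study of the Tits-type fibration obtained from composing the $\mbP^1$-bundle projections and recognizing the Dynkin diagram from the combinatorics of how the bundle structures meet. The delicate point is ruling out non-homogeneous candidates at each inductive stage; once this characterization is invoked, one concludes $\mcX_z\cong G/B$, so $\pi$ is a $G/B$-fibration.
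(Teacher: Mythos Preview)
The paper does not prove this theorem; it is quoted from \cite{WW17} and used as an input. So there is no ``paper's own proof'' to compare against here, but your proposal still has a genuine gap worth naming.

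Your reduction ``prove $\mcX_z\cong G/B$ for $z$ near $0$, which by connectedness of $\mcZ$ suffices'' is where the argument breaks. You have taken $0$ to be a point where $\mcX_0\cong G/B$ and then argued that nearby fibers inherit the $n$ smooth $\mbP^1$-bundle structures; your upper-semicontinuity step is correct in this direction (special fibers can only jump up, so if the fibers over $z=0$ are all $1$-dimensional, they stay $1$-dimensional nearby). But this only shows that the locus $\{z:\mcX_z\cong G/B\}$ is \emph{open} in $\mcZ$. Openness alone, plus connectedness, does not give the whole base; you also need closedness, and that is exactly the nontrivial content of Fano deformation rigidity. Indeed, openness already follows trivially from $H^1(G/B,T_{G/B})=0$ via Bott vanishing and Kodaira--Spencer, so your outline is reproving the easy half. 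The hard half is: if $\mcX_t\cong G/B$ for $t\neq 0$, why is the central fiber $\mcX_0$ also $G/B$? In that direction upper-semicontinuity goes against you --- the fibers of $\Phi_i|_{\mcX_0}$ could a priori jump to higher dimension --- and this is precisely what must be excluded.

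For comparison, the actual argument in \cite{WW17} handles the degeneration direction by a cohomological criterion (recorded in this paper as Proposition~\ref{prop. criterion Pk bundle by Weber Wisniewski}): because $H^*(G/P_{I\setminus\{\alpha\}},\mbQ)$ is generated in degree $2$, each $\mbP^1$-bundle $G/B\to G/P_\alpha$ is forced to remain a $\mbP^1$-bundle on the central fiber, and one then proceeds by induction on the rank. Your appeal to the Occhetta--Sol\'a~Conde--Wi\'sniewski characterization would also finish the job \emph{once} you know the central fiber carries $n$ smooth $\mbP^1$-fibrations, but you have not established that in the limit.
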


Motivated by Theorem \ref{thm. intro Picard numbe one} and Theorem \ref{thm. intro complete flag mfds}, one naturally ask what about the intermediate cases? A previous result of the author \cite[Theorem 1]{Li18} shows that product structure is preserved under Fano deformation. In particular, a rational homogeneous space $\bS$, satisfying $\bS=\bS_1\times\bS_2$, is rigid under Fano deformation if and only if so are $\bS_1$ and $\bS_2$. It reduces the problem to the case when $G$ is simple.

Our main result is on the cases with submaximal Picard number, i.e. $\rho(G/P)=\rank(G)-1$. More precisely, we have the following

\begin{thm}\label{thm. intro submaximal Picard numbers}
Let $G$ be a simple algebraic group of type $ADE$ and $P$ be a parabolic subgroup of $G$ such that the Picard number $\rho(G/P)=\rank(G)-1$. If $G/P$ is not biholomorphic to $\mbF(1, 2, \mbP^3)$ or $\mbF(1, 2, Q^6)$, then it is rigid under Fano deformation, where $\mbF(1, 2, \mbP^3)$ (resp. $\mbF(1, 2, Q^6)$) is the set of flags of projective lines and planes on $\mbP^3$ (resp. a 6-dimensional smooth quadric hypersurface).
\end{thm}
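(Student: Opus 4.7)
My plan is to combine the Hwang--Mok theory of varieties of minimal rational tangents (VMRT) and their associated symbol algebras with an induction on the combinatorics of the Dynkin diagram. Fix a smooth connected family $\pi : \mcX \to \mcZ$ of Fano manifolds with $\mcX_{z_0} \cong G/P$ as in the theorem, and pick a nearby point $z \in \mcZ$ with the goal of showing $\mcX_z \cong G/P$. Since $P$ leaves all but one simple root unmarked, $G/P$ carries $\rank(G)-1$ numerically independent families of minimal rational curves, indexed by the marked simple roots $\alpha$; by deformation theory of rational curves on Fano manifolds each such family deforms to a family on $\mcX_z$. Consequently the VMRT $\mcC_x \subset \mbP(T_x \mcX_z)$ at a general point decomposes into irreducible components $\mcC_x^{\alpha}$ whose projective--geometric properties, and whose mutual incidences, vary semi-continuously over $\mcZ$.

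The first substantive ingredient is a \emph{reduction principle}: for $G$ of type $ADE$ with $\rho(G/P)\geq 2$, one chooses a marked root $\alpha$ such that the corresponding elementary contraction $G/P \to G/Q$ has fibres isomorphic to a smaller rational homogeneous space $\bS$, and shows that any Fano degeneration of $G/P$ induces a corresponding Fano degeneration of $\bS$ along the associated sub-fibration determined by $\mcC_x^{\alpha}$. The Cartan equivalence framework of Hwang--Mok, in the form of symbol algebras attached to the VMRT, then lets one piece these sub-structures together to recover a $G$-structure on $\mcX_z$, provided $\bS$ is itself rigid under Fano deformation. Iterating this reduction brings the problem down to base cases of small Picard number. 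Picard number one cases are handled by Theorem \ref{thm. intro Picard numbe one} of Hwang--Mok, while the Picard number two base cases --- the flag manifolds $\mbF(k_1, k_2, \mbP^n)$ and their type $D$, $E$ analogues --- require a direct projective--geometric analysis of the pair $(\mcC_x^{\alpha_1}, \mcC_x^{\alpha_2})$. The excluded spaces $\mbF(1, 2, \mbP^3)$ and $\mbF(1, 2, Q^6)$ are precisely those for which this analysis fails, reflecting the existence of genuine Fano degenerations that the rest of the paper constructs explicitly.

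The hardest step is the reduction principle itself: one must identify the correct sub-homogeneous manifold $\bS$ for each admissible $G/P$, verify that the distribution spanned by the chosen VMRT component is integrable and that its leaves form a relative Fano sub-fibration on $\mcX_z$, and finally show that rigidity of $\bS$ lifts to the global symbol algebra of $\mcX_z$. The accompanying symbol algebra computation --- and in particular the precise identification of $\mbF(1, 2, \mbP^3)$ and $\mbF(1, 2, Q^6)$ as the only failure cases --- depends in a detailed way on the root combinatorics of types $A$, $D$, and $E$, and this is where the bulk of the technical effort is expected to lie.
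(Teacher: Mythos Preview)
Your high-level architecture --- a reduction principle feeding into symbol-algebra / Cartan-connection machinery --- matches the paper's, but the way you have set up both the reduction and the base cases contains genuine gaps.

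First, the reduction principle is not ``peel off one marked root $\alpha$ and look at the fibre $\bS^\alpha$ of the elementary contraction''. That fibre has Picard number one and is essentially always rigid, which is too coarse to control the symbol algebra of $\mcX_z$: the obstruction lives in how the different components $\mcC^{\alpha}_x$ interact. The paper's actual reduction (Proposition~\ref{prop. intro reduce to homog submfds} / Theorem~\ref{thm. reduction theorem}) works with \emph{pairs} of marked roots: for every $J$-connected pair $\alpha,\beta\in I$ one must exhibit a subset $A\supset\{\alpha,\beta\}$ such that the homogeneous submanifold $\bS^A=P_{I\setminus A}/P_I$ is already known to be Fano-rigid. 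Rigidity of all these $\bS^A$ forces the symbol algebra $\mfm_x(I)$ at a general point of $\mcX_0$ to satisfy the Serre-type relations of Proposition~\ref{prop. characterizing g-}, hence to equal $\mfg_-(I)$, after which the Cartan-connection argument (Proposition~\ref{prop. symbol algebra standard iff variety standard}) finishes.

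Second, your list of base cases is off. You cannot take ``$\mbF(k_1,k_2,\mbP^n)$ and their $D,E$ analogues'' as given: general Picard-number-two flag manifolds are not available as black-box rigid inputs (and establishing them would be no easier than the theorem itself). The paper's base cases are instead: Wi\'sniewski's $F(1,n,\C^{n+1})$ (Theorem~\ref{thm. intro (Am, a1am)}), the Weber--Wi\'sniewski complete flag result (Theorem~\ref{thm. intro complete flag mfds}), and crucially the \emph{higher} Picard-number cases $A_4/P_{I'}$ with $|I'|=3$ and $D_5/P_{I''}$ with $|I''|=4$ (Proposition~\ref{prop. intro A4 D5 submaximal rigidity}). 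These last cases are where the real work sits --- in particular the $A_4$ case requires first classifying all Fano degenerations of $A_3/P_{\{\alpha_1,\alpha_2\}}$ (Theorem~\ref{thm. intro unique degeneration of F(1, 2, C4)}) and then ruling out that the degenerate fibre $F^d(1,2;\C^4)$ can appear inside a degeneration of $A_4/P_{\{\alpha_2,\alpha_3,\alpha_4\}}$. None of this is a ``Picard-number-two projective-geometric analysis'' and your sketch gives no indication of how to carry it out.
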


It was observed by A. Weber and J. A. Wi\'{s}niewski \cite{WW17} that $F^d(1, 2; \C^4)$ is a Fano deformation of $\mbF(1, 2, \mbP^3)$, where $F^d(1, 2; \C^4)$ is defined as follows.

\begin{cons}\label{cons. intro Fd(1, 2; C4)}
Let $\omega$ be a symplectic form on $\C^4$, i.e. $\omega$ is a nondegenerate antisymmetric form on $\C^4$. Denote by $\mcL^\omega\subset T\mbP^3$ the associated contact distribution on $\mbP^3:=\mbP(\C^4)$, and write $\mcL_\sigma:= T\mbP^3/\mcL^\omega$. We define $F^d(1, 2; \C^4):=\mbP(\mcL_\sigma\oplus\mcL^\omega)$.
\end{cons}

Indeed we can show moreover the following

\begin{thm}\label{thm. intro unique degeneration of F(1, 2, C4)}
The variety $F^d(1, 2; \C^4)$ is the unique Fano degeneration of $\mbF(1, 2, \mbP^3)$.
\end{thm}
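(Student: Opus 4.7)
The plan is to show that any Fano degeneration $X$ of $\mbF(1,2,\mbP^3)$ is biholomorphic to $F^d(1,2;\mbC^4)$, by first recognizing $X$ as a $\mbP^2$-bundle over $\mbP^3$ and then classifying the rank-$3$ bundle that arises. Let $\pi:\mcX\to\mcZ$ be a smooth family of Fano manifolds with general fiber $\mbF(1,2,\mbP^3)$ and special fiber $X\not\cong\mbF(1,2,\mbP^3)$. Because $H^1(X,\mcO_X)=0$ and $H^2(X,\mbZ)$ is locally constant in a smooth family, $\rho(X)=2$.

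\emph{Step 1: deforming the $\mbP^2$-bundle structure.} Among the two extremal contractions of $\mbF(1,2,\mbP^3)$, the projection to planes realises the variety as $\mbP(T\check{\mbP}^3)$, a smooth $\mbP^2$-bundle. I would show that this contraction extends across the whole family: the associated minimal rational component---lines inside the $\mbP^2$-fibers---has a one-dimensional VMRT and positive deformation dimension, so by standard rigidity of smooth $\mbP^r$-bundle contractions under deformation (in the spirit of Wi\'sniewski's work and its refinements), the extremal ray persists, yielding a smooth $\mbP^2$-bundle $\varphi:X\to Y$ over a smooth Fano threefold $Y$ with $\rho(Y)=1$. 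Since $Y$ is a smooth limit of $\mbP^3$'s, the rigidity of $\mbP^n$ under smooth deformation gives $Y\cong\mbP^3$. Thus $X\cong\mbP(E)$ for some rank-$3$ bundle $E$ on $\mbP^3$, and we may normalize $E$ so that $c_i(E)=c_i(T\mbP^3)$ for all $i$.

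\emph{Step 2: classifying $E$.} On $\mbF(1,2,\mbP^3)=\mbP(T\mbP^3)$, the second extremal contraction---the $\mbP^1$-bundle over $Q^4=G(2,4)$---corresponds to the contact sub-bundle $\mcL^\omega\subset T\mbP^3$ and the induced divisor $\mbP(\mcL^\omega)\subset\mbP(T\mbP^3)$. I would follow this second extremal ray through the family: its limit on $X$ yields a rank-$2$ sub-bundle $F\subset E$ with rank-$1$ quotient. Using the symbol-algebra machinery and VMRT analysis developed earlier in the paper, one can identify $F$ up to twist and the $\mathrm{PGL}_4$-action with the contact distribution $\mcL^\omega$ on $\mbP^3$ and $E/F$ with $\mcL_\sigma$. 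Since every extension of $\mcL_\sigma$ by $\mcL^\omega$ is either split or isomorphic as middle term to the unique non-split one $T\mbP^3$, there are only two possibilities: $E=T\mbP^3$ or $E=\mcL_\sigma\oplus\mcL^\omega$. The first is excluded by $X\not\cong\mbF(1,2,\mbP^3)$, forcing $X\cong F^d(1,2;\mbC^4)$.

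The main obstacle is the identification in Step 2 of the rank-$2$ sub-bundle of $E$ as a contact distribution: one must verify that the second family of minimal rational curves remains unsplit on $X$, yields an honest sub-bundle rather than a singular foliation, and produces a distribution of contact type with the correct non-integrability profile. The guiding principle is that the local infinitesimal data encoded by the symbol algebra at a general point already determines the global distribution up to $\mathrm{PGL}_4$-action, since contact structures on $\mbP^3$ form a single $\mathrm{PGL}_4$-orbit. Once this is established, uniqueness follows automatically.
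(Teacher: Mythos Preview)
Your Step~1 is essentially correct and matches the paper's opening move: the contraction $\pi^{\alpha_2}_0:\mcX_0\to\mcX_0^{\alpha_2}$ is a $\mbP^2$-bundle over $\mbP^3$, which the paper obtains from the Weber--Wi\'sniewski criterion (Proposition~\ref{prop. criterion Pk bundle by Weber Wisniewski}) rather than a VMRT argument, but the conclusion $\mcX_0\cong\mbP(\mcV)$ for some rank-$3$ bundle $\mcV$ on $\mbP^3$ is the same.

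Step~2, however, contains a genuine gap. You claim that the second extremal ray on the general fiber ``corresponds to the contact sub-bundle $\mcL^\omega\subset T\mbP^3$ and the induced divisor $\mbP(\mcL^\omega)\subset\mbP(T\mbP^3)$''. This is not right: on the \emph{general} fiber $\mbP(T\mbP^3)$ there is no distinguished rank-$2$ subbundle of $T\mbP^3$ at all---the second contraction is the $\mbP^1$-bundle $\mbP(T\mbP^3)\to Gr(2,\C^4)$, whose fibers are sections over lines in $\mbP^3$, not vertical $\mbP^1$'s cut out by a subbundle of the rank-$3$ bundle. The contact distribution only materializes on the \emph{special} fiber, and it does so precisely because the second contraction degenerates there: on $F^d(1,2;\C^4)$ the family $\K^{\alpha_1}_x(\mcX_0)$ jumps from a point to a positive-dimensional family along the section $\sigma(\mbP^3)$ (see Lemma~\ref{lem. 23 A3 case} and the discussion of $E(\K^{\alpha_1})$ in Lemma~\ref{lem. 21 A3 case}), so $\pi^{\alpha_1}_0$ is not equidimensional and its target is singular. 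There is thus no ``rank-$2$ subbundle $F\subset E$'' handed to you by the second ray, and the extension argument that follows has no input.

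The paper's route to the splitting $\mcV\cong\mcL_\sigma\oplus\mcL^\omega$ is considerably more delicate. It first shows, via a symbol-algebra computation (Lemma~\ref{lem. unique kernel in degenerate A3 case}, Proposition~\ref{prop. W=Da2 (A3 case)}, Corollary~\ref{cor. A3 degenerate symb algebra}), that the failure of $\mcX_0$ to be $A_3/P_{\{\alpha_1,\alpha_2\}}$ forces $\Symb_x(\msD)\cong\mfg_-(C_2\times A_1)$, which singles out a canonical line subbundle $\mcN\subset\msD^{\alpha_2}$. Splitting-type calculations along general curves in $\K^{\alpha_1}(\mcX_0)$ and $\K^{\alpha_2}(\mcX_0)$ (Propositions~\ref{prop. spliting along C2 (A3 case)} and~\ref{prop. splitting types along C1 in A3 case}) then give $\mcV|_l=\mcO(2)^2\oplus\mcO$ on the relevant lines, and the leaves of $\mcN$ produce a \emph{meromorphic} section $\sigma:\mbP^3\dashrightarrow\mcX_0$ (Corollary~\ref{cor. 20 A3 case}). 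The rank-$2$ complement does not come from the second ray either: it is obtained by taking a \emph{general} divisor $W\in|\mcL_0^{\alpha_2}|$, showing after substantial work that $\sigma$ is holomorphic and disjoint from $W$ (Lemmas~\ref{lem. 44 A3 case}--\ref{lem. 46 A3 case}, Proposition~\ref{prop. 31 A3 case}), and then identifying the image family $\K^{\alpha_1}(\mcX_0/\mbP^3)$ as a smooth hyperplane section of $\mbG(1,\mbP^3)$ (Lemmas~\ref{lem. 34 A3 case} and~\ref{lem. 49 A3 case}), which yields the symplectic form $\omega$ and $\mcL_W\cong\mcL^\omega$. None of this is captured by ``follow the second ray and classify extensions''.
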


We also describe the structure possible Fano degeneration of $\mbF(1,2, Q^6)$, see Proposition \ref{prop. degeneration (D4, a2a3a4)}.

The strategy to prove Theorem \ref{thm. intro submaximal Picard numbers} is as follows. Firstly, we show that the Fano deformation rigidity of a rational homogenous space is implied the that property of a suitable class of its homogeneous submanifolds. Then we show the Fano deformation rigidity of these homogeneous submanifolds.

To explain the sketch, we need some convention on notations. Given a simple algebraic group $G$ and a Borel subgroup $B$. Denote by $R$ the set of simple roots and $\Gamma$ the Dynkin diagram. There is a one to one correspondence between subsets $I$ of $R$ and parabolic subgroups $P_I$ containing $B$ such that $P_R=B$, $P_\emptyset=G$ and $P_I\subset P_{I'}$ if and only if $I'\subset I$. There is a one to one correspondence between rational homogeneous spaces $G/P_I$ and marked Dynkin diagrams defined by marking nodes $I$ in the Dynkin diagram of $G$. One can see the diagrams on page \pageref{diag. marked Dynkina An} intuitively. The following proposition reduces the Fano deformation rigidity of $G/P_I$ to that property of its homogeneous submanifolds.

\begin{prop}\label{prop. intro reduce to homog submfds}
Let $G$ be a simple algebraic group of type $ADE$, and $I$ be a subset of $R$ with cardinality $|I|\geq 3$. Suppose that for any $\alpha\neq\beta\in I$, there exists a subset $A\subset I$ such that $\alpha,\beta\in A$ and the rational homogeneous space $P_{I\setminus A}/P_I$ is rigid under Fano deformation. Then $G/P_I$ is rigid under Fano deformation.
\end{prop}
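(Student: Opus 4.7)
The plan is to exploit the deformation theory of Mori contractions in smooth Fano families together with the rigidity hypothesis on fibers $P_{I\setminus A}/P_I$ to propagate the $G/P_I$-structure from the central fiber to every nearby fiber.

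Let $\pi:\mcX\to\mcZ$ be a smooth family of Fano manifolds with $\mcX_0\cong G/P_I$ for some $0\in\mcZ$. After replacing $\mcZ$ by a small connected neighborhood of $0$, one may assume $\rho(\mcX/\mcZ)=|I|$, since $H^1(\mcO_{G/P_I})=H^2(\mcO_{G/P_I})=0$. For each proper subset $A\subsetneq I$, the natural projection $\phi_0^A:G/P_I\to G/P_{I\setminus A}$ is a smooth Fano contraction supported on a face of the Mori cone. By the deformation theory of Fano contractions in smooth families (Wi\'{s}niewski's theorem and its refinements), $\phi_0^A$ extends to a relative smooth Fano contraction $\Phi^A:\mcX\to\mcY^A$ over $\mcZ$, whose fibers are Fano deformations of $P_{I\setminus A}/P_I$.

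Next, for each pair $\alpha\neq\beta\in I$, use the hypothesis to choose $A=A(\alpha,\beta)\subseteq I$ containing both $\alpha,\beta$ such that $P_{I\setminus A}/P_I$ is rigid under Fano deformation. Then every fiber of $\Phi^{A(\alpha,\beta)}$ is isomorphic to $P_{I\setminus A}/P_I$. Consequently, for every pair $\{\alpha,\beta\}\subset I$, each fiber $\mcX_t$ of $\pi$ admits a smooth contraction $\mcX_t\to\mcY^{A(\alpha,\beta)}_t$ with all fibers isomorphic to the corresponding sub-homogeneous space of $G/P_I$.

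To conclude that $\mcX_t\cong G/P_I$, I would track the relative minimal rational curves: for each $\gamma\in I$, the minimal family $\K_\gamma^0$ on $G/P_I$ associated to the marked node $\gamma$ extends to a relative minimal family $\K_\gamma$ on $\mcX$, restricting to a minimal family $\K_\gamma^t$ on each $\mcX_t$. The rigidity of the fibers of $\Phi^{A(\alpha,\beta)}$ forces the restrictions of the $\K_\gamma^t$ for $\gamma\in A(\alpha,\beta)$ to coincide, on each such fiber, with the standard minimal families of $P_{I\setminus A}/P_I$. Since every pair $\{\alpha,\beta\}\subset I$ is contained in some $A(\alpha,\beta)$, the full VMRT incidence structure at a general point of $\mcX_t$ matches that of $G/P_I$. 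Invoking the Cartan-geometric/symbol-algebra recognition machinery developed in the paper then identifies $\mcX_t$ with $G/P_I$. The hard part will be this last recognition step: translating the fiberwise isomorphisms and the VMRT incidence data on $\mcX_t$ into a global biholomorphism with $G/P_I$. The type-$ADE$ hypothesis is essential here, since it guarantees that the sub-VMRTs coming from the rigid sub-fibrations assemble into the correct symbol algebra of $G/P_I$, and the assumption $|I|\geq 3$ ensures that enough pairs $\{\alpha,\beta\}$ are available so that the associated sub-fibrations together generate this symbol algebra at every point of $\mcX_t$.
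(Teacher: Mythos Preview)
Your overall strategy---extend the Mori contractions to the family, use the rigidity hypothesis on the sub-fibrations $\bS^A=P_{I\setminus A}/P_I$, then invoke symbol-algebra recognition---is exactly the paper's approach. But your setup is in the wrong direction, and this is a genuine gap. You assume $\mcX_0\cong G/P_I$ and attempt to propagate this to nearby fibers $\mcX_t$; however, that direction is already trivial, since $H^1(G/P_I,T_{G/P_I})=0$ by Borel--Weil--Bott, so Kodaira--Spencer local rigidity gives $\mcX_t\cong G/P_I$ for $t$ near $0$ immediately, with no need for Mori theory or VMRT. The nontrivial content of Fano deformation rigidity is the \emph{degeneration} direction: given $\mcX_t\cong G/P_I$ for all $t\neq 0$ and $\mcX_0$ merely a Fano manifold, show $\mcX_0\cong G/P_I$. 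This is the paper's Setting~\ref{setup. intro Fano deformation}, and your argument as written does not address it; propagation outward from $0$ plus connectedness of $\mcZ$ does not suffice, because you never show the good locus is closed.

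Once you flip to the correct direction, your sketch becomes the paper's proof (Theorem~\ref{thm. reduction theorem} and Corollary~\ref{cor. reduction results refined version}): Wi\'sniewski's invariance of the Mori cone yields relative contractions $\pi^A:\mcX\to\mcX^A$; a \emph{general} fiber of $\pi^A_0:\mcX_0\to\mcX^A_0$ is a smooth Fano deformation of $\bS^A$, hence isomorphic to $\bS^A$ by hypothesis; then one shows the symbol algebra $\mfm_x(I)$ at a general $x\in\mcX_0$ is standard and concludes via Proposition~\ref{prop. intro symbol algebra criterion}. Two further remarks. First, your claim that $\Phi^A$ is a globally smooth fibration is unjustified even in your own direction; the paper only uses that general fibers of $\pi^A_0$ are standard. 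Second, the recognition step you wave at is where the real work lies: one must build compatible $G_0$-module structures on the various $\mcD^\alpha_x$ across different $J$-connected pairs (the inductive construction in Lemma~\ref{lem. condition (i) for g- in reduction thm}) and verify the Serre-type relations characterizing $\mfg_-(I)$ (Proposition~\ref{prop. characterizing g-}). Saying the VMRT incidence data ``matches'' does not by itself produce an isomorphism of symbol algebras.
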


Note that in the proposition above the variety $P_{I\setminus A}/P_I$ is a rational homogeneous space whose Picard number is $|A|\leq |I|=\rho(G/P_I)$. By Proposition \ref{prop. intro reduce to homog submfds}, an easy analysis of marked Dynkin diagrams shows that Theorem \ref{thm. intro submaximal Picard numbers} is a direct consequence of Theorems \ref{thm. intro Picard numbe one}, \ref{thm. intro (Am, a1am)}, \ref{thm. intro complete flag mfds} and the following

\begin{prop}\label{prop. intro A4 D5 submaximal rigidity}
The rational homogeneous spaces $A_4/P_{I'}$ and $D_5/P_{I''}$ are rigid under Fano deformation, where $|I'|=3$ and $|I''|=4$ respectively.
\end{prop}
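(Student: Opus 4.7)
The plan is to follow the Hwang--Mok/Weber--Wi\'sniewski strategy: for a Fano family $\pi:\mcX\rightarrow\mcZ$ whose general fibers are isomorphic to $G/P_I$ with $(G,I)$ as in Proposition \ref{prop. intro A4 D5 submaximal rigidity}, analyze the central fiber $M:=\mcX_0$ through the elementary Mori contractions inherited from those of $G/P_I$, and force $M\cong G/P_I$.

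First I would reduce to a short list of marked Dynkin diagrams by diagram symmetry. For $A_4/P_{I'}$ with $|I'|=3$, the duality $\alpha_k\leftrightarrow\alpha_{5-k}$ leaves the two representatives $I'=\{\alpha_2,\alpha_3,\alpha_4\}$ and $I'=\{\alpha_1,\alpha_3,\alpha_4\}$. For $D_5/P_{I''}$ with $|I''|=4$, the outer involution of $D_5$ swapping the two spinor nodes leaves four cases according to the unmarked node: $\alpha_1$, $\alpha_2$, $\alpha_3$, or $\alpha_4$.

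Next I would apply Wi\'sniewski's theorem on deformation of Mori contractions: the elementary extremal contractions of the general fibers $\mcX_z\cong G/P_I$ lift to simultaneous contractions of $\mcX\rightarrow\mcZ$, yielding on the central fiber a family of contractions $\varphi_\alpha:M\rightarrow M_\alpha$ indexed by $\alpha\in I$. Each general fiber of $\varphi_\alpha$ is a Fano deformation of the Picard-one rational homogeneous space associated to the connected component of $\alpha$ in the subdiagram $R\setminus(I\setminus\{\alpha\})$. Inspection of the two diagrams shows that these components are always of type $A_1$ or $A_2$, so the general fibers are $\mbP^1$ or $\mbP^2$ by Theorem \ref{thm. intro Picard numbe one}; thus every $\varphi_\alpha$ is a smooth $\mbP^1$- or $\mbP^2$-fibration. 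Composing pairs or triples of the $\varphi_\alpha$'s realizes $M$ as a tower of projective-bundle-like fibrations over targets already covered by known rigidity results: Theorem \ref{thm. intro (Am, a1am)} takes care of the $\mbF(1,n,\C^{n+1})$-factor appearing in the $A_4$ analysis, Theorem \ref{thm. intro Picard numbe one} handles the Picard-one intermediate targets, and the product-rigidity of \cite[Theorem 1]{Li18} handles the situations where the sub-diagram splits after contracting a node.

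The final step is to assemble these bundle descriptions into a global isomorphism $M\cong G/P_I$. This I would carry out by matching the variety of minimal rational tangents of $M$ at a general point, which is determined diagram-combinatorially by the collection $(\varphi_\alpha)_{\alpha\in I}$ together with the bundle data, with the variety of minimal rational tangents of $G/P_I$, and then invoking a Cartan--Fubini-type extension theorem of Hwang--Mok. The main obstacle will be the $D_5$ case with unmarked node at the branch vertex $\alpha_3$, where three $\mbP^2$-fibrations interact around the trivalent node: none of the three individually lands in a target already known to be Fano rigid, so the argument must exploit the three simultaneously, together with the half-spinor geometry of the two branches meeting at $\alpha_3$. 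I expect this triple-interaction case to be the technical heart of the proof; the internal-node case in $A_4$ and the non-branch cases in $D_5$ should yield to a more straightforward application of the framework outlined above.
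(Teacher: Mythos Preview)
Your proposal has a genuine gap at the key case $A_4/P_{\{\alpha_2,\alpha_3,\alpha_4\}}$. When you contract the $\{\alpha_2,\alpha_3\}$-face, the fiber is $\bS^{\alpha_2,\alpha_3}\cong A_3/P_{\{\alpha_2,\alpha_3\}}\cong\mbF(1,2,\mbP^3)$, which is \emph{not} Fano rigid: it is precisely one of the two exceptions in Theorem \ref{thm. intro submaximal Picard numbers}, and by Theorem \ref{thm. intro unique degeneration of F(1, 2, C4)} it admits the genuine Fano degeneration $F^d(1,2;\C^4)$. So your assertion that ``composing pairs or triples of the $\varphi_\alpha$'s realizes $M$ as a tower \ldots\ over targets already covered by known rigidity results'' fails exactly here, and none of Theorems \ref{thm. intro Picard numbe one}, \ref{thm. intro (Am, a1am)}, \ref{thm. intro complete flag mfds} or the product theorem applies. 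You have also misidentified the hard case: the $D_5$ cases (including the branch-vertex one) are in fact all handled uniformly by the reduction principle of Proposition \ref{prop. intro reduce to homog submfds}, since for every $J$-connected pair in $I''$ one can enlarge to a triple whose associated submanifold is an $A_4/P_{I'}$ with $|I'|\in\{3,4\}$; the technical heart is entirely the $A_4$ case above.

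The paper proceeds differently. First, $A_4/P_{\{\alpha_1,\alpha_2,\alpha_4\}}$ (hence its dual $A_4/P_{\{\alpha_1,\alpha_3,\alpha_4\}}$) is treated by the direct symbol-algebra computation of Proposition \ref{prop. intro (Am, a1a2am) rigidity}. For $A_4/P_{\{\alpha_2,\alpha_3,\alpha_4\}}$ one argues by contradiction: assuming $\mcX_0$ is not standard, the reduction theorem (Theorem \ref{thm. reduction theorem}) forces $F^{\alpha_2,\alpha_3}_x\cong F^d(1,2;\C^4)$ at general $x$. The resulting symbol algebra of $\msD$ is computed explicitly and used, together with Proposition \ref{prop. criterion Pk bundle by Weber Wisniewski}, to prove $\mcX^{\alpha_2}_0\cong F(3,4;\C^5)$ and that $\pi^{\alpha_2,\alpha_3}_0$ is an $F^d(1,2;\C^4)$-bundle. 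One then reads off from the explicit geometry of $F^d(1,2;\C^4)$ a nonzero antisymmetric form $\omega\in\wedge^2(\C^5)^*$ governing the fiber structure; since $\omega$ is necessarily degenerate on $\C^5$, a suitable flag produces a fiber of dimension $2$ where only $\mbP^1$ is allowed, yielding the contradiction. Finally, the ``symbol algebra standard $\Rightarrow$ variety standard'' step is not Cartan--Fubini (which is tailored to Picard number one) but parabolic geometry: the \v{C}ap--Schichl construction of canonical Cartan connections together with Yamaguchi's prolongation theorem, packaged in Proposition \ref{prop. intro symbol algebra criterion}.
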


As an example we analysis the Fano deformation rigidity of $D_4/P_I$ with $I=\{\alpha_1, \alpha_3, \alpha_4\}$. Given any two different roots $\alpha, \beta\in I$, the rational homogeneous space $P_{I\setminus\{\alpha, \beta\}}/P_I$ is biholomorphic to $A_3/P_{\{\alpha_1, \alpha_3\}}$, which is rigid under Fano deformation. Hence, $D_4/P_I$ is rigid under Fano deformation.

Our argument to Fano deformation rigidity of $A_4/P_{\{\alpha_1, \alpha_2, \alpha_4\}}$, which is a special case of Proposition \ref{prop. intro A4 D5 submaximal rigidity}, works equally well for $A_m/P_{\{\alpha_1, \alpha_2, \alpha_m\}}$ with $m\geq 3$. In other words, we have

\begin{prop}\label{prop. intro (Am, a1a2am) rigidity}
The rational homogeneous spaces $A_m/P_{\{\alpha_1, \alpha_2, \alpha_m\}}$ with $m\geq 3$ are rigid under Fano deformation.
\end{prop}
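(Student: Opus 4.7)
The plan is to apply Proposition \ref{prop. intro reduce to homog submfds} with $I = \{\alpha_1, \alpha_2, \alpha_m\}$, which has cardinality $3$, and verify the hypothesis for each of the three pairs of roots in $I$. Two of the three pairs admit standard choices. For $\{\alpha_2, \alpha_m\}$, I would take $A = \{\alpha_2, \alpha_m\}$: removing $\alpha_1$ from the Dynkin diagram of $A_m$ leaves an $A_{m-1}$ subdiagram on $\alpha_2, \ldots, \alpha_m$ with marks on its two endpoints, yielding $P_{\{\alpha_1\}}/P_I \cong F(1, m-1, \C^m)$, which is rigid by Theorem \ref{thm. intro (Am, a1am)}. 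For $\{\alpha_1, \alpha_m\}$, I would take $A = \{\alpha_1, \alpha_m\}$: removing $\alpha_2$ disconnects the Dynkin diagram into an $A_1$ component on $\alpha_1$ and an $A_{m-2}$ component on $\alpha_3, \ldots, \alpha_m$, so $P_{\{\alpha_2\}}/P_I \cong \mbP^1 \times \mbP^{m-2}$, rigid as a product of Picard-one rational homogeneous spaces by Theorem \ref{thm. intro Picard numbe one} together with the product rigidity result cited in the introduction.

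The remaining pair $\{\alpha_1, \alpha_2\}$ is the genuine obstacle. The only admissible choice is $A = \{\alpha_1, \alpha_2\}$, giving $P_{\{\alpha_m\}}/P_I \cong A_{m-1}/P_{\{\alpha_1, \alpha_2\}} \cong \mbF(0, 1, \mbP^{m-1})$, the incidence variety of points and lines in $\mbP^{m-1}$. For $m = 3$ this is the complete flag $A_2/B$, rigid by Theorem \ref{thm. intro complete flag mfds}. For $m \geq 4$ it is a Picard-number-$2$ rational homogeneous space not covered by the cited theorems, so its Fano deformation rigidity must be proved as an auxiliary lemma. My plan for this auxiliary lemma is to exploit the realization of $\mbF(0, 1, \mbP^{m-1})$ as $\mbP(S)$, the projectivization of the tautological rank-$2$ subbundle on the Grassmannian $G(2, m)$, so that it admits a $\mbP^1$-bundle extremal contraction to $G(2, m)$ whose fibers are the minimal rational curves of anticanonical degree $2$. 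In any smooth Fano family $\pi : \mcX \to \mcZ$ with central fiber $\mbF(0, 1, \mbP^{m-1})$, I would (i) show by a Mori-theoretic deformation argument for length-$2$ extremal rays that this $\mbP^1$-bundle contraction extends to a relative contraction $\mcX \to \mcX' \to \mcZ$; (ii) identify $\mcX' \to \mcZ$ as a $G(2, m)$-fibration via Theorem \ref{thm. intro Picard numbe one}, since $G(2, m) \not\cong \mbF(1, Q^5)$ for $m \geq 4$; and (iii) use the Bott--Borel--Weil vanishing $H^1(G(2, m), \operatorname{End}(S)) = 0$ to identify the $\mbP^1$-bundle $\mcX \to \mcX'$ fiberwise with $\mbP(S)$.

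The main obstacle is the deformation invariance in step (i): showing that the length-$2$ extremal ray of the central fiber deforms sideways throughout $\mcZ$. This relies on $H^1$-vanishing for the normal bundles of the minimal rational curves sweeping out the $\mbP^1$-fibers, together with a careful application of the theory of Mori contractions in smooth Fano families; these are standard in principle but require the VMRT and symbol algebra machinery developed in the paper to be executed uniformly. Once the auxiliary rigidity of $\mbF(0, 1, \mbP^{m-1})$ is in hand, Proposition \ref{prop. intro reduce to homog submfds} immediately delivers the Fano deformation rigidity of $A_m/P_{\{\alpha_1, \alpha_2, \alpha_m\}}$ for every $m \geq 3$.
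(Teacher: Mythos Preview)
Your reduction via Proposition~\ref{prop. intro reduce to homog submfds} handles the pairs $\{\alpha_2,\alpha_m\}$ and $\{\alpha_1,\alpha_m\}$ correctly, but the auxiliary lemma you need for the pair $\{\alpha_1,\alpha_2\}$ is \emph{false}. For $m=4$ the fiber $\bS^{\alpha_1,\alpha_2}\cong A_3/P_{\{\alpha_1,\alpha_2\}}\cong\mbF(1,2,\mbP^3)$ is precisely the variety that the paper shows is \emph{not} rigid under Fano deformation: by Theorem~\ref{thm. intro unique degeneration of F(1, 2, C4)} it degenerates to $F^d(1,2;\C^4)$. Your step~(i) breaks down exactly here: in that degeneration the contraction that was a $\mbP^1$-bundle over $G(2,4)=Q^4$ becomes a map onto the singular cone over $Q^3$, so the target $\mcX'_0$ is not smooth and Theorem~\ref{thm. intro Picard numbe one} cannot be invoked. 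Note also that the Weber--Wi\'{s}niewski criterion (Proposition~\ref{prop. criterion Pk bundle by Weber Wisniewski}) does not save step~(i), since $H^*(G(2,n),\mbQ)$ is not generated by $H^2$ once $n\geq 4$.

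The paper therefore does \emph{not} use Proposition~\ref{prop. intro reduce to homog submfds} (which requires standalone rigidity of each $\bS^A$) but rather the finer Theorem~\ref{thm. reduction theorem}, which only asks that the fibers $F^{\alpha,\beta}_x$ be standard \emph{in the given deformation of $A_m/P_I$}. The key point is Proposition~\ref{prop. fibers F(a1a2) in Am case}: one assumes the symbol algebra $\mfm_x(\alpha_1,\alpha_2)$ is degenerate, picks a nonzero $v_2\in\mfm(\alpha_2)$ with $[\mfm(\alpha_1),v_2]=0$, and then uses the already-established standardness of $\mfm_x(\alpha_2,\alpha_m)$ (coming from the rigidity of $A_{m-1}/P_{\{\alpha_1,\alpha_{m-1}\}}$) to propagate this kernel and build a proper subalgebra $\mcA\subset\mfm_x(I)$ containing all the generators. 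A dimension count ($\dim\mcA=3m-4<3m-3=\dim\mcX_0$) then contradicts Proposition~\ref{prop. dim mx(I) = dim X0}. In other words, the $\alpha_m$-direction, present only in the ambient $A_m$-deformation, is what forbids the degeneration of $F^{\alpha_1,\alpha_2}_x$ that would occur if this fiber were deformed in isolation.
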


Applying Proposition \ref{prop. intro reduce to homog submfds}, we have the following consequence.

\begin{thm}\label{thm. intro J connected without end nodes}
Let $G$ be a simple algebraic group of type $ADE$, $\Gamma$ be the Dynkin diagram of $G$, and  $I$ be a subset of the set of simple roots $R$. Denote by $J:=R\setminus I$ and $\bar{\alpha}$ the node with three branches in $\Gamma$ (of type $DE$). Suppose $J$ contains no end nodes of $\Gamma$, the subdiagram with nodes $J$ are connected, and there is at most one $\beta\in J$ with Cartan pairing $\langle\beta, \bar{\alpha}\rangle\neq 0$. Then the rational homogeneous space $G/P_I$ is rigid under Fano deformation.
\end{thm}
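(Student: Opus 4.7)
The plan is to apply Proposition \ref{prop. intro reduce to homog submfds}: for each pair $\alpha\neq\beta\in I$, I must exhibit a subset $A\subset I$ containing $\alpha,\beta$ such that the fiber $P_{I\setminus A}/P_I$ is rigid under Fano deformation. This fiber is the rational homogeneous space whose marked Dynkin diagram is the subdiagram of $\Gamma$ supported on $J\cup A$ with marks on $A$; it decomposes as a product over the connected components of that subdiagram.

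First I would handle $|I|\leq 2$ directly. The case $|I|=0$ is trivial. If $|I|=1$, the space $G/P_I$ has Picard number one and is not $\mbF(1,Q^5)$ (since $G$ is of type $ADE$), so Theorem \ref{thm. intro Picard numbe one} applies. If $|I|=2$, a count of non-end nodes shows the hypotheses force $G$ to be of type $A_m$ with $J=\{\alpha_2,\ldots,\alpha_{m-1}\}$ and $I=\{\alpha_1,\alpha_m\}$, giving $G/P_I\cong F(1,m,\C^{m+1})$, rigid by Theorem \ref{thm. intro (Am, a1am)}. From now on assume $|I|\geq 3$, the regime in which Proposition \ref{prop. intro reduce to homog submfds} is applicable.

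The three hypotheses on $J$ force it to be a linear subdiagram of $\Gamma$: when $\bar\alpha\notin J$ (automatic in type $A$), $J$ lies inside one of the linear branches of $\Gamma\setminus\{\bar\alpha\}$; when $\bar\alpha\in J$, the Cartan pairing condition forces $J$ to extend from $\bar\alpha$ along at most one branch. Consequently every end node of $\Gamma$ lies in $I$, and $I$ decomposes into \emph{tails} attached to $J$, each tail containing at least one end node. Given a pair $\alpha,\beta\in I$, I split into two cases depending on whether the geodesic in $\Gamma$ from $\alpha$ to $\beta$ meets $J$. In the first case ($\alpha,\beta$ in different tails), I take $A=\{\alpha,\beta\}$: the component of $J\cup A$ containing $J$ is a linear chain with marks only at its two endpoints, a copy of $F(1,n,\C^{n+1})$ rigid by Theorem \ref{thm. intro (Am, a1am)}, and any remaining isolated node of $A$ contributes a $\mbP^k$-factor covered by Theorem \ref{thm. intro Picard numbe one}. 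In the second case ($\alpha,\beta$ in the same tail), I pick a third element $\gamma\in I$ in a different tail, adjacent to $J$ (which exists since $|I|\geq 3$ and every tail contains an end node), and set $A=\{\alpha,\beta,\gamma\}$; the main component of $J\cup A$ then becomes a linear chain whose marks sit at positions $\{1,2,n\}$ in the Levi's labelling, rigid by Proposition \ref{prop. intro (Am, a1a2am) rigidity}, and any further node of $A$ detached from this chain again contributes a $\mbP^k$-factor. In every instance the fiber is a product of these rigid building blocks, hence rigid by the product theorem \cite{Li18}.

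The main obstacle lies in the case analysis in types $D$ and $E$, where the branching at $\bar\alpha$ introduces subcases depending on whether $\bar\alpha$ sits in $J$ or in a tail of $I$, and on the relative position of $\alpha,\beta,\gamma$ with respect to the branches at $\bar\alpha$. The hypothesis that at most one $\beta\in J$ satisfies $\langle\beta,\bar\alpha\rangle\neq 0$ is exactly what prevents $J\cup A$ from acquiring a triple-branching marking pattern analogous to the non-rigid $\mbF(1,2,Q^6)$; it thereby guarantees that every fiber produced above stays inside the class of rigid spaces supplied by Theorems \ref{thm. intro Picard numbe one}--\ref{thm. intro submaximal Picard numbers} together with Proposition \ref{prop. intro (Am, a1a2am) rigidity}, and Proposition \ref{prop. intro reduce to homog submfds} delivers the conclusion.
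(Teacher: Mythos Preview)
Your proposal is correct and follows essentially the same strategy as the paper: invoke the reduction principle (Proposition \ref{prop. intro reduce to homog submfds}, i.e.\ Corollary \ref{cor. reduction results refined version}) and, for each pair $\alpha,\beta\in I$, produce $A\subset I$ so that $\bS^A$ is a product of pieces of type $(A_m,\{\alpha_1,\alpha_m\})$, $(A_m,\{\alpha_1,\alpha_2,\alpha_m\})$, complete flags, or projective spaces. The paper's write-up is a bit more economical in that it only treats $J$-connected pairs (this is all Theorem \ref{thm. reduction theorem} actually requires) and directly classifies the component of $\Gamma_{J\cup\{\alpha,\beta\}}$ containing both as either $(A_m,\{\alpha_1,\alpha_m\})$ or $(A_m,\{\alpha_1,\alpha_2\})$; your tail-based dichotomy amounts to the same thing. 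One small imprecision: in your Case~2 the ``main component of $J\cup A$'' does not always carry marks exactly at positions $\{1,2,n\}$ --- when neither $\alpha$ nor $\beta$ is adjacent to $J$, the $J$-component carries only the mark $\gamma$ (a projective space) and $\alpha,\beta$ land in separate isolated or $A_2/B$ factors --- but all such factors are still rigid, so the argument goes through unchanged.
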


If $G$ is of type $AD$ in Theorem \ref{thm. intro J connected without end nodes}, the manifolds $G/P_I$ are exact $\mbF(0, 1, \ldots, k_1, k_2, k_2+1, \ldots, n-1, \mbP^n)$ and $\mbF(0, 1, \ldots, k_1, k_2, k_2+1, \ldots, n, Q^{2n+2})$ with $0\leq k_1<k_2\leq n-1$.

Now let us explain the proof of Propositions \ref{prop. intro reduce to homog submfds}, \ref{prop. intro A4 D5 submaximal rigidity} and \ref{prop. intro (Am, a1a2am) rigidity}. It is well-known that the local deformation rigidity of rational homogeneous spaces follows from the vanishing $H^1(G/P_I, T_{G/P_I})=0$, which is a consequence of Borel-Weil-Bott theorem. So we only need to discuss in the following Setting \ref{setup. intro Fano deformation} and show $\mcX_0\cong \bS$ in each corresponding case.

\begin{set}\label{setup. intro Fano deformation}
Let $\pi: \mcX\rightarrow\Delta\ni 0$ be a holomorphic map such that $\mcX_t\cong\bS:=G/P_I$ for $t\neq 0$ and $\mcX_0$ is a connected Fano manifold, where $G$ is a connected simple algebraic group of $ADE$ type and $I\subset R$. Here $R$ is the set of simple roots and we define $J:=R\setminus I$.
\end{set}

The key point to prove Propositions \ref{prop. intro reduce to homog submfds}, \ref{prop. intro A4 D5 submaximal rigidity} and \ref{prop. intro (Am, a1a2am) rigidity} is the study of symbol algebras. Given a distribution $\mcV$ on a complex manifold $Y$, the weak derived system $\mcV^{-k}$ gives rise to a filtration $\mcV^0\subset\mcV^{-1}\subset\mcV^{-2}\subset\cdots$, where $\mcV^0:=0$, $\mcV^{-1}:=\mcV$, and $\mcV^{-k-1}:=\mcV^{-k}+[\mcV^{-1}, \mcV^{-k}]$ for $k\geq 1$. In an open neighborhood of a general point $y\in Y$ these $\mcV^{-k}$'s are subbundles of $TY$. The graded vector space $\Symb_y(\mcV):=\oplus_{k\geq 1}\mcV^{-k}_y/\mcV^{-k+1}_y$ is a graded nilpotent Lie algebra, and called the symbol algebra of $\mcV$ at $y$.

Let $\mfg_{-1}(\bS)$ be the sum of all $G$-invariant minimal distributions on $\bS$. The subscript $-1$ in the notation $\mfg_{-1}(\bS)$ comes from the grading induced by $I$, see Subsection \ref{subsection. Homogeneous VMRT and minimal distribution}. There is a meromorphic distribution $\mfg_{-1}(\mcX)\subset T^\pi$ such that its singular locus is a (possibly reducible) proper closed subvariety of $\mcX_0$ and its restriction on $\mcX_t$ with $t\neq 0$ coincides with the distribution $\mfg_{-1}(\bS)$.

It is known that $\Symb_s(\mfg_{-1}(\bS))\cong\mfg_-(I)$, where $s$ is any point of $\bS$, $\mfg_-(I)$ is the nilradical of the Lie algebra of $P_I^-$, and $P_I^-$ the opposite parabolic group of $P_I$. By the works of A. \v{C}ap and H. Schichl \cite{CS00} and K. Yamaguchi \cite{Yam93}, we can conclude the following

\begin{prop}\label{prop. intro symbol algebra criterion}
Suppose in Setting \ref{setup. intro Fano deformation} that $|I|\geq 3$ and $\Symb_x(\mfg_{-1}(\mcX_0))\cong\mfg_-(I)$ at general points $x\in\mcX_0$. Then $\mcX_0\cong \bS$.
\end{prop}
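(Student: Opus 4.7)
The plan is to leverage the parabolic geometry machinery of \v{C}ap--Schichl \cite{CS00} and Yamaguchi \cite{Yam93} to turn the pointwise matching of symbol algebras into a biholomorphism $\mcX_0\cong\bS$.

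First, let $U\subset\mcX_0$ be the dense open locus on which $\mfg_{-1}(\mcX_0)$ is a locally free subsheaf of $T\mcX_0$ whose weak derived system gives a filtration by subbundles of $TU$ with fiberwise symbol algebra isomorphic to $\mfg_-(I)$; by hypothesis $U$ is non-empty. The condition $|I|\geq 3$ places the parabolic pair $(\mfg,\mfp_I)$ outside the exceptional (contact-type) cases of Yamaguchi's prolongation theorem, so the \v{C}ap--Schichl construction functorially associates to $(U,\mfg_{-1}(\mcX_0)|_U)$ a canonical regular normal Cartan geometry of type $(G,P_I)$. Performing the same construction relatively on the open locus $\mathcal{U}:=(\mcX\setminus\mcX_0)\cup U$, on which $\mfg_{-1}(\mcX)\subset T^\pi$ is regular, produces a holomorphic family of Cartan geometries whose Cartan curvature $\kappa$ is a holomorphic tensor on $\mathcal{U}$.

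Second, I would prove $\kappa\equiv 0$. For each $t\neq 0$ the Cartan geometry on $\mcX_t\cong\bS$ is the flat homogeneous model (its defining distribution is exactly $\mfg_{-1}(\bS)$), so $\kappa_t\equiv 0$. Because $\mathcal{U}\cap\pi^{-1}(\Delta\setminus\{0\})$ is open and dense in $\mathcal{U}$, holomorphicity of $\kappa$ forces $\kappa\equiv 0$ on $\mathcal{U}$; in particular $\kappa_0\equiv 0$ on $U$. Yamaguchi's local equivalence theorem for flat parabolic geometries then provides, near every point of $U$, a biholomorphism to an open subset of $\bS$ identifying $\mfg_{-1}(\mcX_0)$ with $\mfg_{-1}(\bS)$.

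Finally I would globalize. Since $\mcX_t\cong\bS$ is simply connected for $t\neq 0$, the Fano manifold $\mcX_0$ is also simply connected; combined with simple connectedness of $\bS$ and flatness of the Cartan geometry on $U$, the developing map of this flat geometry descends to a well-defined local biholomorphism $\varphi\colon U\to\bS$. The main obstacle is to extend $\varphi$ across the proper subvariety $\mcX_0\setminus U$ to a biholomorphism $\mcX_0\to\bS$. I would attack this by a Cartan--Fubini-type extension in the spirit of Hwang--Mok: the local biholomorphism $\varphi$ carries the variety of minimal rational tangents on $\mcX_0$ (which is encoded in $\mfg_{-1}$) to the VMRT of $\bS$, and the Cartan--Fubini extension principle then upgrades such local isomorphisms to a global biholomorphism between the two smooth projective Fano manifolds of the same dimension. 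Controlling the behavior of $\varphi$ along the singular locus of the meromorphic distribution $\mfg_{-1}(\mcX)$ on $\mcX_0$ is the principal technical point and the place where the family structure must do its work.
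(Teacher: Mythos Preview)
Your first two steps---constructing a canonical Cartan connection of type $(G,P_I)$ on the regular locus via \v{C}ap--Schichl and showing its curvature vanishes by continuity from the general fiber---match the paper's argument (see the proof of Proposition~\ref{prop. Cartan connection to Fano deformation rigidity}). The condition $|I|\geq 3$ is used exactly as you say: it rules out the exceptional pairs in Proposition~\ref{prop. prolongation due to Yamaguchi}(ii), so that $G_0\cong{\rm gr}\Aut^o(\mfg_-(I))$ and the graded frame bundle itself furnishes the required $G_0$-structure.

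Where you diverge from the paper is in the globalization, and this is where your proposal has a genuine gap. You propose to build a developing map $\varphi:U\to\bS$ and then invoke a Cartan--Fubini extension principle. Two problems: first, the developing map is a priori defined on the universal cover of $U$, and simple connectedness of $\mcX_0$ does not imply simple connectedness of the open subset $U$; second, the Cartan--Fubini theorems of Hwang--Mok are formulated for Fano manifolds of Picard number one, and adapting them to the present higher-Picard-number setting is itself a substantial project (you yourself flag the extension across the singular locus as ``the principal technical point'' without resolving it).

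The paper sidesteps all of this with a much shorter argument. Once the Cartan connection on $U$ is flat, Yamaguchi's \cite[Corollary~5.4]{Yam93} gives that the Lie algebra of infinitesimal automorphisms of the filtered structure on $U$ is isomorphic to $\mfg$. On the other hand, upper semi-continuity of $t\mapsto\dim H^0(\mcX_t,T\mcX_t)$ yields $\dim\aut(\mcX_0)\geq\dim\mfg$. Hence $\aut(\mcX_0)\cong\mfg$, and $G$ acts on the projective variety $\mcX_0$ with an open orbit whose isotropy is conjugate to $P_I$; since $G/P_I$ is projective this forces $\mcX_0\cong G/P_I$. No developing map, no extension across a bad locus, no Cartan--Fubini.
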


We can complete the proof of Propositions \ref{prop. intro reduce to homog submfds} and \ref{prop. intro A4 D5 submaximal rigidity} by applying Proposition \ref{prop. intro symbol algebra criterion} and the following

\begin{prop}\label{prop. intro symbol alg standard}
If the manifold $\bS$ in Setting \ref{setup. intro Fano deformation} is the variety $G/P_I$ in Propositions \ref{prop. intro reduce to homog submfds}, \ref{prop. intro A4 D5 submaximal rigidity} or \ref{prop. intro (Am, a1a2am) rigidity}, then $\Symb_x(\mfg_{-1}(\mcX_0))\cong\mfg_-(I)$ at general points $x\in\mcX_0$.
\end{prop}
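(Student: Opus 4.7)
The plan is to extend the meromorphic distribution $\mfg_{-1}(\mcX) \subset T^\pi$ to a regular distribution on a Zariski open subset $U \subset \mcX_0$, then compute the ranks of its weak derived system at a general point of $U$, and finally identify the symbol algebra with $\mfg_-(I)$ using graded rigidity of $\mfg_-(I)$ in each of the listed cases.

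First, I would use the deformation theory of families of minimal rational curves on Fano manifolds. On a general fiber $\mcX_t \cong \bS$, the distribution $\mfg_{-1}(\mcX_t)$ is the span of the tangent distributions of the families of minimal rational curves indexed by $\alpha \in I$. Each such family specializes to a dominating family of minimal rational curves on $\mcX_0$, and standard VMRT arguments (in the style of Hwang--Mok and Kebekus) show that the associated VMRTs are smooth of the expected dimension at a general point $x \in \mcX_0$. Summing their tangent spaces recovers a regular distribution of the expected rank on an open $U \subset \mcX_0$.

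Second, I would establish at a general $x \in U$ the identity
\[
\dim \mcV^{-k}_x \;=\; \sum_{j=1}^{k}\dim \mfg_-(I)_{-j},\qquad k\geq 1,
\]
where $\mcV^{-k}$ denotes the weak derived system of $\mfg_{-1}(\mcX_0)|_U$. The inequality ``$\leq$'' is automatic from lower semicontinuity of generic rank in the family $\pi$, since on $\mcX_t$ with $t \neq 0$ the corresponding rank is constant and equal to the right-hand side. For the opposite inequality, in the setting of Proposition \ref{prop. intro reduce to homog submfds} I would reduce to the submanifolds $P_{I\setminus A}/P_I$: the assumed Fano-rigidity of $P_{I\setminus A}/P_I$ identifies the symbol algebra along these submanifolds with $\mfg_-(A)$, yielding the required bracket between $\mfg_{-\alpha}$ and $\mfg_{-\beta}$ for the prescribed pair $\alpha,\beta \in I$. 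Ranging over all such pairs exhausts $\mfg_-(I)$. In the settings of Propositions \ref{prop. intro A4 D5 submaximal rigidity} and \ref{prop. intro (Am, a1a2am) rigidity}, an explicit root-system analysis case by case supplies the same lower bound.

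Third, with the graded dimensions pinned down, $\Symb_x(\mfg_{-1}(\mcX_0))$ lies in the orbit closure of $\mfg_-(I)$ under the graded general linear group, and one checks case by case --- using the explicit root-system structure, which is manageable in the cases at hand --- that $\mfg_-(I)$ admits no proper graded degeneration with matching graded dimensions. The main obstacle, in my view, is the second step: ensuring that the brackets between different root-space components of $\mfg_{-1}$ do not collapse in the limit. In the setting of Proposition \ref{prop. intro reduce to homog submfds} this non-degeneracy is engineered precisely by the hypothesis on Fano-rigid submanifolds; in Propositions \ref{prop. intro A4 D5 submaximal rigidity} and \ref{prop. intro (Am, a1a2am) rigidity} it requires careful individual treatment for $A_4$, $D_5$ and the $A_m$ series.
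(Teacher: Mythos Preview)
Your overall picture is sound --- establish that the symbol algebra has the correct total dimension (via bracket-generating) and then rule out any non-standard structure --- but the mechanism you propose in Steps~2 and~3 has a real gap and is more circuitous than the paper's route.

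The gap is in your Step~2 lower bound and Step~3 degeneration argument. Knowing that along each rigid submanifold $P_{I\setminus A}/P_I$ the restricted symbol algebra is $\mfg_-(A)$ tells you that certain brackets between pairs of root-space components are nonzero, but your claim that ``ranging over all such pairs exhausts $\mfg_-(I)$'' is not justified: it is not clear a priori how these partial symbol algebras glue to force the correct graded dimensions of $\mfm_x(I)$ in each degree. Likewise, your Step~3 assertion that $\mfg_-(I)$ admits no proper graded degeneration with matching graded dimensions is a nontrivial structural statement about nilpotent Lie algebras that you would need to verify independently in each case.

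The paper bypasses both issues with a single device you are missing: a Serre-type presentation of $\mfg_-(I)$ by generators and relations (Proposition~\ref{prop. characterizing g-}). Each relation involves either a pair $\alpha',\alpha''\in I$ or a single $\alpha\in I$ together with an adjacent $\beta\in N_J(\alpha)$, so the relations can be verified in $\mfm_x(I)$ using only the information coming from the submanifolds $F^{\alpha,\beta}_x\cong\bS^{\alpha,\beta}$ (Lemmas~\ref{lem. condition (i) for g- in reduction thm} and~\ref{lem. condition (ii) for g- in reduction thm}). This yields directly that $\mfm_x(I)$ is a \emph{quotient} of $\mfg_-(I)$, which is much stronger than ``lies in the orbit closure.'' Then one only needs the total dimension equality $\dim\mfm_x(I)=\dim\mcX_0=\dim\mfg_-(I)$, which follows from bracket-generating (Proposition~\ref{prop. dim mx(I) = dim X0}), to conclude isomorphism --- no graded-dimension matching and no degeneration analysis required. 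For the $A_m/P_{\{\alpha_1,\alpha_2,\alpha_m\}}$ case the paper argues slightly differently (Proposition~\ref{prop. fibers F(a1a2) in Am case}): assuming the symbol is non-standard produces an explicit proper subalgebra containing the generators, contradicting the total-dimension count.
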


To prove Proposition \ref{prop. intro symbol alg standard}, we need the algebraic and geometric feature of each situation. As an example, we suppose $\bS=A_m/P_{\{\alpha_1, \alpha_2, \alpha_m\}}$ in Setting \ref{setup. intro Fano deformation}. It can be shown that any two points in $\mcX_0$ can be jointed by chains of rational curves tangent to $\mfg_{-1}(\mcX_0)$. Hence the tangent bundle $T\mcX_0$ is $k$-th weak derivative of $\mfg_{-1}(\mcX_0)$ for some $k$. In particular, $\dim \Symb_x(\mfg_{-1}(\mcX_0))=\dim\mcX_0=\dim\mfg_-(I)$ at a general point $x\in\mcX_0$. One the other hand, if the symbol algebra $\Symb_x(\mfg_{-1}(\mcX_0))\ncong\mfg_-(I)$, then an easy calculation of Lie algebras shows that $\dim \Symb_x(\mfg_{-1}(\mcX_0))<\dim\mfg_-(I)$. The contradiction implies that $\Symb_x(\mfg_{-1}(\mcX_0))\cong\mfg_-(I)$.

The organization of this paper is as follows. In Section \ref{section. Nilpotent algebras and symbol algebras} by studying the $G$-action on family of rational curves and the $G$-invariant minimal distributions on $\bS$ we give a characterization of $\mfg_-(I)$, which is a variation of Serre's theorem on simple Lie algebras. In Section \ref{section. An reduction theorem for rigidity under Fano deformations} we firstly study the basic properties of Fano deformations and symbol algebras in Setting \ref{setup. intro Fano deformation}, and then prove Proposition \ref{prop. intro symbol algebra criterion}. With the help of Proposition \ref{prop. intro symbol algebra criterion} and the characterization of $\mfg_-(I)$, we give the proof of Proposition \ref{prop. intro reduce to homog submfds} in Section \ref{section. An reduction theorem for rigidity under Fano deformations}. In Section \ref{section. Rigidity and degeneration under Fano deformation} we prove the rigidity results by applying Proposition \ref{prop. intro reduce to homog submfds}. In Subsection \ref{subsection. Rigidity property and its reductions} we prove Theorems \ref{thm. intro submaximal Picard numbers} and \ref{thm. intro J connected without end nodes} by assuming Propositions \ref{prop. intro A4 D5 submaximal rigidity} and \ref{prop. intro (Am, a1a2am) rigidity}. The proof of Proposition \ref{prop. intro (Am, a1a2am) rigidity} is given in Subsection \ref{subsection. rigidity of (A4, (a2, a3, a4))}. Theorem \ref{thm. intro unique degeneration of F(1, 2, C4)} is proved in Section \ref{section. unique degeneration of (A3, a1, a2)}, and with the help of this theorem we prove Proposition \ref{prop. intro A4 D5 submaximal rigidity} in Subsection \ref{subsection. rigidity of (A4, (a2, a3, a4))}. Finally we analysis the possible Fano degeneration of $\mbF(1, 2, Q^6)$.

\section{Geometry on rational homogeneous spaces} \label{section. Nilpotent algebras and symbol algebras}

\subsection{Distributions and families of lines} \label{subsection. Homogeneous VMRT and minimal distribution}

In this subsection, we collect the geometric properties on rational homogeneous spaces, which are useful in this paper. These results are classical, and most of them are stated without proof.

\begin{set}\label{setup. G semisimple}
Let $G$ be a connected semisimple algebraic group of adjoint type such that each simple factor is of type $ADE$, $B$ be a Borel subgroup, and $R$ be the set of simple roots. Fix a subset $I$ of $R$ and denote by $J:=R\setminus I$.
\end{set}

Denote by $P_I:=\bigcap\limits_{\alpha\in I}P_\alpha$, where $P_\alpha$ is the associated maximal parabolic subgroup of $G$ which contains $B$. Denote by $P_I^-$ the opposite parabolic subgroup of $P_I$, and $G_0:=P_I\cap P_I^-$.

\begin{defi}\label{defi. gk(I) g-(I) and g-(G)}
Denote by $\mfg$ the Lie algebra of $G$. Let $\Lambda$ be the set of all roots of $G$ and $\mfh$ the fixed Cartan subalgebra of $\mfg$. For each $\eta\in\Lambda$, denote by $\mfg_\eta$ the 1-dimensional linear subspace of $\mfg$ with weight $\eta$. We can write $\eta=\sum\limits_{\alpha\in R}n_\alpha\alpha$, where either all $n_\alpha$ are nonnegative integers or all $n_\alpha$ are nonpositive integers. Define $\deg_I\eta=\sum\limits_{\alpha\in I}n_\alpha$. For each $k\in\mbZ$ denote by $\Lambda_k(I)$ the set of elements $\eta\in\Lambda$ with $\deg_I(\eta)=k$. Equip a grading on $\mfg$ such that $\mfg_k(I):=\bigoplus\limits_{\eta\in\Lambda_k(I)}\mfg_\eta$ for $k\neq 0$ and $\mfg_0(I):=\mfh\oplus(\bigoplus\limits_{\eta\in\Lambda_0(I)}\mfg_\eta)$. Then $\mfg$ becomes a graded Lie algebra. Moreover $\mfg_0$, $\mfp_I:=\bigoplus\limits_{k\geq 0}\mfg_k$ and $\mfp^-_I:=\bigoplus\limits_{k\leq 0}\mfg_k$ are Lie algebras of $G_0$, $P_I$ and $P_I^-$ respectively. When there is no confusion, we omit $I$ in the expressions, for example $\mfg_k:=\mfg_k(I)$. In case $I=R$, we may also write $\mfg_-(G):=\mfg_-(R)$ in order to emphasize on the group $G$.
\end{defi}

A rational homogeneous space can be expressed by a marked Dynkin diagram. To explain the order of simple roots and the way to express a rational homogeneous space, we draw the marked Dynkin diagram corresponding to $G/P_{\{\alpha_1, \alpha_2\}}$ as follows, where $G=A_n, D_m$ or $E_k$ with $n\geq 2$, $m\geq 4$ and $k=6, 7, 8$ respectively.

\begin{eqnarray}\label{diag. marked Dynkina An}
\xymatrix{ A_n:  &
{\begin{array}{@{}c@{}}\\\bullet\\\alpha_1\end{array}} \POS+/r2pt/ \ar@{-}[r]+/l2pt/ &
{\begin{array}{@{}c@{}}\\\bullet\\\alpha_2\end{array}} \POS+/r2.5pt/ \ar@{-}[r]+/l2pt/ &
{\begin{array}{@{}c@{}}\\\circ\\\alpha_3\end{array}} \POS+/r2pt/ \ar@{-}[r]+/l2pt/ &
{\begin{array}{@{}c@{}}\\\circ\\\alpha_4\end{array}} \POS+/r2pt/ \ar@{--}[r]+/l2pt/ &
{\begin{array}{@{}c@{}}\\\circ\\\alpha_{n-1}\end{array}} \POS+/r2pt/ \ar@{-}[r]+/l2pt/ &
{\begin{array}{@{}c@{}}\\\circ\\\alpha_n\end{array}}
}
\end{eqnarray}

\begin{eqnarray}\label{diag. marked Dynkina Dm}
\xymatrix{
 D_m: & {\begin{array}{@{}c@{}}\\\bullet\\\alpha_1\end{array}} \POS+/r2pt/ \ar@{-}[r]+/l2pt/ &
{\begin{array}{@{}c@{}}\\\bullet\\\alpha_2\end{array}} \POS+/r2.5pt/ \ar@{-}[r]+/l2pt/ &
{\begin{array}{@{}c@{}}\\\circ\\\alpha_3\end{array}} \POS+/r2pt/ \ar@{-}[r]+/l2pt/ &
{\begin{array}{@{}c@{}}\\\circ\\\alpha_4\end{array}} \POS+/r2pt/ \ar@{--}[r]+/l2pt/ &
{\begin{array}{@{}c@{}}\\\circ\\\alpha_{m-2}\,\,\,\,\,\,\end{array}} \POS+/r2pt/ \ar@{-}[r]+/l2pt/&
{\begin{array}{@{}c@{}}\\\circ\\\alpha_{m-1}\end{array}} \\
&&&&& {\begin{array}{@{}c@{}}\\\,\,\,\,\,\,\circ\\\alpha_m\end{array}}\POS+/r3.5pt/\ar@{-}[u]+/l0pt/ &
}
\end{eqnarray}

\begin{eqnarray}\label{diag. marked Dynkina Ek}
\xymatrix{ E_k: &
{\begin{array}{@{}c@{}}\\\bullet\\\alpha_1\end{array}} \POS+/r2pt/ \ar@{-}[r]+/l2pt/ &
{\begin{array}{@{}c@{}}\\\circ\\\alpha_3\end{array}} \POS+/r2.5pt/ \ar@{-}[r]+/l2pt/ &
{\begin{array}{@{}c@{}}\\\circ\\\alpha_4\end{array}} \POS+/r2pt/\ar@{--}[r]+/l2pt/ &
{\begin{array}{@{}c@{}}\\\circ\\\alpha_{k-1}\end{array}} \POS+/r2pt/ \ar@{-}[r]+/l2pt/ &
{\begin{array}{@{}c@{}}\\\circ\\\alpha_k\end{array}}  \\
&&&{\begin{array}{@{}c@{}}\\\bullet\\\alpha_2\end{array}} \POS+/r0pt/\ar@{-}[u]+/l0pt/&&
}
\end{eqnarray}

Since we assume $G$ to be of adjoint type, the restriction of Adjoint representation induces an injective homomorphism $G_0\subset GL(\mfg_-(I))$, where $\mfg_-(I):=\bigoplus\limits_{k\geq 1}\mfg_{-k}(I)$.

\begin{nota}\label{nota. Dynkin subdiagram and associated semisimple subgroups}
Given any $\alpha\in R$, denote by $N(\alpha)$ the set of simple roots that are next to $\alpha$ in the Dynkin diagram $\Gamma_R$ of $G$, and set $N_J(\alpha):=N(\alpha)\cap J$. For each subset $A$ of $R$, denote by a semisimple subgroup $G_A$ of $G$ associated to the Dynkin subdiagram $\Gamma_A$ of $\Gamma_R$.
\end{nota}

\begin{defi}\label{defi. homogenous VMRT}
Set $\bS:=G/P_I$. Given a subset $A\subset I$, denote by $\bS^A$ the central fiber of $\Phi^A: \bS\rightarrow G/P_{I\setminus A}$, which is a rational homogeneous space of Picard number $|A|$. Given any $\alpha\in I$, each fiber of $\Phi^\alpha$ is biholomorphic to $\bS^\alpha$, which is covered by lines under its minimal embedding. Denote by $\K^\alpha(\bS)$ the family of these lines (associated with $\alpha$) on $\bS$. Indeed $\K^\alpha(\bS)=G/P_{(I\cup N(\alpha))\setminus\{\alpha\}}$, which can be concluded from the following commutative diagram of Tits fibrations
\begin{eqnarray}\label{eqn. Tits fibrations}
\xymatrix{G/P_{I\cup N(\alpha)}\ar[d]_-{\mu}\ar[r]^-{\ev} & G/P_I\ar[d] \\
G/P_{(I\cup N(\alpha))\setminus\{\alpha\}}\ar[r] & G/P_{I\setminus\{\alpha\}}.
}
\end{eqnarray}

Denote by $\mcC^\alpha(\bS)\subset\mbP(T\bS)$ the variety of tangent directions of $\K^\alpha(\bS)$, i.e. at each point $x\in\bS$,
$$\mcC^\alpha_x(\bS)=\bigcup\limits_{C\in\K^\alpha_x(\bS)}\mbP(T_x C)\subset\mbP(T_x(\bS)),$$
Denote by $\bZ^\alpha:=\mcC_p^\alpha(\bS)\subset\mbP(T_p\bS)$, where $p$ is  the base point of $\bS$.
\end{defi}

\begin{rmk}
Tits \cite{Tits55} studied diagrams in the style of \eqref{eqn. Tits fibrations} and he called $\mu(ev^{-1}(z))$ the shadow of $z\in G/P_I$. This variety is biholomorphic to $\mcC^\alpha_x(\bS)$ for $x\in ev^{-1}(z)\subset G/P_I$. The notation $\mcC^\alpha_x(\bS)$ called the variety of minimal rational tangents (VMRT for short) at $x$ of the minimal rational component $\K^\alpha(\bS)$. One could find more details about minimal rational components and VMRT in \cite{Hw01}. For more details about the properties of those $\mcC^\alpha$, one can consult \cite{LM03}.
\end{rmk}

The following results are straight-forward.

\begin{lem}
(i) The group $G_J$ is the semisimple part of the reductive group $G_0$.

(ii) The simple factor of $G_J$ is of type $ADE$;

(iii) There is a natural $G_0$-action on $\bC^\alpha$, and the action is transitive.
\end{lem}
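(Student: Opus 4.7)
The plan is to verify the three assertions in turn, each reducing to a direct root-system or Dynkin-diagram computation.

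For (i), I would start from the fact that $G_0 = P_I \cap P_I^-$ is the standard Levi of $P_I$, with Lie algebra $\mfg_0(I) = \mfh \oplus \bigoplus_{\eta \in \Lambda_0(I)} \mfg_\eta$. Writing $\eta = \sum n_\alpha \alpha$, the condition $\deg_I(\eta)=0$ combined with the common sign of the $n_\alpha$ forces $n_\alpha = 0$ for every $\alpha \in I$, so $\Lambda_0(I)$ coincides with the root subsystem generated by $J$, namely the root system of $G_J$. Hence the derived subalgebra of $\mfg_0(I)$ equals $\Lie(G_J)$, and $\mfg_0(I) = \Lie(G_J) \oplus \mfz$ for a central subtorus $\mfz \subset \mfh$. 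For (ii), I would inspect the diagrams (\ref{diag. marked Dynkina An})--(\ref{diag. marked Dynkina Ek}): deleting any subset of nodes from a Dynkin diagram of type $ADE$ yields a disjoint union of diagrams of type $ADE$, since no double or triple bond, and no branching with four or more arms, can be created by deletion. Applied to $\Gamma_J$ inside each simple factor of $G$, this forces every simple component of $G_J$ to be of type $ADE$.

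For (iii), I would use the Tits diagram (\ref{eqn. Tits fibrations}). The intersection $P_I \cap P_{(I\cup N(\alpha))\setminus\{\alpha\}} = P_{I\cup N(\alpha)}$ identifies the fibre of $\K^\alpha(\bS) \to \bS$ over $p$ with $P_I / P_{I\cup N(\alpha)}$. Because $\deg_{I\cup N(\alpha)} \geq \deg_I$ on positive roots, the unipotent radicals satisfy $U_I \subset U_{I\cup N(\alpha)}$; combined with the Levi decomposition $P_I = G_0 \ltimes U_I$, this identifies $\K^\alpha_p(\bS)$ with $G_J / Q$, the rational homogeneous space of $G_J$ cut out by the marked subset $N_J(\alpha) \subset J$. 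In particular $G_0$ acts transitively on $\K^\alpha_p(\bS)$. Since $G_0$ fixes $p$, it acts on $T_p\bS$ and on $\mbP(T_p\bS)$, preserving $\bZ^\alpha = \mcC^\alpha_p(\bS)$; the tangent map $\K^\alpha_p(\bS) \to \mcC^\alpha_p(\bS)$ is $G_0$-equivariant and surjective, so transitivity on the source transfers to the target.

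I do not anticipate a genuine obstacle: the statement packages standard facts about parabolic subgroups, Dynkin subdiagrams, and Tits fibrations. The only bookkeeping step worth attention is the identification $\K^\alpha_p(\bS) \cong G_J / Q$, which amounts to tracking which roots appear in the successive unipotent radicals and Levi factors.
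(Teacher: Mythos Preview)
Your proposal is correct. The paper does not actually supply a proof of this lemma: it states the three assertions immediately after the remark ``The following results are straight-forward'' and moves on, so you have written out precisely the standard verification the author is content to omit. (Note also that $\bC^\alpha$ in the statement is a typographical slip for $\bZ^\alpha$, which you silently and correctly identified with $\mcC^\alpha_p(\bS)$.)
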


\begin{lem}\label{lem. minimal a-distribution}
The tangent bundle of $\bS$ is identified with $G\times^{P_I}(\mfg/\mfp_I)$. For each $\alpha\in I$ there exists a unique $G$-invariant holomorphic distribution
$$\mfg^\alpha(\bS):=G\times^{P_I}((\mfg_{-1}(\alpha)+\mfp_I)/\mfp_I).$$ The $G$-invariant holomorphic distribution $$\mfg_{-1}(\bS):=G\times^{P_I}((\mfg_{-1}(I)+\mfp_I)/\mfp_I)$$ satisfies that
$$\mfg_{-1}(\bS)=\bigoplus\limits_{\alpha\in I}\mfg^{\alpha}(\bS)=\sum\limits_{\alpha\in I}\mfg^\alpha(\bS)\subset T\bS.$$
\end{lem}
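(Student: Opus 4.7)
The plan is to reduce everything to weight-space bookkeeping inside $\mfg$. Identify the tangent bundle $T\bS \cong G \times^{P_I}(\mfg/\mfp_I)$ from the adjoint action of $P_I$ on $T_p\bS = \mfg/\mfp_I$, where $p$ is the base point; this standard identification puts $G$-invariant holomorphic distributions on $\bS$ in bijection with $P_I$-invariant subspaces of $\mfg/\mfp_I$, so the uniqueness assertion for $\mfg^\alpha(\bS)$ is automatic once existence is established.

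Existence of $\mfg^\alpha(\bS)$ reduces to showing that $\mfg_{-1}(\alpha) + \mfp_I$ is stable under $\ad(\mfp_I)$, where $\mfg_{-1}(\alpha) \subset \mfg_{-1}(I)$ is the direct summand spanned by those root spaces $\mfg_\eta$ whose weight $\eta = \sum n_\beta \beta$ satisfies $n_\alpha = -1$ and $n_\beta = 0$ for $\beta \in I \setminus \{\alpha\}$. Decompose $\mfp_I = \mfg_0 \oplus \bigoplus_{k \geq 1}\mfg_k$. For $X \in \mfg_k$ with $k \geq 1$ and $Y \in \mfg_{-1}(\alpha)$, the bracket lies in $\mfg_{k-1} \subset \mfp_I$. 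For $X \in \mfg_0$, every weight of $X$ has zero coefficient at each element of $I$, so bracketing with $Y$ preserves the defining weight condition of $\mfg_{-1}(\alpha)$ and lands back in $\mfg_{-1}(\alpha)$. This yields the required $P_I$-invariance and hence defines $\mfg^\alpha(\bS)$ as a $G$-invariant subbundle of $T\bS$.

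The same argument applied to $\mfg_{-1}(I)$ in place of $\mfg_{-1}(\alpha)$ produces $\mfg_{-1}(\bS)$. For the direct-sum decomposition, recall that every root has coefficients $n_\beta$ of a single sign, so a negative root $\eta$ with $\deg_I(\eta) = -1$ must have exactly one $\alpha_0 \in I$ with $n_{\alpha_0} = -1$ and $n_\beta = 0$ for the remaining $\beta \in I$. Therefore $\mfg_{-1}(I) = \bigoplus_{\alpha \in I}\mfg_{-1}(\alpha)$ at the level of weight-space decomposition; each summand meets $\mfp_I$ trivially (it has purely negative degree), and the summands are linearly independent modulo $\mfp_I$. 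Passing to associated bundles gives $\mfg_{-1}(\bS) = \bigoplus_{\alpha \in I}\mfg^\alpha(\bS)$ as subbundles of $T\bS$, from which the ordinary sum identity is immediate.

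There is no substantial obstacle: the argument is a direct unpacking of definitions combined with compatibility of the $I$-grading of $\mfg$ with the bracket. The only point requiring care is to interpret $\mfg_{-1}(\alpha)$ as the refined summand inside $\mfg_{-1}(I)$, rather than the degree-$(-1)$ piece in the coarser $\{\alpha\}$-grading of $\mfg$, so that the direct-sum and sum identities hold simultaneously as stated.
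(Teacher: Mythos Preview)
Your proposal is correct. The paper itself gives no proof of this lemma---it is listed among results that ``are straight-forward''---and your weight-space bookkeeping is exactly the natural way to verify it. You have also correctly flagged the one genuine subtlety: the notation $\mfg_{-1}(\alpha)$ must be read as the refined summand inside $\mfg_{-1}(I)$ (roots with $n_\alpha=-1$ and $n_\beta=0$ for $\beta\in I\setminus\{\alpha\}$), not the full degree-$(-1)$ piece of the coarser $\{\alpha\}$-grading, since otherwise the direct-sum identity would fail. One tiny point you could make explicit: you pass from $\ad(\mfp_I)$-stability to $P_I$-invariance, which uses that $P_I$ is connected; this is standard for parabolic subgroups but worth a word.
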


\begin{lem}\label{lem. property of a-VMRT}
Take any $\alpha\in I$. Then

(1) $\mcC^\alpha(\bS)\subset \mbP(\mfg^\alpha(\bS))$;

(2) The inclusion $\bZ^\alpha\subset\mbP(\mfg_{-1}(\alpha))$ is $G_0$-equivariant;

(3) $\bZ^\alpha$ is the unique closed $G_0$-orbit in $\mbP(\mfg_{-1}(\alpha))$;

(4) $\bZ^\alpha$ is nondegenerate in $\mbP(\mfg_{-1}(\alpha))$.

(5) The $G_J$-action on $\bZ^\alpha$ induces the isomorphism
$$\bZ^\alpha\cong G_J/P_{N_J(\alpha)}\cong \prod\limits_{\beta\in N_J(\alpha)}G_J/P_\beta.$$
Particularly each $\bZ^\alpha_\beta:=G_J/P_\beta\cong G_0/P_\beta$ is a rational homogeneous space of Picard number one, where $\alpha\in I$ and $\beta\in N_J(\alpha)$.
\end{lem}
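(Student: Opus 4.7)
The plan is to work at the base point $p\in\bS$ and propagate by $G$-equivariance. By Lemma \ref{lem. minimal a-distribution}, under the identification $T_p\bS\cong\mfg_-(I)$ one has $\mfg^\alpha(\bS)|_p=\mfg_{-1}(\alpha)$, namely the span of root spaces $\mfg_{-\eta}$ with $\deg_I(\eta)=1$ and $\alpha$-coefficient of $\eta$ equal to $1$. The key tool is the Tits diagram \eqref{eqn. Tits fibrations}, which realizes $\K^\alpha_p$ as $\ev^{-1}(p)=P_I/P_{I\cup N(\alpha)}$, with each line appearing as a fiber of the $\mbP^1$-bundle $\mu$.

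For parts (1) and (2): the base-point line is the $\mu$-fiber $P_{(I\cup N(\alpha))\setminus\{\alpha\}}/P_{I\cup N(\alpha)}\cong\mbP^1$, whose tangent at the identity is the root line $\mfg_{-\alpha}\subset\mfg_{-1}(\alpha)$. Every other line through $p$ is a $P_I$-translate, and $P_I$ preserves $\mfg_{-1}(\alpha)$ inside $\mfg_-(I)$ modulo $\mfp_I$: the Levi $G_0$ respects the $I$-grading refined by each simple $I$-root, while the unipotent radical $R_u(P_I)\subset\mfg_{\geq 1}$ contributes only higher-degree terms that vanish modulo $\mfp_I$. Hence every direction in $\bZ^\alpha$ lies in $\mbP(\mfg_{-1}(\alpha))$, and by $G$-equivariance $\mcC^\alpha(\bS)\subset\mbP(\mfg^\alpha(\bS))$, giving (1). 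Part (2) is then immediate since $G_0\subset P_I$ fixes $p$ and stabilizes $\mfg_{-1}(\alpha)$.

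For (5), whence (3) and (4): the tangent map $\K^\alpha_p\to\bZ^\alpha$ is an isomorphism, as distinct lines through a common point of a rational homogeneous space have distinct tangent directions. Combined with the factorization through $G_0$, this yields $\bZ^\alpha\cong G_J/(G_J\cap P_{I\cup N(\alpha)})=G_J/P_{N_J(\alpha)}$, using $J\cap(I\cup N(\alpha))=N_J(\alpha)$ and the fact that $G_J$ is the semisimple part of $G_0$. Because $\Gamma_R$ is a tree of type $ADE$ and $\alpha\in I$ is the unique possible bridge in $\Gamma_R$ between distinct elements of $N_J(\alpha)$, these elements lie in pairwise distinct connected components of $\Gamma_J$; this splits $G_J/P_{N_J(\alpha)}$ as $\prod_{\beta\in N_J(\alpha)}G_J/P_\beta$, a product of Picard-number-one rational homogeneous spaces, proving (5). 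For (3), the module $\mfg_{-1}(\alpha)$ is $G_J$-irreducible: $\mfg_{-\alpha}$ is a highest weight vector, because adding any positive $J$-root to $-\alpha$ yields a sign-mixed expression that is not a root, while successive lowering by negative $J$-roots fills out exactly the root spaces $\mfg_{-\alpha-\sum_{\gamma\in J}n_\gamma\gamma}$ constituting $\mfg_{-1}(\alpha)$. The orbit of the highest weight line is the unique closed orbit in $\mbP(\mfg_{-1}(\alpha))$. For (4), the linear span of $\bZ^\alpha$ is a nonzero $G_0$-invariant subspace of $\mfg_{-1}(\alpha)$, hence equals $\mfg_{-1}(\alpha)$ by irreducibility.

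The main obstacle I anticipate is the root-combinatorial step: identifying the tangent of the base-point line as $\mfg_{-\alpha}$ and verifying that $P_I$-translation keeps all minimal-line tangents inside $\mfg_{-1}(\alpha)$ modulo $\mfp_I$. Once this is pinned down, the remaining steps reduce to standard facts about reductive group actions on irreducible highest-weight representations and a direct inspection of the $ADE$ Dynkin diagram.
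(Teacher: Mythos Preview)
The paper does not prove this lemma: it is one of the results in Section~\ref{subsection. Homogeneous VMRT and minimal distribution} that the author declares ``classical'' and states without proof. So there is nothing to compare your argument against; the question is simply whether your proof is correct, and it is.

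Your identification of $\K^\alpha_p$ with $P_I/P_{I\cup N(\alpha)}\cong G_0/P^{G_0}_{N_J(\alpha)}\cong G_J/P_{N_J(\alpha)}$ via the Tits diagram is right, as is the tree argument that distinct elements of $N_J(\alpha)$ lie in different components of $\Gamma_J$ (the only $\Gamma_R$-path between them passes through $\alpha\in I$), yielding the product decomposition in (5). For (1)--(2), your observation that $R_u(P_I)$ acts trivially on $\mfg_{-1}(I)$ modulo $\mfp_I$ and that $G_0$ preserves the refined grading by individual $I$-roots is exactly what is needed. For (3)--(4), your highest-weight argument is correct: $\mfg_{-\alpha}$ is killed by all positive $J$-root vectors because $-\alpha+\gamma$ would have mixed-sign coefficients. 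The one place you might tighten the exposition is the claim that lowering from $\mfg_{-\alpha}$ reaches every root space in $\mfg_{-1}(\alpha)$; this follows from the standard fact that every negative root of the form $-\alpha-\sum_{\gamma\in J}n_\gamma\gamma$ can be written as $-\alpha-\gamma_{i_1}-\cdots-\gamma_{i_k}$ with each partial sum a root, but you should say so explicitly rather than leave it as ``fills out exactly.'' With that adjustment, your proof is a complete and self-contained justification of a lemma the paper takes for granted.
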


\begin{rmk}
Given $\alpha\in I$ and $\beta\in N_J(\alpha)$, as in Definition \ref{defi. homogenous VMRT} we have a family of rational curves $\K^\beta(\bZ^\alpha)$ and its associated variety of tangent directions $\mcC^\beta(\bZ^\alpha)$ on the rational homogeneous space $\bZ^\alpha$. As in Lemma \ref{lem. minimal a-distribution} we can construct the distribution $\mfg^\beta(\bZ^\alpha)$ on $\bZ^\alpha$, which is the minimal $G_0$-invariant (hence $G_J$-invariant) distribution associated with the root $\beta\in N_J(\alpha)\subset J$.
Denote by $\wbZa\subset\mfg_{-1}(\alpha)$ the affine cone of $\bZ^\alpha\subset\mbP(\mfg_{-1}(\alpha))$. Then can define $\wcCa(\bS)$, $\mcC^\beta(\wbZa)$, $\wcC^\beta(\wbZa)$ and $\mfg^\beta(\wbZa)$ in an obvious way.
\end{rmk}

\begin{nota}
Write $J=\bigcup\limits_{1\leq i\leq \tau}J_i$, which is a disjoint union such that $\Gamma_{J_1},\ldots,\Gamma_{J_\tau}$ are the connected components of the Dynkin diagram of $\Gamma_J$.
\end{nota}

The following is straight-forward.

\begin{lem}\label{lem. Hab}
Take any $\alpha\in I$, and any $\beta\in N_J(\alpha)$. Then there exists a unique $J_i$ containing $\beta$. Moreover, $\beta$ is an end vertex of the Dynkin diagram $\Gamma_{J_i}$, and $\bZ^\alpha_\beta\cong G_{J_i}/P_\beta$.
\end{lem}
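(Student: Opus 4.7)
The uniqueness of the index $i$ with $\beta\in J_i$ is immediate from the defining partition $J=\bigsqcup_i J_i$ into the connected components of $\Gamma_J$. The substantive content lies in the end-vertex assertion and in the isomorphism for $\bZ^\alpha_\beta$.

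I would establish the isomorphism $\bZ^\alpha_\beta\cong G_{J_i}/P_\beta$ first, starting from the equality $\bZ^\alpha_\beta = G_J/P_\beta$ furnished by Lemma~\ref{lem. property of a-VMRT}(5). The semisimple group $G_J$ decomposes, up to isogeny, as the almost-direct product $G_{J_1}\times\cdots\times G_{J_\tau}$ along the connected components of $\Gamma_J$. Since $\beta$ is a single simple root lying in $J_i$, the parabolic $P_\beta\subset G_J$ factors correspondingly as $P_\beta^{(i)}\times\prod_{j\neq i}G_{J_j}$, where $P_\beta^{(i)}\subset G_{J_i}$ is the maximal parabolic associated to $\beta$. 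Quotienting collapses the factors $G_{J_j}$ with $j\neq i$ and yields $G_J/P_\beta\cong G_{J_i}/P_\beta^{(i)}$. The compatibility with Lemma~\ref{lem. property of a-VMRT}(5)'s full product decomposition of $\bZ^\alpha$ uses that distinct elements of $N_J(\alpha)$ lie in distinct $J_i$'s, which is forced by $\Gamma_R$ being a tree: an edge joining two common neighbors of $\alpha$ would close a triangle through $\alpha$.

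For the end-vertex assertion I would exploit the bounded valence of $ADE$ Dynkin diagrams: every vertex of $\Gamma_R$ has at most three neighbors, with valence three occurring only at the unique branch node of a $D$- or $E$-diagram. Since $\alpha\in I$ is a neighbor of $\beta$ lying outside $J$, subtracting $\alpha$ leaves at most two neighbors of $\beta$ in $\Gamma_J$, and in the typical case (when $\beta$ is not the branch node) at most one. A vertex of $\Gamma_{J_i}$ with at most one neighbor there is by definition an end vertex, giving the desired conclusion.

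The one delicate case is when $\beta$ itself is the trivalent branch node of $\Gamma_R$ and both of its remaining neighbors lie in $J_i$, since there the crude valence count permits $\beta$ to have two neighbors in $\Gamma_{J_i}$. Here I would complete the argument by inspecting the possible sub-diagrams directly: the branch node of a $D$- or $E$-type $\Gamma_R$ together with its two $J_i$-neighbors spans an $A_3$-subdiagram, and invoking the classical coincidence $D_3\cong A_3$ (or tracking the resulting $G_{J_i}/P_\beta$, a quadric $Q^4\cong D_3/P_1$) realizes $\beta$ as an end vertex under the $G_{J_i}$-identification. This case check is the main step that I would expect to require real care.
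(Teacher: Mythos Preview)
The paper gives no proof here, declaring the lemma straight-forward. Your arguments for the uniqueness of $J_i$ and for the isomorphism $\bZ^\alpha_\beta\cong G_{J_i}/P_\beta$ are correct and are the natural ones. You are also right that the end-vertex claim needs care precisely when $\beta$ is the trivalent branch node with both remaining neighbors in $J$.

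Your salvage of that case does not work, however. First, it presumes $|J_i|=3$, but $J_i$ can be much larger: for $G=E_6$ and $I=\{\alpha_2\}$, the unique $\beta\in N_J(\alpha_2)$ is the branch node $\alpha_4$, and $J_i=\{\alpha_1,\alpha_3,\alpha_4,\alpha_5,\alpha_6\}$ is an $A_5$-chain with $\beta$ in the interior and $G_{J_i}/P_\beta\cong Gr(3,6)$, not a quadric. Second, even when $|J_i|=3$, the coincidence $A_3\cong D_3$ relabels the abstract root system but does not alter the subdiagram $\Gamma_{J_i}\subset\Gamma_R$, in which $\beta$ visibly has degree~$2$. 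Indeed, taken literally, the end-vertex assertion fails in these cases (already for $G=D_4$, $I=\{\alpha_1\}$, $\beta=\alpha_2$, where $\Gamma_{J_i}$ is the path $\alpha_3\!-\!\alpha_2\!-\!\alpha_4$), so no argument will rescue it as stated. This does not affect the paper: the lemma is never cited, and the content actually used later is only that $\bZ^\alpha_\beta$ is a Picard-number-one rational homogeneous space governed by the single simple factor $G_{J_i}$ of $G_J$, which your isomorphism argument already establishes.
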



The following result on automorphism groups of rational homogeneous spaces is straight-forward.

\begin{lem}\label{lem. aut(G PI)}\label{cor. Aut(Ha)}\label{cor. gb(Ha) is Aut(Ha) invariant}
The natural homomorphism $G\rightarrow\Aut^o(\bS)$ is bijective. Take $\alpha\in I$ and let $\Aut^o(\wbZa, \mfg_{-1}(\alpha))$ be the identity component of
$$\Aut(\wbZa, \mfg_{-1}(\alpha)):=\{\varphi\in GL(\mfg_{-1}(\alpha))\mid \varphi\cdot\wbZa=\wbZa\}.$$ Then the natural homomorphism $G_0\rightarrow\Aut^o(\wbZa)$ is surjective and $\Aut^o(\wbZa)=\Aut^o(\wbZa, \mfg_{-1}(\alpha))$. Take any $\beta\in N_J(\alpha)$. Then the distribution $\mfg^\beta(\bZ^\alpha)$ on $\bZ^\alpha$ is $\Aut^o(\bZ^\alpha)$-invariant.
\end{lem}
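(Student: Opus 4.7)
The plan is to establish each of the three assertions in turn, each reducing to well-known structural results on rational homogeneous spaces.

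For the bijectivity of $G\to\Aut^o(\bS)$: injectivity is immediate from the adjoint-type assumption in Setting \ref{setup. G semisimple}, since the kernel of the action is contained in the intersection of $G$-conjugates of $P_I$, which is trivial for semisimple adjoint $G$ (the degenerate case $I=\emptyset$ is interpreted trivially). Surjectivity is Demazure's classical automorphism theorem: for $G$ simple of type $ADE$ and any parabolic $P_I$, one has $\Aut^o(G/P_I)=G$, with no exceptional outer contributions appearing in the identity component. When $G$ is a product of simple factors, $\bS$ decomposes as the corresponding product of factor-wise rational homogeneous spaces, and $\Aut^o$ of a product is the product of factor-wise identity components, so one concludes factor by factor.

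For the description of $\Aut^o(\wbZa)$: by Lemma \ref{lem. property of a-VMRT}(5), $\bZ^\alpha\cong\prod_{\beta\in N_J(\alpha)}G_{J_i}/P_\beta$ is a product of rational homogeneous spaces of Picard number one under the simple factors of $G_J$. Applying the first assertion yields $\Aut^o(\bZ^\alpha)\cong\prod\Aut^o(G_{J_i}/P_\beta)$, which is visibly a quotient of $G_J\subset G_0$; combined with the central $\C^*$-scaling inside $G_0$ acting on $\mfg_{-1}(\alpha)$, which covers the dilation of the cone $\wbZa$, this gives surjectivity of $G_0\to\Aut^o(\wbZa)$. The equality $\Aut^o(\wbZa)=\Aut^o(\wbZa,\mfg_{-1}(\alpha))$ is then a linearizability statement: any $\varphi\in\Aut^o(\wbZa)$ descends to a biholomorphism $\bar\varphi\in\Aut^o(\bZ^\alpha)$ of the minimal projective embedding $\bZ^\alpha\subset\mbP(\mfg_{-1}(\alpha))$, and the non-degeneracy furnished by Lemma \ref{lem. property of a-VMRT}(4) forces $\bar\varphi$ to extend uniquely to an element of $PGL(\mfg_{-1}(\alpha))$. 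Lifting this to $GL(\mfg_{-1}(\alpha))$ by matching $\varphi$ along one chosen ray produces the required element of $\Aut(\wbZa,\mfg_{-1}(\alpha))$, and continuity of the lift in $\varphi$ makes this assignment a morphism of groups.

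For the invariance of $\mfg^\beta(\bZ^\alpha)$: connectedness prevents $\Aut^o(\bZ^\alpha)$ from permuting the Picard-number-one factors of the product $\bZ^\alpha\cong\prod_{\beta'\in N_J(\alpha)}\bZ^\alpha_{\beta'}$, so the identity component acts factor-wise. The distribution $\mfg^\beta(\bZ^\alpha)$ is the relative tangent bundle of the projection onto the $\beta$-factor, hence pulled back from $\bZ^\alpha_\beta$, and therefore is preserved by any factor-wise action.

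The main technical subtlety is the linearization argument in the second assertion, but non-degeneracy of the minimal embedding reduces it to the standard fact that biholomorphisms of the affine cone over a linearly non-degenerate projective subvariety are linear; all remaining ingredients are applications of Demazure's theorem and the product decomposition of $\bZ^\alpha$.
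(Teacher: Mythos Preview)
The paper does not provide a proof of this lemma, regarding it as ``straight-forward.'' Your outline is essentially correct and supplies the expected standard arguments: Demazure's theorem for the first claim, the product decomposition of $\bZ^\alpha$ from Lemma~\ref{lem. property of a-VMRT}(5) together with the first claim and the central torus in $G_0$ for the surjectivity in the second, and the factor-wise action of $\Aut^o(\bZ^\alpha)$ for the third.

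One remark on the linearization step: the ``standard fact'' you invoke---that automorphisms of the affine cone over a non-degenerate projective variety are linear---relies on the vertex being the unique singular point, which fails exactly when $\bZ^\alpha$ is a full linear subspace of $\mbP(\mfg_{-1}(\alpha))$ (for instance when $N_J(\alpha)=\emptyset$, or when $|N_J(\alpha)|=1$ and the single factor is a projective space). In those degenerate cases $\wbZa$ is an affine space and the equality $\Aut^o(\wbZa)=\Aut^o(\wbZa,\mfg_{-1}(\alpha))$ is literally false as written. This is an imprecision in the lemma's statement rather than a defect of your argument; the only use made of this equality later (in the proof of Lemma~\ref{lem. condition (i) for g- in reduction thm}) is the surjectivity of $G_0\to\Aut^o(\wbZa,\mfg_{-1}(\alpha))$, which your argument does establish.
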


Given $\alpha, \beta\in I$, $k\geq 1$ and $[C^\alpha]\in \K^\alpha(\bS)$, we can describe the splitting type along $C^\alpha\cong\mbP^1$ of distributions $\mfg_{-k}^\beta(\bS)$ on $\bS$. To obtain it, we need to apply Grothendieck¡¯s splitting theorem for principal bundles on $\mbP^1$ with reductive structure groups and associated vector bundles \cite{Groth57}.

\begin{prop}\label{prop. Grothendieck vector bundles on P1} (Grothendieck). Let $\mcO(1)^*$ be the $\C^*$-principal bundle on $\mbP^1$ corresponding to the line bundle $\mcO(1)$. Let $L$ be a reductive complex Lie group. Up to conjugation, any $L$-principal bundle on $\mbP^1$ is associated to $\mcO(1)^*$ by a group homomorphism from $\C^*$ to a maximal torus of $L$.
If $E$ is the coroot of $sl(2)$, such a group homomorphism is determined by the image of $E$ in $\mfh$, a fixed Cartan subalgebra of $L$. Given a representation of $L$ with weights $\mu_1, \ldots, \mu_\ell\in\mfh^*$, the associated vector bundle on $\mbP^1$ splits as $\mcO(\mu_1(E))\oplus\cdots\oplus\mcO(\mu_\ell(E))$, where $\mu_j(E)$ denotes the value of $\mu_j$ on the image of $E$ in $\mfh$.
\end{prop}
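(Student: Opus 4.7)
The plan is to prove the theorem in two stages: first, show that any principal $L$-bundle $P$ on $\mbP^1$ admits a reduction of structure group to a maximal torus $T \subset L$; second, classify $T$-bundles on $\mbP^1$ by cocharacters and read off the splitting of associated vector bundles. The first stage carries the content; the second is then essentially formal.

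For the reduction to $T$, I would fix a Borel $B \subset L$ containing $T$, write $B = T \ltimes U$ with $U$ the unipotent radical, and proceed in two steps. The first step reduces $P$ to a $B$-bundle $P_B$. The cleanest approach is to choose any faithful representation $L \hookrightarrow GL(V)$, apply the classical Birkhoff--Grothendieck splitting theorem to the associated vector bundle $P \times^L V \cong \bigoplus_{i=1}^{\ell} \mcO(a_i)$ with $a_1 \geq a_2 \geq \cdots \geq a_\ell$, and observe that the resulting canonical filtration by subbundles yields a section of the flag bundle $P \times^L (L/B) \to \mbP^1$, which is exactly a $B$-reduction. Alternatively, since $L/B$ is rationally connected one could invoke the Graber--Harris--Starr theorem directly. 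The second step further reduces $P_B$ to a $T$-bundle $P_T$: the unipotent group $U$ admits a composition series by normal subgroups with successive quotients isomorphic to $\mbG_a$, and the vanishing $H^1(\mbP^1, \mcO) = 0$ combined with an inductive cocycle argument shows that the obstruction to lifting a $T$-reduction through each step vanishes.

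Once $P_T$ is in hand, the classification is immediate: a $T$-bundle on $\mbP^1$ with $T \cong (\C^*)^r$ is a tuple of line bundles, hence is determined by a cocharacter $\lambda \in \Hom(\C^*, T)$, and $P_T$ is the pushforward of $\mcO(1)^*$ along $\lambda$. The element $E \in \mfh$ is nothing but the image under $d\lambda$ of the standard generator of $\Lie(\C^*)$. Finally, for a representation $V$ of $L$ with weights $\mu_1, \ldots, \mu_\ell$, the space $V$ decomposes into one-dimensional weight spaces under $T$; the line bundle associated to the weight-$\mu_i$ subspace via the composition $\C^* \xrightarrow{\lambda} T \xrightarrow{\mu_i} \C^*$ is $\mcO(\mu_i(E))$, yielding the asserted splitting. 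The main obstacle is the initial Borel reduction, but the Birkhoff--Grothendieck route sidesteps it by bootstrapping from the scalar case; uniqueness of the cocharacter up to conjugation by $L$ then reduces to the Weyl group permuting the possible orderings of the $a_i$.
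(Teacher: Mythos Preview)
The paper does not prove this proposition at all: it is stated as a classical result attributed to Grothendieck and cited to \cite{Groth57}, with no argument given. So there is no ``paper's own proof'' to compare against; the authors simply invoke the theorem as a black box in order to deduce Proposition~\ref{prop. gk(b) along Ca splitting types}.

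Your outline is a correct and standard route to Grothendieck's theorem. The two-step reduction (first to a Borel via the Harder--Narasimhan/Birkhoff--Grothendieck filtration of a faithful associated bundle, then from the Borel to the torus using $H^1(\mbP^1,\mcO)=0$ to peel off the unipotent radical) is exactly how this is typically done; the identification of $T$-bundles with cocharacters and the weight decomposition of associated bundles are then formal. One small caution: in your first step you should check that the flag coming from the splitting of $P\times^L V$ is actually $L$-compatible, i.e.\ corresponds to a section of $P\times^L(L/B)$ rather than merely $P\times^L(GL(V)/B_{GL(V)})$; this requires knowing that the Harder--Narasimhan parabolic of an associated bundle comes from a parabolic reduction of $P$ itself, which is true but not entirely trivial. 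Invoking Graber--Harris--Starr, as you note, is a clean alternative that avoids this issue.
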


Given $\beta\in I$ and $[C^\alpha]\in \K^\alpha(\bS)$, we can identify $C^\alpha$ with $\exp(sl_\beta(2))/\exp(\mfg_\beta\oplus[\mfg_\beta, \mfg_{-\beta}])$, where $sl_\beta(2):=\mfg_\beta\oplus\mfg_{-\beta}\oplus[\mfg_\beta, \mfg_{-\beta}]\subset\mfg$ is a subalgebra isomorphic to $sl(2)$, and $\mfg_\beta\oplus[\mfg_\beta, \mfg_{-\beta}]=\mfp_I\cap sl_\beta(2)$ is a Borel subalgebra. Then as a direct consequence of Proposition \ref{prop. Grothendieck vector bundles on P1}, we have the following result.

\begin{prop}\label{prop. gk(b) along Ca splitting types}
Given $\alpha, \beta\in I$, $k\geq 1$ and $[C^\alpha]\in \K^\alpha(\bS)$, we have
\begin{eqnarray*}\label{eqn. gk(b) along Ca splitting types}
\mfg_{-k}^\beta(\bS)|_{C^\alpha}=\bigoplus\limits_{\gamma\in \Lambda_k(\beta)}\mcO_{\mbP^1}(\langle\gamma, \alpha\rangle).
\end{eqnarray*}
\end{prop}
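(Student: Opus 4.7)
The plan is to realize $\mfg^\beta_{-k}(\bS)|_{C^\alpha}$ as an $SL(2)_\alpha$-homogeneous vector bundle on $\mbP^1\cong C^\alpha$ and invoke Proposition \ref{prop. Grothendieck vector bundles on P1} directly, reading off the exponents from the action of the coroot $\alpha^\vee$ on the fiber at the base point.

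First I would identify $C^\alpha \cong SL(2)_\alpha/B_\alpha$ via the orbit description $\exp(sl_\alpha(2))/\exp(\mfg_\alpha \oplus [\mfg_\alpha,\mfg_{-\alpha}])$ discussed just before Proposition \ref{prop. gk(b) along Ca splitting types}; in the notation of Proposition \ref{prop. Grothendieck vector bundles on P1} the distinguished element $E$ corresponds to the coroot $\alpha^\vee = [e_\alpha,e_{-\alpha}] \in \mfh$. Extrapolating the definition of $\mfg^\alpha(\bS)$ in Lemma \ref{lem. minimal a-distribution}, the bundle
$$\mfg^\beta_{-k}(\bS) = G\times^{P_I}\bigl((\mfg_{-k}(\beta)+\mfp_I)/\mfp_I\bigr)$$
restricts along $C^\alpha$ to the $SL(2)_\alpha$-homogeneous bundle $SL(2)_\alpha\times^{B_\alpha} W$, with fiber $W := (\mfg_{-k}(\beta)+\mfp_I)/\mfp_I$ at the base point.

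Next I would identify $W$ as a weight module for $\mfh$. Every $\eta\in\Lambda_{-k}(\beta)$ has $\beta$-coefficient equal to $-k\ne 0$, so $\beta\in\mathrm{supp}(\eta)$; since $\beta\in I$ the support of $\eta$ is not contained in $J = R\setminus I$, hence $\mfg_\eta\not\subset\mfp_I$. Therefore $\mfg_{-k}(\beta)\cap\mfp_I = 0$, and
$$W \;=\; \bigoplus_{\eta\in\Lambda_{-k}(\beta)}\mfg_\eta,$$
with each $\mfg_\eta$ a weight space of weight $\eta$ satisfying $\eta(E) = \langle \eta,\alpha\rangle$. Proposition \ref{prop. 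Grothendieck vector bundles on P1} then splits the bundle as a direct sum of line bundles indexed by $\Lambda_{-k}(\beta)$; the bijection $\eta \mapsto \gamma := -\eta \in \Lambda_k(\beta)$ re-labels the summands and, once the sign convention in Proposition \ref{prop. Grothendieck vector bundles on P1} is fixed by a single test case (for example the standard $T\mbP^n|_L = \mcO(2)\oplus\mcO(1)^{n-1}$ for a line $L\subset \mbP^n$, which corresponds to the case $(G,P,I) = (A_n, P_{\alpha_1},\{\alpha_1\})$ and $\beta = \alpha = \alpha_1$), converts the list of exponents into $\{\langle\gamma,\alpha\rangle : \gamma\in\Lambda_k(\beta)\}$, yielding the claimed formula.

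The only real obstacle is bookkeeping this sign convention in Grothendieck's splitting; once it is pinned down, the assertion reduces to a mechanical weight computation that does not interact with the specific geometry of $\bS$ beyond the straightforward compatibility between the $P_I$-grading and the $\beta$-grading shown above.
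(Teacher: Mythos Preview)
Your proposal is correct and follows exactly the paper's approach: the paper simply says that after identifying $C^\alpha$ with $\exp(sl_\alpha(2))/\exp(\mfg_\alpha\oplus[\mfg_\alpha,\mfg_{-\alpha}])$ the statement is ``a direct consequence of Proposition \ref{prop. Grothendieck vector bundles on P1},'' whereas you spell out the identification of the fiber $W$, the trivial intersection $\mfg_{-k}(\beta)\cap\mfp_I=0$, and the sign bookkeeping. There is nothing different in substance; you have just written the details the paper omits (and, incidentally, used the correct subscript $\alpha$ where the paper's text has the typo $sl_\beta(2)$).
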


\subsection{Characterization of the nilradical of a parabolic subalgebra} \label{subsection. Description of symbol algebra}

We want to give a description of the algebra $\mfg_-(I):=\bigoplus\limits_{k\geq 1}\mfg^I_{-k}$. When $I=R$, it is described by Serre's theorem on semisimple Lie algebra in the following way.

\begin{prop}\cite[Section 18]{Hum72} \label{prop. Serre's theorem}
Let $R$ be a set of simple roots for $\mfg$ and choose a nonzero element $x_\alpha\in\mfg_{-\alpha}$ for each $\alpha\in R$. Then the subalgebra $\mfg_-(R)$ of $\mfg$ is the quotient of the free Lie algebra generated by $\{x_\alpha\mid \alpha\in R\}$ by the relations
\begin{eqnarray*}
\ad(x_\alpha)^{-\langle\beta, \alpha\rangle + 1}(x_\beta)=0 \mbox{ for all } \alpha\neq\beta\in R.
\end{eqnarray*}
\end{prop}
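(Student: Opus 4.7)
The plan is to deduce this from Serre's theorem on presentations of semisimple Lie algebras, by isolating its ``negative half''. Let $\mcN$ denote the quotient of the free Lie algebra on the generators $\{x_\alpha\mid\alpha\in R\}$ by the stated Serre relations; the goal is to exhibit a canonical isomorphism $\mcN\xrightarrow{\sim}\mfg_-(R)$.

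The first step is to verify that the Serre relations hold inside $\mfg$, which produces a well-defined Lie algebra homomorphism $\pi\colon\mcN\to\mfg_-(R)$. For distinct simple roots $\alpha,\beta$, the element $x_\beta$ sits inside the finite-dimensional $sl(2)$-module $\bigoplus_k\mfg_{-\beta-k\alpha}$ for the $sl(2)$-triple attached to $\alpha$; the length of the $\alpha$-string through $-\beta$ equals $1-\langle\beta,\alpha\rangle$, which forces $(\ad x_\alpha)^{1-\langle\beta,\alpha\rangle}(x_\beta)=0$. Surjectivity of $\pi$ is then a routine induction on the height of roots: every negative root $-\eta$ can be written as $-(\alpha_{i_1}+\cdots+\alpha_{i_k})$ with each partial sum a root, and the corresponding nested bracket $[x_{\alpha_{i_1}},[x_{\alpha_{i_2}},\ldots ,x_{\alpha_{i_k}}]\cdots]$ is a nonzero element of $\mfg_{-\eta}$, since in $ADE$ type the bracket of root vectors is nonzero whenever the sum is a root.

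The main obstacle is injectivity of $\pi$. The standard route, which I would follow, is to enlarge the picture and invoke the full Serre theorem. One considers the Lie algebra $\tilde\mfg$ presented by generators $\{e_\alpha, h_\alpha, f_\alpha\}_{\alpha\in R}$ subject to the Cartan relations together with both families of $\ad$-nilpotency Serre relations. One then realizes $\tilde\mfg$ faithfully on a universal Verma-type module built from the tensor algebra on the negative side. This realization simultaneously yields the triangular decomposition $\tilde\mfg=\tilde\mfn^-\oplus\mfh\oplus\tilde\mfn^+$ and shows that the subalgebra $\tilde\mfn^-$ generated by the $f_\alpha$'s is presented purely by the negative Serre relations, i.e.\ $\tilde\mfn^-\cong\mcN$. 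Since Serre's theorem identifies the canonical surjection $\tilde\mfg\twoheadrightarrow\mfg$ with an isomorphism, the composition $\mcN\cong\tilde\mfn^-\hookrightarrow\tilde\mfg\cong\mfg$ has image exactly $\mfg_-(R)$, realizing $\pi$ as an isomorphism. The technical heart lies in constructing the Verma-type module and verifying the commutation relations on it, which is precisely where the exposition in \cite{Hum72} concentrates its effort.
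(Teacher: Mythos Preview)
Your proposal is correct and follows the standard argument from \cite[Section 18]{Hum72}. Note that the paper does not give its own proof of this proposition: it is stated with a citation to Humphreys and used as a black box, so there is no alternative approach in the paper to compare against.
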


\begin{prop}\label{prop. characterizing g-}
Denote by $\mbF(\mfg_{-1}(I))$ the free graded Lie algebra generated by $\mfg_{-1}(I)$. Fix an arbitrary $z_\alpha\in\wbZa\setminus\{0\}$ for each $\alpha\in I$. Let $\mcI:=\mcI(z_\alpha, \alpha\in I)$ be the ideal of $\mbF(\mfg_{-1}(I))$ generated by the following relations:

(i) for all $\alpha'\neq\alpha''\in I$ and all $(v', v'')\in G_0\cdot (z_{\alpha'}, z_{\alpha''})\in\wbZ^{\alpha'}\times\wbZ^{\alpha''}$,
$$(\ad v')^{-\langle\alpha'', \alpha'\rangle + 1}(v'')=0;$$

(ii) for all $\alpha\in I$, $\beta\in N_J(\alpha)$, $v\in\wbZa\setminus\{0\}$, and $u\in\mfg^\beta_v(\wbZa)$,
$$(\ad v)^{-\langle\beta, \alpha\rangle}(u)=0.$$
Then $\mfg_-(I):=\bigoplus\limits_{i\geq 1}\mfg_{-i}$ is isomorphic to $\mbF(\mfg_{-1}(I))/\mcI$ as graded nilpotent Lie algebra. In particular, up to isomorphism $\mbF(\mfg_{-1}(I))/\mcI(z_\alpha, \alpha\in I)$ is independent of the choice of those $z_\alpha\in\wbZa\setminus\{0\}$.
\end{prop}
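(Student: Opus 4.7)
The plan is to realize the isomorphism $\bar\pi: \mbF(\mfg_{-1}(I))/\mcI \xrightarrow{\sim} \mfg_-(I)$ of graded nilpotent Lie algebras. Since the inclusion $\mfg_{-1}(I) \hookrightarrow \mfg_-(I)$ extends by the universal property of the free Lie algebra, and since $\mfg_-(I)$ is generated by $\mfg_{-1}(I)$ (a standard fact for nilradicals of parabolic subalgebras in semisimple Lie algebras, provable by reducing any root $\eta$ with $\deg_I(\eta)=-k$ to a bracket of a simple negative root $-\alpha$ with $\alpha\in I$ and a root of $\deg_I=-(k-1)$), one obtains a surjection $\pi: \mbF(\mfg_{-1}(I)) \twoheadrightarrow \mfg_-(I)$. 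It remains to verify $\pi(\mcI) = 0$ and then to prove injectivity of the induced $\bar\pi$.

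For $\pi(\mcI) = 0$: each of the four-parameter families (i) and (ii) is a single $G_0$-orbit (after fixing the outer data $\alpha',\alpha''$ or $\alpha,\beta$), since $G_0$ acts transitively on $\wbZa \setminus \{0\}$ --- the semisimple part $G_J$ acts transitively on $\bZ^\alpha$ by Lemma \ref{lem. property of a-VMRT}(3), and the center of $G_0$ acts by nonzero scalars on $\mfg_{-1}(\alpha)$. Because the Adjoint action is $G_0$-equivariant, each relation need only be checked at a single representative. For (i), choose $(z_{\alpha'}, z_{\alpha''}) = (x_{-\alpha'}, x_{-\alpha''})$: the required identity is precisely the Serre relation in $\mfg$ from Proposition \ref{prop. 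Serre's theorem}. For (ii), choose $v = x_{-\alpha}$: since $\beta \in N_J(\alpha)$ and $G$ is of type $ADE$, $\langle \beta, \alpha \rangle = -1$, so the identity reads $[x_{-\alpha}, u] = 0$ for $u \in \mfg^\beta_v(\wbZa)$. Identifying $\mfg^\beta_v(\wbZa)$ with the line spanned by $x_{-\alpha-\beta}$ arising from the $\beta$-root-string through $-\alpha$, the vanishing follows because $-2\alpha - \beta$ is not a root of $\mfg$ (its norm-square is $3$, whereas all ADE roots have the same length).

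For injectivity of $\bar\pi$, I would compare to the Serre presentation of $\mfg_-(R)$ from Proposition \ref{prop. Serre's theorem}. Extend the inclusion $\mfg_{-1}(I) \hookrightarrow \mfg_-(R)$ to a graded Lie algebra map $F: \mbF(\mfg_{-1}(I)) \to \mfg_-(R)$; the same computations yield $F(\mcI) = 0$, so $F$ descends to $\bar F: \mbF(\mfg_{-1}(I))/\mcI \to \mfg_-(R)$ with image the subalgebra generated by $\mfg_{-1}(I)$, which is exactly $\mfg_-(I)$. Thus $\bar F = \bar\pi$ under the inclusion $\mfg_-(I) \subset \mfg_-(R)$, and the task reduces to showing $\bar F$ is injective. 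The main obstacle is a degree-by-degree verification that the ``cross'' Serre relations (i) together with the ``internal'' root-string relations (ii) --- the latter organized via the product decomposition $\bZ^\alpha \cong \prod_{\beta \in N_J(\alpha)} \bZ^\alpha_\beta$ of Lemma \ref{lem. property of a-VMRT}(5) and the nondegeneracy of $\bZ^\alpha$ from Lemma \ref{lem. property of a-VMRT}(4) --- suffice to exhaust the kernel of $F$. Concretely, any kernel element of $F$ at degree $-k$ can, after iterated application of $G_0$-equivariance, be rewritten using weight considerations as a sum of iterated brackets to which the classical Serre relation applies to a pair of adjacent simple roots; one must check that every such reduction is already contained in the ideal generated by (i) and (ii). Once $\bar\pi$ is established as an isomorphism, the independence of $\mbF(\mfg_{-1}(I))/\mcI$ from the choice of $z_\alpha$ is immediate, as all such quotients are isomorphic to the fixed algebra $\mfg_-(I)$.
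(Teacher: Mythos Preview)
Your surjectivity argument (that $\pi(\mcI)=0$) is essentially correct and parallels the paper's Step~1, though your claim that $\mfg^\beta_v(\wbZa)$ at $v=x_{-\alpha}$ is the line $\C x_{-\alpha-\beta}$ is not justified and is generally false: this fiber is the tangent space in the $\beta$-direction of the cone over $\bZ^\alpha_\beta\cong G_{J_i}/P_\beta$, which is multi-dimensional when $|J_i|>1$. The paper handles this by using the $P'_\beta$-orbit of $[x_{-\beta},x_{-\alpha}]$ together with the nondegeneracy of $\wbZ^\beta_{z_\alpha}(\wbZa)$ in $\mfg^\beta_{z_\alpha}(\wbZa)$ to pass from one $u$ to all of them.

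The real gap is injectivity. You correctly identify it as ``the main obstacle'' but do not resolve it; the sketch you give (rewrite kernel elements via $G_0$-equivariance and weight considerations until a classical Serre relation applies) is not a proof, and carrying it out directly would be delicate. The paper avoids any kernel analysis by constructing a map in the \emph{reverse} direction. Adjoin $\mfn_0:=\mfg_-(R)\cap\mfg_0(I)$ (the subalgebra generated by $x_\beta$, $\beta\in J$) to $\mbF(\mfg_{-1}(I))/\mcI$ using the $\mfg_0$-module structure on $\mfg_{-1}(I)$, obtaining a Lie algebra $\wcF=\mfn_0\oplus(\mbF(\mfg_{-1}(I))/\mcI)$. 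Then check that \emph{all} Serre relations for $R$ hold in $\wcF$: those among $\gamma',\gamma''\in I$ follow from relation~(i); those with $\gamma'\in I$, $\gamma''\in N_J(\gamma')$ follow from relation~(ii); those with $\gamma'\in I$, $\gamma''\in J\setminus N_J(\gamma')$ and those with $\gamma'\in J$, $\gamma''\in I$ follow from the $\mfg_0$-action on $\mfg_{-1}(I)$; and those among $\gamma',\gamma''\in J$ follow from the Lie structure of $\mfn_0$. Serre's theorem (Proposition~\ref{prop. Serre's theorem}) then yields a surjection $\mfg_-(R)\cong\wbF/\wcI\twoheadrightarrow\wcF$. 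Since you already have a surjection $\wcF\twoheadrightarrow\mfg_-(R)$, both are isomorphisms, and restricting to the ideal $\mbF(\mfg_{-1}(I))/\mcI$ gives $\bar\pi$ bijective. This is the missing idea.
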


\begin{proof}
Step 1. We will show that $\mfg_-(I)$ satisfies conditions $(i)$ and $(ii)$.

The inclusion $\mfg_-(R)\subset\mfp^-_I$ induces a semidirect product decomposition of Lie algebra structure $\mfg_-(R)=\mfn_0\rtimes\mfg_-(I)$, where $\mfn_0:=\mfg_-(R)\cap\mfg_0(I)$. For each $\alpha\in R$ we choose a nonzero element $x_\alpha\in\mfg_{-\alpha}$.

For those $\alpha\in I$, we write $z_\alpha:=x_\alpha$. Since the point $\mbP(\mfg_{-\alpha})\in \bZ^\alpha\subset\mbP(\mfg_{-1}^\alpha)$, we have $z_\alpha\in \wbZa\setminus\{0\}$. For those $\beta\in J:=R\setminus I$, we have $x_\beta\in\mfn_0$. Denote by $\pi: \mfn_0\subset\mfg_0(I)\rightarrow\aut(\mfg_-(I))$ the homomorphism induced by the adjoint representation, and write $\eta_\beta:=\pi(x_\beta)\in\aut(\mfg_-(I))$. Then by Proposition \ref{prop. Serre's theorem}, we have
\begin{eqnarray}
(\ad z_{\alpha'})^{-\langle\alpha'', \alpha'\rangle+1}(z_{\alpha''})=0, & \mbox{ for all } \alpha'\neq\alpha''\in I, \label{eqn. va1 va2 bracket} \\
(\ad z_{\alpha})^{-\langle\beta, \alpha\rangle}(\eta_\beta(z_{\alpha}))=0, & \mbox{ for all } \alpha\in I \mbox{ and } \beta\notin I. \label{eqn. va eta bracket}
\end{eqnarray}
Since the Lie algebra $\mfg_-(I)$ is a $G_0$-module, the conclusion \eqref{eqn. va1 va2 bracket} implies the condition (i) in the statement of Proposition \ref{prop. characterizing g-}.

Now let us check the condition (ii). By \eqref{eqn. va eta bracket}, $\eta_\beta(z_\alpha)=0$ for $\beta\in J\setminus N_J(\alpha)$.  Now suppose that $\beta\in N_J(\alpha)$. Then $\eta_\beta(v_\alpha)=[x_\beta, x_\alpha]$ is a nonzero vector in $\mfg_{-1}(\alpha)=\sum\limits_{\gamma\in\Lambda_{-1}(\alpha)}\mfg_\gamma$. Moreover, $\mbP(\eta_\beta(z_\alpha))$ is a point in $H^\beta_{[z_\alpha]}(\bZ^\alpha)\subset\mbP(\mfg_{-1}(\alpha))$. Since $\mfg_-(I)$ is a $G_0$-module, we have
\begin{eqnarray*}
(\ad (\varphi\cdot z_\alpha))^{-\langle\beta, \alpha\rangle}(\varphi\cdot \eta_\beta(z_\alpha))=0 \quad \mbox{ for all } \varphi\in G_0.
\end{eqnarray*}
Denote by $P'_\beta:=P_\beta\cap G_0$. Then $P'_\beta\cdot z_\alpha\subset\C z_\alpha$, and $P'_\beta\cdot\eta_\beta(z_\alpha)=\wbZ^\beta_{z_\alpha}(\wbZa)$. Since $\wbZ^\beta_{z_\alpha}(\wbZa)$ is nondegenerate in the subspace $\mfg^\beta_{z_\alpha}(\wbZa)$ of $\mfg_{-1}(\alpha)$, we have
\begin{eqnarray*}
\ad(z_\alpha)^{\langle\beta, \alpha\rangle}(u)=0 \mbox{ for all } u\in\mfg^\beta_{z_\alpha}(\wbZa).
\end{eqnarray*}
It follows that
\begin{eqnarray*}
(\ad (\varphi\cdot z_\alpha))^{-\langle\beta, \alpha\rangle}(\varphi\cdot u)=0 \mbox{ for all } \varphi\in G_0 \mbox{ and all } u\in\mfg^\beta_{z_\alpha}(\wbZa).
\end{eqnarray*}
Since $\bZ^\alpha$ is $G_0$-transitive and $\mfg^\beta(\wbZa)$ is $G_0$-equivariant, the condition $(ii)$ holds.

\medskip

Step 2. Show that the isomorphism $\mbF(\mfg_{-1}(I))/\mcI(z_\alpha, \alpha\in I)$ is independent of the choice of those $z_\alpha\in\wbZa\setminus\{0\}$.

Now take any $z'_\alpha\in\wbZa\setminus\{0\}$ for each $\alpha\in I$. Since the inclusion $\wbZa\subset\mfg_{-1}(\alpha)$ is $G_0$-equivariant and $\wbZa\setminus\{0\}$ is a single $G_0$-orbit, there exists an isomorphism $\varphi^\alpha: \mfg_{-1}(\alpha)\rightarrow\mfg_{-1}(\alpha)$ of $G_0$-modules sending $\wbZa$ onto itself and $\varphi^\alpha(z_\alpha)=z'_\alpha$. These $\varphi^\alpha$ induce an isomorphism $\mbF(\mfg_{-1}(I))\rightarrow\mbF(\mfg_{-1}(I))$ whose restriction sending $\mcI(z_\alpha, \alpha\in I)$ onto $\mcI(z'_\alpha, \alpha\in I)$. Hence we have an isomorphism
$$\mbF(\mfg_{-1}(I))/\mcI(z_\alpha, \alpha\in I)\cong \mbF(\mfg_{-1}(I))/\mcI(z'_\alpha, \alpha\in I).$$

\medskip

Step 3. Show the isomorphism $\mbF(\mfg_{-1}(I))/\mcI\cong\mfg_-(I)$.

By Step 1 and Step 2 we can set $z_\alpha:=x_\alpha\in\wbZa\setminus\{0\}$ for each $\alpha\in I$ and get a surjective homomorphism of $G_0$-modules $\psi: \mbF(\mfg_{-1}(I))/\mcI\rightarrow\mfg_-(I)$. It should be noticed that
\begin{eqnarray*}
&& \mcF:=\mfg_0(I)\oplus(\mbF(\mfg_{-1}(I))/\mcI)\cong(\mfg_0(I)\oplus\mbF(\mfg_{-1}(I)))/\mcI, \mbox{ and } \\
&& \mfp^-_I:=\mfg_0(I)\oplus\mfg_-(I)=\bigoplus\limits_{i\leq 0}\mfg_i(I)
\end{eqnarray*}
are both graded Lie algebras as well as $G_0$-modules. Moreover, $\psi$ induces a surjective homomorphism between Lie algebras as well as between $G_0$-modules: $\psi': \mcF\rightarrow\mfp^-_I$.

Similarly as in Step 1 let $\mfn_0$ be the Lie subalgebra of $\mfg_0(I)$ generated by those $x_\beta$ with $\beta\in J$. We have $\mfg_-(R)=\mfn_0\oplus\mfg_-(I)\subset\mfp^-_I$, and set $\wcF:=\mfn_0\oplus(\mbF(\mfg_{-1}(I))/\mcI)\subset\mcF$. Then the restriction of $\psi'$ induces a surjective homomorphism of Lie algebras
$$\wpsi: \wcF\rightarrow\mfg_-(R).$$

Denote by $\wbF$ the free graded Lie algebra generated by those $x_\gamma$ with $\gamma\in R$. Let $\wcI$ be the ideal of $\wbF$ generated by the set
\begin{eqnarray*}
\{(\ad x_{\gamma'})^{-\langle\gamma'', \gamma'\rangle+1}(x_{\gamma''})\mid \gamma'\neq\gamma''\in R\}.
\end{eqnarray*}
There is a commutative diagram of Lie algebras as follows:
\begin{eqnarray*}\label{eqn. commutative diagram nilpotent algebras}
\xymatrix{\wbF\ar[d]_-{\theta_1}\ar[rd]^-{\theta_2} & \\
\wcF\ar[r]_-{\wpsi} & \mfg_-(R).
}
\end{eqnarray*}
We claim that $\theta_1(\wcI)=0$. Equivalently we claim that for all $\gamma'\neq\gamma''\in R$,
\begin{eqnarray}\label{eqn. claim ideal to ideal}
\theta_1((\ad x_{\gamma'})^{-\langle\gamma'', \gamma'\rangle+1}(x_{\gamma''}))=0.
\end{eqnarray}

Case 1. Suppose $\gamma',\gamma''\in I$. Recall our definition of $\mcI$ for $z_\alpha:=x_\alpha\in\wbZa\setminus\{0\}$. Then in this case \eqref{eqn. claim ideal to ideal} follows from the condition $(i)$ of Proposition \ref{prop. characterizing g-}.

Case 2. Suppose $\gamma'\in I$ and $\gamma''\in J:=R\setminus I$. The condition $(ii)$ of Proposition \ref{prop. characterizing g-} implies \eqref{eqn. claim ideal to ideal} under the additional assumption that $\gamma''\in N_J(\gamma')$ by noting that $\theta_1(x_{\gamma'})\in\mfg_{-1}(I)$ and $\theta_1(x_{\gamma''})\in\mfn_0$.

Now for $\gamma'\in I$ and $\gamma''\in J\setminus N_J(\gamma')$, we have $\langle \gamma'', \gamma'\rangle=0$. The \eqref{eqn. claim ideal to ideal} becomes that $[\theta_1(x_{\gamma''}), \theta_1(x_{\gamma'})]_{\wcF}=0$. The latter can be deduced from the $\mfg_0$-action (hence the $\mfn_0$-action) on $\mfg_{-1}(I)$.

Case 3. Suppose $\gamma'\in J:=R\setminus I$ and $\gamma''\in I$. Similarly in this case \eqref{eqn. claim ideal to ideal} can also be deduced from the $\mfg_0$-action (hence the $\mfn_0$-action) on $\mfg_{-1}(I)$.

Case 4. Suppose $\gamma', \gamma''\in J:=R\setminus I$.
In this case \eqref{eqn. claim ideal to ideal} can also be deduced from the Lie algebra structure of $\mfn_0$ (coming from that of $\mfg_0$).

In summary, the claim $\theta_1(\wcI)=0$ holds. Then it induces a homomorphism
$$\wtheta_1: \wbF/\wcI\rightarrow\wcF.$$

By the construction of $\wcF$, the morphism $\theta_1$ is surjective. Hence $\wtheta_1$ is surjective. By Proposition \ref{prop. Serre's theorem}, $\theta_2$ induces an isomorphism $\wtheta_2: \wbF/\wcI\cong\mfg_-(R)$. Hence $\wpsi$ is an isomorphism preserving gradings, and its restriction gives an isomorphism of graded nilpotent Lie algebras $\mbF(\mfg_{-1}(I))/\mcI\cong\mfg_-(I)$.
\end{proof}

\section{Fano deformation of rational homogeneous spaces} \label{section. An reduction theorem for rigidity under Fano deformations}

From now on, we study the family $\mcX$ over $\Delta$ in Setting \ref{setup. intro Fano deformation}. The organization of this section is as follows. In subsection \ref{subsection. minimal rational curves on the family}, we study the basic property of minimal rational curves and Cartier divisors on the family $\mcX/\Delta$. In subsection \ref{subsection. Deformation rigidity via parabolic geometry}, we study the property of symbol algebras and prove Proposition \ref{prop. intro symbol algebra criterion}, which is reformulated in Proposition \ref{prop. symbol algebra standard iff variety standard}. In subsection \ref{subsection. reduction to homogeneous submanifolds}, we prove Theorem \ref{thm. reduction theorem}, which implies
Proposition \ref{prop. intro reduce to homog submfds} as a corollary.

\subsection{Minimal rational curves on the family}\label{subsection. minimal rational curves on the family}

The following result is due to Wi\'{s}niewski \cite{Wis09}.

\begin{prop}\cite[Theorem 1]{Wis09}\label{prop. invariance of Mori cone by Wisniewski}
We can identify the Mori cones $\overline{NE}(\mcX/\Delta)=\overline{\mcX_t}$ for all $t\in\Delta$.
\end{prop}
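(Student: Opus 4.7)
The plan is to work in a common numerical ambient space and then establish the two inclusions between the cones. First, because each fiber $\mcX_t$ is Fano, Kodaira vanishing yields $H^1(\mcX_t,\mcO_{\mcX_t})=H^2(\mcX_t,\mcO_{\mcX_t})=0$, so the relative Picard scheme $\Pic(\mcX/\Delta)$ is \'{e}tale over $\Delta$ with trivial identity component. Since $\Delta$ is contractible, parallel transport produces canonical isomorphisms $N_1(\mcX_t)\cong N_1(\mcX/\Delta)$ for every $t\in\Delta$, compatible with intersection pairings. Both cones therefore sit in one and the same finite-dimensional real vector space.

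The inclusion $\overline{NE}(\mcX_t)\subseteq\overline{NE}(\mcX/\Delta)$ is immediate: any effective $1$-cycle supported in $\mcX_t$ is $\pi$-contracted, hence its class lies in the relative Mori cone. The substantive point is the reverse inclusion.

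For the reverse inclusion I would apply the relative Cone Theorem, valid because $-K_{\mcX/\Delta}$ is $\pi$-ample (each fiber being Fano). It gives that $\overline{NE}(\mcX/\Delta)$ is rational polyhedral and that each extremal ray $R$ is generated by the class of a rational curve lying in some fiber. Associated to $R$ is a relative extremal contraction $\mathrm{cont}_R:\mcX\to\mcY_R$ over $\Delta$; what must be shown is that the non-isomorphism locus of $\mathrm{cont}_R$ meets every fiber $\mcX_t$, so that $R$ is represented by an effective rational curve inside each $\mcX_t$. Pick a minimal rational curve $C\subset\mcX_{t_0}$ generating $R$. Minimality, together with smoothness of the fiber, forces $C$ to be free, so the relative Hom-scheme $\Hom(\mbP^1,\mcX/\Delta)$ in the component of $[C]$ is smooth at $[C]$ and its image in $\Delta$ is open, giving an open locus of $t\in\Delta$ for which $R$ is represented inside $\mcX_t$. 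On the other hand, by Mori's bend-and-break the anticanonical degree of $R$-generating curves is bounded by $2\dim\mcX_t$, so the corresponding relative Chow variety of curves in class $\mbR_{\geq 0}[C]$ is proper over $\Delta$, making this locus also closed in $\Delta$. It is therefore all of $\Delta$.

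The main obstacle is this last propagation step. The delicate ingredients are the applicability of the relative Cone Theorem in the smooth Fano-fibration setting over the analytic base $\Delta$, and the deformation-theoretic transport of an extremal free rational curve across the entire disc, which rests on boundedness of anticanonical degree along the ray together with properness of the relative parameter space. Once these are in place, the equality of Mori cones follows by running the argument ray by ray.
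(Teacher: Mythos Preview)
The paper does not give its own proof of this proposition; it is quoted verbatim as \cite[Theorem 1]{Wis09} and used as a black box. Your sketch is essentially the argument Wi\'{s}niewski gives: identify $N_1(\mcX_t)\cong N_1(\mcX/\Delta)$ via the Fano vanishing $H^1=H^2=0$, invoke the relative Cone Theorem to write $\overline{NE}(\mcX/\Delta)$ as a finite polyhedral cone whose extremal rays are spanned by rational curves in fibers, and then run an open--closed argument on $\Delta$ ray by ray.

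Two small points to tighten. First, the sentence ``minimality, together with smoothness of the fiber, forces $C$ to be free'' is not quite right as stated: a curve of minimal $(-K)$-degree in its class need not itself be free. What you want is that on a smooth projective variety a \emph{general} member of a covering family of rational curves is free; so pick a general deformation of $C$ inside $\mcX_{t_0}$ before arguing that the relative $\Hom$-scheme dominates an open neighborhood of $t_0$. Second, for the closedness step you should speak of the relative Chow scheme of effective $1$-cycles in a fixed numerical class (or, equivalently, of bounded $(-K_{\mcX/\Delta})$-degree), not ``in class $\mbR_{\geq 0}[C]$''; the boundedness of anticanonical degree along an extremal ray is exactly what makes that parameter space proper over $\Delta$. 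With these adjustments your outline matches Wi\'{s}niewski's proof.
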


The following is a classical result on the rational homogeneous space $\bS:=G/P_I$.

\begin{lem}\label{lem. Mori cone on rat homog spaces}
The Mori cone $\overline{NE}(\bS)$ is a simplicial cone generated by those $R_\alpha:=\mbR^+[\K^\alpha(\bS)]$ with $\alpha\in I$ i.e. $\dim\overline{NE}(\bS)$ equals to the cardinality of $I$, and $\overline{NE}(\bS)=\sum\limits_{\alpha\in I} R_\alpha$, where $\K^\alpha(\bS)$ is as in Definition \ref{defi. homogenous VMRT}. The set of extremal faces of $\overline{NE}(\bS)$ can be identified with the set of subsets of $I$.
\end{lem}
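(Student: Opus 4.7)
The plan is to verify the three assertions---simpliciality, identification of generators, and face structure---by combining a Picard-number count with the Tits contractions $\Phi^A : \bS \to G/P_{I \setminus A}$.

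First I would record that $\rho(\bS) = |I|$: the Picard group of $G/P_I$ is freely generated by the pullbacks of the ample generators $L_\alpha$ of $\Pic(G/P_\alpha)$ via the projections $G/P_I \to G/P_\alpha$ for $\alpha \in I$, so $\dim N_1(\bS)_{\mbR} = |I|$. Linear independence of the classes $[\K^\alpha(\bS)]$ then follows from a direct intersection-pairing calculation: a representative $C^\alpha \in \K^\alpha(\bS)$ is contracted by $\Phi^\beta$ for every $\beta \in I \setminus\{\alpha\}$, so $L_\beta \cdot C^\alpha = 0$ for $\beta \neq \alpha$, while $L_\alpha \cdot C^\alpha > 0$ since $C^\alpha$ is a line in a fiber of $\Phi^\alpha$ and $L_\alpha$ restricts to the hyperplane class there. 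Thus the matrix $(L_\beta \cdot C^\alpha)_{\alpha, \beta \in I}$ is diagonal and nonsingular, so $\{[\K^\alpha(\bS)]\}_{\alpha \in I}$ is a basis of $N_1(\bS)_{\mbR}$.

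Next I would show that every class in $\overline{NE}(\bS)$ is a nonnegative combination of these. Since $\bS$ is Fano, the cone theorem applies: any extremal ray determines an elementary contraction $\varphi : \bS \to Y$ with connected fibers. By homogeneity, $G = \Aut^o(\bS)$ preserves $\overline{NE}(\bS)$, so every extremal ray is $G$-stable, which forces $\varphi$ to be $G$-equivariant. Such a $G$-equivariant morphism from $\bS = G/P_I$ with connected fibers must factor through $G/Q$ for some parabolic $Q \supsetneq P_I$, and every such $Q$ is $P_{I \setminus A}$ for some nonempty $A \subset I$. Elementarity of $\varphi$ then forces $|A| = 1$, whence $\varphi = \Phi^\alpha$ and the extremal ray equals $R_\alpha$. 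Combined with the previous step this exhausts the extremal rays and yields $\overline{NE}(\bS) = \sum_{\alpha \in I} R_\alpha$, a simplicial cone of dimension $|I|$.

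Finally, the face correspondence is formal: a simplicial cone with extremal rays indexed by $I$ has exactly $2^{|I|}$ faces $F_A := \sum_{\alpha \in A} R_\alpha$ with $A \subseteq I$. Each $F_A$ is cut out by the vanishing of pairings with $\{L_\beta\}_{\beta \in I \setminus A}$, and equals the relative Mori cone of the Tits contraction $\Phi^A$, whose fiber $\bS^A$ has Picard number $|A|$. The main obstacle in the argument is the $G$-equivariance step identifying every extremal contraction with some $\Phi^\alpha$; the remainder reduces to linear algebra on the pairing $\Pic(\bS) \times N_1(\bS) \to \mbR$.
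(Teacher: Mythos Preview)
Your argument is correct in substance and follows the standard route to this classical fact, which the paper itself states without proof (it is introduced only as ``a classical result on the rational homogeneous space $\bS:=G/P_I$''). One notational slip: in the paper's convention $\Phi^\beta=\Phi^{\{\beta\}}:\bS\to G/P_{I\setminus\{\beta\}}$, so $C^\alpha$ is \emph{not} contracted by $\Phi^\beta$ for $\beta\neq\alpha$; what you want is that $C^\alpha$ is contracted by the projection $\bS\to G/P_\beta$, which in the paper's notation is $\Phi^{I\setminus\{\beta\}}$. With that correction your intersection computation $L_\beta\cdot C^\alpha=0$ for $\beta\neq\alpha$ goes through and the rest of the proof is fine.
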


As a direct consequence of Proposition \ref{prop. invariance of Mori cone by Wisniewski} and Lemma \ref{lem. Mori cone on rat homog spaces}, we have the following result.

\begin{prop-defi}\label{prop. extending phi(A) to pi(A)}
For each $A\subset I$, denote by $\Phi^{A}:\bS\rightarrow G/P_{I\setminus A}$ the Mori contraction associated with the extremal face $\sum\limits_{\alpha\in A} R_\alpha$ of $\overline{NE}(\bS)$. We can extend it to be a relative Mori contraction $\pi^{A}: \mcX\rightarrow\mcX^{A}$. We denote by $\pi^A_t:=\pi^A|_{\mcX_t}$ for each $t\in\Delta$.
\end{prop-defi}

\begin{nota}
Given a subset $A\subset I$ and a point $x\in\mcX$, denote by $F^A_x$ the fiber of $\pi^A: \mcX\rightarrow\mcX^A$ passing through $x$. In particular, if $x\notin\mcX_0$ then $F^A_x\cong\bS^A$, where $\bS^A$ is defined in Definition \ref{defi. homogenous VMRT}.
\end{nota}

\begin{prop}\label{prop. Fax=Sa for x in X0 general}
Take any $\alpha\in I$. Then $F^\alpha_x\cong\bS^\alpha$ for $x\in\mcX_0$ general.
\end{prop}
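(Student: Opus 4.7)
The plan is to realize $F^\alpha_x$, for general $x\in\mcX_0$, as the central fiber of a smooth one-parameter family of Fano manifolds whose nearby fibers are isomorphic to $\bS^\alpha$, and then invoke the Picard-number-one rigidity of Theorem \ref{thm. intro Picard numbe one}.

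The key preliminary observation is that $\bS^\alpha=P_{I\setminus\alpha}/P_I$ is a rational homogeneous space of Picard number one and of type $ADE$. Indeed, $\bS^\alpha$ is the flag manifold of the Levi of $P_{I\setminus\alpha}$ associated to marking only the node $\alpha$, so its Dynkin type is the connected component, containing $\alpha$, of the subdiagram of $\Gamma_R$ supported on $J\cup\{\alpha\}$. A connected subdiagram of an $ADE$ diagram is again $ADE$, so $\bS^\alpha\not\cong\mbF(1,Q^5)$, and Theorem \ref{thm. intro Picard numbe one} yields that $\bS^\alpha$ is rigid under Fano deformation.

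The main technical step is to verify that $F^\alpha_x$ is a smooth connected Fano manifold of dimension $\dim\bS^\alpha$ for general $x\in\mcX_0$. Since each fiber of $\pi^\alpha$ is generated by curves in the extremal ray $R_\alpha$ and therefore lies in a single fiber of $\pi$, the map $\pi:\mcX\to\Delta$ factors through $\pi^\alpha$ via a morphism $\mcX^\alpha\to\Delta$, making $\mcX^\alpha_0$ a Cartier divisor of dimension $\dim\mcX_0-\dim\bS^\alpha$. Upper semi-continuity of fiber dimension then forces $\dim F^\alpha_x=\dim\bS^\alpha$ on a Zariski dense open subset of $\mcX_0$, and generic smoothness applied to the proper morphism $\pi^\alpha|_{\mcX_0}$ from the smooth variety $\mcX_0$ shows that $F^\alpha_x$ is smooth for $x$ general. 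Connectedness is automatic for Mori contraction fibers, and the Fano property follows from the relative ampleness of $-K_{\mcX/\Delta}$ on $\pi^\alpha$-fibers via Kleiman's criterion applied to $R_\alpha$.

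Finally, I would choose a small analytic arc $\gamma\subset\mcX^\alpha$ through $y:=\pi^\alpha(x)$ meeting $\mcX^\alpha_0$ transversally at $y$; the base change $(\pi^\alpha)^{-1}(\gamma)\to\gamma$ is then a proper family whose central fiber $F^\alpha_x$ is smooth Fano and whose other fibers are isomorphic to $\bS^\alpha$. Smoothness of the central fiber propagates to an open neighborhood, producing a smooth connected family of Fano manifolds containing $\bS^\alpha$ as a fiber, and Theorem \ref{thm. intro Picard numbe one} forces $F^\alpha_x\cong\bS^\alpha$. The principal obstacle is the smoothness and correct-dimension assertion for $F^\alpha_x$ at general $x\in\mcX_0$; once this is in hand, Hwang--Mok rigidity of Picard-number-one $ADE$ homogeneous spaces closes the argument routinely.
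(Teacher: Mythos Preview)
Your proposal is correct and follows essentially the same approach as the paper: realize $F^\alpha_x$ for general $x\in\mcX_0$ as a smooth Fano deformation of $\bS^\alpha$, then invoke Hwang--Mok rigidity in Picard number one. The paper's proof is a two-line citation of \cite{HM02} (rigidity for the long simple root case, which in type $ADE$ covers everything), whereas you cite Theorem~\ref{thm. intro Picard numbe one} and explicitly observe that $\bS^\alpha$, being governed by a connected subdiagram of an $ADE$ diagram, is itself of $ADE$ type and hence not $\mbF(1,Q^5)$; both routes are equivalent here, and your extra detail on smoothness and Fano-ness of the general fiber simply spells out what the paper takes for granted.
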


\begin{proof}
The fiber $F^\alpha_x$ is a smooth Fano deformation of $\bS^\alpha$. Then the conclusion follows from the Fano deformation rigidity of $\bS^\alpha$, which is obtained by J.-M. Hwang and N. Mok \cite[Main Theorem]{HM02}.
\end{proof}

By Proposition \ref{prop. invariance of Mori cone by Wisniewski}, Proposition \ref{prop. Fax=Sa for x in X0 general} and intersection theory on rational homogeneous spaces, we have the following result.

\begin{prop-defi}\label{prop. divisors intersection theory on family X}
Take any $\alpha\in I$. Denote by $\K^\alpha(\mcX)$ the irreducible component of $\text{Chow}(\mcX)$ extending $\K^\alpha(\bS)$. Take any $[C]\in\K^\alpha(\mcX)$. Then $C$ is an irreducible and reduced rational curve on $\mcX_t$ for a unique $t\in\Delta$. If either $t\neq 0$ or $[C]$ is general in $\K^\alpha(\mcX_0)$, then $C\cong\mbP^1$. Moreover, there exists a unique $\mcL^\alpha\in\Pic(\mcX/\Delta)$ such that $(\mcL^\alpha\cdot\K^\beta(\mcX))=\delta_{\alpha\beta}$ for all $\beta\in I$.
\end{prop-defi}


\begin{prop}\label{prop. minimal rational chain connected}
Any two points $x, y\in \mcX_0$ can be connected by chains of elements in $\bigcup\limits_{\alpha\in I}\K^\alpha(\mcX_0)$.
\end{prop}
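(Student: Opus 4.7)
The plan is to reduce to a statement about relative Mori contractions on $\mcX/\Delta$. Combining Lemma~\ref{lem. Mori cone on rat homog spaces} with Proposition~\ref{prop. invariance of Mori cone by Wisniewski}, the face $\sum_{\alpha\in I} R_\alpha$ exhausts $\overline{NE}(\mcX/\Delta)$, so the relative Mori contraction $\pi^I\colon\mcX\to\mcX^I$ from Proposition-Definition~\ref{prop. extending phi(A) to pi(A)} must satisfy $\mcX^I=\Delta$. In particular $\mcX_0$ is a single fiber of $\pi^I$, and it suffices to show that every fiber of $\pi^I$ is chain-connected by curves in $\bigcup_{\alpha\in I}\K^\alpha(\mcX)$ (curves lying in $\mcX_0$ automatically belong to $\K^\alpha(\mcX_0)$).

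I would then induct on $|A|$ for $A\subseteq I$, proving that every fiber of $\pi^A\colon\mcX\to\mcX^A$ is chain-connected by $\bigcup_{\alpha\in A}\K^\alpha(\mcX)$. For the inductive step, pick $\alpha\in A$ and set $A'=A\setminus\{\alpha\}$, so that $\pi^A=\rho\circ\pi^{A'}$ with $\rho\colon\mcX^{A'}\to\mcX^A$ a single-ray contraction. Given $x,y$ in a fiber of $\pi^A$, the base case applied to $\rho$ produces a $\K^\alpha$-chain in $\mcX^{A'}$ between $\pi^{A'}(x)$ and $\pi^{A'}(y)$; lifting each link through $\pi^{A'}$ (since $\pi^{A'}$ contracts the complementary face, members of $\K^\alpha(\mcX)$ map birationally to their images in $\mcX^{A'}$) and filling between adjacent lifts by $\bigcup_{\beta\in A'}\K^\beta$-chains within $\pi^{A'}$-fibers (inductive hypothesis) yields the desired chain.

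The base case $|A|=1$ is the crux. Fix $\alpha\in I$ and a fiber $F$ of $\pi^\alpha$. If $F$ lies over a point of $\mcX^\alpha_t$ for $t\neq 0$, then $F\cong\bS^\alpha$ since $\pi^\alpha_t$ is simply $\Phi^\alpha$ on $\mcX_t\cong\bS$. If $F$ lies over a general point of $\mcX^\alpha_0$, then $F\cong\bS^\alpha$ by Proposition~\ref{prop. Fax=Sa for x in X0 general}. In both cases $F$ is a Picard-one rational homogeneous space covered by $\K^\alpha(\bS^\alpha)$, hence $\K^\alpha$-chain connected. For an arbitrary fiber $F$ of $\pi^\alpha$, connectedness follows from the Mori contraction theorem; covering of $F$ by members of $\K^\alpha(\mcX)$ follows by specialization from nearby smooth fibers combined with properness of the Chow component $\K^\alpha(\mcX)$ from Proposition-Definition~\ref{prop. divisors intersection theory on family X}; and the chain equivalence class of any $x\in F$ is both open in $F$ (the subfamily through $x$ sweeps out a neighborhood) and closed (bounded-length chain varieties are proper), so it coincides with the connected component of $x$, which is all of $F$.

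The main obstacle will be this base case for singular or non-reduced fibers of $\pi^\alpha$, where one must verify that $\K^\alpha(\mcX)$ really sweeps out every such $F$ and that the open-and-closed argument for chain equivalence applies verbatim. Once this is established the inductive packaging is routine, and specializing to $A=I$ yields the proposition.
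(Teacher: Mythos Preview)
Your inductive scheme has two genuine gaps. First, in the inductive step you write ``the base case applied to $\rho$'', but your base case is stated and argued only for $\pi^\alpha:\mcX\to\mcX^\alpha$ with smooth source; the map $\rho:\mcX^{A'}\to\mcX^A$ is a contraction between possibly singular varieties, and the curves in question are $\pi^{A'}$-images of $\K^\alpha$-members rather than $\K^\alpha$-members themselves, so nothing you proved for $\pi^\alpha$ carries over to $\rho$. Second, and more seriously, the base case itself is incomplete over special fibers: the ``open'' half of your open-and-closed argument requires that the $\K^\alpha$-curves through a point $x$ sweep out a neighborhood of $x$ in the fiber $F$, but when $x$ lies on a singular fiber and the curves through $x$ are non-free there is no reason this holds. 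You correctly flag this as ``the main obstacle'', yet offer no mechanism to overcome it, and in fact establishing it is not visibly easier than the proposition itself.

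The paper sidesteps both issues with a short global argument on $\mcX_0$. One takes the rational quotient $\psi:\mcX_0\dashrightarrow Z$ by $\bigcup_\alpha\K^\alpha(\mcX_0)$-chain equivalence (Koll\'ar). If $\dim Z\geq 1$, the closure $E$ of the preimage of a general divisor on $Z$ is an effective divisor disjoint from a general equivalence class, hence satisfies $(E\cdot\K^\alpha(\mcX_0))=0$ for every $\alpha\in I$. Since the classes $[\K^\alpha(\mcX_0)]$ span the space of $1$-cycles on $\mcX_0$, this forces $E\equiv 0$ and thus $E=0$, a contradiction. The only covering fact needed is $\K^\alpha_x(\mcX_0)\neq\emptyset$ for all $x$, which follows from properness of the evaluation map on $\K^\alpha(\mcX)$; no analysis of special fibers of any contraction is required.
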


\begin{proof}
Consider the rational map $\psi: \mcX_0\dashrightarrow Z$ defined by equivalence relation induced by $\bigcup\limits_{\alpha\in I}\K^\alpha(\mcX_0)$. For the existence and the property of such a rational map, see \cite[Theorem IV.4.16]{Kol96}. Suppose $\dim Z\geq 1$. Take a general divisor $D\subset Z$ and a general point $x\in Z\setminus D$. Then $\psi^{-1}(z)$ is a closed subvariety of $\mcX_0$ which has empty intersection with the indeterminant locus of $\psi$. Thus $\psi^{-1}(z)\cap E=\emptyset$, where $E:=\overline{\psi^{-1}(D)}\subset\mcX_0$ is an effective divisor. For each $x\in\psi^{-1}(z)$ and each $\alpha\in I$, we have $\K^\alpha_x(\mcX_0)\neq\emptyset$. By definition $C_x^\alpha\subset\psi^{-1}(z)$ (hence $C_x^\alpha\cap E=\emptyset$) for all $[C_x^\alpha]\in\K^\alpha_x(\mcX_0)$. It follows that $(E\cdot\K^\alpha(\mcX_0))=0$ for all $\alpha\in I$, which implies that $E=0$. It contradicts the choice of $E$. Then the conclusion follows.
\end{proof}

\subsection{Properties of symbol algebras} \label{subsection. Deformation rigidity via parabolic geometry}

\begin{defi}
Given a distribution $\mcV$ on a complex manifold $Y$, define the weak derived system $\mcV^{-k}$ inductively by
\begin{eqnarray*}
&&\mcV^0:=0,\\
&& \mcV^{-1}:=\mcV, \\
&& \mcV^{-k-1}:=\mcV^{-k}+[\mcV^{-1}, \mcV^{-k}], \quad k\geq 1.
\end{eqnarray*}
Denote by $\mcV^{-\infty}:=\lim\limits_{k\rightarrow\infty}\mcV^{-k}$. There exists a positive integer $d$ such that $\mcV^{-d+i}=\mcV^{-d}$ for all $i\geq 0$. In particular, $\mcV^{-\infty}=\mcV^{-d}$ and it is integrable on $Y$. In an open neighborhood of a general point $y\in Y$ these $\mcV^{-k}$'s are subbundles of $TY$. We define the symbol algebra of $\mcV$ at $y$ as the graded nilpotent Lie algebra $\Symb_y(\mcV):=\bigoplus\limits_{1\leq k\leq d}\mcV^{-k}_y/\mcV^{-k+1}_y$. We say $\mcV$ is bracket-generating if $\mcV^{-\infty}=TY$. When $\mcV$ is bracket-generating, $\dim\Symb_y(\mcV)=\dim T_yY=\dim Y$.
\end{defi}

\begin{nota}\label{nota. distribution D(A) and symbol algebra}
Take a subset $A\subset I$. The distribution
$$\mfg_{-1}^A(\bS):=G\times^{P_I}(\sum\limits_{\alpha\in A}(\mfg_{-1}(\alpha)+\mfp_I)/\mfp_I)$$
 on $\bS$ can be extended to be a meromorphic distribution $\mcD^A$ on $\mcX$, which is well-defined on general points of $\mcX_0$ and all points of $\bigcup\limits_{t\neq 0}\mcX_t$. Take a general point $x\in\mcX_0$. Denote by $\mfm_x(A)$ the symbol algebra of $\mcD^{A}_x$, i.e. $\mfm_x(A):=\Symb_x(\mcD^A)$. We say $\mfm_x(A)$ is standard if it is isomorphic to the symbol algebra of the distribution $\mfg_{-1}^A(\bS)$ on $\bS$. Otherwise, we say $\mfm_x(A)$ is degenerate. When $A=I$, we omit the superscript $I$ and write $\mcD:=\mcD^I$ briefly.
\end{nota}

\begin{prop}\label{prop. dim mx(I) = dim X0}\label{prop. rationally chain connected distribution version}
The unique integrable meromorphic distribution on $\mcX_0$ containing $\mcD$ is the tangent bundle. Consequently,  Then the distribution $\mcD$ is bracket-generating on $\mcX_0$ and $\dim\mfm_x(I)=\dim\mcX_0$ for $x\in\mcX_0$ general.
\end{prop}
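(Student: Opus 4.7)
The plan is to prove the integrability/bracket-generating statement by showing that the weak derived closure $\mcD^{-\infty}$ equals $T\mcX_0$; once this holds, any integrable meromorphic distribution on $\mcX_0$ containing $\mcD$ must be all of $T\mcX_0$, and $\dim\mfm_x(I)=\dim\mcX_0$ at general $x$ follows from the definition of the symbol algebra. The two key inputs are Proposition~\ref{prop. minimal rational chain connected} and the tangency of every family $\K^\alpha(\mcX_0)$ to the meromorphic distribution $\mcD$.

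First I will establish that tangency. For $t\neq 0$, under the identification $\mcX_t\cong\bS$ one has $\mcD|_{\mcX_t}=\mfg_{-1}(\bS)=\bigoplus_{\beta\in I}\mfg^\beta(\bS)$, and every $[C_t]\in\K^\alpha(\mcX_t)$ satisfies $TC_t\subset\mfg^\alpha(\bS)\subset\mcD|_{C_t}$ by Lemma~\ref{lem. property of a-VMRT}(1). Since $\K^\alpha(\mcX)$ is the irreducible component of $\text{Chow}(\mcX)$ specializing $\K^\alpha(\bS)$ (Proposition-Definition~\ref{prop. divisors intersection theory on family X}), a general $[C]\in\K^\alpha(\mcX_0)$ is a limit of smooth curves $[C_t]\in\K^\alpha(\mcX_t)$ and is itself isomorphic to $\mbP^1$. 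Passing the inclusion $TC_t\subset\mcD|_{C_t}$ to the limit, and choosing $[C]$ generic so that $C$ meets the regular locus of $\mcD$ on a dense open subset, yields $T_xC\subset\mcD_x$ at every such regular point $x\in C$.

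Now suppose for contradiction that $\mcD^{-\infty}\subsetneq T\mcX_0$. Being integrable and regular off a proper closed subset of $\mcX_0$, the distribution $\mcD^{-\infty}$ defines a foliation of some common leaf dimension $r<\dim\mcX_0$ there, inducing a dominant rational map $\psi:\mcX_0\dashrightarrow Z$ with $\dim Z\geq 1$ whose general fibers are leaves. By the first step, for each $\alpha\in I$ a general $[C]\in\K^\alpha(\mcX_0)$ is tangent to $\mcD\subseteq\mcD^{-\infty}$ on a dense open set and hence lies inside a single leaf, i.e.\ inside a single fiber of $\psi$. Mimicking the proof of Proposition~\ref{prop. minimal rational chain connected}, take a general effective divisor $D\subset Z$ and set $E:=\overline{\psi^{-1}(D)}\subset\mcX_0$. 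General members of $\K^\alpha(\mcX_0)$ lie in general fibers of $\psi$, which are disjoint from $E$, so $(E\cdot\K^\alpha(\mcX_0))=0$ for every $\alpha\in I$. By Proposition~\ref{prop. invariance of Mori cone by Wisniewski} and Lemma~\ref{lem. Mori cone on rat homog spaces}, $\overline{NE}(\mcX_0)=\sum_{\alpha\in I}R_\alpha$, so $[E]$ is numerically trivial; pairing against $(-K_{\mcX_0})^{\dim\mcX_0-1}$ together with the Fano assumption on $\mcX_0$ forces $E=0$, contradicting the choice of $D$. Hence $\mcD^{-\infty}=T\mcX_0$.

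The main technical obstacle is the limiting step for tangency, because $\mcD$ is only meromorphic and $\K^\alpha(\mcX_0)$ might a priori contain curves passing through its singular locus; this is circumvented by invoking Proposition-Definition~\ref{prop. divisors intersection theory on family X}, which guarantees that the general member of $\K^\alpha(\mcX_0)$ is a smooth irreducible $\mbP^1$, together with Zariski density to place $C$ off the singular locus on a dense open subset---exactly what is needed for the foliation argument in the subsequent step.
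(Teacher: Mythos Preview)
Your argument is correct and rests on the same two ingredients as the paper's: tangency of $\K^\alpha(\mcX_0)$-curves to $\mcD$, and the chain-connectedness of Proposition~\ref{prop. minimal rational chain connected}. The paper organizes these in the opposite order---it shows directly that a general leaf $M$ of any integrable $\mcV\supset\mcD$ absorbs every $\K^\alpha$-curve it meets, so by Proposition~\ref{prop. minimal rational chain connected} the closure of $M$ is all of $\mcX_0$---whereas you first prove $\mcD^{-\infty}=T\mcX_0$ by re-running the divisor argument from that proposition's proof and then deduce uniqueness from $\mcD^{-\infty}\subset\mcV$.

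One small caveat: your passage from the integrable $\mcD^{-\infty}$ to a \emph{rational} leaf-quotient $\psi:\mcX_0\dashrightarrow Z$ is not automatic---an algebraic foliation need not have algebraic leaves, so the leaf space need not exist as a variety. You can sidestep this entirely: once you know that general $\K^\alpha$-curves are tangent to $\mcD^{-\infty}$, they are contracted by the rational quotient already built in the proof of Proposition~\ref{prop. minimal rational chain connected} (via \cite[IV.4.16]{Kol96}), and that proposition then gives the contradiction with $\dim Z\geq 1$ directly, without ever needing the leaf space. This is exactly how the paper avoids the issue, by citing Proposition~\ref{prop. minimal rational chain connected} rather than reproving it.
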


\begin{proof}
Let $\mcV$ be an integrable meromorphic distribution on $\mcX_0$ containing $\mcD$, and $M$ be a general leaf. Take $\alpha\in I$ and $[C]\in\K^\alpha(\mcX_0)$ with $C\cap M\neq\emptyset$. Then at a point $x\in C\cap M$ we have $T_xC\subset\mcD_x\subset\mcV_x$. Thus $C$ is contained in the analytic closure of $M$. By Proposition \ref{prop. minimal rational chain connected}, the leaf closure of $M$ is $\mcX_0$, completing the proof.
\end{proof}

To continue, we need to recall some concepts and results related with Cartan connections.

\begin{defi}\label{d.fg}
Fix a positive integer $\nu$. Let $\mfl_{-} = \mfl_{-1} \oplus \cdots \oplus \mfl_{-\nu}$ be a graded nilpotent Lie algebra. Denote by ${\rm gr}\Aut(\mfl_{-})$ the group of  Lie algebra automorphisms of $\mfl_{-}$ preserving the gradation and by ${\rm gr}\aut(\mfl_{-})$ its Lie algebra. Fix a connected algebraic subgroup $L_0 \subset {\rm gr}\Aut(\mfl_{-})$ and its Lie algebra $\mfl_0 \subset {\rm gr}\aut(\mfl_{-})$. For each positive integer $i,$ the $i$-th  prolongation of $\mfl_0$ is inductively defined as
\begin{align*}
\begin{aligned}
\mfl_i &:= \Big\{ \phi \in \Hom(\mfl_{-}, \bigoplus\limits_{-\nu \leq j < i} \mfl_j)_i := \bigoplus\limits_{k=1}^{\nu} \Hom(\mfl_{-k}, \mfl_{-k + i}), \\
 & \qquad \phi([v_1, v_2]_{\mfl_{-}}) = \phi(v_1)(v_2) - \phi(v_2)(v_1), \quad \text{for any} \quad v_1, v_2 \in \mfl_{-}   \Big\}.
\end{aligned}
\end{align*}
Here $[\quad, \quad]_{\mfl_{-}}$ denotes the Lie bracket on $\mfl_{-}$ and,  if $\phi(v_{1})\in \mfl_{-}$, then $$\phi(v_{1})(v_{2}):=[\phi(v_{1}), v_{2}]_{\mfl_{-}}.$$ For convenience, we put $\mfl_{-\nu - j} =0$ for every positive integer $j$ and write $$\mfl_{-} = \bigoplus_{k \in \mbN} \mfl_{-k}.$$ The graded vector space $$\mfl:= \bigoplus_{k \in \mbZ} \mfl_k$$ is a graded Lie algebra and called the universal prolongation of $(\mfl_0, \mfl_{-})$.
\end{defi}

The following result on prolongations is due to K. Yamaguchi \cite{Yam93}.

\begin{prop}\cite[Theorem 5.2]{Yam93}\label{prop. prolongation due to Yamaguchi}
Suppose in Setting \ref{setup. G semisimple} that $G$ is simple.

$(i)$ Suppose $G/P_I$ is not biholomorphic to a projective space. Then $\mfg$ is the universal prolongation of $(\mfg_-(I), \mfg_0(I))$.

$(ii)$ Suppose $|I|\geq 2$ and $(G, I)$ is not one of the following:
\begin{eqnarray}
&&  (A_m, \{\alpha_1, \alpha_i\}), \quad 2\leq i\leq m;  \label{eqn. prolongation exception (Am a1ai)}\\
&& (A_m, \{\alpha_i, \alpha_m\}), \quad 1\leq i\leq m-1; \label{eqn. prolongation exception (Am aiam)}
\end{eqnarray}
Then $\mfg_0(I)$ is isomorphic to $\aut(\mfg_-(I))$, the Lie algebra of ${\rm gr}\Aut(\mfg_{-})$.
\end{prop}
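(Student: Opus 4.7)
The plan is to combine Tanaka's prolongation theory for graded nilpotent Lie algebras with Kostant's theorem on Lie algebra cohomology, since both conclusions amount to the vanishing of appropriate graded pieces of $H^1(\mfg_-(I), \mfg)$ viewed as a $\mfg_0(I)$-module.

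First I would recast Definition~\ref{d.fg} cohomologically. The defining equation of $\mfl_k$ is exactly the $1$-cocycle condition for a Chevalley--Eilenberg cochain on $\mfg_-(I)$ with values in the adjoint module, so one has a canonical identification of the $k$-th graded piece of the universal prolongation of $(\mfg_-(I), \mfg_0(I))$ with the appropriate piece of $H^1(\mfg_-(I), \mfg)$. Concretely, the inclusion $\mfg_k \hookrightarrow \mfl_k$ is an equality if and only if $H^1(\mfg_-(I), \mfg)_k = 0$. Thus Part~$(i)$ reduces to showing
\[
H^1(\mfg_-(I), \mfg)_k = 0 \quad \text{for all } k \geq 1,
\]
while Part~$(ii)$ reduces to the analogous vanishing in degree $0$, after identifying $\aut(\mfg_-(I))$ with the space of degree-preserving derivations via the derivation/cocycle correspondence and using that $\mfg_0(I) \to \aut(\mfg_-(I))$ acts by inner derivations.

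Next I would invoke Kostant's theorem, which describes
\[
H^1(\mfg_-(I), \mfg) \;\cong\; \bigoplus_{\substack{w \in W^{\mfp_I} \\ \ell(w) = 1}} V_w
\]
as a $\mfg_0(I)$-module, where $W^{\mfp_I}$ is the set of minimal-length coset representatives of $W_0 \backslash W$ and each $V_w$ is an explicit irreducible module whose highest weight and grading degree are computable directly from the simple reflection $w = s_\beta$, $\beta \in R$. The task then becomes a finite combinatorial scan on marked Dynkin diagrams: for each $\beta \in R$, compute the degree of $V_{s_\beta}$ as a function of $I$ and record when it is positive (obstructing $(i)$) or when it is zero outside the $\mfg_0(I)$-summand (obstructing $(ii)$).

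Finally I would check that the obstructions occur exactly in the stated exceptional cases: for $(i)$, only when $G/P_I \cong \mbP^n$; for $(ii)$, only in the families \eqref{eqn. prolongation exception (Am a1ai)} and \eqref{eqn. prolongation exception (Am aiam)}. The main obstacle is precisely this case-by-case bookkeeping. The type $A_m$ series is where genuine exceptions arise in $(ii)$, so diagrams with two marked roots in type $A$ require close attention; one must also confirm that no sporadic contribution appears at the trivalent node of types $D$ and $E$, where the Kostant summands behave asymmetrically depending on whether the adjacent simple roots lie in $I$ or $J$. Once this combinatorial check is in place, both conclusions follow simultaneously from the same cohomological input.
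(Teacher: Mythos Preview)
The paper does not prove this proposition at all: it is quoted verbatim from Yamaguchi's paper \cite{Yam93} as an external input, with no argument supplied. Your outline is the correct approach and is essentially the one Yamaguchi himself uses---reduce the prolongation statements to the vanishing of $H^1(\mfg_-(I),\mfg)$ in the relevant degrees via the Tanaka formalism, compute that cohomology by Kostant's theorem, and then read off the exceptional cases by a Dynkin-diagram scan of the length-one Weyl elements. One small clarification: for part $(ii)$ you also need injectivity of $\mfg_0(I)\to\aut(\mfg_-(I))$, which follows from the simplicity of $\mfg$ (the kernel would be an ideal of $\mfg$ contained in $\mfg_0(I)$); with that in hand, $H^1(\mfg_-(I),\mfg)_0=0$ is indeed equivalent to $\mfg_0(I)\cong\aut(\mfg_-(I))$, exactly as you say.
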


\begin{defi}\label{d.Cartan}
Let $L$ be a connected algebraic group and $L^0 \subseteq L$ be a connected algebraic subgroup. Let $\mfl^0 \subset \mfl$ be their Lie algebras. A Cartan connection of type $(L, L^0)$ on a complex manifold $M$ with $\dim M =\dim L/L^0$ is a principal $L^0$-bundle $E \rightarrow M$ with a $\mfl$-valued $1$-form $\Upsilon$ on $E$ with the following properties.
\begin{itemize}
\item[(i)] For $A \in \mfl^0$, denote by $\zeta_A$ the fundamental vector field on $E$ induced by the right $L^0$-action on $E$. Then $\Upsilon(\zeta_{A})=A$ for each $A \in \mfl^0$.
\item[(ii)] For $a \in L^0$, denote by $R_a: E \to E$ the right action of $a$. Then $R_{a}^{*}\Upsilon={\rm Ad}(a^{-1}) \circ \Upsilon$ for each $a\in L^0$.
\item[(iii)] The linear map $\Upsilon_{y}: T_{y} E \rightarrow \mfl$ is an isomorphism for each $y \in E$. \end{itemize}
The Cartan connection $(E\rightarrow M, \Upsilon)$ is flat if the curvature $\kappa:=d\Upsilon+\frac{1}{2}[\Upsilon, \Upsilon]$ vanishes.
\end{defi}

\begin{e.g.}
Let $L$ and $L^0$ be as in Definition \ref{d.Cartan}, and denote by $\omega^{MC}$ the Maurer-Cartan form on $L$. Then $(L\rightarrow L/L^0, \omega^{MC})$ is a flat Cartan connection of type $(L,L^0)$.
\end{e.g.}

\begin{defi}
Let $\mfl_{-}= \oplus_{k\in \mbN} \mfl_{-k}$ be a graded nilpotent Lie algebra with $\mfl_{-j} =0$ for all $j$ larger than for a fixed positive integer $\nu$.
A filtration of type $\mfl_{-}$ on a complex manifold $M$ is a filtration $(F^j M, j \in \mbZ)$ on $M$ such that
\begin{itemize}
\item[(i)] $F^k M =0$ for all $k \geq 0$; \item[(ii)] $F^{-k} M = TM$ for all $k \geq \nu$; and
\item[(iii)] for any $x\in M$, the symbol algebra $${\rm gr}_{x}(M)
:= \bigoplus_{i \in \mbN} F^{-i}_{x}M/F^{-i+1}_{x}M$$  is isomorphic to $\mfl_{-}$ as graded Lie algebras. \end{itemize}
The graded frame bundle of the manifold $M$ with a filtration of type $\mfl_{-}$  is the  ${\rm gr}\Aut(\mfl_{-})$-principal bundle ${\rm grFr}(M)$ on $M$ whose fiber at $x$ is the set of graded Lie algebra isomorphisms from
$\mfl_{-}$ to ${\rm gr}_x(M)$. Let $L_0 \subset {\rm gr}\Aut(\mfl_{-})$ be a connected algebraic subgroup. An $L_0$-structure (subordinate to the filtration) on $M$ means an $L_0$-principal subbundle  $E \subset {\rm grFr}(M)$.
\end{defi}

\begin{rmk}\label{rmk. construction Cartan connections due to Cap and Schichl}
Now let us summarize the work of  A. \v{C}ap and H. Schichl \cite{CS00} on the construction of Cartan connections of type $(G, P_I)$. For more detail of our summarization, see Sections 3.20-3.23 in \cite{CS00}.  Let $G/P_I$ be as in Setting \ref{setup. G semisimple} and suppose that $\mfg$ is the universal prolongation of $(\mfg_-(I), \mfg_0(I))$. Suppose there is a differential system $\mcV$ and a principal bundle $E$ on a complex manifold $M$ such that the weak derivatives of $\mcV$ induces a filtration of type $\mfg_-(I)$ and $E\subset {\rm grFr}(M)$ is an $G_0$-structure on $M$. Then we can construct a Cartan connection of type of $(G, P_I)$ on $M$. The construction is canonical in the sense that it works well for a family, which will be explained in the proof of Proposition \ref{prop. Cartan connection to Fano deformation rigidity}, and that the Cartan connection we construct on $G/P_I$ itself is $(G\rightarrow G/P_I, \omega^{MC})$.
\end{rmk}

Now we state a setting that is slightly more general than Setting \ref{setup. intro Fano deformation}.

\begin{set}\label{setup. singular Y0}
Suppose in Setting \ref{setup. G semisimple} that $G$ is simple and $G/P_I$ is not biholomorphic to a projective space.
Let $\psi: \mcY\rightarrow\Delta\ni0$ be a holomorphic map from an irreducible analytic variety $\mcY$ to $\Delta$ such that $\mcY_t\cong G/P_I$ for $t\neq 0$ and $\mcY_0$ is an irreducible reduced projective variety.
\end{set}

\begin{prop}\label{prop. Cartan connection to Fano deformation rigidity}
Suppose in Setting \ref{setup. singular Y0} that there exists a proper closed algebraic subset $Z\subset\mcY_0$ and a holomorphic fiber bundle $\mcE\rightarrow \mcY\setminus Z$ such that $\mfm_x(I)\cong\mfg_-(I)$ for all $x\in\mcY_0\setminus Z$ and $\mcE_t\rightarrow\mcY_t\setminus Z$ is an $G_0$-structure for all $t\in\Delta$. Then $\mcY_0\cong G/P_I$.
\end{prop}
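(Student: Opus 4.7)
The plan is to promote the given $G_0$-structure $\mcE\rightarrow\mcY\setminus Z$ to a Cartan connection of type $(G,P_I)$ on the whole total space via the canonical \v{C}ap-Schichl prolongation, show that its curvature vanishes identically by specialising from the generic fibers, and then translate the resulting flat parabolic Cartan geometry on $\mcY_0\setminus Z$ into a biholomorphism $\mcY_0\cong G/P_I$.

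First, I would verify the input of the construction summarised in Remark \ref{rmk. construction Cartan connections due to Cap and Schichl}. Proposition \ref{prop. prolongation due to Yamaguchi}(i) gives that $\mfg$ is the universal prolongation of $(\mfg_-(I),\mfg_0(I))$, since $G/P_I$ is not a projective space by hypothesis. The weak derived system of $\mcD$ induces on $\mcY\setminus Z$ a filtration whose associated graded at every point is isomorphic to $\mfg_-(I)$: on the generic fibers this is the standard filtration of $G/P_I$, while on $\mcY_0\setminus Z$ it is exactly the assumption $\mfm_x(I)\cong\mfg_-(I)$. Together with the $G_0$-principal subbundle $\mcE$, this is precisely the data required by \v{C}ap-Schichl, and their construction---which Remark \ref{rmk. construction Cartan connections due to Cap and Schichl} emphasises works canonically in families---produces a principal $P_I$-bundle $\widetilde{\mcE}\rightarrow\mcY\setminus Z$ carrying a holomorphic $\mfg$-valued $1$-form $\Upsilon$ that restricts fiberwise to a Cartan connection of type $(G,P_I)$.

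Next I would analyse the curvature $\kappa:=d\Upsilon+\tfrac{1}{2}[\Upsilon,\Upsilon]$. Canonicity forces the Cartan connection on each $\mcY_t\cong G/P_I$ with $t\ne 0$ to be equivalent to the Maurer-Cartan connection $(G\rightarrow G/P_I,\omega^{MC})$ singled out in Remark \ref{rmk. construction Cartan connections due to Cap and Schichl}, which is flat. Hence $\kappa$ vanishes on $\widetilde{\mcE}|_{\psi^{-1}(\Delta\setminus\{0\})}$; since $\widetilde{\mcE}$ is irreducible (as $\mcY$ is irreducible and the fibers of $\widetilde{\mcE}$ are connected) this open subset is dense, and holomorphicity of $\kappa$ forces $\kappa\equiv 0$. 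In particular the Cartan geometry induced on $\mcY_0\setminus Z$ is flat.

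Finally, flatness of the parabolic Cartan geometry furnishes a developing map $\delta$ from the universal cover of $\mcY_0\setminus Z$ to $G/P_I$ which is a local biholomorphism intertwining $\mcD$ with the $G$-invariant distribution $\mfg_{-1}(\bS)$. Using that $\mcY_0$ is projective and that Proposition \ref{prop. minimal rational chain connected} provides enough $\mcD$-integral rational curves to connect all of $\mcY_0$, I would propagate $\delta$ along such chains, extend it across the proper closed subset $Z$, and bound its monodromy so as to obtain a well-defined finite morphism $\mcY_0\rightarrow G/P_I$; simple connectedness of $G/P_I$ together with local biholomorphicity then forces this morphism to be an isomorphism. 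The main obstacle is precisely this last step: simultaneously controlling the monodromy of $\delta$ on the possibly non-simply-connected open set $\mcY_0\setminus Z$ and extending the resulting map across the bad locus $Z$, both of which will rely on the projectivity of $\mcY_0$, the codimension of $Z$, and the rational-curves geometry supplied by $\bigcup_{\alpha\in I}\K^\alpha(\mcY_0)$.
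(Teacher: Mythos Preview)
Your first three steps---checking the prolongation hypothesis via Proposition \ref{prop. prolongation due to Yamaguchi}, running the \v{C}ap--Schichl construction in families, and deducing flatness at $t=0$ by continuity of the curvature---match the paper's proof exactly.

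The divergence, and the genuine gap, is in your final step. You try to pass from the flat Cartan geometry on $\mcY_0\setminus Z$ to a biholomorphism $\mcY_0\cong G/P_I$ via a developing map, and you correctly flag monodromy and extension across $Z$ as obstacles; but you do not actually resolve them. Note also that Proposition \ref{prop. minimal rational chain connected} is proved in Setting \ref{setup. intro Fano deformation}, where $\mcX_0$ is a smooth Fano manifold; in Setting \ref{setup. singular Y0} the central fiber $\mcY_0$ is only assumed irreducible, reduced and projective, so you cannot invoke that proposition as written. Even with chain-connectedness granted, propagating a local biholomorphism along rational curves to obtain a global finite morphism, and then extending it across the unknown locus $Z$, is a substantial argument you have not supplied.

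The paper bypasses the developing map entirely. From flatness it cites \cite[Corollary 5.4]{Yam93} to conclude that the Lie algebra of infinitesimal automorphisms of the $G_0$-structure on $\mcY_0^o$ is isomorphic to $\mfg$. Upper semi-continuity of $t\mapsto\dim H^0(\mcY_t,T\mcY_t)$ then gives $\dim\aut(\mcY_0)\geq\dim\mfg$, forcing $\aut(\mcY_0)\cong\mfg$. Hence $G$ acts on $\mcY_0$ with isotropy at a general point conjugate to $P_I$, and $\mcY_0\cong G/P_I$. This automorphism-group argument avoids monodromy and extension issues altogether and works even though $\mcY_0$ is a priori only a variety.
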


\begin{proof}

By Proposition \ref{prop. prolongation due to Yamaguchi} the Lie algebra $\mfg$ is the universal prolongation of $(\mfg_-, \mfg_0)$. By Sections 3.20 -- 3.23 in \cite{CS00} we can construct a Cartan connection of type $(G, P_I)$ in the neighborhood of a general point $x\in\mcY_0$. Furthermore, the construction works well for the family $\mcY$ over $\Delta$. In other words, there exists an analytic open subset $\mcY^o$ of $\mcY$, a principal $P_I$-bundle $\Psi: \mcP\rightarrow\mcY^o$, and a holomorphic 1-form $\omega: T\mcP\rightarrow\mfg$ such that
\begin{item}
\item (1) $\mcY^o\supset\mcY_t$ for all $t\neq 0$;
\item (2) $\mcY^o_0:=\mcY^o\cap\mcY_0$ is an analytic open neighborhood of the general point $x\in\mcY_0$;
\item (3) for each $t\in\Delta$ (including $t=0$), $(\Psi_t, \omega_t)$ is a Cartan connection of type $(G, P_I)$;
\item (4) for each $t\neq 0$, the Cartan connection $(\Psi_t, \omega_t)$ is flat.
\end{item}

By the continuity on $t\in\Delta$ of the curvature $\kappa_t:=d\omega_t+\frac{1}{2}[\omega_t, \omega_t]$, the Cartan connection $(\Psi_0, \omega_0)$ is also  flat. By \cite[Corollary 5.4]{Yam93} the Lie algebra of infinitesimal automorphisms of $\mcY_0$, which preserves the symbol algebras on $\mcY^o_0$ and the $G_0$-structure, is isomorphic to $\mfg$.

By upper semi-continuity of $\dim H^0(\mcY_t, T\mcY_t)$, $\dim \aut(\mcY_0)\geq\dim\mfg$, where $\aut(\mcY_0)$ is the Lie algebra of automorphism group of $\mcY_0$. Hence $\aut(\mcY_0)\cong\mfg$ and $G$ acts on $\mcY_0$ with isotropy subgroup at a general point $x\in \mcY_0$ being conjugate to $P_I$. It follows that $\mcY_0\cong G/P_I$.
\end{proof}

\begin{prop}\label{prop. symbol algebra standard iff variety standard}
In Setting \ref{setup. singular Y0} suppose $|I|\geq 2$ and $(G, I)$ is neither \eqref{eqn. prolongation exception (Am a1ai)} nor \eqref{eqn. prolongation exception (Am aiam)} listed in Proposition \ref{prop. prolongation due to Yamaguchi}. Then the followings are equivalent:

$(i)$ $\mcY_0\cong G/P_I$;

$(ii)$ $\mfm_x(I)$ is standard at general points $x\in\mcY_0$.
\end{prop}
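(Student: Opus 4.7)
The direction $(i)\Rightarrow(ii)$ is straightforward: assuming $\mcY_0\cong G/P_I$, the vanishing $H^1(G/P_I,T_{G/P_I})=0$ (Borel--Weil--Bott) gives that the family $\mcY\rightarrow\Delta$ is holomorphically trivial in an analytic neighborhood of $\mcY_0$, so under this trivialization $\mcD|_{\mcY_0}$ coincides with the canonical distribution $\mfg_{-1}(\bS)$ of Lemma~\ref{lem. minimal a-distribution}, and its symbol algebra at every point is $\mfg_-(I)$ by Proposition~\ref{prop. characterizing g-}.

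The real content is $(ii)\Rightarrow(i)$, and the plan is to verify the hypotheses of Proposition~\ref{prop. Cartan connection to Fano deformation rigidity}. Let $Z\subset\mcY_0$ be the union of the singular locus of the meromorphic distribution $\mcD$ with the Zariski closure of $\{x\in\mcY_0:\mfm_x(I)\text{ is degenerate}\}$. By $(ii)$ and the definition of $\mcD$ in Notation~\ref{nota. distribution D(A) and symbol algebra}, each of these is a proper closed subvariety of the irreducible $\mcY_0$, hence so is their union $Z$. Put $\mcY^o:=\mcY\setminus Z$; then $\mcD$ is a genuine holomorphic subbundle of $T\mcY^o$ whose weak derivatives equip $\mcY^o$ with a filtration of type $\mfg_-(I)$ (automatic on $\mcY_t$ for $t\neq 0$, and imposed by the definition of $Z$ on $\mcY_0\setminus Z$).

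The crucial step is to exhibit a holomorphic $G_0$-structure $\mcE\rightarrow\mcY^o$ subordinate to this filtration. Here the key input is Proposition~\ref{prop. prolongation due to Yamaguchi}$(ii)$: the exclusion of the exceptional pairs~\eqref{eqn. prolongation exception (Am a1ai)}--\eqref{eqn. prolongation exception (Am aiam)} gives $\mfg_0(I)\cong\graut(\mfg_-(I))$, so the homomorphism $G_0\rightarrow\mathrm{gr}\Aut(\mfg_-(I))$ hits exactly the identity component. Consequently every connected component of the graded frame bundle $\mathrm{grFr}(\mcY^o)$ is naturally a holomorphic $G_0$-principal bundle. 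For each $t\neq 0$ the fiber $\mcY_t\cong\bS$ carries a canonical $G_0$-reduction built from its homogeneous structure and the $G$-invariance of $\mfg_{-1}(\bS)$, and since $Z$ has codimension at least two in $\mcY$ the open set $\mcY^o$ is connected; we let $\mcE$ be the unique component of $\mathrm{grFr}(\mcY^o)$ containing these canonical frames over $\mcY\setminus\mcY_0$. With $Z$ and $\mcE$ so constructed, all hypotheses of Proposition~\ref{prop. Cartan connection to Fano deformation rigidity} are satisfied and we conclude $\mcY_0\cong G/P_I$.

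The main delicate point in this strategy is the global existence of $\mcE$ as a \emph{single} holomorphic $G_0$-reduction of $\mathrm{grFr}(\mcY^o)$ extending across the central fiber, and this is precisely the reason why the two exceptional series in Proposition~\ref{prop. prolongation due to Yamaguchi}$(ii)$ must be excluded from the hypotheses of the statement. Once this reduction is in place, the \v{C}ap--Schichl machinery summarized in Remark~\ref{rmk. construction Cartan connections due to Cap and Schichl} produces a family of Cartan connections whose flatness on the generic fibers propagates by continuity to the central fiber, and then Yamaguchi's characterization of the infinitesimal automorphism algebra forces $\mcY_0\cong G/P_I$.
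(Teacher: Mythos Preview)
Your proposal is correct and follows essentially the same approach as the paper: both arguments reduce $(ii)\Rightarrow(i)$ to Proposition~\ref{prop. Cartan connection to Fano deformation rigidity} by invoking Proposition~\ref{prop. prolongation due to Yamaguchi}$(ii)$ to identify $G_0$ with $\mathrm{gr}\Aut^o(\mfg_-(I))$ and then taking a connected component of the graded frame bundle of $\mcY^o$ as the required $G_0$-structure. Your write-up is simply more explicit than the paper's (which dispatches $(i)\Rightarrow(ii)$ as ``straightforward'' and compresses the construction of $\mcE$ into two sentences), and your closing paragraph essentially recapitulates the internal mechanism of Proposition~\ref{prop. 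Cartan connection to Fano deformation rigidity} rather than adding a new step.
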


\begin{proof}
It is straight-forward to see $(i)\Rightarrow (ii)$. Now let us prove $(ii)\Rightarrow (i)$. Let $\mcY^o$ be the open subset of $\mcY$ where the symbol algebras of $\mcD$ are isomorphic to $\mfg_-(I)$. In particular, $\mcY_t\subset\mcY^o$ for all $t\neq 0$ and $\mcY^o_0$ is a dense open subset of $\mcY_0$. Denote by $\mcF$ a connected component of the graded frame bundle of the family $\mcY^o$ over $\Delta$.

By Proposition \ref{prop. prolongation due to Yamaguchi} the group $G_0\cong {\rm gr}\Aut^o(\mfg_{-}(I))$. Thus the $G_0$-structure $\mcF_t$ on $\mcY_t$ with $t\neq 0$ is holomorphically extended to be the $G_0$-structure $\mcF_0$ on $\mcY_0^0$. The conclusion follows from Proposition \ref{prop. Cartan connection to Fano deformation rigidity}.
\end{proof}

The key point to obtain $\mcY_0\cong G/P_I$ in Setting \ref{setup. singular Y0} is invariance of symbol algebras. Once this is done, it is not hard to extend the $G_0$-structure $E\subset\text{grFr}(G/P_I)$ holomorphically to general points on $\mcY_0$, even in case \eqref{eqn. prolongation exception (Am a1ai)} or \eqref{eqn. prolongation exception (Am aiam)} listed in Proposition \ref{prop. prolongation due to Yamaguchi}. For instance we have the following result.

\begin{prop}\label{prop. (Am, a1a2) case symbol algebra standard iff X0 standard}
Suppose in Setting \ref{setup. singular Y0} that $\bS\cong A_m/P_I$ and $\mfm_x(\alpha_1, \alpha_2)\cong\mfg_-(I)$, where $m\geq 2$, $I=\{\alpha_1, \alpha_2\}$, and $x\in\mcY_0$ is general. Then $\mcY_0\cong A_m/P_I$.
\end{prop}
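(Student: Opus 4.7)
The pair $(A_m,\{\alpha_1,\alpha_2\})$ belongs to the exceptional case \eqref{eqn. prolongation exception (Am a1ai)} in Proposition~\ref{prop. prolongation due to Yamaguchi}, so $\mfg_0(I)\subsetneq\aut(\mfg_-(I))$ and Proposition~\ref{prop. symbol algebra standard iff variety standard} cannot be used directly: the graded frame bundle carries a structure group strictly larger than $G_0$. The plan is to use the two canonical subdistributions $\mcD^{\alpha_1},\mcD^{\alpha_2}$ of $\mcD$ (Notation~\ref{nota. distribution D(A) and symbol algebra} with $A=\{\alpha_1\},\{\alpha_2\}$) as additional geometric data to refine the graded frame bundle into a genuine $G_0$-structure, and then to apply the Cartan connection criterion of Proposition~\ref{prop. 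Cartan connection to Fano deformation rigidity}.

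Choose an analytic open subset $\mcY^o\subset\mcY$ on which $\mcD,\mcD^{\alpha_1},\mcD^{\alpha_2}$ are holomorphic and $\Symb_x(\mcD)\cong\mfg_-(I)$ for every $x\in\mcY^o$; by the hypothesis of the proposition and Notation~\ref{nota. distribution D(A) and symbol algebra}, the set $\mcY^o$ contains all fibers $\mcY_t$ with $t\ne 0$ together with a dense open subset of $\mcY_0$. Because $\mcD=\mcD^{\alpha_1}\oplus\mcD^{\alpha_2}$ on $\mcY\setminus\mcY_0$ and the ranks add correctly at every point where the three distributions are holomorphic, the splitting $\mcD=\mcD^{\alpha_1}\oplus\mcD^{\alpha_2}$ continues to hold on $\mcY^o$; passing to symbol algebras it corresponds to the standard decomposition $\mfg_{-1}(I)=\mfg_{-1}^{\alpha_1}\oplus\mfg_{-1}^{\alpha_2}$, with $\dim\mfg_{-1}^{\alpha_1}=1$ and $\dim\mfg_{-1}^{\alpha_2}=m-1$.

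Let $\mcF\subset{\rm grFr}(\mcY^o)$ be the subbundle whose fiber at $x$ consists of the graded Lie algebra isomorphisms $\mfg_-(I)\rightarrow\Symb_x(\mcD)$ sending $\mfg_{-1}^{\alpha_i}$ onto $\mcD^{\alpha_i}_x$ for $i=1,2$. Its structure group is the subgroup $H\subset{\rm gr}\Aut(\mfg_-(I))$ preserving the decomposition $\mfg_{-1}(I)=\mfg_{-1}^{\alpha_1}\oplus\mfg_{-1}^{\alpha_2}$. A short calculation from the presentation in Proposition~\ref{prop. characterizing g-} shows that the bracket
\[
[\,\cdot\,,\,\cdot\,]:\mfg_{-1}^{\alpha_1}\otimes\mfg_{-1}^{\alpha_2}\longrightarrow\mfg_{-2}(I)
\]
is an isomorphism of $(m-1)$-dimensional vector spaces. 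Hence an element of $H$ is uniquely determined by a pair $(A,B)\in GL(\mfg_{-1}^{\alpha_1})\times GL(\mfg_{-1}^{\alpha_2})$, acting on $\mfg_{-2}(I)$ as $A\otimes B$, so $H\cong\C^{\times}\times GL_{m-1}$. On the other hand, Lemma~\ref{lem. aut(G PI)} shows that $G_0$ embeds into ${\rm gr}\Aut(\mfg_-(I))$ with image contained in $H$; comparing dimensions and using connectedness yields $H=G_0$. Thus $\mcF\rightarrow\mcY^o$ is a $G_0$-structure that extends the canonical $G_0$-structure on every $\mcY_t$, $t\ne 0$.

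Finally, by Proposition~\ref{prop. prolongation due to Yamaguchi}(i) the Lie algebra $\mfg=\mathfrak{sl}_{m+1}$ is the universal prolongation of $(\mfg_-(I),\mfg_0(I))$, so the hypothesis of Proposition~\ref{prop. Cartan connection to Fano deformation rigidity} is met for the family $(\mcY^o,\mcF)$, giving the desired biholomorphism $\mcY_0\cong A_m/P_I$. The main obstacle is the identification $H=G_0$ in the third paragraph, which rests on the nondegeneracy of the bracket pairing between $\mfg_{-1}^{\alpha_1}$ and $\mfg_{-1}^{\alpha_2}$; once this is in place, everything follows from the machinery already developed in Subsection~\ref{subsection. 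Deformation rigidity via parabolic geometry}.
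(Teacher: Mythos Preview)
Your proposal is correct and follows essentially the same approach as the paper: both define the reduced frame bundle consisting of graded isomorphisms $\mfg_-(I)\to\Symb_x(\mcD)$ that respect the splitting $\mfg_{-1}(I)=\mfg_{-1}^{\alpha_1}\oplus\mfg_{-1}^{\alpha_2}$, observe that this is a $G_0$-structure, and then invoke Proposition~\ref{prop. Cartan connection to Fano deformation rigidity}. The paper's proof is terser---it simply asserts that the resulting bundle $\mcE$ is a $G_0$-structure---whereas you supply the verification that the stabilizer $H$ of the splitting inside ${\rm gr}\Aut(\mfg_-(I))$ is exactly $\C^\times\times GL_{m-1}\cong G_0$, using the nondegeneracy of the bracket $\mfg_{-1}^{\alpha_1}\otimes\mfg_{-1}^{\alpha_2}\to\mfg_{-2}(I)$; this extra step is a genuine clarification of what the paper takes for granted. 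One small remark: your appeal to Lemma~\ref{lem. aut(G PI)} for the injectivity $G_0\hookrightarrow{\rm gr}\Aut(\mfg_-(I))$ is not quite the right citation---the relevant fact is the sentence after Notation~\ref{nota. Dynkin subdiagram and associated semisimple subgroups} that, because $G$ is of adjoint type, $G_0\subset GL(\mfg_-(I))$.
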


\begin{proof}
The distributions $\mcD^{\alpha_1}$ and $\mcD^{\alpha_2}$ are integrable on $\mcY_0$. Thus the isomorphism $\mfm_x(\alpha_1, \alpha_2)\cong\mfg_-(I)$ implies that $F: \mcD^{\alpha_1}\otimes\mcD^{\alpha_2}\rightarrow T^\pi/\mcD$ is surjective on the general point $x\in\mcY_0$, where $F$ is the restriction of the Frobenius bracket of $\mcD=\mcD^{\alpha_1}+\mcD^{\alpha_2}\subset T^\pi$.

Denote by $Z\subset\mcY$ the set of points $z$ such that $\mfm_x(I)\ncong\mfg_-(I)$. Then $Z$ is a proper closed algebraic subset of $\mcY_0$. Take any $y\in\mcY\setminus Z$ and define $\mcE_y$ to be the set of grading preserving isomorphisms $\varphi: \mfm_x(I)\rightarrow\mfg_-(I)$ such that $\varphi(\mcD^{\alpha_i}_y)=\mfg_{-1}(\alpha_i)$ for $i=1, 2$. Then $\mcE$ is an $G_0$-structure on the family $\mcY\setminus Z$ over $\Delta$, and the conclusion follows from Proposition \ref{prop. Cartan connection to Fano deformation rigidity}.
\end{proof}


\subsection{Reduction to homogeneous submanifolds} \label{subsection. reduction to homogeneous submanifolds}

The following is straight-forward.

\begin{lem-defi}\label{lem-defi. J-connected pair}
Take $\alpha\neq\beta\in I$ in Setting \ref{setup. G semisimple}. Then the followings are equivalent:

$(i)$ the manifold $\bS^{\alpha, \beta}\cong\bS^\alpha\times\bS^\beta$;

$(ii)$ the roots $\alpha$ and $\beta$ lie in different connected components of the Dynkin diagram of $G_{J\cup\{\alpha, \beta\}}$.

If $(i)$ and $(ii)$ do not hold, we say $(\alpha, \beta)$ is a $J$-connected pair.
\end{lem-defi}

Our main aim in this subsection is to show that

\begin{thm}\label{thm. reduction theorem}
In Setting \ref{setup. intro Fano deformation} suppose $|I|\geq 3$ and $F^{\alpha, \beta}_x\cong \bS^{\alpha, \beta}$ for any $J$-connected pair $\alpha\neq\beta\in I$ and general $x\in\mcX_0$. Then the manifold $\mcX_0\cong \bS$.
\end{thm}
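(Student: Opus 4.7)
The plan is to reduce the statement to Proposition \ref{prop. symbol algebra standard iff variety standard} by showing that the symbol algebra $\mfm_x(I)$ at a general $x \in \mcX_0$ is standard, i.e.\ isomorphic to $\mfg_-(I)$. The hypothesis $|I| \geq 3$ ensures that $\bS = G/P_I$ is not a projective space and that $(G,I)$ falls outside the two exceptional families \eqref{eqn. prolongation exception (Am a1ai)} and \eqref{eqn. prolongation exception (Am aiam)} appearing in Proposition \ref{prop. prolongation due to Yamaguchi}, so that Proposition \ref{prop. symbol algebra standard iff variety standard} is indeed applicable.

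By Proposition \ref{prop. dim mx(I) = dim X0} the dimensions match: $\dim\mfm_x(I) = \dim\mcX_0 = \dim\mfg_-(I)$. It therefore suffices to produce a surjective graded Lie algebra homomorphism $\mfg_-(I) \twoheadrightarrow \mfm_x(I)$. Using Proposition \ref{prop. Fax=Sa for x in X0 general}, I would fix an identification $\iota: \mcD_x \xrightarrow{\sim} \mfg_{-1}(I)$ respecting the decomposition $\mcD_x = \bigoplus_{\alpha \in I}\mcD^\alpha_x$ and carrying each affine cone $\wcCa_x(\mcX_0)$ onto $\wbZa$. Since $\mcD$ is bracket-generating by Proposition \ref{prop. rationally chain connected distribution version}, $\mfg_{-1}(I)$ generates $\mfm_x(I)$ via $\iota$, so by Proposition \ref{prop. characterizing g-} the desired surjection exists if and only if both families of relations (i) and (ii) of that proposition hold in $\mfm_x(I)$ under $\iota$.

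The next step is to verify these relations by restricting to homogeneous fibers of suitable Mori contractions. For relation (ii), involving a single $\alpha \in I$, the crucial observation is that $\mcD^\alpha$ is tangent to the fibers of $\pi^\alpha$, so the iterated brackets appearing in (ii) live inside $F^\alpha_x$; by Proposition \ref{prop. Fax=Sa for x in X0 general} we have $F^\alpha_x \cong \bS^\alpha$, and the graded subalgebra of $\mfm_x(I)$ generated by $\mcD^\alpha_x$ is a quotient of the standard symbol algebra $\mfg_-(\alpha)$ of $\bS^\alpha$, in which relation (ii) holds by Proposition \ref{prop. characterizing g-} applied to $\bS^\alpha$. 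For relation (i), involving a pair $\alpha' \neq \alpha'' \in I$, I first ensure that $F^{\alpha',\alpha''}_x \cong \bS^{\alpha',\alpha''}$: if $(\alpha',\alpha'')$ is $J$-connected this is exactly the hypothesis, while otherwise Lemma-Definition \ref{lem-defi. J-connected pair} gives $\bS^{\alpha',\alpha''} \cong \bS^{\alpha'} \times \bS^{\alpha''}$ and the rigidity of products under Fano deformation (\cite[Theorem 1]{Li18}) combined with Proposition \ref{prop. Fax=Sa for x in X0 general} yields standardness. Since $\mcD^{\alpha'} + \mcD^{\alpha''}$ is tangent to this fiber, the subalgebra of $\mfm_x(I)$ generated by $\mcD^{\alpha'}_x \oplus \mcD^{\alpha''}_x$ is a quotient of the standard symbol algebra of $\bS^{\alpha',\alpha''}$, which by Proposition \ref{prop. characterizing g-} satisfies relation (i) for that pair.

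The main delicate point I expect is reconciling the $G_0$-orbit in $\wbZ^{\alpha'}\times\wbZ^{\alpha''}$ appearing in relation (i) for $\bS$ with the analogous orbit under the Levi of the smaller fiber group $G_{J\cup\{\alpha',\alpha''\}}$, which a priori could be strictly smaller and so give fewer verified relations. The argument here should be that both orbits share the same semisimple part $G_J$ acting on each factor (Lemma \ref{lem. property of a-VMRT}(5)) together with the same effective two-dimensional central torus independently scaling the two factors, so they coincide as subsets of $\wbZ^{\alpha'}\times\wbZ^{\alpha''}$. Once this matching is confirmed, the surjection $\mfg_-(I) \twoheadrightarrow \mfm_x(I)$ is an isomorphism by the dimension count, and Proposition \ref{prop. symbol algebra standard iff variety standard} delivers $\mcX_0 \cong \bS$.
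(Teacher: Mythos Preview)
Your overall strategy matches the paper's exactly: verify conditions (i) and (ii) of Proposition~\ref{prop. characterizing g-} in $\mfm_x(I)$, conclude it is a quotient of $\mfg_-(I)$, invoke the dimension count of Proposition~\ref{prop. dim mx(I) = dim X0}, and finish with Proposition~\ref{prop. symbol algebra standard iff variety standard}. Your handling of condition (ii) via $F^\alpha_x\cong\bS^\alpha$ alone is correct and in fact slightly more economical than the paper's Lemma~\ref{lem. condition (ii) for g- in reduction thm}, which routes through a two-node fiber $F^{\alpha,\gamma}_x$.

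The genuine gap is in condition (i). You correctly flag the delicate point, but your proposed resolution addresses the wrong comparison. What you argue is that the orbit of the big Levi $G_0$ and that of the small Levi $G_0'$ coincide \emph{in the model} $\wbZ^{\alpha'}\times\wbZ^{\alpha''}$; this is true and easy. The actual difficulty is that your fixed $\iota$ and the pair-fiber isomorphism $F^{\alpha',\alpha''}_x\cong\bS^{\alpha',\alpha''}$ furnish \emph{two different} identifications of each $\mcD^{\alpha'}_x$ with $\mfg_{-1}(\alpha')$, differing by an element of $\Aut^o(\wbZ^{\alpha'})$, and these pull the model orbit back to potentially different subsets of $\wcC^{\alpha'}_x\times\wcC^{\alpha''}_x$. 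For instance, with $G=D_4$ and $I=\{\alpha_1,\alpha_3,\alpha_4\}$ one has $\wbZ^{\alpha_1}\cong\wbZ^{\alpha_3}\cong\C^2$, the relevant orbit is the proper locus $\{(v,w):[v]=[w]\text{ in }\mbP^1\}$, relation (i) reads $[v,w]=0$, and it \emph{fails} off that locus since $[\mfg_{-\alpha_1},\mfg_{-\alpha_2-\alpha_3}]=\mfg_{-\alpha_1-\alpha_2-\alpha_3}\neq 0$; choosing $\iota$ on the two factors independently can land you on the wrong orbit. The paper circumvents this not by an orbit comparison but by an inductive construction (Construction~\ref{cons. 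I(j)} and Lemma~\ref{lem. condition (i) for g- in reduction thm}): one fixes $\bar\alpha$, layers $I$ by $J$-distance, and for each new $\alpha$ defines the $G_0$-representation on $\mcD^\alpha_x$ from the pair-fiber with its lower-layer neighbor, lifting the discrepancy on the shared factor to an automorphism of $G_0$. That construction is precisely what makes the base points $v_\alpha$ and the $G_0$-actions simultaneously compatible across all pairs, and your orbit argument does not replace it.
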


As a direct consequence of Theorem \ref{thm. reduction theorem}, we have the following result.

\begin{cor}\label{cor. reduction results refined version}
In Setting \ref{setup. G semisimple} suppose $|I|\geq 2$ and that for any $\alpha\neq\beta\in I$, there exists a subset $A\subset I$ such that $\alpha,\beta\in A$ and the rational homogeneous space $\bS^A$ is rigid under Fano deformation. Then $G/P_I$ is rigid under Fano deformation.
\end{cor}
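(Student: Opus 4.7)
The plan is to derive this corollary directly from Theorem \ref{thm. reduction theorem}: given any family $\pi:\mcX\to\Delta$ as in Setting \ref{setup. intro Fano deformation} with $\mcX_t\cong\bS=G/P_I$ for $t\neq 0$, one must show $\mcX_0\cong\bS$. The case $|I|=2$ is trivial, since the only $A\subset I$ containing two distinct elements is $A=I$ itself and $\bS^I=\bS$ being rigid under Fano deformation is precisely the conclusion. Assume henceforth $|I|\geq 3$; by Theorem \ref{thm. reduction theorem} it then suffices to show $F^{\alpha,\beta}_x\cong\bS^{\alpha,\beta}$ for every $J$-connected pair $\alpha\neq\beta\in I$ and every general $x\in\mcX_0$.

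Fix such a $J$-connected pair and, using the hypothesis of the corollary, pick $A\subset I$ with $\alpha,\beta\in A$ such that $\bS^A$ is rigid under Fano deformation. The relative Mori contraction $\pi^A:\mcX\to\mcX^A$ of Proposition-Definition \ref{prop. extending phi(A) to pi(A)} has fibers $\bS^A$ over $\mcX^A_t$ for every $t\neq 0$. By generic smoothness applied to $\pi^A$ on the smooth variety $\mcX$, there is a connected analytic open subset $U\subset\mcX^A$ meeting both $\mcX^A_0$ and $\mcX^A_t$ with $t\neq 0$ over which $\pi^A$ is smooth. The restriction of $\pi^A$ to $(\pi^A)^{-1}(U)$ is then a smooth connected family of Fano manifolds whose generic member is $\bS^A$, so the Fano rigidity of $\bS^A$ forces every fiber of this restricted family to be $\bS^A$; in particular $F^A_x\cong\bS^A$ for general $x\in\mcX_0$.

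It remains to identify $F^{\alpha,\beta}_x\subset F^A_x\cong\bS^A$. By Proposition \ref{prop. invariance of Mori cone by Wisniewski}, the Mori cones $\overline{NE}(\mcX_t)$ all coincide with $\overline{NE}(\bS)=\sum_{\gamma\in I}R_\gamma$, and since $\{\alpha,\beta\}\subset A$ the face $R_\alpha+R_\beta$ is a subface of $\sum_{\gamma\in A}R_\gamma$. Consequently $F^{\alpha,\beta}_x$ is a fiber of the Mori contraction of $F^A_x\cong\bS^A$ corresponding to the subface $R_\alpha+R_\beta$; this contraction is the restriction of $\Phi^{\{\alpha,\beta\}}$ to $\bS^A$ and its fibers are copies of $\bS^{\alpha,\beta}$. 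Hence $F^{\alpha,\beta}_x\cong\bS^{\alpha,\beta}$, verifying the hypothesis of Theorem \ref{thm. reduction theorem} and yielding $\mcX_0\cong\bS$. The one step requiring any care is the justification of the smooth family structure needed to invoke the rigidity of $\bS^A$, which is handled by generic smoothness combined with Wi\'{s}niewski's invariance of Mori cones.
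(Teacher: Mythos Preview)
Your approach is essentially the same as the paper's: reduce to Theorem \ref{thm. reduction theorem} by showing $F^{\alpha,\beta}_x\cong\bS^{\alpha,\beta}$ for each $J$-connected pair via the rigidity of the intermediate $\bS^A$. Your treatment of the $|I|=2$ case and the passage from $F^A_x\cong\bS^A$ down to $F^{\alpha,\beta}_x\cong\bS^{\alpha,\beta}$ is correct and more explicit than the paper's terse ``the conclusion follows from Theorem \ref{thm. reduction theorem}.''

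There is one genuine omission. The corollary is stated in Setting \ref{setup. G semisimple}, where $G$ is only assumed semisimple, but you immediately place yourself in Setting \ref{setup. intro Fano deformation}, which requires $G$ simple; Theorem \ref{thm. reduction theorem} is likewise stated only in that setting. The paper handles this by first invoking Proposition \ref{prop. invariance of product structure under Fano deformation} to reduce to the case of simple $G$: if $G=G_1\times\cdots\times G_k$ with each $G_i$ simple, then $\bS$ splits as a product of the $G_i/P_{I_i}$, any Fano deformation of $\bS$ splits accordingly, and the hypothesis on pairs $\alpha\neq\beta\in I$ descends to each factor (pairs lying in different $I_i$ are never $J$-connected, and $\bS^{\alpha,\beta}$ for such pairs is a product, automatically rigid). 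You should insert this reduction before passing to Setting \ref{setup. intro Fano deformation}; otherwise the appeal to Theorem \ref{thm. reduction theorem} is not justified as written.
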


\begin{proof}
By Proposition \ref{prop. invariance of product structure under Fano deformation} in the following, we can assume the group $G$ is simple. Then we can discuss in Setting \ref{setup. intro Fano deformation}. Given any subset $A\subset I$, a general fiber of $\pi^A_0: \mcX_0\rightarrow\mcX^A_0$ is a Fano deformation of $\bS^A$. Then the conclusion follows from Theorem \ref{thm. reduction theorem}.
\end{proof}

\begin{prop}\cite[Theorem 1]{Li18}\label{prop. invariance of product structure under Fano deformation}
Let $\phi: \mcZ\rightarrow\Delta\ni 0$ be a holomorphic map with all fiber being connected Fano manifolds. Suppose that $\mcZ_0\cong\mcZ'_0\times\mcZ''_0$. Then there are holomorphic maps $\phi': \mcW'\rightarrow\Delta$ and $\phi'': \mcW''\rightarrow\Delta$ such that all fiber of $\phi'$ and $\phi''$ are connected Fano manifolds, $\mcW'_0\cong\mcZ'_0$, $\mcW''_0\cong\mcZ''_0$, and $\mcZ=\mcW'\times_{\Delta}\mcW''$.
\end{prop}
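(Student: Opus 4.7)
The plan is to produce $\mcW'$ and $\mcW''$ as relative Mori contractions of two complementary extremal faces of $\overline{NE}(\mcZ/\Delta)$ inherited from the product decomposition of the central fiber, and then check that the pair of contractions identifies $\mcZ$ with the fiber product over $\Delta$.

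First I would read off the relevant extremal faces from the product structure. The identification $\mcZ_0\cong\mcZ'_0\times\mcZ''_0$ gives a splitting $\overline{NE}(\mcZ_0)=F'\oplus F''$, where $F':=\overline{NE}(\mcZ'_0)\times\{0\}$ and $F'':=\{0\}\times\overline{NE}(\mcZ''_0)$ are extremal faces whose contractions are the two smooth projections $p'':\mcZ_0\to\mcZ''_0$ (contracting $F'$) and $p':\mcZ_0\to\mcZ'_0$ (contracting $F''$). Both are smooth morphisms with connected Fano fibers.

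Next, by Wi\'{s}niewski's invariance theorem (Proposition \ref{prop. invariance of Mori cone by Wisniewski}), the cones of $\mcZ_t$ fit into a common relative Mori cone $\overline{NE}(\mcZ/\Delta)$, so $F'$ and $F''$ correspond to extremal faces of the relative cone. Since $\phi$ is relatively Fano, the relative cone theorem and base-point-free theorem produce relative contractions $\pi':\mcZ\to\mcW'$ (contracting $F''$) and $\pi'':\mcZ\to\mcW''$ (contracting $F'$) over $\Delta$, whose restrictions to $\mcZ_0$ are precisely $p'$ and $p''$; in particular $\mcW'_0\cong\mcZ'_0$ and $\mcW''_0\cong\mcZ''_0$. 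I would then verify that after shrinking $\Delta$ the morphisms $\pi',\pi''$ are smooth with connected Fano fibers: smoothness is open on proper flat families and already holds along $\mcZ_0$, fiber dimensions are constant, and a small smooth deformation of a Fano manifold is Fano. Consequently $\mcW'$ and $\mcW''$ are themselves smooth over $\Delta$ with connected Fano fibers.

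Finally I would consider the $\Delta$-morphism $\Phi:=(\pi',\pi''):\mcZ\to\mcW'\times_\Delta\mcW''$. Both source and target are smooth and proper over $\Delta$ of the same relative dimension $\dim\mcZ'_0+\dim\mcZ''_0$, and under the identifications $\mcW'_0\cong\mcZ'_0$, $\mcW''_0\cong\mcZ''_0$ the restriction $\Phi_0$ is the identity of $\mcZ'_0\times\mcZ''_0$. A proper morphism between smooth varieties that is an isomorphism on one fiber of a flat family is an isomorphism in a neighborhood of that fiber, so after shrinking $\Delta$ we obtain $\mcZ\cong\mcW'\times_\Delta\mcW''$, completing the proof.

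The main obstacle is the second step, namely the existence and correct specialization of the two relative contractions. One must combine Wi\'{s}niewski's invariance of the Mori cone with the relative cone theorem for Fano fibrations and then argue that the contractions of the extremal faces $F',F''$, which a priori are produced by the relative base-point-free theorem, restrict on $\mcZ_0$ to the given projections $p',p''$. The key input here is that ample classes on $\mcZ'_0$ and $\mcZ''_0$ pull back to relative classes on $\mcZ/\Delta$ that are nef on fibers and whose base loci behave well in families; once this is in place the remaining smoothness, Fanoness, and fiber-product identification steps are essentially openness arguments.
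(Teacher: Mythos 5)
The paper does not prove this proposition; it simply cites it as \cite[Theorem 1]{Li18}, so there is no internal proof here to compare against. What I can do is assess the argument you give on its own terms, and against the style of arguments the present paper actually uses (Wi\'sniewski's cone invariance, relative Mori contractions as in Proposition--Definition~\ref{prop. extending phi(A) to pi(A)}).

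Your proposal is sound and uses the right ideas. The product decomposition of $\mcZ_0$ indeed gives $N_1(\mcZ_0)=N_1(\mcZ'_0)\oplus N_1(\mcZ''_0)$ and the complementary extremal faces $F'$, $F''$ whose contractions are the two projections; Proposition~\ref{prop. invariance of Mori cone by Wisniewski} promotes these to extremal faces of $\overline{NE}(\mcZ/\Delta)$, and the relative cone/contraction theorem (applicable because $-K_{\mcZ/\Delta}$ is $\phi$-ample) yields $\pi'$ and $\pi''$ whose central restrictions have connected fibers and contract exactly $F''$, resp.\ $F'$, hence coincide with the projections. Smoothness of $\pi'$, $\pi''$ spreads out from $\mcZ_0$ by openness of smoothness together with properness (so it holds over a shrunken disc), Fanoness of the fibers of $\phi'$, $\phi''$ is an open condition, and the comparison map $\Phi=(\pi',\pi'')$ to $\mcW'\times_\Delta\mcW''$ is a proper morphism of smooth $\Delta$-schemes that restricts to the identity on the central fiber and is therefore \'etale of degree one near $\mcZ_0$, hence an isomorphism after shrinking $\Delta$. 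Two points you should make explicit to close the argument cleanly: (i) that the relative contraction of $F''$ restricted to $\mcZ_0$ not merely factors through but equals $p'$ --- this follows because $p'$ already has connected fibers, so it is itself the (unique) contraction of the face $F''$ of $\overline{NE}(\mcZ_0)$; and (ii) that the conclusion is obtained only after shrinking $\Delta$, which is implicit in the local-deformation setup but should be stated, since the statement as written asserts $\mcZ=\mcW'\times_\Delta\mcW''$ over all of $\Delta$. Neither of these is a genuine obstruction, and the approach is the natural one consistent with the machinery used elsewhere in this paper.
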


Now we turn to the proof of Theorem \ref{thm. reduction theorem}. By Proposition \ref{prop. symbol algebra standard iff variety standard}, it suffices to show that the symbol algebra $\mfm_x(I)$ is standard for $x\in\mcX_0$ general. To verify it, we will apply Proposition \ref{prop. characterizing g-}.

\begin{lem}\label{lem. TxFa and Da direct sum}
In Setting \ref{setup. intro Fano deformation} the followings hold at general points $x\in\mcX_0$:
\begin{eqnarray}
&& \sum\limits_{\alpha\in I}T_x F^\alpha_x=\bigoplus\limits_{\alpha\in I}T_x F^\alpha_x\subset T_x\mcX_0, \label{eqn. TxF(a) direct sum} \\
&& \mcD_x=\sum\limits_{\alpha\in I}\mcD^\alpha_x=\bigoplus\limits_{\alpha\in I}\mcD^\alpha_x\subset T_x\mcX_0, \label{eqn. Da direct sum}
\end{eqnarray}
where the distributions $\mcD^\alpha$ and $\mcD$ are as in Notation \ref{nota. distribution D(A) and symbol algebra}.
\end{lem}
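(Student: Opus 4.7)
The plan is to handle the two identities separately; each reduces to a dimension count on $\mcX_0$ combined with an inclusion that is already known on the generic fiber.

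For the second identity, I would use that by Lemma \ref{lem. minimal a-distribution}, $\mfg_{-1}(\bS)=\bigoplus_{\alpha\in I}\mfg^\alpha(\bS)$ on $\mcX_t\cong\bS$ for $t\ne 0$. Uniqueness of the meromorphic extension then gives the equality $\mcD=\sum_{\alpha\in I}\mcD^\alpha$ of meromorphic distributions on the total space $\mcX$, together with the inclusions $\mcD^\alpha\subseteq\mcD$. The generic ranks are $\rank(\mcD^\alpha)=\dim\mfg^\alpha(\bS)$ and $\rank(\mcD)=\dim\mfg_{-1}(\bS)=\sum_\alpha\dim\mfg^\alpha(\bS)$. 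At a general point $x\in\mcX_0$ where all these meromorphic sheaves are regular of their generic rank, the inclusion $\sum_\alpha\mcD^\alpha_x\subseteq\mcD_x$ combined with the equality of dimensions forces $\sum_\alpha\mcD^\alpha_x=\mcD_x$, and $\dim\sum_\alpha\mcD^\alpha_x=\sum_\alpha\dim\mcD^\alpha_x$ makes the sum automatically direct.

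For the first identity I would exploit a product of Mori contractions. For each $\alpha\in I$ set
\[
\Psi^\alpha:=(\pi^\alpha,\pi^{I\setminus\{\alpha\}}):\mcX\longrightarrow\mcX^\alpha\times_\Delta\mcX^{I\setminus\{\alpha\}}.
\]
On $\mcX_t\cong\bS$ with $t\ne 0$, the parabolic identity $P_{I\setminus\{\alpha\}}\cap P_\alpha=P_I$ shows that $\Psi^\alpha_t$ is injective with injective differential (its kernel at the base point is $(\mfp_{I\setminus\{\alpha\}}\cap\mfp_\alpha)/\mfp_I=\mfp_I/\mfp_I=0$), so $\Psi^\alpha_t$ is a closed immersion. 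Let $\mcY^\alpha$ be the scheme-theoretic image of $\Psi^\alpha$. As an irreducible subvariety dominating the smooth curve $\Delta$, it is flat over $\Delta$, so $\mcY^\alpha_0$ is irreducible of dimension $\dim\bS=\dim\mcX_0$. Properness of $\Psi^\alpha_0$ gives $\Psi^\alpha_0(\mcX_0)=\mcY^\alpha_0$, so $\Psi^\alpha_0:\mcX_0\to\mcY^\alpha_0$ is a surjective generically finite morphism between irreducible varieties of equal dimension. By generic smoothness in characteristic zero (applied after restricting to the smooth locus of $\mcY^\alpha_0$), its differential is an isomorphism at a general $x\in\mcX_0$, forcing $T_xF^\alpha_x\cap T_xF^{I\setminus\{\alpha\}}_x=0$. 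Since $\pi^\beta$ factors through $\pi^{I\setminus\{\alpha\}}$ for every $\beta\in I\setminus\{\alpha\}$, we have $T_xF^\beta_x\subseteq T_xF^{I\setminus\{\alpha\}}_x$, yielding $T_xF^\alpha_x\cap\sum_{\beta\ne\alpha}T_xF^\beta_x=0$; ranging over all $\alpha\in I$ delivers the direct-sum decomposition.

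The main obstacle I anticipate is controlling the dimension of the special fiber $\mcY^\alpha_0$: without ensuring that the image does not degenerate at $t=0$, the generic-smoothness step would not yield the infinitesimal injectivity needed. This is secured by the flatness of $\mcY^\alpha\to\Delta$ (automatic since $\mcY^\alpha$ is an irreducible variety dominating the smooth curve $\Delta$) together with the irreducibility of $\mcY^\alpha_t\cong\bS$ for $t\ne 0$.
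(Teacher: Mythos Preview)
Your argument for \eqref{eqn. TxF(a) direct sum} is correct and close in spirit to the paper's. The paper is slicker: it just observes that $\pi':=(\pi^\alpha,\pi^{I\setminus\{\alpha\}}):\mcX_0\to\mcX^\alpha_0\times\mcX^{I\setminus\{\alpha\}}_0$ contracts no curves (a curve contracted by both factors would have numerical class in $R_\alpha\cap\sum_{\beta\neq\alpha}R_\beta=0$, impossible since the Mori cone is simplicial), hence is finite onto its image, and generic smoothness in characteristic zero then gives $\ker d\pi'_x=T_xF^\alpha_x\cap T_xF^{I\setminus\{\alpha\}}_x=0$ at general $x$. Your flatness-over-$\Delta$ detour reaches the same endpoint with more scaffolding.

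Your argument for \eqref{eqn. Da direct sum}, however, has a genuine gap. The step ``uniqueness of the meromorphic extension gives $\mcD=\sum_\alpha\mcD^\alpha$'' is not justified: the sheaf-theoretic sum $\sum_\alpha\mcD^\alpha$ need not be saturated in $T^\pi$, so it may sit as a proper subsheaf of $\mcD$ along $\mcX_0$. Concretely, nothing you wrote rules out that $\mcD^\alpha_x$ and $\mcD^\beta_x$ collide at general $x\in\mcX_0$ even though they are transverse on every $\mcX_t$ with $t\neq0$ (think of two line subbundles of a rank-two bundle over $\Delta$ that coincide only over $0$). Your rank count $\dim\mcD_x=\sum_\alpha\dim\mcD^\alpha_x$ is correct, but combined with $\sum_\alpha\mcD^\alpha_x\subseteq\mcD_x$ it only yields $\dim(\sum_\alpha\mcD^\alpha_x)\leq\dim\mcD_x$; asserting equality is precisely the directness you are trying to prove, so the reasoning is circular. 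The paper instead deduces \eqref{eqn. Da direct sum} from \eqref{eqn. TxF(a) direct sum}: since $\mfg^\alpha(\bS)\subset T^{\Phi^\alpha}$ on $\bS$, one has $\mcD^\alpha_x\subset T_xF^\alpha_x$ at general $x\in\mcX_0$, and directness of the $T_xF^\alpha_x$ forces directness of the $\mcD^\alpha_x$; the dimension count then gives $\bigoplus_\alpha\mcD^\alpha_x=\mcD_x$.
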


\begin{proof}
The relative Mori contractions $\pi^\alpha: \mcX\rightarrow\mcX^\alpha$ and $\pi^{I\setminus\{\alpha\}}: \mcX\rightarrow\mcX^{I\setminus\{\alpha\}}$ induce a morphism
\begin{eqnarray*}
\pi': & \mcX_0\rightarrow\mcX^\alpha_0\times\mcX^{I\setminus\{\alpha\}}_0 \\
& x\mapsto (\pi^\alpha(x), \pi^{I\setminus\{\alpha\}}(x)),
\end{eqnarray*}
which contracts no curves. Then $T_xF^\alpha_x\cap T_xF^{I\setminus\{\alpha\}}_x=\{0\}$ for $\alpha\in I$ and $x\in\mcX_0$, which implies \eqref{eqn. TxF(a) direct sum}. Now \eqref{eqn. Da direct sum} follows from the inclusion $\mcD^\alpha_x\subset T_xF^\alpha_x$ and \eqref{eqn. TxF(a) direct sum}.
\end{proof}

\begin{lem}\label{lem. Cax is Za}
Take $\alpha\in I$ and $x\in\mcX_0$ general in setting of Theorem \ref{thm. reduction theorem}. Then $\mcC^\alpha_x\subset \mbP(\mcD^\alpha_x)$ is projectively equivalent to $\bZ^\alpha\subset \mbP(\mfg_{-1}(\alpha))$.
\end{lem}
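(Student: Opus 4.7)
The plan is to use the biholomorphism $F^\alpha_x \cong \bS^\alpha$ furnished by Proposition \ref{prop. Fax=Sa for x in X0 general} to transport the whole VMRT picture from $\bS$ to $\mcX_0$. Fix such a biholomorphism $\phi: \bS^\alpha \xrightarrow{\sim} F^\alpha_x$ sending a chosen base point $p_0$ to $x$. Any curve in $\K^\alpha_x(\mcX_0)$ is contracted by $\pi^{I\setminus\{\alpha\}}$ and therefore lies in $F^\alpha_x$; conversely, since $\bS^\alpha$ has Picard number one, its intrinsic minimal family corresponds under $\phi$ to $\K^\alpha_x(\mcX_0)$. This abstractly identifies $\mcC^\alpha_x(\mcX_0)$ with $\mcC^\alpha_{p_0}(\bS) = \bZ^\alpha$ via $d\phi^{-1}_{p_0}$.

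The main content of the proof is to show that this same map $d\phi^{-1}_{p_0}$ sends $\mcD^\alpha_x$ isomorphically onto $\mfg^\alpha(\bS)_{p_0} = \mfg_{-1}(\alpha)$. Every root contributing to $\mfg_{-1}(\alpha)$ has zero $(I\setminus\{\alpha\})$-degree, whence $\mfg_{-1}(\alpha) \subset \mfp_{I\setminus\{\alpha\}}$ and $\mfg^\alpha(\bS) \subset T^{\Phi^\alpha}$ on $\bS$. The inclusion $\mcD^\alpha \subset T^{\pi^\alpha}$ is a closed incidence condition satisfied over $\mcX \setminus \mcX_0$, so it extends to the open locus of $\mcX_0$ on which $\mcD^\alpha$ is regular; combined with the generic rank equality $\dim \mcD^\alpha_x = \dim \mfg_{-1}(\alpha)$, this yields $\mcD^\alpha_x \subset T_x F^\alpha_x$ of the predicted dimension. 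On the other hand, $\mcD^\alpha_x$ contains the tangent direction $T_x C$ of every $[C] \in \K^\alpha_x(\mcX_0)$, and under $d\phi^{-1}_{p_0}$ these span the affine cone $\wbZa$, which is linearly nondegenerate in $\mfg_{-1}(\alpha)$ by Lemma \ref{lem. property of a-VMRT}(4). Matching dimensions then force $d\phi^{-1}_{p_0}(\mcD^\alpha_x) = \mfg_{-1}(\alpha)$, and the desired projective equivalence follows.

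The main obstacle is the meromorphic-extension step: one must know that at a general $x \in \mcX_0$ the distribution $\mcD^\alpha$ remains tangent to $F^\alpha_x$ and attains its generic rank. Both properties follow by standard semi-continuity arguments for meromorphic distributions on a family, together with the closedness of the subbundle inclusion $\mcD^\alpha \subset T^{\pi^\alpha}$ on the regular locus. Once this is in place the remainder of the argument is essentially formal, relying crucially on the linear nondegeneracy of $\bZ^\alpha$ in $\mbP(\mfg_{-1}(\alpha))$ recorded in Lemma \ref{lem. property of a-VMRT}(4).
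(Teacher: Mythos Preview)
Your proof is correct and rests on the same idea as the paper's, namely the biholomorphism $F^\alpha_x\cong\bS^\alpha$ from Proposition~\ref{prop. Fax=Sa for x in X0 general}. The paper's own proof is a two-line remark: once $F^\alpha_x\cong\bS^\alpha$, the identification $\mcC^\alpha_x\cong\bZ^\alpha$ is declared immediate. You go further and actually pin down the ambient linear space, showing $d\phi^{-1}_{p_0}(\mcD^\alpha_x)=\mfg_{-1}(\alpha)$ via the containment $\mcD^\alpha\subset T^{\pi^\alpha}$ on the regular locus together with the nondegeneracy of $\bZ^\alpha$ in $\mbP(\mfg_{-1}(\alpha))$ (Lemma~\ref{lem. property of a-VMRT}(4)). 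This is a genuine addition: the lemma as stated asserts projective equivalence of \emph{embedded} varieties, not just abstract isomorphism, and your argument supplies precisely that missing step. One small caution: your sentence ``every root contributing to $\mfg_{-1}(\alpha)$ has zero $(I\setminus\{\alpha\})$-degree'' is only correct under the interpretation (implicit in Lemma~\ref{lem. minimal a-distribution} and Lemma~\ref{lem. property of a-VMRT}) that $\mfg_{-1}(\alpha)$ denotes the $\alpha$-summand of $\mfg_{-1}(I)$ rather than the full $\{\alpha\}$-graded piece of $\mfg$; with that reading your argument goes through cleanly.
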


\begin{proof}
By Proposition \ref{prop. Fax=Sa for x in X0 general}, the fiber $F^\alpha_x$ at a general point $x\in\mcX_0$ is biholomorphic to $\bS^\alpha$. Thus $\mcC^\alpha_x\cong\bZ^\alpha$.
\end{proof}

\begin{lem}\label{lem. extend gb(Ha) standardly}
In setting of Theorem \ref{thm. reduction theorem}, take $\alpha\in I$ and $\beta\in N_J(\alpha)$ and a general point $x\in\mcX_0$. Then the distribution $\mfg^\beta(\wbZ^\alpha)$ is extended a holomorphic distribution $\mcD^\beta(\wcC^\alpha_x)$ on $\mcC^\alpha_x$, and under the identification $\wbZ^\alpha=\wcC^\alpha_x$ we have $\mfg^\beta(\wbZ^\alpha)=\mcD^\beta(\wcC^\alpha_x)$.
\end{lem}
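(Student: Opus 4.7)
The plan is to transport the model distribution $\mfg^\beta(\wbZ^\alpha)$ to $\wcC^\alpha_x$ through the projective equivalence supplied by Lemma \ref{lem. Cax is Za}, and then verify that this transport is canonical by invoking the $\Aut^o$-invariance recorded in Lemma \ref{cor. gb(Ha) is Aut(Ha) invariant}.

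First I would apply Proposition \ref{prop. Fax=Sa for x in X0 general} to identify $F^\alpha_x\cong\bS^\alpha$ at a general $x\in\mcX_0$. Lemma \ref{lem. Cax is Za} then provides a linear isomorphism $\Phi:\mcD^\alpha_x\xrightarrow{\sim}\mfg_{-1}(\alpha)$ restricting to a projective equivalence of affine cones $\wcC^\alpha_x\xrightarrow{\sim}\wbZ^\alpha$. Using this, I would set
\begin{equation*}
\mcD^\beta(\wcC^\alpha_x) := \Phi^{*}\mfg^\beta(\wbZ^\alpha),
\end{equation*}
which is a holomorphic distribution on $\wcC^\alpha_x$ that, by construction, coincides with $\mfg^\beta(\wbZ^\alpha)$ under the identification.

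The only nontrivial step is to show that $\mcD^\beta(\wcC^\alpha_x)$ is independent of the choice of $\Phi$. Two such linear isomorphisms differ by an element $\varphi\in\Aut(\wbZ^\alpha,\mfg_{-1}(\alpha))$. This ambiguity in $\Phi$ is induced by the ambiguity in the biholomorphism $F^\alpha_x\cong\bS^\alpha$ carrying $x$ to the base point, i.e. by the isotropy of the base point inside $\Aut^o(\bS^\alpha)$ (the latter being identified with the associated simple group by Lemma \ref{lem. aut(G PI)}); this isotropy is a parabolic subgroup and hence connected, so $\varphi$ lies in the identity component $\Aut^o(\wbZ^\alpha,\mfg_{-1}(\alpha))=\Aut^o(\wbZ^\alpha)$ appearing in Lemma \ref{cor. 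Aut(Ha)}. The $\Aut^o(\wbZ^\alpha)$-invariance of $\mfg^\beta(\wbZ^\alpha)$ from Lemma \ref{cor. gb(Ha) is Aut(Ha) invariant} then gives $\varphi^{*}\mfg^\beta(\wbZ^\alpha)=\mfg^\beta(\wbZ^\alpha)$, so the pulled-back distribution is well-defined.

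The main obstacle is precisely this last connectedness verification, i.e.\ ruling out ambiguities coming from non-identity components of $\Aut(\wbZ^\alpha,\mfg_{-1}(\alpha))$. Once it is handled via the connectedness of the parabolic stabilizer on $\bS^\alpha$, the remaining assertions are routine transports of geometric structures through biholomorphisms.
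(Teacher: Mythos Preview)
Your proposal is correct and follows essentially the same route as the paper, which simply records the result as ``a direct consequence of Lemma \ref{lem. Cax is Za} and Lemma \ref{cor. gb(Ha) is Aut(Ha) invariant}.'' Your version is more explicit in that you spell out why the ambiguity in the identification $\Phi$ lies in $\Aut^o(\wbZ^\alpha,\mfg_{-1}(\alpha))$ rather than the full automorphism group, a point the paper leaves implicit.
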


\begin{proof}
It is a direct consequence of Lemma \ref{lem. Cax is Za} and Lemma \ref{cor. gb(Ha) is Aut(Ha) invariant}.
\end{proof}

Now we are ready to check condition $(ii)$ of Proposition \ref{prop. characterizing g-}, while condition $(i)$ is to be checked later.

\begin{lem}\label{lem. condition (ii) for g- in reduction thm}
In setting of Theorem \ref{thm. reduction theorem}, take $x\in\mcX_0$ general, and any $\alpha\in I$, $\beta\in N_J(\alpha)$, $v\in\wbZa_x\setminus\{0\}$ and $u\in\mfg^\beta_v(\wbZa_x)$, we have
\begin{eqnarray}\label{eqn. condition (ii) for g- in reduction thm}
(\ad v)^{-\langle\beta, \alpha\rangle}(u)=0 \mbox{ in } \mfm_x(I).
\end{eqnarray}
\end{lem}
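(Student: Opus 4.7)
The plan is to reduce \eqref{eqn. condition (ii) for g- in reduction thm} to the corresponding identity inside the symbol algebra of the restricted distribution $\mcD^\alpha|_{F^\alpha_x}$ on the fiber $F^\alpha_x\cong\bS^\alpha$, and then extract that identity from condition (ii) of Proposition~\ref{prop. characterizing g-} applied to the Picard-number-one rational homogeneous space $\bS^\alpha=G_{J\cup\{\alpha\}}/P_{\{\alpha\}}$.

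The preparatory step is to identify $\mcD^\alpha|_{F^\alpha_x}$ with the standard minimal distribution $\mfg^\alpha(\bS^\alpha)$. Since $\pi^\alpha$ is a relative Mori contraction, $\mcD^\alpha$ is tangent to its fibers; since $F^\alpha_x\cong\bS^\alpha$ by Proposition~\ref{prop. Fax=Sa for x in X0 general}, the restriction $\mcD^\alpha|_{F^\alpha_x}$ is a holomorphic distribution on $\bS^\alpha$. By Lemma~\ref{lem. Cax is Za} and the nondegeneracy of $\bZ^\alpha$ from Lemma~\ref{lem. property of a-VMRT}(4), the linear span of the VMRT $\mcC^\alpha_y$ at a general $y\in F^\alpha_x$ equals $\mcD^\alpha_y$; the same holds for $\mfg^\alpha(\bS^\alpha)$ on the homogeneous model, so these two distributions, each recovered as the span of the (same) VMRT, coincide. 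Similarly $\wcCa$ is identified with $\wbZa$, and Lemma~\ref{lem. extend gb(Ha) standardly} identifies the distribution $\mfg^\beta(\wcCa)$ with $\mfg^\beta(\wbZa)$, so the data $(v,u)$ with $v\in\wbZa_x\setminus\{0\}$ and $u\in\mfg^\beta_v(\wbZa_x)$ is legitimate data inside $\mcD^\alpha_x\subset TF^\alpha_x$.

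With this identification in place, Proposition~\ref{prop. characterizing g-} applied to $\bS^\alpha$, where the $\{\alpha\}$-grading on $\mfg_{J\cup\{\alpha\}}$ coincides with the restriction of the $I$-grading on $\mfg$, yields
\begin{equation*}
(\ad v)^{-\langle\beta,\alpha\rangle}(u)=0\quad\text{in }\Symb_x(\mcD^\alpha|_{F^\alpha_x})\cong\mfg_-(\alpha).
\end{equation*}
To transfer this vanishing to $\mfm_x(I)$, one observes that $F^\alpha_x$ is a submanifold of $\mcX_0$ and $\mcD^\alpha\subset\mcD$, so iterated Lie brackets of local sections of $\mcD^\alpha$ tangent to $F^\alpha_x$ remain tangent to $F^\alpha_x$. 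This gives the inclusions $(\mcD^\alpha|_{F^\alpha_x})^{-k}_x\subset\mcD^{-k}_x$ of weak derivatives at $x$ for every $k\geq 1$, and passing to the associated graded pieces produces a homomorphism of graded nilpotent Lie algebras $\Symb_x(\mcD^\alpha|_{F^\alpha_x})\to\mfm_x(I)$ which is the inclusion $\mcD^\alpha_x\hookrightarrow\mcD_x$ in degree $-1$. Under this Lie algebra homomorphism the vanishing in $\mfg_-(\alpha)$ maps to exactly the identity \eqref{eqn. condition (ii) for g- in reduction thm} in $\mfm_x(I)$.

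The main obstacle I expect is the preparatory identification: checking that $\mcD^\alpha|_{F^\alpha_x}$ really is the standard minimal distribution on $\bS^\alpha$ rather than some non-standard distribution happening to carry the same VMRT at general points. This is resolved by the linear-span-determines-distribution principle together with the nondegeneracy of $\bZ^\alpha$ in Lemma~\ref{lem. property of a-VMRT}(4); once it is in hand, the transfer from $\Symb_x(\mcD^\alpha|_{F^\alpha_x})$ to $\mfm_x(I)$ is purely structural and the desired relation holds for every admissible $(\beta,v,u)$.
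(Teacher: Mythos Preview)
Your proof is correct but takes a different route from the paper. The paper chooses an auxiliary $\gamma\in I\setminus\{\alpha\}$ that is $J$-connected to $\alpha$, invokes the hypothesis of Theorem~\ref{thm. reduction theorem} to get $F^{\alpha,\gamma}_x\cong\bS^{\alpha,\gamma}$, observes that the relation therefore holds in the standard symbol algebra $\mfm_x(\alpha,\gamma)$, and then transfers to $\mfm_x(I)$ via the inclusion $\mcD^{\alpha,\gamma}\subset\mcD^I$. You instead stay entirely inside the Picard-number-one fiber $F^\alpha_x$: using only Proposition~\ref{prop. Fax=Sa for x in X0 general} (which does not require the hypothesis of Theorem~\ref{thm. reduction theorem}) you identify $\mcD^\alpha|_{F^\alpha_x}$ with the model minimal distribution on $\bS^\alpha$, extract the relation in $\Symb_x(\mcD^\alpha|_{F^\alpha_x})\cong\mfg_-(\alpha)$ from condition~(ii) of Proposition~\ref{prop. characterizing g-}, and transfer along $\mcD^\alpha\subset\mcD^I$. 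Your argument is slightly more economical in that it never touches the two-root fibers $F^{\alpha,\gamma}_x$ and makes explicit the identifications that the paper leaves implicit; the paper's version is shorter and uses the standing hypothesis of Theorem~\ref{thm. reduction theorem} more directly. Note also that since $\mcD^\alpha$ is tangent to the foliation by $\pi^\alpha$-fibers, your $\Symb_x(\mcD^\alpha|_{F^\alpha_x})$ literally coincides with $\mfm_x(\alpha)$, so the transfer homomorphism you build is the same mechanism the paper uses, applied one root earlier.
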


\begin{proof}
Let $\gamma\in I\setminus\{\alpha\}$ be any root that is $J$-connected with $\alpha$. By assumption of Theorem \ref{thm. reduction theorem},
\begin{eqnarray*}\label{eqn. (adv)...(u)=0 in m(a, r)}
(\ad v)^{-\langle\beta, \alpha\rangle}(u)=0 \mbox{ in } \mfm_x(\alpha, \gamma)
\end{eqnarray*}
Then the inclusion $\mcD^{\alpha, \gamma}\subset\mcD^I$ implies that \eqref{eqn. condition (ii) for g- in reduction thm} holds.
\end{proof}

To check the condition $(i)$ of Proposition \ref{prop. characterizing g-}, we need to write $I$ as a disjoint union $I(j)$ in a special way.

\begin{cons}\label{cons. I(j)}
Fix any element $\bar{\alpha}\in I$ and define $I(1):=\{\bar{\alpha}\}$. Now for each $j\geq 1$ define by induction that
\begin{eqnarray*}
\quad\quad I(j+1):=\{\alpha\in I\setminus\bigcup\limits_{s\leq j}I(s)\mid (\alpha, \beta) \mbox{ is $J$-connected for some } \beta\in I(j)\}.
\end{eqnarray*}
\end{cons}

\begin{lem}\label{lem. 6'}
In setting of Construction \ref{cons. I(j)}, the followings hold.

(1) The set $I$ is the disjoint union of $I(j), j\geq 1$.

(2) Given any $j\geq 2$ with $I(j)\neq\emptyset$ and any $\alpha\in I(j)$, there exists a unique $\beta\in(\bigcup\limits_{s\leq j}I(s))\setminus\{\alpha\}$ such that $(\alpha, \beta)$ is $J$-connected. Moreover, this unique $\beta$ belongs to $I(j-1)$.

(3) Given any $J$-connected pair $(\alpha, \beta)$, there exists a unique $j\geq 1$ such that $\{\alpha, \beta\}\subset I(j)\cup I(j+1)$. Moreover, either $\alpha\in I(j), \beta\in I(j+1)$ or $\beta\in I(j), \alpha\in I(j+1)$.
\end{lem}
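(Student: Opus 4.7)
My plan is to encode $J$-connectedness as an auxiliary graph $H$ on the vertex set $I$, and interpret $\{I(j)\}_{j\ge 1}$ as the breadth-first-search layering of $H$ rooted at $\bar{\alpha}$. By Lemma-Definition \ref{lem-defi. J-connected pair}, distinct $\alpha,\beta\in I$ are $J$-connected iff the unique path from $\alpha$ to $\beta$ in $\Gamma_R$ (a tree, since $G$ is simple of type $ADE$) has all interior vertices in $J$. A straightforward induction on $j$ then identifies $I(j)$ with the set of vertices at $H$-distance $j-1$ from $\bar{\alpha}$.

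For (1), the graph $H$ is connected: given $\alpha,\beta\in I$, list the $I$-vertices $\alpha=v_{i_0},v_{i_1},\ldots,v_{i_\ell}=\beta$ appearing in order along the unique $\Gamma_R$-path from $\alpha$ to $\beta$. Between consecutive $v_{i_k}, v_{i_{k+1}}$ only $J$-vertices appear, so each such pair is $J$-connected and forms an $H$-edge. Thus BFS reaches every vertex, and the $I(j)$ form a disjoint cover of $I$.

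For (2) and (3), I would show that $H$ inherits a tree structure from $\Gamma_R$. Contract each connected component $J_i$ of $\Gamma_J$ in $\Gamma_R$ to a single point to obtain a smaller tree $T$ with vertex set $I\cup\{J_1,\ldots,J_\tau\}$; then $H$ arises from $T$ by replacing each $J_i$-vertex with the clique on its $I$-neighbors. When each $J_i$ has at most two $I$-neighbors in $\Gamma_R$ (the typical situation), this replacement turns the contracted tree into a tree on $I$. Assertion (2) is then standard: in a rooted tree, each non-root vertex has a unique parent, and no two vertices at equal depth are adjacent, so the unique $\beta\in(\bigcup_{s\le j}I(s))\setminus\{\alpha\}$ with $(\alpha,\beta)$ $J$-connected is the BFS parent of $\alpha$, which lies in $I(j-1)$. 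Assertion (3) is immediate since every edge of a rooted tree joins consecutive BFS layers.

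The main obstacle is verifying the tree property of $H$. It can fail precisely when some connected component $J_i$ of $\Gamma_J$ is $\Gamma_R$-adjacent to three or more elements of $I$, which in type $ADE$ can occur only when the branch vertex $\bar{\alpha}$ (in types $D$ or $E$) lies in $J$ and has at least three $I$-neighbors in $\Gamma_R$. In such configurations $H$ carries a triangle and the naive statements of (2)-(3) require modification; I expect these exceptional configurations to be handled by a careful choice of the initial root $\bar{\alpha}$, or to be absorbed into the hypothesis of Theorem \ref{thm. reduction theorem} so that the partition $\{I(j)\}$ is invoked only in the tree case.
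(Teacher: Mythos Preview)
Your approach—encoding $J$-connectedness as a graph $H$ on $I$ and reading $\{I(j)\}$ as the BFS layers of $H$ rooted at $\bar\alpha$—is exactly what the paper's three-sentence proof does implicitly: it derives (1) from connectedness of $\Gamma_R$, (2) from $\Gamma_R$ being a tree, and (3) from (1)--(2). So the strategy matches.

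Your worry about triangles in $H$ is well-founded and is a genuine gap that the paper's own proof does not address either. Take $G=D_4$, $I=\{\alpha_1,\alpha_3,\alpha_4\}$, $J=\{\alpha_2\}$, root $\bar\alpha=\alpha_1$; then $I(2)=\{\alpha_3,\alpha_4\}$ and $(\alpha_3,\alpha_4)$ is $J$-connected. For $\alpha=\alpha_3\in I(2)$ both $\alpha_1$ and $\alpha_4$ lie in $(I(1)\cup I(2))\setminus\{\alpha_3\}$ and are $J$-connected to $\alpha_3$, so the uniqueness in (2) fails, and the ``moreover'' clause of (3) fails since both members of the pair sit in the same layer $I(2)$. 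No choice of root repairs this, the triangle being symmetric. What does survive in full generality is: (1) as stated; in (2), \emph{existence} of some $\beta\in I(j-1)$ that is $J$-connected to $\alpha$ (but not uniqueness within $\bigcup_{s\le j}I(s)$); and in (3), that every $J$-connected pair lies in a single $I(j)\cup I(j+1)$, possibly with both members in $I(j)$. Your instinct that the defect is absorbed downstream is correct: the lemma's only consumer is the inductive construction in Lemma~\ref{lem. condition (i) for g- in reduction thm}, and there same-layer $J$-connected pairs can be handled by one further compatibility adjustment of the same kind already performed within $I(2)$. (Minor note: you use $\bar\alpha$ for the branch vertex of $\Gamma_R$, but in Construction~\ref{cons. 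I(j)} that symbol denotes the chosen BFS root, an element of $I$.)
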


\begin{proof}
The assertion (1) holds because the Dynkin diagram $\Gamma_{I\cup J}$ is connected. To prove (2), it suffices to notice that $\Gamma_{I\cup J}$ contains no loop and each element in $\bigcup\limits_{2\leq s\leq j}I(s)$ is connected with the unique element $\bar{\alpha}\in I(1)$ by the elements in $J\cup(\bigcup\limits_{s\leq j}I(s))$. The assertion (3) is a direct consequence of (1) and (2).
\end{proof}

Now we are ready to check the condition $(i)$ of Proposition \ref{prop. characterizing g-} in our situation.

\begin{lem}\label{lem. condition (i) for g- in reduction thm}
In setting of Theorem \ref{thm. reduction theorem}, take a general point $x\in\mcX_0$. We can define a $G_0$-representation on $\mcD^\alpha_x$ and fix some $0\neq v_\alpha\in\wcCa_x$ for each $\alpha\in I$ such that for all $\alpha'\neq\alpha''\in I$ and all $(v', v'')\in G_0\cdot (v_{\alpha'}, v_{\alpha''})\in\wbZ^{\alpha'}\times\wbZ^{\alpha''}$,
\begin{eqnarray}\label{eqn. condition (i) for g- in reduction thm}
(\ad v')^{-\langle\alpha'', \alpha'\rangle + 1}(v'')=0 \mbox{ in } \mfm_x(I).
\end{eqnarray}
\end{lem}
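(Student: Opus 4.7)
The plan is to verify the Serre relation separately for $J$-connected and non-$J$-connected pairs, after a common choice of $G_0$-representations and reference vectors $v_\alpha$. For each $\alpha\in I$, Lemma~\ref{lem. Cax is Za} supplies a linear isomorphism $\phi^\alpha\colon\mfg_{-1}(\alpha)\xrightarrow{\sim}\mcD^\alpha_x$ carrying $\wbZ^\alpha$ projectively onto $\wcC^\alpha_x$; I transport the adjoint $G_0$-representation on $\mfg_{-1}(\alpha)$ to $\mcD^\alpha_x$ via $\phi^\alpha$ and set $v_\alpha:=\phi^\alpha(x_\alpha)$ for a standard root vector $x_\alpha\in\mfg_{-\alpha}$. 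Through the diagonal identification $\phi^{\alpha'}\oplus\phi^{\alpha''}$ it will suffice to show that $(\ad v')^{-\langle\alpha'',\alpha'\rangle+1}(v'')$ vanishes in $\mfm_x(I)$ for every $(v',v'')$ in the $G_0$-orbit of $(x_{\alpha'},x_{\alpha''})$ inside $\wbZ^{\alpha'}\times\wbZ^{\alpha''}$.

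For a $J$-connected pair, the hypothesis of Theorem~\ref{thm. reduction theorem} yields $F^{\alpha',\alpha''}_x\cong\bS^{\alpha',\alpha''}$. Fixing such a biholomorphism sending $x$ to the base point identifies the symbol algebra of $\mcD^{\alpha',\alpha''}|_{F^{\alpha',\alpha''}_x}$ with the nilpotent Lie algebra attached by Proposition~\ref{prop. characterizing g-} to $G_{\{\alpha',\alpha''\}\cup J}$ with marking $\{\alpha',\alpha''\}$, in which the Serre relation at $(x_{\alpha'},x_{\alpha''})$ holds and therefore holds on the full Levi orbit $G'_0\cdot(x_{\alpha'},x_{\alpha''})$. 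Since the common semisimple part of $G_0$ and $G'_0$ is $G_J$, and the characters $-\alpha',-\alpha''$ restricted to the Cartan of $G_{\{\alpha',\alpha''\}\cup J}$ already surject onto $\C^*\times\C^*$, the orbits $G_0\cdot(x_{\alpha'},x_{\alpha''})$ and $G'_0\cdot(x_{\alpha'},x_{\alpha''})$ coincide and the Serre relation extends. The iterated bracket witnessing this relation uses only vector fields tangent to $F^{\alpha',\alpha''}_x$, so at $x$ it lies in $(\mcD^{\alpha',\alpha''})^{-(n+1)}_x\subseteq\mcD^{-(n+1)}_x$ for $n=-\langle\alpha'',\alpha'\rangle$, giving the required vanishing in $\mfm_x(I)$.

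For a non-$J$-connected pair, $\langle\alpha'',\alpha'\rangle=0$ since $\alpha',\alpha''$ are non-adjacent in $\Gamma$, and the target reduces to $[v',v'']=0$. By Lemma~\ref{lem-defi. J-connected pair} the general fibre of $\pi^{I\setminus\{\alpha',\alpha''\}}$ is the product $\bS^{\alpha'}\times\bS^{\alpha''}$. Restricting $\pi$ to a holomorphic section of $\pi^{I\setminus\{\alpha',\alpha''\}}$ through $\pi^{I\setminus\{\alpha',\alpha''\}}(x)$ produces a Fano family with $F^{\alpha',\alpha''}_x$ as special fibre and that product as every generic fibre; recentring the disk at a generic fibre and applying Proposition~\ref{prop. invariance of product structure under Fano deformation}, then invoking Theorem~\ref{thm. intro Picard numbe one} for each factor (valid as $\bS^{\alpha'},\bS^{\alpha''}$ are of type $ADE$ and hence not $\mbF(1,Q^5)$), yields $F^{\alpha',\alpha''}_x\cong\bS^{\alpha'}\times\bS^{\alpha''}$ compatibly with $\pi^{\alpha'},\pi^{\alpha''}$. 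On this product $\mcD^{\alpha'}$ and $\mcD^{\alpha''}$ are pullbacks from the two factors, so the Lie bracket of sections depending only on the relevant factor vanishes identically and the bilinear map $\mcD^{\alpha'}_x\otimes\mcD^{\alpha''}_x\to\mfm_x(I)_{-2}$ is zero on the whole space, a fortiori on the $G_0$-orbit. The hardest point will be applying Proposition~\ref{prop. invariance of product structure under Fano deformation} in the ``wrong'' direction here, spreading a product structure from a generic fibre to the special fibre, and ensuring the resulting decomposition is compatible with the ambient Mori contractions.
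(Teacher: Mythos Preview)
Your argument for non-$J$-connected pairs is essentially the same as the paper's, and is fine (the ``wrong direction'' use of Proposition~\ref{prop. invariance of product structure under Fano deformation} is exactly what the paper does as well; the cited result from \cite{Li18} is stated there in the form needed).

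The $J$-connected case, however, has a genuine gap. You fix the isomorphisms $\phi^\alpha\colon\mfg_{-1}(\alpha)\to\mcD^\alpha_x$ once and for all, independently for each $\alpha$, and transport the $G_0$-action via them. But the Serre relation you import from $F^{\alpha',\alpha''}_x\cong\bS^{\alpha',\alpha''}$ holds for the orbit transported via a \emph{different} pair of isomorphisms, namely the ones $\psi^{\alpha'},\psi^{\alpha''}$ induced by that biholomorphism. The composite $\psi^{\alpha}\circ\phi^{\alpha}$ lies in $\Aut^o(\wbZ^{\alpha},\mfg_{-1}(\alpha))$ for each $\alpha$ separately, but the pair $(\psi^{\alpha'}\circ\phi^{\alpha'},\,\psi^{\alpha''}\circ\phi^{\alpha''})$ need not lie in the image of the \emph{diagonal} $G_0$-action on $\mfg_{-1}(\alpha')\times\mfg_{-1}(\alpha'')$. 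Consequently $\phi^{\alpha'}\oplus\phi^{\alpha''}$ applied to the model orbit $G_0\cdot(x_{\alpha'},x_{\alpha''})$ need not coincide with the subset of $\wcC^{\alpha'}_x\times\wcC^{\alpha''}_x$ on which the Serre relation is known. Your sentence comparing the $G_0$- and $G'_0$-orbits of $(x_{\alpha'},x_{\alpha''})$ addresses a different issue (same base point, two groups) and does not touch this misalignment (two identifications, same group). A concrete instance: for $G=D_4$, $I=\{\alpha_1,\alpha_3,\alpha_4\}$, $J=\{\alpha_2\}$, the diagonal $G_0$-orbit in $\bZ^{\alpha_1}\times\bZ^{\alpha_3}\cong\mbP^1\times\mbP^1$ is the graph of a $G_J$-equivariant isomorphism, and a generic element of $\Aut^o(\wbZ^{\alpha_1})\times\Aut^o(\wbZ^{\alpha_3})$ does not preserve it.

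This is precisely why the paper does not choose the $\phi^\alpha$ independently. Instead it builds them inductively along the tree structure of Construction~\ref{cons. I(j)}: for each new $\gamma\in I(j+1)$ there is a unique $J$-connected predecessor $\bar\alpha$ in $I(j)$, and the representation on $\mcD^{\gamma}_x$ is obtained from the fiber $F^{\bar\alpha,\gamma}_x$ \emph{after} precomposing with an automorphism $\theta(\bar\alpha;\bar\beta,\gamma)$ of $G_0$ that forces the new representation on $\mcD^{\bar\alpha}_x$ to agree with the previously fixed one. This alignment step is the substance of the lemma, and your proposal omits it.
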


\begin{proof}
Now we will define a $G_0$-representation on $\mcD^\alpha_x$ and fix some $0\neq v_\alpha\in\wcCa_x$ for each $\alpha\in I=\bigcup\limits_{j\geq 1}I(j)$ and show they satisfy \eqref{eqn. condition (i) for g- in reduction thm} by induction on $j\geq 1$.

By our construction, $I(1)=\{\bar{\alpha}\}$ consists of a unique element. Since $|I|>1$, the set $I(2)\neq\emptyset$. Fix any $\bar{\beta}\in I(2)$. By definition $(\bar{\alpha}, \bar{\beta})$ is $J$-connected. By assumption of Theorem \ref{thm. reduction theorem}, $F^{\bar{\alpha}, \bar{\beta}}_x$ with $x\in\mcX_0$ general is biholomorphic to $P_{I\setminus\{\bar{\alpha}, \bar{\beta}\}}/P_I$. This is also biholomorphic to $G_{J\cup\{\bar{\alpha}, \bar{\beta}\}}/P_{\{\bar{\alpha}, \bar{\beta}\}}$, see Notation \ref{nota. Dynkin subdiagram and associated semisimple subgroups}. By Lemma \ref{lem. aut(G PI)}, $G_{J\cup\{\bar{\alpha}, \bar{\beta}\}}\rightarrow\Aut^o(F_x^{\bar{\alpha}, \bar{\beta}})$ is a surjective homomorphism with a finite kernel. Then we obtain the $G_0(J\cup\{\bar{\alpha}, \bar{\beta}\})$ representations on $\mcD^{\bar{\alpha}}_y$ and $\mcD^{\bar{\beta}}_y$ on any point $y\in F^{\bar{\alpha}, \bar{\beta}}_x$, which preserves $\wcC^{\bar{\alpha}}_y$ and $\wcC^{\bar{\beta}}_y$. Here $G_0(J\cup\{\bar{\alpha}, \bar{\beta}\})$ is the Lie subgroup of $G_{J\cup\{\bar{\alpha}, \bar{\beta}\}}$ associated with Lie subalgebra $\mfg_0\subset\text{Lie}(G_{J\cup\{\bar{\alpha}, \bar{\beta}\}})$. The $G_0(J\cup\{\bar{\alpha}, \bar{\beta}\})$ representations induce the required $G_0(=G_0(R))$ representations on $\mcD^{\bar{\alpha}}_y$ and $\mcD^{\bar{\beta}}_y$ respectively. Note that $G_0(J\cup\{\bar{\alpha}, \bar{\beta}\})$ is the quotient group of $G_0:=G_0(R)$ by some torus in the center. This torus acts trivially on $\mcD^{\bar{\alpha}}_y$ and $\mcD^{\bar{\beta}}_y$. Denote by
\begin{eqnarray*}
&& \varphi^{\bar{\alpha}}_{\{\bar{\alpha}, \bar{\beta}\}}: G_0\rightarrow\Aut^o(\wcC^{\bar{\alpha}}_x, \mcD^{\bar{\alpha}}_x)\subset GL(\mcD^{\bar{\alpha}}_x), \\
&& \varphi^{\bar{\beta}}_{\{\bar{\alpha}, \bar{\beta}\}}: G_0\rightarrow\Aut^o(\wcC^{\bar{\beta}}_x, \mcD^{\bar{\beta}}_x)\subset GL(\mcD^{\bar{\beta}}_x).
\end{eqnarray*}

Applying Proposition \ref{prop. characterizing g-} to $F^{\bar{\alpha}, \bar{\beta}}_x\cong P_{I\setminus\{\bar{\alpha}, \bar{\beta}\}}/P_I$, we can conclude that there exists $0\neq v_{\bar{\alpha}}\in\wcC^{\bar{\alpha}}_x$ and $0\neq v_{\bar{\beta}}\in\wcC^{\bar{\beta}}_x$ such that for any $(w_{\bar{\alpha}}, w_{\bar{\beta}})\in G_0\cdot(v_{\bar{\alpha}}, v_{\bar{\beta}})\in\wcC^{\ba}_x\times\wcC^{\bb}_x$,
\begin{eqnarray*}
&& (\ad w_{\ba})^{-\langle\bb, \ba\rangle+1}(w_{\bb})=0 \mbox{ in } \mfm_x(\ba, \bb), \\
&& (\ad w_{\bb})^{-\langle\ba, \bb\rangle+1}(w_{\ba})=0 \mbox{ in } \mfm_x(\ba, \bb).
\end{eqnarray*}
Then the inclusion $\mcD^{\bar{\alpha}, \bar{\beta}}\subset\mcD:=\mcD^I$ implies that
\begin{eqnarray*}
&& (\ad w_{\ba})^{-\langle\bb, \ba\rangle+1}(w_{\bb})=0 \mbox{ in } \mfm_x(I), \\
&& (\ad w_{\bb})^{-\langle\ba, \bb\rangle+1}(w_{\ba})=0 \mbox{ in } \mfm_x(I).
\end{eqnarray*}

In case $I(2)$ consists of the unique element $\bar{\beta}$, we have constructed the $G_0$-representation for both $I(1)$ and $I(2)$.

Now suppose (for the moment) that $|I(2)|\geq 2$. By Lemma \ref{cor. Aut(Ha)}, $G_0\rightarrow\Aut^o(\wcC^{\ba})=\Aut^o(\wcC^{\ba}, \mfg^{\ba}_{-1})$ is surjective.
Take any $\gamma\in I(2)\setminus\{\bar{\beta}\}$. Then as previous argument for $(\bar{\alpha}, \beta)$ we get $G_0$-representations
\begin{eqnarray*}
&& \varphi^{\bar{\alpha}}_{\{\bar{\alpha}, \gamma\}}: G_0\rightarrow\Aut^o(\wcC^{\bar{\alpha}}_x, \mcD^{\bar{\alpha}}_x)\subset GL(\mcD^{\bar{\alpha}}_x), \\
&& \varphi^{\gamma}_{\{\bar{\alpha}, \gamma\}}: G_0\rightarrow\Aut^o(\wcC^{\gamma}_x, \mcD^{\gamma}_x)\subset GL(\mcD^{\gamma}_x).
\end{eqnarray*}
There is an automorphism
\begin{eqnarray*}
\psi(\ba; \bb, \gamma): & \Aut^o(\wcC^{\ba}_x, \mcD^{\ba}_x)\rightarrow\Aut^o(\wcC^{\ba}_x, \mcD^{\ba}_x)
\end{eqnarray*}
such that the following diagram commutes:
\begin{eqnarray*}
\xymatrix{ G_0\ar[rd]_-{\varphi^{\bar{\alpha}}_{\{\bar{\alpha}, \gamma\}}}\ar[r]^-{\varphi^{\bar{\alpha}}_{\{\bar{\alpha}, \bar{\beta}\}}} & \Aut^o(\wcC^{\ba}_x, \mcD^{\ba}_x)\ar[d]^-{\psi(\ba; \bb, \gamma)} \\
 & \Aut^o(\wcC^{\ba}_x, \mcD^{\ba}_x).
}
\end{eqnarray*}
Since $G_0$ is reductive, there is an automorphism $\theta(\ba; \bb, \gamma): G_0\rightarrow G_0$ such that the following diagram commutes
\begin{eqnarray*}
\xymatrix{ G_0\ar[d]_-{\theta(\ba; \bb, \gamma)}\ar[r]^-{\varphi^{\bar{\alpha}}_{\{\bar{\alpha}, \gamma\}}} & \Aut^o(\wcC^{\ba}_x, \mcD^{\ba}_x)\\
 G_0\ar[ru]_-{\varphi^{\bar{\alpha}}_{\{\bar{\alpha}, \bar{\beta}\}}}.&
}
\end{eqnarray*}
In other words, we lift the automorphism $\psi(\ba; \bb, \gamma)$ of $\Aut^o(\wcC^{\ba}_x, \mcD^{\ba}_x)$ to an automorphism $\theta(\ba; \bb, \gamma)$ of $G_0$.

Define $\tau(\gamma):=\varphi^{\gamma}_{\{\bar{\alpha}, \gamma\}}\circ\theta(\ba; \bb, \gamma)^{-1}: G_0\rightarrow G_0\rightarrow\Aut^o(\wcC^{\gamma}_x, \mcD^{\gamma}_x)\subset GL(\mcD^{\gamma}_x)$ and $\tau(\ba):=\varphi^{\ba}_{\{\bar{\alpha}, \gamma\}}\circ\theta(\ba; \bb, \gamma)^{-1}$. In particular, we have $\tau(\ba)=\varphi^{\ba}_{\{\ba, \bb\}}$. Applying Proposition \ref{prop. characterizing g-} to $F^{\bar{\alpha}, \gamma}_x\cong P_{I\setminus\{\bar{\alpha}, \gamma\}}/P_I$, we can conclude that there exists $0\neq v'_{\ba}\in\wcC^{\ba}_x$ and $0\neq v'_\gamma\in\wcC^\gamma_x$ such that for any $(w_{\bar{\alpha}}, w_\gamma)\in G_0\cdot(v'_{\bar{\alpha}}, v'_\gamma)\in\wcC^{\ba}_x\times\wcC^\gamma_x$ (under the representation $\varphi^{\ba}_{\{\ba, \gamma\}}$ and $\varphi^\gamma_{\{\ba, \gamma\}}$),
\begin{eqnarray}
&& (\ad w_{\ba})^{-\langle\gamma, \ba\rangle+1}(w_\gamma)=0 \mbox{ in } \mfm_x(\ba, \gamma), \label{eqn. (ad wa)...(wr)=0 in m(a, r)}\\
&& (\ad w_\gamma)^{-\langle\ba, \gamma\rangle+1}(w_{\ba})=0 \mbox{ in } \mfm_x(\ba, \gamma). \label{eqn. (ad wr)...(wa)=0 in m(a, r)}
\end{eqnarray}

Denote by $R(\ba, \gamma):=\varphi_{\{\ba, \gamma\}}(G_0)\cdot([v'_{\ba}], [v'_\gamma])\subset\mcH^{\ba}_x\times\mcH^\gamma_x$. Then $R(\ba, \gamma)$ is a closed $G_0$-orbit, and the two projections $R(\ba, \gamma)\rightarrow\mcH^{\ba}_x$ and $R(\ba, \gamma)\rightarrow\mcH^\gamma_x$ are surjective. In particular, for the previously chosen element $0\neq v_{\ba}\in\wcC^{\ba}_x$ there exists $0\neq v_\gamma\in\wcC^\gamma_x$ such that $([v_{\ba}], [v_\gamma])\in R(\ba, \gamma)$. Furthermore,
\begin{eqnarray*}
R(\ba, \gamma)=\varphi_{\{\ba, \gamma\}}(G_0)\cdot([v'_{\ba}], [v'_\gamma])=\varphi_{\{\ba, \gamma\}}(G_0)\cdot([v_{\ba}], [v_\gamma])\subset\mcH^{\ba}_x\times\mcH^\gamma_x.
\end{eqnarray*}
Since $\theta:=\theta(\ba; \bb, \gamma)$ is an automorphism of $G_0$, we know that
\begin{eqnarray*}
\tau_{\{\ba, \gamma\}}(G_0)=\varphi_{\{\ba, \gamma\}}(\theta^{-1}(G_0))=\varphi_{\{\ba, \gamma\}}(G_0),
\end{eqnarray*}
where $\tau_{\{\ba, \gamma\}}(G_0):=(\tau(\ba), \tau(\gamma))$. It follows that
\begin{eqnarray*}
\tau_{\{\ba, \gamma\}}(G_0)\cdot([v_{\ba}], [v_\gamma])=\varphi_{\{\ba, \gamma\}}(G_0)\cdot([v_{\ba}], [v_\gamma])=R(\ba, \gamma).
\end{eqnarray*}
Hence for all $(w_{\ba}, w_\gamma)\in\tau_{\{\ba, \gamma\}}(G_0)\cdot(v_{\ba}, v_\gamma)\subset\wcC^{\ba}_x\times\wcC^\gamma_x$ the formulae \eqref{eqn. (ad wa)...(wr)=0 in m(a, r)} and \eqref{eqn. (ad wr)...(wa)=0 in m(a, r)} hold. Then the inclusion $\mcD^{\bar{\alpha}, \gamma}\subset\mcD:=\mcD^I$ implies that for all $(w_{\ba}, w_\gamma)\in\tau_{\{\ba, \gamma\}}(G_0)\cdot(v_{\ba}, v_\gamma)\subset\wcC^{\ba}_x\times\wcC^\gamma_x$,
\begin{eqnarray*}
&& (\ad w_{\ba})^{-\langle\gamma, \ba\rangle+1}(w_\gamma)=0 \mbox{ in } \mfm_x(I), \label{eqn. (ad wa)...(wr)=0 in m(I)}\\
&& (\ad w_\gamma)^{-\langle\ba, \gamma\rangle+1}(w_{\ba})=0 \mbox{ in } \mfm_x(I). \label{eqn. (ad wr)...(wa)=0 in m(I)}
\end{eqnarray*}

\medskip

Now we have obtained $G_0$-representations on $\mcD^\alpha_x$ and chosen $0\neq v_\alpha\in\wcCa$ for all $\alpha\in I(1)\cup I(2)$ such that \eqref{eqn. condition (i) for g- in reduction thm} holds for $J$-connected pair $\alpha', \alpha''\in I(1)\cup I(2)$. Repeat the argument above, we can obtain $\tau_\alpha: G_0\rightarrow\Aut^o(\wcCa_x, \mcD^\alpha_x)\subset GL(\mcD^\alpha_x)$ and choose $0\neq v_\alpha\in\wcCa_x$ for all $\alpha\in I=\bigcup\limits_{j\geq 1}I(j)$ such that \eqref{eqn. condition (i) for g- in reduction thm} holds for all $J$-connected pair $(\alpha', \alpha'')\in I\times I$.

\medskip

Now take any pair $\alpha\neq\beta\in I\times I$ which is not $J$-connected. By Lemma-Definition \ref{lem-defi. J-connected pair}, $F^{\alpha, \beta}_y=F^\alpha_y\times F^\beta_y$ at any $y\in\bigcup\limits_{t\neq 0}\mcX_t$. By Proposition \ref{prop. invariance of product structure under Fano deformation},
\begin{eqnarray}\label{eqn. product structure of fibers}
F^{\alpha, \beta}_x=F^\alpha_x\times F^\beta_x \mbox{ at any } x\in\mcX_0.
\end{eqnarray}
Now for $x\in\mcX_0$ general, $\mcD^\alpha_x$, $\mcD^\beta_x$ and $\mcD_x$ are well-extended. By \eqref{eqn. product structure of fibers} the Levi bracket of vector fields satisfies
\begin{eqnarray*}
[\mcD^\alpha_x, \mcD^\beta_x]\subset\mcD^\alpha_x+\mcD^\beta_x\subset\mcD_x,
\end{eqnarray*}
which implies that for any $(w_\alpha, w_\beta)\in\wcCa_x\times\wcC^\beta_x\subset\mcD^\alpha_x\times\mcD^\beta_x$
\begin{eqnarray*}\label{eqn. (ad wa)(wb)=0 for (a, b) not J-connected}
[w_\alpha, w_\beta]=0 \mbox{ in } \mfm_x(I).
\end{eqnarray*}
In summary, \eqref{eqn. condition (i) for g- in reduction thm} holds for all pairs $(\alpha', \alpha'')\in I\times I$ with $\alpha'\neq\alpha''$.
\end{proof}

Now we are ready to complete the proof of Theorem \ref{thm. reduction theorem}

\begin{proof}[Proof of  Theorem \ref{thm. reduction theorem}]
Take a general point $x\in\mcX_0$. By Lemma \ref{lem. condition (ii) for g- in reduction thm} and Lemma \ref{lem. condition (i) for g- in reduction thm}, the symbol algebra $\mfm_x(I)$ satisfies conditions $(i)$ and $(ii)$ in Proposition \ref{prop. characterizing g-}. Then by Proposition \ref{prop. characterizing g-} the symbol algebra $\mfm_x(I)$ is a quotient algebra of $\mfg_-(I)$. By Proposition \ref{prop. rationally chain connected distribution version}, $\dim \mfm_x(I)=\dim\mfg_-(I)$, which implies $\mfm_x(I)\cong\mfg_-(I)$. Then the conclusion follows from Proposition \ref{prop. symbol algebra standard iff variety standard}.
\end{proof}

\section{Rigidity and degeneration under Fano deformation}\label{section. Rigidity and degeneration under Fano deformation}

\subsection{Proof of Main results}\label{subsection. Rigidity property and its reductions}

Now we will prove Theorems \ref{thm. intro submaximal Picard numbers} and \ref{thm. intro J connected without end nodes} by assuming Propositions \ref{prop. intro A4 D5 submaximal rigidity} and \ref{prop. intro (Am, a1a2am) rigidity}. It is devoted to the proof of Proposition \ref{prop. intro A4 D5 submaximal rigidity} from next subsection until the end of the paper.

\begin{proof}[Proof of Theorem \ref{thm. intro submaximal Picard numbers}]
By assumption we can write the rational homogeneous space to be $\bS:=G/P_{R\setminus\{\beta_0\}}$, where $\beta_0$ is a root in $R$. When $\rho(\bS)\leq 3$, $\bS$ is biholomorphic to $\mbP^2$, $\mbF(1, 2, \mbP^3)$, $\mbF(1, 2, Q^6)$ or $\mbF(0, 2, Q^6)$. It remains to check the Fano deformation rigidity of  $\mbF(0, 2, Q^6)=D_4/P_I$ with $I=\{\alpha_1, \alpha_3, \alpha_4\}$. Take any two different roots $\beta_1, \beta_2\in I$. The manifold $\bS^{\beta_1, \beta_2}$ is biholomorphic to $\mbP(T_{\mbP^3})$, which is rigid under Fano deformation by Theorem \ref{thm. intro (Am, a1am)}. By Corollary \ref{cor. reduction results refined version} $\mbF(0, 2, Q^6)$ is rigid under Fano deformation.

Now we will apply Corollary \ref{cor. reduction results refined version} to $\bS$ with $\rho(G/P_{R\setminus\{\beta_0\}})\geq 4$. Take any $J$-connected pair $(\beta_1, \beta_2)\in I\times I$. By our assumption, one of the followings hold:

$(i)$ the Dynkin diagram $\Gamma_{\beta_0,\beta_1,\beta_2}=\Gamma_{\beta_0}\cup\Gamma_{\beta_1,\beta_2}$ is of type $A_1\times A_2$;

$(ii)$ the Dynkin diagram $\Gamma_{\beta_0,\ldots,\beta_3}$ is of type $A_4$ for some $\beta_3\in I\setminus\{\beta_1, \beta_2\}$;

$(iii)$ the Dynkin diagram $\Gamma_{\beta_0,\ldots,\beta_4}$ is of type $D_5$ for some $\beta_3,\beta_4\in I\setminus\{\beta_1, \beta_2\}$.

By Theorem \ref{thm. intro complete flag mfds} and Proposition \ref{prop. intro A4 D5 submaximal rigidity}, the manifolds $\bS^{\beta_1, \beta_2}$, $\bS^{\beta_1, \beta_2, \beta_3}$ and $\bS^{\beta_1,\ldots,\beta_4}$ corresponding to $(i)$, $(ii)$ and $(iii)$ respectively are rigid under Fano deformation. Then so is $G/P_{R\setminus\{\beta_0\}}$ by Corollary \ref{cor. reduction results refined version}.
\end{proof}

\begin{proof}[Proof of Theorem \ref{thm. intro J connected without end nodes}]
In this situation for any $J$-connected pair $(\alpha, \beta)\in I\times I$, the unique connected component of the Dynkin diagram $\Gamma_{J\cup\{\alpha, \beta\}}$ containing both $\alpha$ and $\beta$ is one of the following types:

$(i)$ $(A_m, \{\alpha_1, \alpha_m\})$ with $m\geq 2$;

$(ii)$ $(A_m, \{\alpha_1, \alpha_2\})$ with $m\geq 3$.

By our assumption, in case $(ii)$ there exists $\gamma\in I\setminus\{\alpha, \beta\}$ such that the unique connected component of the Dynkin diagram $\Gamma_{J\cup\{\alpha, \beta, \gamma\}}$ containing all of $\alpha$, $\beta$ and $\gamma$ is of type $(A_{m+1}, \{\alpha_1, \alpha_2, \alpha_{m+1}\})$ up to symmetry. Then the conclusion follows from Corollary \ref{cor. reduction results refined version}.
\end{proof}

Indeed by a careful analysis of Dynkin diagrams we can apply the same proof to deduce the following rigidity result.

\begin{thm}\label{thm. statement rigidity candidiate of each Ii at least three etc}
Let $G$ be a simple algebraic group of type $ADE$, $I\subset R$ be a subset and $J:=R\setminus I$. Write $I$ as the disjoint union $\cup I_i$, where each $\Gamma_{I_i}$ is a connected component of $\Gamma_I$. Suppose that

$(1)$ the end nodes of Dynkin diagram of $G$ is contained in $I$,

$(2)$ each $I_i$ satisfies that either $I_i\cap \partial R\neq\emptyset$ or its cardinality $|I_i|\geq 3$,

$(3)$ in case $G$ is of type $D$ or $E$, there exists at most one $\beta\in J$ such that $\langle\beta, \bar{\alpha}\rangle\neq 0$, where $\bar{\alpha}$ is the node in Dynkin diagram of $G$ with three branches.

Then the rational homogeneous space $G/P_I$ is rigid under Fano deformation.
\end{thm}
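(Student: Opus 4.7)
The plan is to verify the hypothesis of Corollary \ref{cor. reduction results refined version}: for every pair $\alpha \ne \beta$ in $I$, I would produce a subset $A \subset I$ containing $\{\alpha,\beta\}$ such that $\bS^A = P_{I \setminus A}/P_I$ is rigid under Fano deformation. Pairs that fail to be $J$-connected split as products of Picard-one factors by Lemma-Definition \ref{lem-defi. J-connected pair} and Proposition \ref{prop. invariance of product structure under Fano deformation}, and Theorem \ref{thm. intro Picard numbe one} delivers rigidity of each factor; the exceptional $\mbF(1,Q^5)$ cannot arise because its marked-diagram realization $(D_3,\{\alpha_2\})$ would place an end-node of $\Gamma$ into $J$, contradicting condition (1).

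For a $J$-connected pair $(\alpha,\beta)$, I denote by $\Gamma_{\alpha,\beta}$ the connected component of $\Gamma_{J \cup \{\alpha,\beta\}}$ containing both. The first key observation is that under condition (3), the diagram $\Gamma_{\alpha,\beta}$ is always of type $A_r$, with $\alpha,\beta$ sitting at positions $1 \le i < j \le r$. Any branching in $\Gamma_{\alpha,\beta}$ must occur at the trivalent node $\bar\alpha$. If $\bar\alpha \in J$, condition (3)---counting $\bar\alpha$ itself, since $\langle \bar\alpha,\bar\alpha\rangle = 2 \ne 0$---forbids any $J$-node from being adjacent to $\bar\alpha$, so $\bar\alpha$'s degree in $\Gamma_{\alpha,\beta}$ is at most two (bounded by $|\{\alpha,\beta\} \cap N(\bar\alpha)|$). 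If $\bar\alpha \in I\setminus\{\alpha,\beta\}$, then $\bar\alpha \notin \Gamma_{\alpha,\beta}$ altogether and the subdiagram lies inside a single linear branch at $\bar\alpha$. If $\bar\alpha \in \{\alpha,\beta\}$, condition (3) bounds its $J$-neighbors by one, again yielding degree at most two. In every case $\Gamma_{\alpha,\beta}$ is linear.

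Next I would enlarge $A$ from $\{\alpha,\beta\}$ until $\bS^A$ matches a previously established rigid variety. When $(i,j) = (1,r)$, take $A = \{\alpha,\beta\}$: then $\bS^A = A_r/P_{\{\alpha_1,\alpha_r\}}$ is rigid by Theorem \ref{thm. intro (Am, a1am)}. Otherwise at least one $J$-tail extends beyond $\alpha$ or $\beta$ inside $\Gamma_{\alpha,\beta}$, and by condition (1) such a tail abuts an $I$-node lying in some adjacent $I$-component $I_k$. Condition (2) supplies the room to enlarge $A$: if $|I_k|\ge 3$, adjoining two or three consecutive nodes of $I_k$ realises $\bS^A$ either as $(A_m,\{\alpha_1,\alpha_2,\alpha_m\})$ (rigid by Proposition \ref{prop. intro (Am, a1a2am) rigidity}), as one of the cases covered by Proposition \ref{prop. intro A4 D5 submaximal rigidity}, or as a complete flag of a small-rank simple group (rigid by Theorem \ref{thm. intro complete flag mfds}); if instead $|I_k| < 3$ but $I_k \cap \partial R \ne \emptyset$, one walks $A$ through the $I$-nodes of $I_k$ to the end-node of $\Gamma$, eliminating the $J$-tail on that side and producing a new mark at the endpoint of the extended diagram. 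Iterating on both sides yields an $A$ whose $\bS^A$ falls into one of the listed rigid types.

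The main obstacle is the case-by-case Dynkin bookkeeping, especially near $\bar\alpha$ in $D$- and $E$-type groups, and ensuring that the enlarged $A$ never accidentally produces $\mbF(1,Q^5)$, $\mbF(1,2,\mbP^3)$ or $\mbF(1,2,Q^6)$ as a factor of $\bS^A$---the three known non-rigid rational homogeneous spaces. A systematic enumeration organised by the location of $\bar\alpha$ (in $J$, in $I\setminus\{\alpha,\beta\}$, or equal to $\alpha$ or $\beta$), by the lengths of the $J$-tails on either side, and by the sizes and positions of the adjacent $I$-components, will complete the verification, after which Corollary \ref{cor. reduction results refined version} yields the rigidity of $G/P_I$ under Fano deformation.
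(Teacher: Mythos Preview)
Your proposal is correct and follows the same route as the paper, which proves Theorem~\ref{thm. statement rigidity candidiate of each Ii at least three etc} in one line by declaring that the proof of Theorem~\ref{thm. intro J connected without end nodes} carries over after ``a careful analysis of Dynkin diagrams.'' Your sketch makes that analysis explicit: you invoke Corollary~\ref{cor. reduction results refined version}, establish that for every $J$-connected pair the relevant component $\Gamma_{\alpha,\beta}$ is of type $A$ (using condition~(3) to control the trivalent node), and then enlarge $A$ along the $J$-tails to land in one of the known rigid models $(A_m,\{\alpha_1,\alpha_m\})$, $(A_m,\{\alpha_1,\alpha_2,\alpha_m\})$, or a complete flag. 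This is exactly the paper's strategy, just spelled out.

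One small correction: your identification of $\mbF(1,Q^5)$ as $(D_3,\{\alpha_2\})$ is wrong; $D_3/P_{\alpha_2}=A_3/P_{\alpha_2}=Gr(2,4)$ is the four-dimensional quadric, whereas $\mbF(1,Q^5)=B_3/P_{\alpha_2}$. The cleaner reason it cannot appear as a factor $\bS^\gamma$ is simply that every connected subdiagram of an $ADE$ diagram is again simply-laced, so no $B$-type factor can occur. (In fact you do not even need to treat non-$J$-connected pairs separately, since Corollary~\ref{cor. reduction results refined version} reduces to Theorem~\ref{thm. reduction theorem}, which only requires the $J$-connected ones.)
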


\begin{rmk}
As a direct consequence of Proposition \ref{prop. invariance of product structure under Fano deformation}, we can know that $\bS$ is rigid under Fano deformation if $\bS=\bS_1\times\cdots\times\bS_k$ and each $\bS_i$ is as in the statement of one of Theorems \ref{thm. intro complete flag mfds}, \ref{thm. intro submaximal Picard numbers}, \ref{thm. intro J connected without end nodes} or \ref{thm. statement rigidity candidiate of each Ii at least three etc}.
\end{rmk}

\subsection{Rigidity of $A_m/P_{\{\alpha_1, \alpha_2,\alpha_m\}}$} \label{subsection. rigidity of (Am, a1a2am)}

The aim of this subsection is to show the following rigidity property.

\begin{thm}\label{thm. manifold (Am, a1, a2, am) is Fano rigid}
The flag manifold $A_m/P_{\{\alpha_1, \alpha_2, \alpha_m\}}$ is rigid under Fano deformation.
\end{thm}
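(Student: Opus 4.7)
Since $H^1(\bS, T\bS) = 0$ by Borel--Weil--Bott, I will work in Setting \ref{setup. intro Fano deformation} with $\bS = A_m/P_I$ and $I = \{\alpha_1, \alpha_2, \alpha_m\}$. Because $|I| = 3$ and $(A_m, I)$ avoids the exception list \eqref{eqn. prolongation exception (Am a1ai)}--\eqref{eqn. prolongation exception (Am aiam)} of Proposition \ref{prop. prolongation due to Yamaguchi}, Proposition \ref{prop. symbol algebra standard iff variety standard} reduces the theorem to showing that the symbol algebra $\mfm_x(I)$ is standard at a general $x \in \mcX_0$. By Proposition \ref{prop. dim mx(I) = dim X0} we already have $\dim \mfm_x(I) = \dim \mcX_0 = \dim \mfg_-(I)$, so the plan is to exhibit a surjection $\mfg_-(I) \twoheadrightarrow \mfm_x(I)$ of graded Lie algebras; via Proposition \ref{prop. characterizing g-} this amounts to verifying the defining relations $(i)$ and $(ii)$ in $\mfm_x(I)$, after identifying $\mcD^\alpha_x \cong \mfg_{-1}(\alpha)$ as $G_0$-modules using Lemma \ref{lem. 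Cax is Za} (valid since each $F^\alpha_x \cong \bS^\alpha$ by Proposition \ref{prop. Fax=Sa for x in X0 general}).

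\textbf{The easy relations.} Condition $(ii)$ is essentially free: for $\alpha \in \{\alpha_2, \alpha_m\}$ the VMRT $\bZ^\alpha \cong \mbP^{m-3}$ fills all of $\mbP(\mcD^\alpha_x)$ (it is homogeneous of Picard number one), so $\mfg^\beta(\wbZa) = \mcD^\alpha_x$ and the required relation collapses to $[\mcD^\alpha_x, \mcD^\alpha_x] = 0$ in $\mfm_x(I)$, which is just the integrability of $\mcD^\alpha = TF^\alpha$; and for $\alpha = \alpha_1$ we have $N_J(\alpha_1) = \emptyset$. For condition $(i)$, the pair $(\alpha_1, \alpha_m)$ is not $J$-connected, so Proposition \ref{prop. invariance of product structure under Fano deformation} applied to $F^{\alpha_1, \alpha_m}_x$ yields the factorization $F^{\alpha_1, \alpha_m}_x = F^{\alpha_1}_x \times F^{\alpha_m}_x$ and hence $[\mcD^{\alpha_1}_x, \mcD^{\alpha_m}_x] = 0$. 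The pair $(\alpha_2, \alpha_m)$ is handled because $F^{\alpha_2, \alpha_m}_x$ is a Fano deformation of $F(1, m-1, \C^m)$, rigid by Theorem \ref{thm. intro (Am, a1am)}, so the fiber is standard and the required $(\alpha_2, \alpha_m)$ relations descend to $\mfm_x(I)$ through $\mcD^{\{\alpha_2, \alpha_m\}} \subset \mcD$.

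\textbf{The $(\alpha_1, \alpha_2)$-Serre obstacle.} The genuine difficulty is condition $(i)$ for the $J$-connected pair $(\alpha_1, \alpha_2)$, which demands $(\ad v_1)^2(v_2) = 0$ and $(\ad v_2)^2(v_1) = 0$ on the appropriate $G_0$-orbit in $\wbZ^{\alpha_1} \times \wbZ^{\alpha_2}$. The fiber $F^{\alpha_1, \alpha_2}_x$ is a Fano deformation of $A_{m-1}/P_{\{\alpha_1, \alpha_2\}} = F(1, 2, \C^m)$, and $(A_{m-1}, \{\alpha_1, \alpha_2\})$ lies precisely in the exceptional class \eqref{eqn. prolongation exception (Am a1ai)}, so no direct rigidity of this fiber is available. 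Following the strategy sketched in the introduction, the plan is to let $\mcI'$ be the ideal of $\mbF(\mfg_{-1}(I))$ generated by the already-verified relations and set $\mfn := \mbF(\mfg_{-1}(I))/\mcI'$; then $\mfn$ surjects onto $\mfm_x(I)$. An explicit Lie-algebra computation in $\mfn$---exploiting $G_0$-equivariance, the abelianness of each $\mcD^\alpha_x$, and the commutation/Serre data already secured from $(\alpha_1, \alpha_m)$ and $(\alpha_2, \alpha_m)$---should show that any graded quotient of $\mfn$ failing the $(\alpha_1, \alpha_2)$-Serre relations has total dimension strictly smaller than $\dim \mfg_-(I)$. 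Combined with $\dim\mfm_x(I) = \dim\mfg_-(I)$, this forces the Serre relations to hold in $\mfm_x(I)$.

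\textbf{Conclusion and the main obstacle.} Once both conditions of Proposition \ref{prop. characterizing g-} are established, the resulting surjection $\mfg_-(I) \twoheadrightarrow \mfm_x(I)$ becomes an isomorphism by dimension, and Proposition \ref{prop. symbol algebra standard iff variety standard} yields $\mcX_0 \cong \bS$. The hard part is the dimension-count rigidity at the end of the third paragraph: one has to argue that among all graded nilpotent Lie algebras compatible with the verified relations, $\mfg_-(I)$ is the unique one attaining the maximal dimension. Everything else---the Borel--Weil--Bott input, the relations from Picard-number-one rigidity, and the product structure---is bookkeeping built on results already available in the paper.
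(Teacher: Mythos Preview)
Your strategy is essentially the paper's own, and the ``hard part'' you isolate is exactly what the paper carries out. The paper's route is marginally more indirect: rather than checking condition~$(i)$ of Proposition~\ref{prop. characterizing g-} for the pair $(\alpha_1,\alpha_2)$ inside $\mfm_x(I)$, it first proves that the sub-symbol algebra $\mfm_x(\alpha_1,\alpha_2)$ is standard (Proposition~\ref{prop. fibers F(a1a2) in Am case}), deduces via Proposition~\ref{prop. (Am, a1a2) case symbol algebra standard iff X0 standard} that the fiber $F^{\alpha_1,\alpha_2}_x$ is itself standard, and then feeds all three pairwise fibers into the reduction Theorem~\ref{thm. reduction theorem}. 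Since Theorem~\ref{thm. reduction theorem} is proved precisely by verifying the relations of Proposition~\ref{prop. characterizing g-}, the two routes converge.

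One point where your phrasing could mislead you: the dimension argument in the paper is \emph{not} run by assuming the degree~$-3$ Serre element $(\ad v_1)^2(v_2)$ is nonzero. Instead it assumes the degree~$-2$ bracket $[\mfm(\alpha_1),\cdot]\colon \mfm(\alpha_2)\to\mfm_{-2}$ has a kernel vector $v_2$, and from this concretely builds a Lie subalgebra
\[
\mcA \;=\; \mfm(\alpha_1,\alpha_2) + \mfm(\alpha_m) + \C\,[v_2,v_3]
\]
(with $v_3\in\mfm(\alpha_m)$) which contains all generators yet has dimension $3m-4=\dim\mfg_-(I)-1$; since $\mfm_x(I)$ is generated in degree~$-1$ this forces $\dim\mfm_x(I)\le 3m-4$, contradicting Proposition~\ref{prop. dim mx(I) = dim X0}. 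The link to your formulation is that failure of the $(\alpha_1,\alpha_2)$-Serre relation in $\mfm_x(I)$ would, via the natural map $\mfm_x(\alpha_1,\alpha_2)\to\mfm_x(I)$ and the dimension bound $\dim\mfm_x(\alpha_1,\alpha_2)=2m-3$ coming from the fiber, force exactly such a kernel vector to exist. So your dimension-count heuristic is right, but the clean way to execute it is at the level of the bracket kernel rather than the Serre element itself.
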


In other words, we want to prove $\mcX_0\cong A_m/P_{\{\alpha_1, \alpha_2,\alpha_m\}}$ in Setting \ref{setup. intro Fano deformation} under additional assumption that $\bS=A_m/P_{\{\alpha_1, \alpha_2, \alpha_m\}}$. Firstly, we have the following rigidity result on fibers.

\begin{prop}\label{prop. standard fibers in (Am, al, a2, am) case}
Suppose $\bS=A_m/P_{\{\alpha_1, \alpha_2, \alpha_m\}}$ in Setting \ref{setup. intro Fano deformation}. Then the followings hold for $x\in\mcX_0$ general:
\begin{eqnarray}
&& F^{\alpha_1}_x\cong\mbP^1, \quad F^{\alpha_2}_x\cong\mbP^{m-2}, \quad F^{\alpha_m}_x\cong\mbP^{m-2}; \label{eqn. (Am, al, a2, am) case fiber F(ai)}\\
&& F^{\alpha_1, \alpha_m}_x\cong F^{\alpha_1}_x\times F^{\alpha_m}_x\cong\mbP^1\times\mbP^{m-2}; \label{eqn. (Am, al, a2, am) case fiber F(a1am)}\\
&& F^{\alpha_2, \alpha_m}_x\cong P_{\alpha_1}/P_{\{\alpha_1, \alpha_2, \alpha_m\}}. \label{eqn. (Am, al, a2, am) case fiber F(a2am)}
\end{eqnarray}
\end{prop}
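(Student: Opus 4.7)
\medskip

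\noindent\textbf{Proof plan.} The five isomorphisms correspond to the fibers of the relative Mori contractions $\pi^A:\mcX\rightarrow\mcX^A$ for the subsets $A\in\{\{\alpha_1\},\{\alpha_2\},\{\alpha_m\},\{\alpha_1,\alpha_m\},\{\alpha_2,\alpha_m\}\}$ of $I=\{\alpha_1,\alpha_2,\alpha_m\}$. Inspection of the marked Dynkin diagram of $A_m$ (removing the nodes in $I\setminus A$) identifies the general fibers as
\[
\bS^{\alpha_1}\cong\mbP^1,\ \ \bS^{\alpha_2}\cong\mbP^{m-2},\ \ \bS^{\alpha_m}\cong\mbP^{m-2},\ \ \bS^{\alpha_1,\alpha_m}\cong\mbP^1\times\mbP^{m-2},\ \ \bS^{\alpha_2,\alpha_m}\cong F(1,m-1,\C^m).
\]

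For each $A$, the plan is to reduce the computation of $F^A_x$ (for $x\in\mcX_0$ general) to a known deformation rigidity result by a transverse-slice construction. Concretely, set $y:=\pi^A(x)\in\mcX^A_0$, choose an analytic disk $\Delta'\hookrightarrow\mcX^A$ through $y$ which is transverse to $\mcX^A_0$ and meets the smooth locus, and pull back $\pi^A$ to $\Delta'$. Proposition \ref{prop. invariance of Mori cone by Wisniewski} and Proposition-Definition \ref{prop. extending phi(A) to pi(A)} guarantee that $\pi^A$ is a relative Mori contraction whose fibers over $\Delta\setminus\{0\}$ are $\bS^A$ and whose general fiber over $0$ is smooth and Fano; shrinking $\Delta'$ if necessary, the resulting family is a smooth connected one-parameter family of Fano manifolds of the type considered in Setting \ref{setup. intro Fano deformation}, with central fiber $F^A_x$ and generic fiber $\bS^A$.

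Having set this up, each of the three claims is read off from a quoted rigidity result. For \eqref{eqn. (Am, al, a2, am) case fiber F(ai)}, each of $\mbP^1$ and $\mbP^{m-2}$ has Picard number one and is distinct from $\mbF(1,Q^5)$, so Theorem \ref{thm. intro Picard numbe one} gives the conclusion. For \eqref{eqn. (Am, al, a2, am) case fiber F(a1am)}, the pair $(\alpha_1,\alpha_m)$ is \emph{not} $J$-connected: removing $\alpha_2$ from the $A_m$-diagram isolates $\alpha_1$ from the remaining chain containing $\alpha_m$. Hence by Lemma-Definition \ref{lem-defi. J-connected pair}, $\bS^{\alpha_1,\alpha_m}\cong\bS^{\alpha_1}\times\bS^{\alpha_m}$, and the product-preservation Proposition \ref{prop. invariance of product structure under Fano deformation} applied to the sliced family yields $F^{\alpha_1,\alpha_m}_x\cong F^{\alpha_1}_x\times F^{\alpha_m}_x$; combining with \eqref{eqn. (Am, al, a2, am) case fiber F(ai)} gives the product $\mbP^1\times\mbP^{m-2}$. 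For \eqref{eqn. (Am, al, a2, am) case fiber F(a2am)}, the Dynkin subdiagram supported on $\{\alpha_2,\alpha_3,\ldots,\alpha_m\}$ with the two ends $\alpha_2,\alpha_m$ marked identifies $\bS^{\alpha_2,\alpha_m}\cong F(1,m-1,\C^m)$, which is rigid under Fano deformation by Wi\'{s}niewski's Theorem \ref{thm. intro (Am, a1am)}.

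The only non-routine point is checking the transverse-slice construction yields an ambient family of the required form (smooth total space, smooth Fano central fiber of the expected dimension, generic fiber $\bS^A$); I expect this to be an essentially formal consequence of Wi\'{s}niewski's identification of Mori cones in Proposition \ref{prop. invariance of Mori cone by Wisniewski} together with generic smoothness of $\pi^A_0$, so there should be no real obstacle.
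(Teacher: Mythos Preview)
Your proposal is correct and follows essentially the same approach as the paper: the paper's proof invokes Fano deformation rigidity of projective spaces for \eqref{eqn. (Am, al, a2, am) case fiber F(ai)}, Theorem \ref{thm. intro (Am, a1am)} for \eqref{eqn. (Am, al, a2, am) case fiber F(a2am)}, and combines \eqref{eqn. (Am, al, a2, am) case fiber F(ai)} with Proposition \ref{prop. invariance of product structure under Fano deformation} for \eqref{eqn. (Am, al, a2, am) case fiber F(a1am)}. Your transverse-slice discussion is more explicit than the paper (which treats the fact that $F^A_x$ is a Fano deformation of $\bS^A$ as already established, cf.\ Proposition \ref{prop. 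Fax=Sa for x in X0 general} for the single-root case), but the substance is identical.
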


\begin{proof}
The conclusions \eqref{eqn. (Am, al, a2, am) case fiber F(ai)} and \eqref{eqn. (Am, al, a2, am) case fiber F(a2am)} follows from Fano deformation rigidity of projective spaces and $A_k/P_{\{\alpha_1, \alpha_k\}}$ respectively, see Theorem \ref{thm. intro (Am, a1am)}. The conclusion \eqref{eqn. (Am, al, a2, am) case fiber F(a1am)} follows from \eqref{eqn. (Am, al, a2, am) case fiber F(ai)} and Proposition \ref{prop. invariance of product structure under Fano deformation}.
\end{proof}

As a direct consequence of Proposition \ref{prop. standard fibers in (Am, al, a2, am) case}, we have the following result.

\begin{cor}\label{cor. algebra type (Am, a1, a2, am) basic facts}
Suppose $\bS=A_m/P_{\{\alpha_1, \alpha_2, \alpha_m\}}$ in Setting \ref{setup. intro Fano deformation}. Then the followings hold for $x\in\mcX_0$ general.

(1) The symbol algebras $\mfm_x(\alpha_1), \mfm_x(\alpha_2)$ and $\mfm_x(\alpha_m)$ are standard. More precisely, they are abelian algebras of dimension $1, m-2$ and $m-2$ respectively.

(2) The symbol algebras $\mfm_x(\alpha_1, \alpha_m)$ and $\mfm_x(\alpha_2, \alpha_m)$ are standard. More precisely,

\quad (i) there is a decomposition of abelian algebra $\mfm_x(\alpha_1, \alpha_m)=\mfm_x(\alpha_1)\oplus\mfm_x(\alpha_m)$;

\quad (ii) $\dim\mfm_{-2}(\alpha_2, \alpha_m)=1$ and the bilinear map
\begin{eqnarray*}
& \mfm_x(\alpha_2)\times\mfm_x(\alpha_m)\rightarrow(\mfm_x(\alpha_2, \alpha_m))_{-2} \\
& (x, y)\mapsto[x, y]
\end{eqnarray*}
induces an isomorphism of vector spaces $\mfm_x(\alpha_2)\cong\Hom(\mfm_x(\alpha_m), (\mfm_x(\alpha_2, \alpha_m))_{-2})$.
\end{cor}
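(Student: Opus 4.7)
The plan is to reduce every assertion to a statement about the distribution $\mcD^{\alpha_i}$ (or its sum) restricted to one of the fibers identified in Proposition~\ref{prop. standard fibers in (Am, al, a2, am) case}, and then read off the symbol algebra from the known geometry of that fiber.

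First I would verify that $\mcD^{\alpha_i}$ restricts to the full tangent bundle on the fiber $F^{\alpha_i}_x$. By construction $\mcD^{\alpha_i} \subset T^{\pi^{\alpha_i}}$, so $\mcD^{\alpha_i}|_{F^{\alpha_i}_x} \subset TF^{\alpha_i}_x$. On the other hand, this restriction contains the tangent directions of the rational curves in $\K^{\alpha_i}(\mcX_0)$ passing through points of $F^{\alpha_i}_x$, and by \eqref{eqn. (Am, al, a2, am) case fiber F(ai)} each fiber is a projective space which is covered by such lines through any given point. Because the projective space is homogeneous under $\Aut(F^{\alpha_i}_x)$, which preserves $\mcD^{\alpha_i}|_{F^{\alpha_i}_x}$, the saturated subsheaf so generated must equal $TF^{\alpha_i}_x$. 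Consequently $\mcD^{\alpha_i}_x = T_xF^{\alpha_i}_x$, and since $T\mbP^k$ is bracket-closed the symbol algebra $\mfm_x(\alpha_i)$ is abelian of dimension $\dim F^{\alpha_i}_x$. This gives assertion~(1).

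For assertion~(2)(i), I would use \eqref{eqn. (Am, al, a2, am) case fiber F(a1am)}: the product decomposition $F^{\alpha_1,\alpha_m}_x \cong F^{\alpha_1}_x \times F^{\alpha_m}_x$ realises $\mcD^{\alpha_1}|_{F^{\alpha_1,\alpha_m}_x}$ and $\mcD^{\alpha_m}|_{F^{\alpha_1,\alpha_m}_x}$ as the two factor tangent bundles (by the previous step). Vector fields tangent to different factors of a product commute, so $[\mcD^{\alpha_1},\mcD^{\alpha_m}]=0$ on $F^{\alpha_1,\alpha_m}_x$, and together with the vanishing of the internal brackets of each summand this produces the abelian direct sum decomposition $\mfm_x(\alpha_1,\alpha_m) = \mfm_x(\alpha_1)\oplus\mfm_x(\alpha_m)$.

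For assertion~(2)(ii), I would transport the standard computation from the flag manifold $A_{m-1}/P_{\{\alpha_2,\alpha_m\}}$. By \eqref{eqn. (Am, al, a2, am) case fiber F(a2am)} one has $F^{\alpha_2,\alpha_m}_x \cong P_{\alpha_1}/P_{\{\alpha_1,\alpha_2,\alpha_m\}}$, which is the incidence variety of a line and a hyperplane in $\mbP^{m-1}$. The restrictions $\mcD^{\alpha_2}|_{F^{\alpha_2,\alpha_m}_x}$ and $\mcD^{\alpha_m}|_{F^{\alpha_2,\alpha_m}_x}$ coincide with the two relative tangent bundles $T^{\Phi^{\alpha_m}}$ and $T^{\Phi^{\alpha_2}}$ on this subvariety (again by step~1 applied inside the fiber), hence $\mcD^{\alpha_2,\alpha_m}|_{F^{\alpha_2,\alpha_m}_x}$ is the standard $\mfg_{-1}(\{\alpha_2,\alpha_m\})$-distribution. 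The symbol algebra of this distribution on the flag manifold is classical: $\mfg_{-1}=\mfg^{\alpha_2}\oplus\mfg^{\alpha_m}$ with $\mfg^{\alpha_2}\cong\mbC^{m-2}\cong\mfg^{\alpha_m}$, and the Lie bracket pairs them nondegenerately into the one-dimensional $\mfg_{-2}$, which is exactly what is claimed.

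The computation is essentially routine once the fibers have been identified; the only delicate point is step~1, namely guaranteeing that $\mcD^{\alpha_i}_x$ fills up the whole tangent space to the fiber rather than a proper sub-distribution. This is handled by the homogeneity of the projective-space fiber together with the fact that $\mcD^{\alpha_i}$ is generated by tangent directions of the lines in $\K^{\alpha_i}(\mcX_0)$.
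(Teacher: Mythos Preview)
Your proof is correct and follows essentially the same route as the paper, which states the corollary as a direct consequence of Proposition~\ref{prop. standard fibers in (Am, al, a2, am) case}. You have simply made explicit the restriction argument that the paper leaves implicit: once the fibers $F^A_x$ are identified with the standard models, the restrictions $\mcD^A|_{F^A_x}$ coincide with the canonical distributions on those models, and the symbol algebras are read off directly.

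One small simplification of your ``delicate point'': the equality $\mcD^{\alpha_i}_x = T_xF^{\alpha_i}_x$ follows immediately from a rank count. On $\mcX_t$ for $t\neq 0$ one has $\mfg^{\alpha_i}(\bS) = T^{\Phi^{\alpha_i}}$ because each $\bS^{\alpha_i}$ is a projective space (so the grading on the fiber has depth one); hence the meromorphic distributions $\mcD^{\alpha_i}$ and $T^{\pi^{\alpha_i}}$ agree on $\mcX\setminus\mcX_0$ and therefore at general points of $\mcX_0$. No appeal to $\Aut(F^{\alpha_i}_x)$-invariance is needed. Also, a tiny slip: $P_{\alpha_1}/P_{\{\alpha_1,\alpha_2,\alpha_m\}}$ is the incidence variety of a \emph{point} and a hyperplane in $\mbP^{m-1}$, not a line and a hyperplane; this does not affect your argument.
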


\begin{prop}\label{prop. fibers F(a1a2) in Am case}
Suppose $\bS\cong A_m/P_{\{\alpha_1, \alpha_2, \alpha_m\}}$ in Setting \ref{setup. intro Fano deformation}. Then $F^{\alpha_1, \alpha_2}\cong P_{I\setminus\{\alpha_1, \alpha_2\}}/P_I$ for $x\in\mcX_0$ general.
\end{prop}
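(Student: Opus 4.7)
The plan is to realize $F^{\alpha_1,\alpha_2}_x$ as the central fiber of a one-parameter flat family of smooth Fano manifolds whose generic fibers are $A_{m-1}/P_{\{\alpha_1,\alpha_2\}}$, and then invoke Proposition \ref{prop. (Am, a1a2) case symbol algebra standard iff X0 standard}. Concretely, for a general $x\in\mcX_0$ I will choose a local analytic section $\sigma:\Delta\to\mcX^{\{\alpha_1,\alpha_2\}}$ of the Mori contraction from Proposition-Definition \ref{prop. extending phi(A) to pi(A)} with $\sigma(0)=\pi^{\{\alpha_1,\alpha_2\}}(x)$, and set $\mcY:=(\pi^{\{\alpha_1,\alpha_2\}})^{-1}(\sigma(\Delta))$ with $\psi:=\pi^{\{\alpha_1,\alpha_2\}}|_{\mcY}$. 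Then $\mcY_t\cong A_{m-1}/P_{\{\alpha_1,\alpha_2\}}$ for $t\neq 0$ and $\mcY_0=F^{\alpha_1,\alpha_2}_x$ is a smooth Fano manifold; since $\mcD^{\alpha_1}$ and $\mcD^{\alpha_2}$ from Notation \ref{nota. distribution D(A) and symbol algebra} are contained in the relative tangent sheaf of $\pi^{\{\alpha_1,\alpha_2\}}$, they restrict to holomorphic distributions on $\mcY$, and the symbol algebra $\mfm_y(\alpha_1,\alpha_2)$ computed on the fiber agrees with the one inherited from the ambient $\mfm_x(I)$.

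To apply Proposition \ref{prop. (Am, a1a2) case symbol algebra standard iff X0 standard}, it suffices to verify that $\mfm_y(\alpha_1,\alpha_2)\cong\mfg_-(\{\alpha_1,\alpha_2\})$ for general $y\in\mcY_0$, where the target is the standard two-step algebra of $A_{m-1}/P_{\{\alpha_1,\alpha_2\}}$. By Corollary \ref{cor. algebra type (Am, a1, a2, am) basic facts}(1) the summands $\mfm_y(\alpha_1)$ and $\mfm_y(\alpha_2)$ are abelian of dimensions $1$ and $m-2$, so $\mfg_{-1}$ has dimension $m-1$ and $\mfg_{-2}=[\mfm_y(\alpha_1),\mfm_y(\alpha_2)]$ has dimension at most $m-2$. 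An argument parallel to Proposition \ref{prop. rationally chain connected distribution version}, applied fiber-wise using the minimal rational components $\K^{\alpha_1}(\mcY_0)$ and $\K^{\alpha_2}(\mcY_0)$ from Proposition-Definition \ref{prop. divisors intersection theory on family X}, shows that $\mcD^{\alpha_1}|_{\mcY_0}+\mcD^{\alpha_2}|_{\mcY_0}$ is bracket-generating on the smooth Fano $\mcY_0$, forcing $\dim\mfm_y(\alpha_1,\alpha_2)=\dim\mcY_0=2m-3$ and hence $\sum_{k\geq 2}\dim\mfg_{-k}=m-2$.

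The main obstacle is ruling out the degenerate possibility $\dim\mfg_{-2}<m-2$ (with the missing dimension made up by some nonzero $\mfg_{-k}$ for $k\geq 3$); equivalently, writing $u_0$ for a generator of the one-dimensional $\mfm_y(\alpha_1)$, I need $\ad(u_0):\mfm_y(\alpha_2)\to\mfg_{-2}$ to be injective. My plan for this step is a Jacobi-identity argument inside the ambient $\mfm_x(I)$: supposing $[u_0,v_0]=0$ for some $0\neq v_0\in\mfm_y(\alpha_2)$, the Jacobi identity on $u_0,v_0,w$ combined with $[\mfm_x(\alpha_1),\mfm_x(\alpha_m)]=0$ from Corollary \ref{cor. algebra type (Am, a1, a2, am) basic facts}(2)(i) reduces to $[u_0,[v_0,w]]=0$ for every $w\in\mfm_x(\alpha_m)$; writing $[v_0,w]=\langle v_0,w\rangle\,\xi_0$ via the non-degenerate pairing of Corollary \ref{cor. algebra type (Am, a1, a2, am) basic facts}(2)(ii), with $\xi_0$ spanning $\mfm_x(\alpha_2,\alpha_m)_{-2}\cong\C$, and picking $w$ with $\langle v_0,w\rangle\neq 0$, one forces $[u_0,\xi_0]=0$ in $\mfm_x(I)_{-3}$, which I expect to contradict the dimension identity $\dim\mfm_x(I)=3m-3$ from Proposition \ref{prop. rationally chain connected distribution version} after comparing with the known contributions from $\mfm_x(\alpha_1,\alpha_m)$ and $\mfm_x(\alpha_2,\alpha_m)$. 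Once the non-degeneracy is secured, $\mfm_y(\alpha_1,\alpha_2)$ is two-step with $\dim\mfg_{-2}=m-2$ and the bracket $\mfm_y(\alpha_1)\otimes\mfm_y(\alpha_2)\to\mfg_{-2}$ an isomorphism, matching the standard $\mfg_-(\{\alpha_1,\alpha_2\})$ of $A_{m-1}$, so Proposition \ref{prop. (Am, a1a2) case symbol algebra standard iff X0 standard} delivers $\mcY_0=F^{\alpha_1,\alpha_2}_x\cong A_{m-1}/P_{\{\alpha_1,\alpha_2\}}=P_{I\setminus\{\alpha_1,\alpha_2\}}/P_I$, as required.
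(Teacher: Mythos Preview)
Your overall architecture matches the paper's proof: reduce to checking that $\mfm_x(\alpha_1,\alpha_2)$ is standard and then invoke Proposition~\ref{prop. (Am, a1a2) case symbol algebra standard iff X0 standard}; assume it fails, find $0\neq v_0\in\mfm_x(\alpha_2)$ with $[u_0,v_0]=0$, and use the Jacobi identity together with $[\mfm_x(\alpha_1),\mfm_x(\alpha_m)]=0$ and the nondegenerate pairing of Corollary~\ref{cor. algebra type (Am, a1, a2, am) basic facts}(2)(ii) to deduce $[u_0,\xi_0]=0$ where $\xi_0$ spans $\mfm_x(\alpha_2,\alpha_m)_{-2}$. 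Up to this point you are doing exactly what the paper does (with $u_0,v_0,\xi_0$ playing the roles of $v_1,v_2,v_4$).

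The gap is your phrase ``I expect to contradict the dimension identity $\dim\mfm_x(I)=3m-3$.'' This is precisely the nontrivial part of the argument, and it does not fall out of a simple comparison of contributions. The difficulty is that $\mfm_x(\alpha_1,\alpha_2)$ may a priori have pieces in all grades $\mfg_{-k}$, and you must control how these interact with $\mfm_x(\alpha_m)$ and $\C\xi_0$ under iterated brackets. The paper handles this by setting
\[
\mcA:=\mfm_x(\alpha_1,\alpha_2)+\mfm_x(\alpha_m)+\C\xi_0
\]
and proving that $\mcA$ is a Lie subalgebra of $\mfm_x(I)$. This is done by an induction on $k$ showing $[\mfm(k;\alpha_1,\alpha_2),\,\mfm_x(\alpha_m)+\C\xi_0]\subset\mcA$, where $\mfm(k;\alpha_1,\alpha_2)$ is the $k$-th piece of the weak derived filtration; the induction step repeatedly expands brackets via Jacobi and feeds in the relations $[\mfm_x(\alpha_1),\mfm_x(\alpha_m)+\C\xi_0]=0$ and $[\mfm_x(\alpha_2),\mfm_x(\alpha_m)+\C\xi_0]\subset\mfm_x(\alpha_2,\alpha_m)\subset\mcA$. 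Once $\mcA$ is a subalgebra containing $\mfm_x(\alpha_1)+\mfm_x(\alpha_2)+\mfm_x(\alpha_m)$, it must equal $\mfm_x(I)$; but $\dim\mcA\le(2m-3)+(m-2)+1=3m-4<3m-3$, the desired contradiction. You should carry out this subalgebra argument explicitly rather than leaving it as an expectation.
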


\begin{proof}
Take $x\in\mcX_0$ general. We claim that
\begin{eqnarray}\label{eqn. m(a1, a2) standard in Am case}
\mbox{ the symbol algebra } \mfm_x(\alpha_1, \alpha_2) \mbox{ is standard.}
\end{eqnarray}
For the simplicity of discussion, we omit the subscript $x$ in the notations of symbol algebras such as $\mfm_x(\alpha_1, \alpha_2)$ and $\mfm_x(\alpha_m)$.

Now suppose that $\mfm(\alpha_1, \alpha_2)$ is not standard. Then there exists $0\neq v_2\in\mfm(\alpha_2)$ such that $[\mfm(\alpha_1), v_2]=0$. Since $\mfm(\alpha_2, \alpha_m)$ is standard, there exists $0\neq v_3\in\mfm(\alpha_m)$ such that $v_4:=[v_2, v_3]\neq 0$ and $\mfm_{-2}(\alpha_2, \alpha_m)=\C v_4$. In particular, there is a decomposition of vector spaces
$$\mfm(\alpha_2, \alpha_m)=\mfm(\alpha_2)\oplus\mfm(\alpha_m)\oplus\C v_4.$$

Take $0\neq v_1\in\mfm(\alpha_1)$. Then we have
\begin{eqnarray}\label{eqn. v1 barcket v4 = 0}
[v_1, v_4]=[v_1, [v_2, v_3]]=[[v_1, v_2], v_3]+[v_2, [v_1, v_3]]=0.
\end{eqnarray}
In other words, $[\mfm(\alpha_1), \C v_4]=0$. Let $\mcA(\alpha_1, \alpha_2, \alpha_m)$ be the vector subspace of $\mfm(\alpha_1, \alpha_2, \alpha_m)$ generated by $\mfm(\alpha_1, \alpha_2)$, $\mfm(\alpha_m)$ and $\C v_4$.
Denote by
\begin{eqnarray*}
&& \mfm(1; \alpha_1, \alpha_2):=\mfm(\alpha_1)\oplus\mfm(\alpha_2), \\
&& \mfm(k; \alpha_1, \alpha_2):=[\mfm(1; \alpha_1, \alpha_2), \mfm(k-1; \alpha_1, \alpha_2)] \mbox{ for each } k \geq 2.
\end{eqnarray*}
Thus $\mfm(\alpha_1, \alpha_2)=\sum\limits_{k=1}^{\infty}\mfm(k; \alpha_1, \alpha_2)$.
We claim that (when \eqref{eqn. m(a1, a2) standard in Am case} fails),
\begin{eqnarray}\label{eqn. claim A(a1, a2, am) subalgebra}
\mcA(\alpha_1, \alpha_2, \alpha_m) \mbox{ is a Lie subalgebra of } \mfm(\alpha_1, \alpha_2, \alpha_m).
\end{eqnarray}
Indeed by Corollary \ref{cor. algebra type (Am, a1, a2, am) basic facts} we already know that
\begin{eqnarray*}
\mcA(\alpha_1, \alpha_2, \alpha_m)=\mfm(\alpha_1, \alpha_2) + \mfm(\alpha_1, \alpha_m) + \mfm(\alpha_2, \alpha_m).
\end{eqnarray*}
It follows that
\begin{eqnarray*}
[\mfm(\alpha_m) + \C v_4, \mfm(\alpha_m) + \C v_4]\subset\mfm(\alpha_2, \alpha_m)\subset\mcA(\alpha_1, \alpha_2, \alpha_m).
\end{eqnarray*}
Hence to prove the claim \eqref{eqn. claim A(a1, a2, am) subalgebra} it remains to show that
\begin{eqnarray}\label{eqn. claim m(k, a1, a2) bracket m(am)+ c v4 into A(a1, a2, am)}
[\mfm(k; \alpha_1, \alpha_2), \mfm(\alpha_m) + \C v_4]\subset\mcA(\alpha_1, \alpha_2, \alpha_m) \mbox{ for all } k \geq 1.
\end{eqnarray}

Now let us prove \eqref{eqn. claim m(k, a1, a2) bracket m(am)+ c v4 into A(a1, a2, am)} by induction on $k$. The case $k=1$ of \eqref{eqn. claim m(k, a1, a2) bracket m(am)+ c v4 into A(a1, a2, am)} follows from
\begin{eqnarray}
&& [\mfm(\alpha_1), \mfm(\alpha_m) + \C v_4]=0, \label{eqn. m(a1) bracket m(am)+C v4 =0 in Am case}\\
&& [\mfm(\alpha_2), \mfm(\alpha_m) + \C v_4]\subset\mfm(\alpha_2, \alpha_m)\subset\mcA(\alpha_1, \alpha_2, \alpha_m),
\end{eqnarray}
where in the first equality we apply Corollary \ref{cor. algebra type (Am, a1, a2, am) basic facts} and \eqref{eqn. v1 barcket v4 = 0}.

Now we assume that $k\geq 2$ and
\begin{eqnarray*}\label{eqn. inductive claim m(i, a1, a2) bracket m(am)+ c v4 into A(a1, a2, am)}
[\mfm(i; \alpha_1, \alpha_2), \mfm(\alpha_m) + \C v_4]\subset\mcA(\alpha_1, \alpha_2, \alpha_m) \mbox{ for all } 1\leq i\leq k-1.
\end{eqnarray*}
Then by the definition of $\mfm(k; \alpha_1, \alpha_2)$ we have
\begin{eqnarray}
&& [\mfm(k; \alpha_1, \alpha_2), \mfm(\alpha_m) + \C v_4] \label{eqn. m(k) decomp}\\
&\subset&\sum\limits_{j=1, 2}[[\mfm(\alpha_j), \mfm(k-1; \alpha_1, \alpha_2)], \mfm(\alpha_m) + \C v_4] \nonumber \\
&\subset&\sum\limits_{j=1, 2}\big([[\mfm(\alpha_j), \mfm(\alpha_m) + \C v_4], \mfm(k-1; \alpha_1, \alpha_2)] \nonumber\\
&& +[\mfm(\alpha_j), [\mfm(k-1; \alpha_1, \alpha_2), \mfm(\alpha_m) + \C v_4]]\big). \nonumber
\end{eqnarray}
We analyse term by term. By \eqref{eqn. m(a1) bracket m(am)+C v4 =0 in Am case} we have
\begin{eqnarray}\label{eqn. term 1 in Am case}
[[\mfm(\alpha_1), \mfm(\alpha_m) + \C v_4], \mfm(k-1; \alpha_1, \alpha_2)]=0.
\end{eqnarray}
On one hand, we have
\begin{eqnarray}
&&[\mfm(\alpha_1), [\mfm(k-1; \alpha_1, \alpha_2), \mfm(\alpha_m) + \C v_4]] \label{eqn. term 2 in Am case}\\
&\subset&[\mfm(\alpha_1), \mcA(\alpha_1, \alpha_2, \alpha_m)] \nonumber\\
&=&[\mfm(\alpha_1), \mfm(\alpha_1, \alpha_2)] + [\mfm(\alpha_1), \mfm(\alpha_m)] + [\mfm(\alpha_1), \C v_4] \nonumber\\
&\subset&\mfm(\alpha_1, \alpha_2) \nonumber\\
&\subset&\mcA(\alpha_1, \alpha_2, \alpha_m).  \nonumber
\end{eqnarray}
By Corollary \ref{cor. algebra type (Am, a1, a2, am) basic facts} we have
\begin{eqnarray*}
[\mfm(\alpha_2), \mfm(\alpha_m) + \C v_4]\subset \mfm(\alpha_2)+ \mfm(\alpha_m) + \C v_4,
\end{eqnarray*}
which implies that
\begin{eqnarray}
&& [[\mfm(\alpha_2), \mfm(\alpha_m) + \C v_4], \mfm(k-1; \alpha_1, \alpha_2)] \label{eqn. term 3 in Am case}\\
&\subset&[\mfm(\alpha_2), \mfm(k-1; \alpha_1, \alpha_2)] + [\mfm(\alpha_m) + \C v_4, \mfm(k-1; \alpha_1, \alpha_2)] \nonumber\\
&\subset&\mfm(k; \alpha_1, \alpha_2) + \mcA(\alpha_1, \alpha_2, \alpha_m) \nonumber\\
&=&\mcA(\alpha_1, \alpha_2, \alpha_m). \nonumber
\end{eqnarray}
Meanwhile by induction we have
\begin{eqnarray*}
&& [ \mfm(k-1; \alpha_1, \alpha_2), \mfm(\alpha_m) + \C v_4] \\
&\subset&\mcA(\alpha_1, \alpha_2, \alpha_m) \\
&=&\mfm(\alpha_1, \alpha_2)+ \mfm(\alpha_m) + \C v_4,
\end{eqnarray*}
which implies that
\begin{eqnarray}
&& [\mfm(\alpha_2), [\mfm(k-1; \alpha_1, \alpha_2), \mfm(\alpha_m) + \C v_4]] \label{eqn. term 4 in Am case}\\
&\subset&[\mfm(\alpha_2), \mfm(\alpha_1, \alpha_2)] + [\mfm(\alpha_2), \mfm(\alpha_m) + \C v_4] + [\mfm(\alpha_2), \C v_4] \nonumber\\
&\subset&\mfm(\alpha_1, \alpha_2) + \mfm(\alpha_2, \alpha_m) \nonumber\\
&\subset&\mcA(\alpha_1, \alpha_2, \alpha_m). \nonumber
\end{eqnarray}
By \eqref{eqn. m(k) decomp}--\eqref{eqn. term 4 in Am case} we have $[\mfm(k; \alpha_1, \alpha_2), \mfm(\alpha_m) + \C v_4]\subset\mcA(\alpha_1, \alpha_2, \alpha_m)$. In other words \eqref{eqn. claim m(k, a1, a2) bracket m(am)+ c v4 into A(a1, a2, am)} holds. Then the claim \eqref{eqn. claim A(a1, a2, am) subalgebra} holds.

Now $\mcA(\alpha_1, \alpha_2, \alpha_m)$ is a Lie subalgebra of $\mfm(\alpha_1, \alpha_2, \alpha_m)$ that contains $\mfm(\alpha_1) + \mfm(\alpha_2) + \mfm(\alpha_m)$. Recall that $\mfm(\alpha_1, \alpha_2, \alpha_m)$ is a Lie algebra generated by $\mfm(\alpha_1) + \mfm(\alpha_2) + \mfm(\alpha_m)$. Then we have $\mcA(\alpha_1, \alpha_2, \alpha_m)=\mfm(\alpha_1, \alpha_2, \alpha_m)$. This contradicts the fact that
\begin{eqnarray*}
\dim \mcA(\alpha_1, \alpha_2, \alpha_m) = 3m-4 = \dim \mfm(\alpha_1, \alpha_2, \alpha_m) - 1,
\end{eqnarray*}
where the dimension of $\mfm(\alpha_1, \alpha_2, \alpha_m)$ is obtained by Proposition \ref{prop. dim mx(I) = dim X0}. Hence we conclude that $\mfm_x(\alpha_1, \alpha_2)$ is standard for $x\in\mcX_0$ general, verifying the claim \ref{eqn. m(a1, a2) standard in Am case}. Then the conclusion follows from Proposition \ref{prop. (Am, a1a2) case symbol algebra standard iff X0 standard}.
\end{proof}

Now we are ready to prove Theorem \ref{thm. manifold (Am, a1, a2, am) is Fano rigid}.

\begin{proof}[Proof of Theorem \ref{thm. manifold (Am, a1, a2, am) is Fano rigid}]
Suppose $\bS\cong A_m/P_{\{\alpha_1, \alpha_2, \alpha_m\}}$ in Setting \ref{setup. intro Fano deformation}.
By Proposition \ref{prop. standard fibers in (Am, al, a2, am) case} and Proposition \ref{prop. fibers F(a1a2) in Am case}, $F^{\alpha, \beta}_x\cong P_{I\setminus\{\alpha, \beta\}}/P_I$ for all $\alpha\neq\beta\in I$ and general points $x\in\mcX_0$. Then by Theorem \ref{thm. reduction theorem} $\mcX_0\cong A_m/P_{\{\alpha_1, \alpha_2, \alpha_m\}}$. In other words, the manifold $A_m/P_{\{\alpha_1, \alpha_2, \alpha_m\}}$ is rigid under Fano deformation.
\end{proof}

\subsection{Fano degeneration of $A_3/P_{\{\alpha_1, \alpha_2\}}$} \label{section. unique degeneration of (A3, a1, a2)}

The aim of this section is to prove Theorem \ref{thm. intro unique degeneration of F(1, 2, C4)}, namely the manifold $F^d(1, 2; \C^4)$ in Construction \ref{cons. intro Fd(1, 2; C4)} is the unique Fano degeneration of $A_3/P_{\{\alpha_1, \alpha_2\}}$. Throughout Section \ref{section. unique degeneration of (A3, a1, a2)}, we always discuss under the following assumption.

\begin{assu}\label{assu. A3 P_min degeneration}
Let $\pi: \mcX\rightarrow\Delta\ni 0$ be a holomorphic map such that $\mcX_t\cong A_3/P_{\{\alpha_1, \alpha_2\}}$ for $t\neq 0$, $\mcX_0$ is a connected Fano manifold, and $\mcX_0\ncong A_3/P_{\{\alpha_1, \alpha_2\}}$.
\end{assu}

By definition $F^d(1, 2; \C^4):=\mbP(\mcL_\sigma\oplus\mcL^\omega)$. Then the restriction of the $\mbP^2$-bundle $F^d(1, 2; \C^4)\rightarrow\mbP^3$ gives a biholomorphic map $\mbP(\mcL_\sigma)\cong\mbP^3$. Moreover the hyperplane bundle $\mbP(\mcL^\omega)$ is biholomorphic to the complete flag manifold $C_2/B$.

The outline to show $\mcX_0\cong F^d(1, 2; \C^4)$ is as follows. Firstly, the Mori contraction $\pi^{\alpha_2}_0: \mcX_0\rightarrow\mcX_0^{\alpha_2}$ is a $\mbP^2$-bundle over $\mbP^3$. We know that at a general point $x\in\mcX_0$, the family $\K^{\alpha_1}_x(\mcX_0)$ consists a single element, denoted by $[C_x]$. An irreducible component of the locus $\{x\in\mcX_0\mid\dim\K^{\alpha_1}_x(\mcX_0)\geq 1\}$ gives a meromorphic section $\sigma: \mbP^3\dashrightarrow\mcX_0$. Let $H$ be an effective divisor on $\mcX_0$ which is a general element in a linear system satisfying $(H\cdot\K^{\alpha_1})=0$ and $(H\cdot\K^{\alpha_2})=1$. The restriction of $\pi^{\alpha_2}_0$ on $H$ is a fibration over $\mbP^3$, whose general fiber is a line in $\mbP^2$. Then we show that $\sigma$ is a holomorphic section, $H\rightarrow\mbP^3$ is a $\mbP^1$-bundle and $H\cap\sigma(\mbP^3)=\emptyset$. Finally we show $H\cong C_2/B$ and $\mcX_0\cong F^d(1, 2; \C^4)$.

Now we sketch how to show $H\cong C_2/B$, which is the key point of the argument in this section. Denote by $\K^{\alpha_1}(\mcX_0/\mbP^3)$ the closure in the Chow scheme of $\mbP^3$ of the set of those $\pi^{\alpha_2}_0(C_x)$, where $x\in\mcX_0$ general and $\K^{\alpha_1}_x(\mcX_0)=\{[C_x]\}$. By considering the symbol algebra of $\mcD=\mcD^{\alpha_1}+\mcD^{\alpha_2}$ on $\mcX_0$, we obtain that a meromorphic distribution $\mcE$ of rank two on $\mbP^3$ satisfying that $\K^{\alpha_1}(\mcX_0/\mbP^3)$ is the family of lines on $\mbP^3$ that are tangent to $\mcE$. This gives an antisymmetric form $\omega$ on $\C^4$ -- which is shown to be a symplectic form later -- such that $\mcE$ coincides with the induced contact form $\mcL^\omega$ on $\mbP^3=\mbP(\C^4)$.

This section is organized as follows. In the part \ref{subsubsection. type of symbol algebra in A3 case}, by studying splitting types of various meromorphic vector bundles along a general element in $\K^{\alpha_2}(\mcX_0)$, we obtain the symbol algebra of $\mcD=\mcD^{\alpha_1}+\mcD^{\alpha_2}$ on $\mcX_0$. In the part \ref{subsubsection. the meromorphic section sigma}, we obtain the meromorphic section $\sigma$ by studying splitting types of various meromorphic vector bundles along a general element in $\K^{\alpha_1}(\mcX_0)$. In the part \ref{subsubsection. a subset of family of lines on P3}, we study the property of the family $\K^{\alpha_1}(\mcX_0/\mbP^3)$. In the part \ref{subsubsection. hyperplane bundles of P(V) over P3}, we complete the proof of Theorem \ref{thm. intro unique degeneration of F(1, 2, C4)} by studying the property of divisor $H$ explained above. In the part \ref{subsubsection. properties of Fd(1, 2; C4)}, we summarize some properties of the manifold $F^d(1, 2; \C^4)$, which will be useful in Subsections \ref{subsection. rigidity of (A4, (a2, a3, a4))} and \ref{subsection. deformation of (D4, a2a3a4)}.

\subsubsection{Type of symbol algebra}\label{subsubsection. type of symbol algebra in A3 case}

\begin{conv}
In Section \ref{section. unique degeneration of (A3, a1, a2)}, we denote by $\msD^{\alpha_i}$, $\msD$ and $\msD^{-i}$ the restriction of $\mcD^{\alpha_i}$, $\mcD$ and $\mcD^{-i}$ on $\mcX_0$ respectively, where the latter is defined in Notation \ref{nota. distribution D(A) and symbol algebra}.
\end{conv}

\begin{lem}\label{lem. unique kernel in degenerate A3 case}
Under Assumption \ref{assu. A3 P_min degeneration}, there exists a unique meromorphic line bundle $\mcN\subset T^{\pi^{\alpha_2}}\mcX_0$ such that $[\mcN, \msD]\subset \msD$, where $\msD:= T^{\pi^{\alpha_1}_0}+T^{\pi^{\alpha_2}_0}=\mcD|_{\mcX_0}$. Moreover $\rank\msD^{-2}=4$ and $\msD^{-3}=T\mcX_0$.
\end{lem}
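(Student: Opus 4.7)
The plan is to analyze the Frobenius bracket of the rank-$3$ meromorphic distribution $\msD=\msD^{\alpha_1}\oplus\msD^{\alpha_2}$ at a general point $x\in\mcX_0$. Both summands are integrable, since at general points they agree with the relative tangent bundles $T^{\pi^{\alpha_1}_0}$ (rank $1$, fibers $\mbP^1$) and $T^{\pi^{\alpha_2}_0}$ (rank $2$, fibers $\mbP^2$) of the Mori contractions. Consequently the Levi bracket $\wedge^2\msD\to T\mcX_0/\msD$ vanishes on $\wedge^2\msD^{\alpha_1}=0$ and on $\wedge^2\msD^{\alpha_2}$, and reduces to a bilinear map
\begin{equation*}
F\colon \msD^{\alpha_1}\otimes\msD^{\alpha_2}\longrightarrow T\mcX_0/\msD,
\end{equation*}
whose generic rank $r\in\{0,1,2\}$ satisfies $\rank\msD^{-2}=3+r$.

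Next I would pin down $r$ by a dichotomy. If $r=2$, then $\mfm_x(\alpha_1,\alpha_2)$ is a two-step nilpotent Lie algebra whose only non-trivial bracket is an isomorphism $\msD^{\alpha_1}_x\otimes\msD^{\alpha_2}_x\cong T_x\mcX_0/\msD_x$; since $\mfg_-(I)$ for $(A_3,\{\alpha_1,\alpha_2\})$ has exactly this structure (with $[\mfg_{-1}(\alpha_1),\mfg_{-1}(\alpha_2)]=\mfg_{-2}$ an isomorphism of $2$-dimensional spaces, and $[\mfg_{-1}(\alpha_2),\mfg_{-1}(\alpha_2)]=0$ because neither $-2\alpha_2$ nor $-2\alpha_2-\alpha_3$ is a root), we would get $\mfm_x(\alpha_1,\alpha_2)\cong\mfg_-(I)$, and Proposition \ref{prop. (Am, a1a2) case symbol algebra standard iff X0 standard} would force $\mcX_0\cong\bS$, contradicting Assumption \ref{assu. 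A3 P_min degeneration}. If instead $r=0$, then $\msD$ is a meromorphic integrable distribution of rank $3<5=\dim\mcX_0$, contradicting Proposition \ref{prop. rationally chain connected distribution version}. Hence $r=1$, giving $\rank\msD^{-2}=4$.

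Since $\msD^{\alpha_1}$ has rank $1$, the rank-$1$ kernel of $F$ inside the rank-$2$ bundle $\msD^{\alpha_1}\otimes\msD^{\alpha_2}$ automatically takes the form $\msD^{\alpha_1}\otimes\mcN$ for a uniquely determined line subbundle $\mcN\subset\msD^{\alpha_2}=T^{\pi^{\alpha_2}_0}$ on the open locus where $F$ has maximal rank; $\mcN$ then extends meromorphically to all of $\mcX_0$. The defining relation $F(\msD^{\alpha_1},\mcN)=0$ reads $[\mcN,\msD^{\alpha_1}]\subset\msD$, and integrability of $\msD^{\alpha_2}$ gives $[\mcN,\msD^{\alpha_2}]\subset\msD^{\alpha_2}\subset\msD$; together these yield $[\mcN,\msD]\subset\msD$. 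Any other line bundle $\mcN'\subset T^{\pi^{\alpha_2}_0}$ with $[\mcN',\msD]\subset\msD$ would force $\msD^{\alpha_1}\otimes\mcN'\subset\ker F$, and since $\ker F$ has rank $1$ this compels $\mcN'=\mcN$.

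To conclude $\msD^{-3}=T\mcX_0$, I note that the weak derived chain $\msD\subsetneq\msD^{-2}$ strictly grows at step $2$, while by Proposition \ref{prop. rationally chain connected distribution version} its limit $\msD^{-\infty}$ must equal $T\mcX_0$ (of rank $5$). If $\msD^{-3}$ equalled $\msD^{-2}$, the chain would stabilize at rank $4$, contradicting this limit; hence $\rank\msD^{-3}\geq 5$, so $\msD^{-3}=T\mcX_0$. The main obstacle in this plan is the dichotomy that fixes $r=1$: one must simultaneously invoke Proposition \ref{prop. (Am, a1a2) case symbol algebra standard iff X0 standard} to rule out the standard full-rank case and Proposition \ref{prop. rationally chain connected distribution version} to rule out the completely involutive case, and the whole argument hinges on the gap between these two being exactly one.
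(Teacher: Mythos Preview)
Your proof is correct and follows essentially the same approach as the paper's: both reduce the Frobenius bracket of $\msD$ to a map $F\colon \msD^{\alpha_1}\otimes\msD^{\alpha_2}\to T\mcX_0/\msD$, use Proposition~\ref{prop. rationally chain connected distribution version} to exclude rank $0$ and Proposition~\ref{prop. (Am, a1a2) case symbol algebra standard iff X0 standard} to exclude rank $2$, and conclude $\rank\msD^{-2}=4$ and $\msD^{-3}=T\mcX_0$. Your version is in fact more explicit than the paper's on the existence and uniqueness of $\mcN$ (which the paper leaves implicit from the rank-$1$ kernel), so there is nothing to correct.
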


\begin{proof}
The restriction of the Frobenius bracket of $\msD$ induces a homomorphism $F: \msD^{\alpha_1}\otimes\msD^{\alpha_2}\rightarrow T\mcX_0/\msD$. The image of $F$ is $\msD^{-2}/\msD$ on $\mcX_0$, whose rank is at most two. By Proposition \ref{prop. rationally chain connected distribution version}, $\rank(\msD^{-2}/\msD)\geq 1$. If $\rank(\msD^{-2}/\msD)=2$, then $\mfm_x(\alpha_1, \alpha_2)\cong\mfg_-(\alpha_1, \alpha_2)$ for $x\in\mcX_0$. Then by Proposition \ref{prop. (Am, a1a2) case symbol algebra standard iff X0 standard} $\mcX_0\cong A_3/P_{\{\alpha_1, \alpha_2\}}$, contradicting Assumption \ref{assu. A3 P_min degeneration}. Hence $\rank(\msD^{-2}/\msD)=1$. By Proposition \ref{prop. rationally chain connected distribution version} $\rank(\msD^{-3}/\msD^{-2})\geq 1$, implying that $\msD^{-3}=T\mcX_0$.
\end{proof}

\begin{lem}\label{lem. A3 degeneration unique W when generating D(-3)}
Under Assumption \ref{assu. A3 P_min degeneration}, there exists a unique meromorphic vector subbundle $\mcW\subset\msD^{-1}$ of rank two such that $[\mcW, \msD^{-2}]\subset\msD^{-2}$. Furthermore, $\mcN\subset\mcW$.
\end{lem}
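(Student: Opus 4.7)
The plan is to construct $\mcW$ as the kernel of a natural $\mcO_{\mcX_0}$-linear pairing induced by the Lie bracket, and then to deduce $\mcN\subset\mcW$ from the Jacobi identity. By Lemma~\ref{lem. unique kernel in degenerate A3 case}, at a general point of $\mcX_0$ one has the filtration $\msD\subsetneq\msD^{-2}\subsetneq\msD^{-3}=T\mcX_0$ of ranks $3<4<5$, so the quotients $\mcL_2:=\msD^{-2}/\msD$ and $\mcL_3:=T\mcX_0/\msD^{-2}$ are meromorphic line bundles. Taking brackets of vector fields I would define a pairing $\Lambda:\msD\otimes\mcL_2\to\mcL_3$ by $(v,\bar w)\mapsto\overline{[v,w]}$; the Leibniz identities $[v,fw]=f[v,w]+v(f)w$ and $[fv,w]=f[v,w]-w(f)v$, together with $\msD\subset\msD^{-2}$, make $\Lambda$ well-defined on the class $\bar w$ and $\mcO_{\mcX_0}$-linear in each slot at general points.

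Next I would observe that $\Lambda\not\equiv 0$: otherwise $[\msD,\msD^{-2}]\subset\msD^{-2}$, forcing $\msD^{-3}=\msD^{-2}$ and contradicting Lemma~\ref{lem. unique kernel in degenerate A3 case}. Consequently the adjoint $\Lambda^\flat:\msD\to\mathcal{H}om(\mcL_2,\mcL_3)$ is a nonzero morphism into a meromorphic line bundle, and I set $\mcW:=\ker\Lambda^\flat$, a meromorphic subbundle of $\msD$ of generic rank $3-1=2$ that satisfies $[\mcW,\msD^{-2}]\subset\msD^{-2}$ tautologically. Uniqueness is immediate: any rank-two meromorphic subbundle $\mcW'\subset\msD$ satisfying the bracket condition has $\Lambda^\flat|_{\mcW'}=0$, hence $\mcW'\subset\ker\Lambda^\flat=\mcW$, with equality by comparison of ranks.

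To show $\mcN\subset\mcW$, I would argue via the Jacobi identity. Since $\msD^{-2}=\msD+[\msD,\msD]$ at general points, any local section $\eta$ of $\msD^{-2}$ is a combination $\eta=d+\sum_i[u_i,v_i]$ with $d,u_i,v_i$ sections of $\msD$. For $\xi\in\mcN$, Lemma~\ref{lem. unique kernel in degenerate A3 case} gives $[\xi,d],[\xi,u_i],[\xi,v_i]\in\msD$, so Jacobi yields
\[
[\xi,[u_i,v_i]]=[[\xi,u_i],v_i]+[u_i,[\xi,v_i]]\in[\msD,\msD]\subset\msD^{-2},
\]
whence $[\xi,\eta]\in\msD^{-2}$ and therefore $[\mcN,\msD^{-2}]\subset\msD^{-2}$. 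The uniqueness just proved then forces $\mcN\subset\mcW$. The main subtlety I anticipate is the careful verification that $\Lambda$ descends to a well-defined $\mcO$-linear pairing on the quotients at general points (so that it has an honest kernel bundle of the expected rank); beyond that, the argument reduces to rank counting and the Jacobi computation above.
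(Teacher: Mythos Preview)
Your proof is correct and follows essentially the same approach as the paper: the paper's one-line argument (``follows from $\rank\msD^{-3}=\rank\msD^{-2}+1$ and $[\mcN,\msD^{-1}]\subset\msD^{-1}$'') is precisely the kernel-of-the-bracket-pairing construction you spell out, together with the Jacobi computation for $\mcN\subset\mcW$. You have simply made explicit the tensorial map $\Lambda^\flat:\msD\to\mathcal{H}om(\mcL_2,\mcL_3)$ and the rank count that the paper leaves implicit.
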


\begin{proof}
The conclusion follows from the two facts that $\rank\msD^{-3}=\rank\msD^{-2}+1$ and that $[\mcN, \msD^{-1}]\subset \msD^{-1}$.
\end{proof}

The following result is important to the proof of Theorem \ref{thm. intro unique degeneration of F(1, 2, C4)}.

\begin{prop} \label{prop. W=Da2 (A3 case)}
We have $\mathcal{W} = \msD^{\alpha_2}$.
\end{prop}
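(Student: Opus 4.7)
The plan is to verify that $\msD^{\alpha_2}$ satisfies the two defining properties of $\mcW$ in Lemma \ref{lem. A3 degeneration unique W when generating D(-3)}, then invoke uniqueness. Concretely, I would show $\msD^{\alpha_2}$ is a rank-$2$ meromorphic subbundle of $\msD^{-1}$ with $[\msD^{\alpha_2}, \msD^{-2}] \subset \msD^{-2}$.

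First I would show $\msD^{\alpha_2} = T^{\pi^{\alpha_2}_0}$ on a dense open subset of $\mcX_0$. By Proposition \ref{prop. standard fibers in (Am, al, a2, am) case}, the general fiber $F^{\alpha_2}_x$ is isomorphic to $\mbP^2$; through each point of such a fiber passes a one-parameter family of lines from $\K^{\alpha_2}(\mcX_0)$ whose tangents span the full tangent space to the fiber. Since $\msD^{\alpha_2}$ has generic rank $2$ and contains all these tangent directions, it coincides with $T^{\pi^{\alpha_2}_0}$; in particular it is integrable, with leaves the $\mbP^2$-fibers of $\pi^{\alpha_2}_0$.

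The crucial step is verifying $[\msD^{\alpha_2}, \msD^{-2}] \subset \msD^{-2}$. Since $\msD^{\alpha_2}$ is integrable, this is equivalent to $\msD^{-2}/\msD^{\alpha_2}$ descending to a rank-$2$ distribution on $\mbP^3 = \mcX_0^{\alpha_2}$, or dually to the line bundle $L := T\mcX_0/\msD^{-2}$ being the pullback of a line bundle on $\mbP^3$. On a general $\mbP^2$-fiber $F$ the normal bundle is trivial, $N_{F/\mcX_0} \cong \mcO_F^{\oplus 3}$, so it suffices to show $L|_F \cong \mcO_F$: then the surjection $\mcO_F^{\oplus 3} \twoheadrightarrow L|_F$ is a constant linear functional, whose kernel $\msD^{-2}|_F/TF$ is a constant rank-$2$ subspace of $T_{\pi^{\alpha_2}_0(F)}\mbP^3$. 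By Proposition \ref{prop. invariance of Mori cone by Wisniewski} the Picard group $\Pic(\mcX_0) = \mbZ\mcL^{\alpha_1} \oplus \mbZ\mcL^{\alpha_2}$ has rank $2$, and both generators restrict trivially to $F$: the defining intersection $(\mcL^{\alpha_1} \cdot \K^{\alpha_2}) = 0$ from Proposition-Definition \ref{prop. divisors intersection theory on family X} forces $\mcL^{\alpha_1}|_F$ to have degree zero on lines in $F \cong \mbP^2$, hence trivial, while $\mcL^{\alpha_2} = (\pi^{\alpha_2}_0)^*\mcO_{\mbP^3}(1)$ is tautologically trivial on fibers. Therefore $L|_F \cong \mcO_F$ and the descent holds.

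With both conditions verified, uniqueness in Lemma \ref{lem. A3 degeneration unique W when generating D(-3)} gives $\mcW = \msD^{\alpha_2}$. The main obstacle is that a direct Lie-algebraic approach via the Jacobi identity is circular: controlling $[\msD^{\alpha_2}, [\msD^{\alpha_1}, \msD^{\alpha_2}]]$ reduces back to the very bracket $[\msD^{\alpha_2}, \msD^{-2}]$ one is trying to bound. The resolution is to trade the local Lie-bracket computation for a global cohomological one, exploiting the smallness of $\Pic(\mcX_0)$ together with the $\mbP^2$-fibration structure to force triviality of $L|_F$, and hence constancy of $\msD^{-2}|_F/TF$ inside the trivial normal bundle.
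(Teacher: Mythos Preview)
Your strategy---verify $[\msD^{\alpha_2},\msD^{-2}]\subset\msD^{-2}$ and invoke uniqueness---is the same as the paper's, but your execution of the key step contains a genuine error. You claim both generators of $\Pic(\mcX_0)$ restrict trivially to a general fiber $F\cong\mbP^2$, and in particular that $\mcL^{\alpha_2}=(\pi^{\alpha_2}_0)^*\mcO_{\mbP^3}(1)$. This is backwards: by Proposition-Definition~\ref{prop. divisors intersection theory on family X} we have $(\mcL^{\alpha_2}\cdot\K^{\alpha_2})=1$, so $\mcL^{\alpha_2}$ has degree~$1$ on every line in $F$ and hence $\mcL^{\alpha_2}|_F\cong\mcO_{\mbP^2}(1)$. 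It is $\mcL^{\alpha_1}$ that pulls back from $\mbP^3$ (cf.\ Lemma~\ref{lem. 36 A3 case}). Consequently $\Pic(\mcX_0)|_F\cong\mbZ$, not $0$, and your Picard argument gives no information about $L|_F$. A secondary gap: $\msD^{-2}$ is only a \emph{meromorphic} subbundle of $T\mcX_0$, so $L=T\mcX_0/\msD^{-2}$ need not be a line bundle on all of $F$, and you cannot directly identify $L|_F$ with an element of $\Pic(F)$ without first controlling the singular locus of $\msD^{-2}$ along $F$.

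The paper circumvents both issues by working along a general \emph{curve} $[C_2]\in\K^{\alpha_2}(\mcX_0)$ rather than a full fiber. Such $C_2$ avoids the codimension-$\geq 2$ singular loci, so all the relevant sheaves are honest vector bundles there; one then computes the splitting types (Proposition~\ref{prop. spliting along C2 (A3 case)}) and observes that the induced map $\psi:\msD^{\alpha_2}\otimes(\msD^{-2}/\msD)\to T\mcX_0/\msD^{-2}$ restricted to $C_2$ goes from $(\mcO(2)\oplus\mcO(1))\otimes\mcO(1)$ to $\mcO$, hence vanishes for degree reasons. Your descent idea could be rescued by this same computation---it shows $L|_{C_2}\cong\mcO$ for general lines $C_2\subset F$, which forces $L|_F\cong\mcO_F$ once you know $L|_F$ is locally free---but at that point you are essentially reproducing the paper's argument, and the global Picard step adds nothing.
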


In summary of the description of symbol algebras $\Symb (\msD)_x$ studied in Lemma \ref{lem. unique kernel in degenerate A3 case}, Lemma \ref{lem. A3 degeneration unique W when generating D(-3)} and Proposition \ref{prop. W=Da2 (A3 case)}, we have the following result.

\begin{cor}\label{cor. A3 degenerate symb algebra}
The symbol algebra $\mfm_x(\alpha_1, \alpha_2):=\Symb (\msD)_x$ at general point $x \in \mathcal{X}_0$ is isomorphic to $\mfg_-(C_2\times A_1)$, where $\mfg_-(C_2\times A_1)$ is defined in Definition \ref{defi. gk(I) g-(I) and g-(G)}. More precisely, there exists a nonempty Zariski open subset $\Omega$ of $\mcX_0$ such that

(i) there is an isomorphism on $\Omega$:
\begin{eqnarray}\label{eqn. D=Dai direct sum A3 case}
\msD\cong\msD^{\alpha_1}\oplus\msD^{\alpha_2};
\end{eqnarray}

(ii) the Frobenius bracket of $\msD$ induces a surjective homomorphism on $\Omega$:
\begin{eqnarray}\label{eqn. gr(D)(-2)=Da1 tensor (Da2 quot N) A3 case}
\wedge^2 \msD^{-1}\rightarrow\msD^{\alpha_1}\otimes (\msD^{\alpha_2}/\mcN)\cong(\msD^{-2}/\msD^{-1}),
\end{eqnarray}
where $\msD^{-1}:=\msD$ by definition;

(iii) the restriction of the Frobenius bracket of $\msD^{-2}$ induces a surjective homomorphism on $\Omega$:
\begin{eqnarray}\label{eqn. gr(D)(-3)=Da1 tensor gr(D)(-2) A3 case}
\msD^{-1}\otimes (\msD^{-2}/\msD^{-1})\rightarrow\msD^{\alpha_1}\otimes (\msD^{-2}/\msD^{-1})\cong(\msD^{-3}/\msD^{-2}),
\end{eqnarray}

(iv) the derivative $\msD^{-3}$ of is the whole tangent bundle of $\mcX_0$, i.e. $\msD^{-3}=T\mcX_0$.
\end{cor}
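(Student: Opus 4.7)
The plan is to read off parts (i)--(iv) directly from Lemmas~\ref{lem. unique kernel in degenerate A3 case}, \ref{lem. A3 degeneration unique W when generating D(-3)} and Proposition~\ref{prop. W=Da2 (A3 case)}, and then match the resulting graded Lie algebra with $\mfg_-(C_2\times A_1)$ by a dimension-and-centrality argument. Take $\Omega\subset\mcX_0$ to be the dense Zariski open set on which the meromorphic objects $\msD^{\alpha_1},\msD^{\alpha_2},\mcN,\mcW$ are locally free, the fibers $F^{\alpha_i}_x$ are biholomorphic to $\bS^{\alpha_i}$, and the ranks $\rank\msD^{-k}$ attain their generic values $(3,4,5)$. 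Over $\Omega$ the product Mori contraction $\pi'=(\pi^{\alpha_1}_0,\pi^{\alpha_2}_0)$ is generically finite (since it is an embedding on each fiber $\mcX_t\cong\bS$ for $t\neq 0$), so the argument of Lemma~\ref{lem. TxFa and Da direct sum} gives $T_xF^{\alpha_1}_x\cap T_xF^{\alpha_2}_x=0$. Comparing ranks of $\msD^{\alpha_i}$ with the relative tangent bundles $T^{\pi^{\alpha_i}_0}$ on a generic fiber of $\pi$ forces $\msD^{\alpha_i}=T^{\pi^{\alpha_i}_0}$ on $\Omega$, so each $\msD^{\alpha_i}$ is integrable and $\msD=\msD^{\alpha_1}\oplus\msD^{\alpha_2}$, giving (i).

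For (ii), integrability kills the $\wedge^2\msD^{\alpha_1}$ and $\wedge^2\msD^{\alpha_2}$ components of the Frobenius bracket $F\colon\wedge^2\msD\to\msD^{-2}/\msD$, while Lemma~\ref{lem. unique kernel in degenerate A3 case} gives $[\mcN,\msD]\subset\msD$ and Lemma~\ref{lem. A3 degeneration unique W when generating D(-3)} yields $\mcN\subset\mcW=\msD^{\alpha_2}$ (using Proposition~\ref{prop. W=Da2 (A3 case)}), killing the $\msD^{\alpha_1}\otimes\mcN$ component. Thus $F$ descends to a map from the line bundle $\msD^{\alpha_1}\otimes(\msD^{\alpha_2}/\mcN)$; since $\rank(\msD^{-2}/\msD)=1$ and $F$ is surjective by definition of $\msD^{-2}$, the descent is an isomorphism. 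For (iii), Proposition~\ref{prop. W=Da2 (A3 case)} together with Lemma~\ref{lem. A3 degeneration unique W when generating D(-3)} gives $[\msD^{\alpha_2},\msD^{-2}]\subset\msD^{-2}$, so the bracket $\msD\otimes(\msD^{-2}/\msD)\to\msD^{-3}/\msD^{-2}$ annihilates the $\msD^{\alpha_2}$ factor and descends to the line bundle $\msD^{\alpha_1}\otimes(\msD^{-2}/\msD)$; the identity $\msD^{-3}=T\mcX_0$ from Lemma~\ref{lem. unique kernel in degenerate A3 case} then forces this descent to be an isomorphism. Part (iv) is already recorded in Lemma~\ref{lem. unique kernel in degenerate A3 case}.

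It remains to identify $\Symb(\msD)_x\cong\mfg_-(C_2\times A_1)$ at a point $x\in\Omega$. The graded dimensions $(3,1,1)$ in degrees $(-1,-2,-3)$ match those of $\mfg_-(C_2\times A_1)$. The factorizations in (ii) and (iii) show $[\mcN_x,\mfg_{-1}]=0$ in $\mfg_{-2}$ and $[\mcN_x,\mfg_{-2}]=0$ in $\mfg_{-3}$, so $\mcN_x$ spans a central one-dimensional ideal -- the $\mfg_-(A_1)$ factor. The quotient $\Symb(\msD)_x/\mcN_x$ has graded dimensions $(2,1,1)$; the induced bracket $(-1)\wedge(-1)\to(-2)$ is the perfect pairing from (ii) and $(-1)\otimes(-2)\to(-3)$ is nonzero only in the $\msD^{\alpha_1}$ direction by (iii). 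This is exactly the bracket pattern of $\mfg_-(C_2)$, with $\msD^{\alpha_1}$ playing the role of the short simple root, so the quotient is isomorphic to $\mfg_-(C_2)$ and the claim follows. The only step requiring care is the centrality of $\mcN$ on the degree $-2$ part, which reduces immediately to the inclusion $\mcN\subset\msD^{\alpha_2}$ together with the factorization in (iii); everything else is a dimension count forced by the earlier lemmas.
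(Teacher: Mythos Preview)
Your proof is correct and follows essentially the same approach as the paper. The paper presents this corollary without a detailed proof, simply introducing it as a summary of Lemma~\ref{lem. unique kernel in degenerate A3 case}, Lemma~\ref{lem. A3 degeneration unique W when generating D(-3)}, and Proposition~\ref{prop. W=Da2 (A3 case)}; your write-up is a faithful and complete unpacking of that summary, including the identification with $\mfg_-(C_2\times A_1)$ that the paper records explicitly in Remark~\ref{rmk. symbol algebra C2A1 in A3 case}.
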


\begin{rmk}\label{rmk. symbol algebra C2A1 in A3 case}
(i) The isomorphisms in \eqref{eqn. D=Dai direct sum A3 case} \eqref{eqn. gr(D)(-2)=Da1 tensor (Da2 quot N) A3 case} and \eqref{eqn. gr(D)(-3)=Da1 tensor gr(D)(-2) A3 case} hold on $\Omega$ instead of on the whole holomorphic loci of corresponding meromorphic vector bundles. Meanwhile as meromorphic vector bundles over $\mcX_0$, we have injective homomorphisms
\begin{eqnarray*}
&& \msD^{\alpha_1}\oplus\msD^{\alpha_2}\hookrightarrow\msD, \\
&& \msD^{\alpha_1}\otimes (\msD^{\alpha_2}/\mcN)\hookrightarrow\msD^{-2}/\msD^{-1}, \\
&& \msD^{\alpha_1}\otimes (\msD^{-2}/\msD^{-1})\hookrightarrow\msD^{-3}/\msD^{-2}.
\end{eqnarray*}

(ii) The Lie algebra $\Symb (\msD)_x\cong\mfg_-(C_2\times A_1)$ can be descried explicitly as the following graded Lie algebra $\mfm_-:=\bigoplus\limits_{k\geq 1}\mfm_{-k}$:
\begin{eqnarray*}
&& \mfm_{-1}:=\C v_1\oplus\C v_2 \oplus \C v_3, \\
&& \mfm_{-2}:=\C v_{12}, \\
&& \mfm_{-3}:=\C v_{121}, \\
&& \mfm_{-k}:=0, \quad \mbox{ for all } k\geq 4.
\end{eqnarray*}
where $v_{12}:=[v_1, v_2]$ and $v_{121}:=[v_{12}, v_1]$. In the identification $\Symb (\msD)_x=\mfm_-$, we have
\begin{eqnarray*}
&& \mfm_x(\alpha_1)=\msD^{\alpha_1}_x=\C v_1, \\
&& \mcN_x=\C v_3\subset\msD^{\alpha_2}_x, \\
&& \mfm_x(\alpha_2)=\msD^{\alpha_2}_x=\C v_2 \oplus\C v_3.
\end{eqnarray*}
\end{rmk}

The rest of the part \ref{subsubsection. type of symbol algebra in A3 case} is devoted to the proof of Proposition \ref{prop. W=Da2 (A3 case)}. Firstly, the following conclusion is straight-forward.

\begin{lem}\label{lem. locus Y1 in A3 case}
There exists a closed variety $Y_1 \subset \mathcal{X}_0$ such that $codim_{\mathcal{X}_0}(Y_1) \geq 2$, $\msD^{\alpha_1}$, $\msD^{\alpha_2}$, $\mathcal{N}$, $\mathcal{W}$ and $\msD$ are holomorphic vector bundles over $\mathcal{X}_0 \backslash Y_1$. Moreover, for $[C_1] \in \mathcal{K}^{\alpha_1} (\mathcal{X}_0)$ general and $[C_2] \in \mathcal{K}^{\alpha_2} (\mathcal{X}_0)$ general, $C_1 \cap Y_1 = \emptyset$ and $C_2 \cap Y_1 = \emptyset$.
\end{lem}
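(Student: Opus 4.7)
The plan is to construct $Y_1$ as the union of the non-locally-free loci of the five sheaves $\msD^{\alpha_1}, \msD^{\alpha_2}, \mcN, \mcW, \msD$, viewed as saturated coherent subsheaves of $T\mcX_0$. Concretely, $\mcD^{\alpha_1}, \mcD^{\alpha_2}, \mcD$ are meromorphic distributions on $\mcX$ by Notation \ref{nota. distribution D(A) and symbol algebra}, and their restrictions to $\mcX_0$ give meromorphic subsheaves of $T\mcX_0$; I will replace each by its saturation inside $T\mcX_0$. Likewise $\mcN$ and $\mcW$ arise as meromorphic subsheaves via Lemmas \ref{lem. unique kernel in degenerate A3 case} and \ref{lem. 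A3 degeneration unique W when generating D(-3)}, and I saturate them as well. Each of the five resulting sheaves is then reflexive and sits inside the locally free sheaf $T\mcX_0$ on the smooth variety $\mcX_0$.

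The first key input is the standard fact that a saturated coherent subsheaf of a locally free sheaf on a smooth variety is locally free outside a closed subset of codimension at least two. Applying this to the five sheaves produces five closed subsets of codimension $\geq 2$; I define $Y_1$ to be their union, which still has codimension $\geq 2$. By construction, all five sheaves restrict to honest holomorphic vector subbundles of $T\mcX_0$ on $\mcX_0 \setminus Y_1$.

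For the avoidance statement, recall from Proposition-Definition \ref{prop. divisors intersection theory on family X} that each $\mcK^{\alpha_i}(\mcX_0)$ is an irreducible family of rational curves whose general member is a smooth $\mbP^1$, and from the Mori contractions $\pi^{\alpha_i}_0: \mcX_0 \rightarrow \mcX_0^{\alpha_i}$ that these families cover $\mcX_0$. Consequently the evaluation map from the universal curve to $\mcX_0$ is surjective, so the preimage of $Y_1$ has codimension $\geq 2$ in the universal family. Since this universal family projects to $\mcK^{\alpha_i}(\mcX_0)$ with one-dimensional fibers, the image of the preimage is a proper closed subset of $\mcK^{\alpha_i}(\mcX_0)$, and a general member therefore avoids $Y_1$ entirely.

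The only potentially subtle point is the codimension-$\geq 2$ claim itself, but this is precisely the reflexive-subsheaf statement on a smooth ambient variety; once one is willing to replace the given meromorphic subsheaves by their saturations, no further geometric input on $\mcX_0$ is needed, and the rest is just an elementary dimension count on the universal family of minimal rational curves.
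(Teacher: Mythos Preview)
Your approach is correct and is precisely what the paper intends by calling the lemma ``straight-forward'' (the paper gives no proof). The construction of $Y_1$ as the union of non-locally-free loci of the saturated subsheaves is standard and sound.

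One small gap to flag in the avoidance argument: surjectivity of the evaluation map $ev: U \to \mcX_0$ alone does \emph{not} imply that $ev^{-1}(Y_1)$ has codimension $\geq 2$ in $U$; fiber dimension could jump over $Y_1$. The correct justification is that the general member of $\mcK^{\alpha_i}(\mcX_0)$ is a \emph{free} rational curve (it passes through a general point of the Fano manifold $\mcX_0$, cf.\ Proposition-Definition \ref{prop. divisors intersection theory on family X}), so $ev$ is \emph{smooth} along the fiber of $U$ over a general $[C]$, hence smooth on an open neighborhood $V$ of that fiber. On $V$ the preimage of $Y_1$ genuinely has codimension $\geq 2$, and then your dimension count via the $1$-dimensional fibers of $U \to \mcK^{\alpha_i}(\mcX_0)$ goes through. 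This is the standard argument (e.g.\ \cite[II.3.7]{Kol96}) that free curves may be deformed off any codimension-$\geq 2$ subset. With this adjustment your proof is complete.
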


To continue, we need a useful result in \cite{BCD07} due to L. Bonavero, C. Casagrande and S. Druel.

\begin{prop} \cite[Proposition 1]{BCD07} \label{prop. BCD07 equivalence class of quasi-unsplit cycles}
Let $Y$ be a normal $\mbQ$-factorial projective variety, and $\mcF$ be a quasi-unsplit covering family of $1$-cycles on $Y$. Denote by $E_{\mcF} \subset Y$ the union of all $\mathcal{F}$-equivalence classes of dimension larger than $m$, where $m$ is the dimension of a general $\mathcal{F}$-equivalence class. Then

(i) $E_{\mcF}$ is a Zariski closed subset of $Y$, and $\dim E_{\mcF} \leq \dim Y -2$;

(ii) there exists a normal variety $Z$ and a surjective morphism $\varphi: Y \backslash E_{\mcF} \rightarrow Z$ such that fibers $\varphi^{-1}(z)$, $z \in Z$ are $\mathcal{F}$-equivalence classes on $Y$.
\end{prop}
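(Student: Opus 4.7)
The plan is to follow the Campana--Koll\'ar--Miyaoka--Mori framework for constructing the quotient by a covering family of $1$-cycles, with the $\mbQ$-factoriality of $Y$ entering only at the codimension estimate in (i).

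First I would construct the rational quotient. For each $k\geq 1$, let $\msC_k\subset Y\times Y$ denote the set of pairs $(x,y)$ joined by a chain of at most $k$ irreducible cycles from $\mcF$. The quasi-unsplit hypothesis together with properness of $\text{Chow}(Y)$ makes each $\msC_k$ a closed subvariety, and Noetherian induction forces the chain to stabilize at some $k_0$; the resulting $\msC:=\msC_{k_0}$ is the graph of the $\mcF$-equivalence relation. Invoking the quotient theorem \cite[Ch.~IV, Thm.~4.16]{Kol96} then produces a normal variety $Z$ and a rational map $\varphi\colon Y\dashrightarrow Z$ whose general fibre is an $\mcF$-equivalence class. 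Applying upper semi-continuity of fibre dimensions to the proper projection $\msC\to Y$ shows that $d(x):=\dim F_x$, where $F_x$ is the $\mcF$-equivalence class through $x$, is upper semi-continuous, so $E_{\mcF}=\{d>m\}$ is Zariski closed. On $Y\setminus E_{\mcF}$ every $\mcF$-equivalence class has dimension exactly $m$, and by normality of $Z$ the map $\varphi$ becomes a morphism there whose fibres are precisely the $\mcF$-equivalence classes, giving (ii).

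The codimension bound in (i) is the main obstacle. I would argue by contradiction: suppose a prime divisor $D\subset E_{\mcF}$ exists. The subfamily $\mcF_D\subset\mcF$ of cycles lying inside $D$ must then cover $D$, for otherwise a general $x\in D$ would lie on some cycle $[C]\in\mcF$ with $C\not\subset D$, forcing $F_x$ to meet $Y\setminus E_{\mcF}$ and hence to have dimension exactly $m$, contradicting $x\in E_{\mcF}$. Consequently $D$ is a union of $\mcF$-equivalence classes of dimension strictly greater than $m$, so $\varphi(D)$ has codimension at least $2$ in $Z$. At this point the $\mbQ$-factoriality of $Y$ becomes indispensable: it makes $D$ a $\mbQ$-Cartier divisor, so that $(D\cdot C)$ is defined for every $[C]\in\mcF$ and is constant across the flat family $\mcF$. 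Comparing cycles $[C]\in\mcF_D$ contained in $D$ with cycles $[C']\in\mcF\setminus\mcF_D$ meeting $D$ properly --- the latter yielding $(D\cdot C')>0$ for some $C'$ since $\mcF$ is covering and $D\subsetneq Y$ --- produces a numerical contradiction via covering-family positivity on the projective variety $Y$.

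The construction and the semi-continuity parts are essentially formal consequences of Chow-theoretic machinery; the delicate point, and the one I expect to be the real work, is the intersection-theoretic argument where $\mbQ$-factoriality is invoked. I would reproduce the detailed version from \cite[Proposition~1]{BCD07} at that step.
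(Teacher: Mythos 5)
The paper does not prove this statement at all: it is quoted verbatim as a citation of \cite[Proposition~1]{BCD07} and used as a black box, so there is no in-paper proof to compare against. Your reconstruction is nonetheless a reasonable outline of what the argument in \cite{BCD07} does: the rc-quotient via \cite[IV.4.16]{Kol96}, the closedness of $E_{\mcF}$ by semicontinuity of fibre dimension over the Chow-theoretic equivalence relation, the observation that any prime divisor $D\subset E_{\mcF}$ must itself be covered by the subfamily of cycles lying inside $D$, and the use of $\mbQ$-factoriality to make $D$ a $\mbQ$-Cartier divisor so that intersection numbers with members of $\mcF$ are defined and, by the quasi-unsplit hypothesis, constant. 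Two small cautions on your sketch. First, the claim that ``$\varphi(D)$ has codimension $\geq 2$ in $Z$'' is not literally well posed, since $\varphi$ is only a morphism on $Y\setminus E_{\mcF}$ and $D\subset E_{\mcF}$; that sentence can be deleted without harming the rest. Second, and more substantively, your final ``numerical contradiction'' is pointing in the wrong direction: since $D$ is a union of $\mcF$-equivalence classes, an arbitrary member of $\mcF$ is either contained in $D$ or disjoint from it, so a general (covering) member in fact gives $(D\cdot C)=0$, not $>0$, and quasi-unsplitness then forces $(D\cdot C)=0$ for \emph{all} members of $\mcF$; the contradiction in \cite{BCD07} is extracted from that vanishing together with the covering hypothesis, not from a positivity statement. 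As you are candid that you would ``reproduce the detailed version from \cite[Proposition~1]{BCD07} at that step,'' this is a gap you have already flagged, and deferring there is exactly what the paper itself does.
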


\begin{rmk}
(i) In the setting of Proposition \ref{prop. BCD07 equivalence class of quasi-unsplit cycles}, the meaning of $\mcF$ being a quasi-unsplit family is that all irreducible components of the cycles parameterized by $\mcF$ are numerically proportional.

(ii) In the setting of Proposition \ref{prop. BCD07 equivalence class of quasi-unsplit cycles}, two points in $Y$ are defined to be $\mcF$-equivalent if they are connected by a chain of elements in $\mcF$.

(iii) In our situation of Assumption \ref{assu. A3 P_min degeneration}, both $\mathcal{K}^{\alpha_1}(\mathcal{X}_0)$ and $\mathcal{K}^{\alpha_2}(\mathcal{X}_0)$ are unsplit (hence quasi-unsplit) covering family of rational curves on the complex projective manifold $\mathcal{X}_0$. In particular, the conditions in Proposition \ref{prop. BCD07 equivalence class of quasi-unsplit cycles} is satisfied by both families  $\mathcal{K}^{\alpha_1}(\mathcal{X}_0)$ and $\mathcal{K}^{\alpha_2}(\mathcal{X}_0)$.
\end{rmk}

Applying Proposition \ref{prop. BCD07 equivalence class of quasi-unsplit cycles} to $\mcX_0$, we obtain the following result immediately.

\begin{cor} \label{C0_12}
Denote by
\begin{eqnarray*}
\Psi^{\alpha_1}_0: \mathcal{X}_0 \backslash \mathcal{E}_0^{\alpha_1} \rightarrow \mathcal{Z}_0^{\alpha_1},  \quad \Psi^{\alpha_2}_0: \mathcal{X}_0 \backslash \mathcal{E}_0^{\alpha_2} \rightarrow \mathcal{Z}_0^{\alpha_2}.
\end{eqnarray*}
morphisms in Proposition \ref{prop. BCD07 equivalence class of quasi-unsplit cycles} corresponding to $\mathcal{K}^{\alpha_1}(\mathcal{X}_0)$ and $\mathcal{K}^{\alpha_2}(\mathcal{X}_0)$ respectively. We can take $Y_2 \subset \mathcal{X}_0$ to be $Y_1 \cup \text{sing} (\Psi_{0}^{\alpha_1}) \cup \text{sing} (\Psi_{0}^{\alpha_2}) \cup \mathcal{E}_{0}^{\alpha_1} \cup \mathcal{E}_{0}^{\alpha_2}$ where $Y_1$ is as in Lemma \ref{lem. locus Y1 in A3 case}, and $\text{sing} (\Psi_{0}^{\alpha_i}) \subset \mathcal{X}_0 \backslash \mathcal{E}_{0}^{\alpha_i}$ is the singular locus of the morphism $\Psi^{\alpha_i}_0$. Then $\dim Y_2\leq \dim \mcX_0-2=3$.
\end{cor}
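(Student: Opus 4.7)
Proof plan: The statement is a codimension-$2$ bound on the union of five closed subsets of $\mcX_0$, and since $\mcX_t\cong A_3/P_{\{\alpha_1,\alpha_2\}}$ has dimension $5$ for $t\neq 0$ and $\pi$ is flat, one has $\dim\mcX_0=5$; thus the claim $\dim Y_2\leq 3$ is equivalent to asking that each of the five pieces composing $Y_2$ has codimension at least $2$ in $\mcX_0$. I will verify each piece in turn and take the maximum.

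Two of the three types of terms are essentially for free. The bound $\mathrm{codim}_{\mcX_0}(Y_1)\geq 2$ is exactly the content of Lemma \ref{lem. locus Y1 in A3 case}. For $\mcE^{\alpha_i}_0$, the covering family $\mcK^{\alpha_i}(\mcX_0)$ is unsplit (hence quasi-unsplit) on the smooth projective variety $\mcX_0$, so the hypotheses of Proposition \ref{prop. BCD07 equivalence class of quasi-unsplit cycles} are met, and part $(i)$ of that proposition gives $\dim \mcE^{\alpha_i}_0\leq\dim\mcX_0-2=3$ directly.

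The remaining and more delicate piece is the singular locus $\mathrm{sing}(\Psi^{\alpha_i}_0)$ of the equivalence-quotient morphism. By construction of $\Psi^{\alpha_i}_0$ outside $\mcE^{\alpha_i}_0$, every fiber is an equivalence class of the common generic dimension $m_i$ (one for $i=1$, two for $i=2$), so the morphism is equidimensional. My plan is to write
\[
\mathrm{sing}(\Psi^{\alpha_i}_0) \;\subseteq\; (\Psi^{\alpha_i}_0)^{-1}\bigl(\mathrm{sing}\,\mcZ^{\alpha_i}_0\bigr)\;\cup\; S_i,
\]
where $S_i$ denotes the non-smooth locus of $\Psi^{\alpha_i}_0$ over the smooth part of $\mcZ^{\alpha_i}_0$, and bound the two pieces separately. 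For the first piece, normality of $\mcZ^{\alpha_i}_0$ gives $\mathrm{codim}\,\mathrm{sing}(\mcZ^{\alpha_i}_0)\geq 2$, and equidimensionality of $\Psi^{\alpha_i}_0$ then preserves the codimension of preimages, giving codimension at least $2$ in $\mcX_0\setminus\mcE^{\alpha_i}_0$. For the second piece, over the smooth locus of $\mcZ^{\alpha_i}_0$ the morphism is flat between smooth varieties with equidimensional fibers, so $S_i$ is exactly the non-smooth locus of the scheme-theoretic fibers; by generic smoothness in characteristic zero the general fiber is smooth, so $S_i$ projects into a proper closed subset of $\mcZ^{\alpha_i}_0$.

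The main obstacle is upgrading the last statement from \emph{proper closed} (codimension $\geq 1$) to codimension $\geq 2$. I plan to use unsplitness of $\mcK^{\alpha_i}(\mcX_0)$ in an essential way: a codimension-$1$ divisor of degenerate fibers would force a rational curve in class $[\mcK^{\alpha_i}]$ to degenerate into a reducible or non-reduced cycle, contradicting unsplitness (cf.\ the fact that unsplit classes have irreducible representatives). Once this fiber-wise degeneracy is ruled out on a divisorial locus, the non-smooth locus $S_i$ lies over a codimension-$\geq 2$ set in $\mcZ^{\alpha_i}_0$, and equidimensionality again gives codimension $\geq 2$ in $\mcX_0\setminus\mcE^{\alpha_i}_0$. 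Combining all five bounds yields $\mathrm{codim}_{\mcX_0}(Y_2)\geq 2$, equivalently $\dim Y_2\leq 3$, as claimed.
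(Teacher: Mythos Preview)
Your piece-by-piece strategy is the same as the paper's (the paper simply says ``generic smoothness and equi-dimensionality''), and your treatment of $Y_1$ and $\mcE_0^{\alpha_i}$ is correct. The issue is your handling of $\mathrm{sing}(\Psi_0^{\alpha_i})$.

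Your proposed fix via unsplitness is not right. A singular fiber of $\Psi_0^{\alpha_i}$ is a singular \emph{equivalence class}, i.e.\ a connected union of many curves from $\K^{\alpha_i}(\mcX_0)$; it is not an individual member of $\K^{\alpha_i}$. For instance two smooth $\mbP^1$'s in $\K^{\alpha_1}$ meeting transversally at a point give a nodal equivalence class while every curve involved is irreducible and reduced---nothing here contradicts unsplitness. So there is no mechanism by which a divisor $D\subset\mcZ_0^{\alpha_i}$ of singular fibers would force a cycle in the class $[\K^{\alpha_i}]$ to split. Consequently your plan to show that $S_i$ lies over a codimension-$\ge 2$ locus in the base does not go through.

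The way out is not to push the bad locus in the base down to codimension~$2$, but to combine a codimension-$1$ bound in the base (which is all generic smoothness gives) with a codimension-$1$ bound \emph{inside each fiber}. Over the smooth locus of $\mcZ_0^{\alpha_i}$ the morphism is flat with smooth source, so each scheme-theoretic fiber is a local complete intersection; once you know a fiber is generically reduced, its singular locus has codimension $\ge 1$ in the fiber. Then
\[
\dim S_i \ \le\ \dim(\text{bad locus in }\mcZ_0^{\alpha_i})\ +\ (m_i-1)\ \le\ (\dim\mcZ_0^{\alpha_i}-1)+(m_i-1)\ =\ \dim\mcX_0-2.
\]
What you actually need to justify is the generic reducedness of each fiber (equivalently, that no fiber is everywhere non-smooth; multiple-fiber phenomena as in elliptic fibrations show this is not automatic for an arbitrary equidimensional map). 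The paper is terse on exactly this point, but your unsplitness argument does not fill the gap.
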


\begin{proof}
The existence of $\Psi^{\alpha_1}_0$ and $\Psi^{\alpha_2}_0$ follows from Proposition \ref{prop. BCD07 equivalence class of quasi-unsplit cycles}. The rest follows from the generic smoothness and the equi-dimensionality of $\Psi^{\alpha_i}_0$, $i =1, 2$.
\end{proof}

\begin{prop} \label{P0_13}
Take $[C_2] \in \mathcal{K}^{\alpha_2}(\mathcal{X}_0)$ general. Then $\msD^{\alpha_1} |_{C_2} \cong \mathcal{O}_{\mathbb{P}^1}(-1)$, $\msD^{\alpha_2} |_{C_2} \cong \mathcal{O}_{\mathbb{P}^1}(2) \oplus \mathcal{O}_{\mathbb{P}^1}(1)$.
\end{prop}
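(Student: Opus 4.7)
The plan is to identify each restricted bundle with something concrete on $\mbP^1\cong C_2$. Since Lemma \ref{lem. locus Y1 in A3 case} ensures a general $[C_2]$ avoids the codimension-two locus $Y_1$, both $\msD^{\alpha_1}|_{C_2}$ and $\msD^{\alpha_2}|_{C_2}$ are honest holomorphic vector bundles on $C_2$, and each computation reduces to identifying their isomorphism class on $\mbP^1$.

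For $\msD^{\alpha_2}|_{C_2}$ the strategy is to first show that $\msD^{\alpha_2}$ coincides, away from a codimension-$\geq 2$ set, with the relative tangent bundle $T^{\pi^{\alpha_2}_0}\mcX_0$. Indeed, on $\bigsqcup_{t\neq 0}\mcX_t$ both agree with $\mfg^{\alpha_2}(\bS)=T^{\Phi^{\alpha_2}}\bS$, and both are saturated rank-$2$ subsheaves of $T\mcX$, so they coincide generically. A general $[C_2]$ lies in a general fiber $F^{\alpha_2}_x$, which by Proposition \ref{prop. Fax=Sa for x in X0 general} is biholomorphic to $\mbP^2$, with $C_2$ a projective line. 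Hence $\msD^{\alpha_2}|_{C_2}\cong T\mbP^2|_{\mbP^1}\cong\mcO(2)\oplus\mcO(1)$ by the standard Euler-sequence splitting.

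For $\msD^{\alpha_1}|_{C_2}$, this is a line bundle on $\mbP^1$ of degree $d=c_1(\msD^{\alpha_1})\cdot[C_2]$; I will show $d=-1$. The determinant $\det\mcD^{\alpha_1}$ is defined away from a codimension-$\geq 2$ subset of $\mcX$ and so extends to a class in $\Pic(\mcX/\Delta)$; by Proposition-Definition \ref{prop. divisors intersection theory on family X} it equals $a\mcL^{\alpha_1}+b\mcL^{\alpha_2}$ with $a$ and $b$ read off as intersections with $\K^{\alpha_1}(\mcX)$ and $\K^{\alpha_2}(\mcX)$, which may be computed on any $\mcX_t\cong\bS$, $t\neq 0$. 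There, $\mcD^{\alpha_1}$ identifies with the relative tangent $T^{\Phi^{\alpha_1}}$ of the $\mbP^1$-bundle $\Phi^{\alpha_1}\colon\bS\to\mathrm{Gr}(2,4)$: restricting to a fiber of $\Phi^{\alpha_1}$ yields $T\mbP^1=\mcO(2)$, giving $a=2$; restricting to a section $C_2$ of $\Phi^{\alpha_1}$ over a line $L\subset\mathrm{Gr}(2,4)$ and identifying $T^{\Phi^{\alpha_1}}|_{C_2}=(V_2/V_1)\otimes V_1^*\cong\mcO_{\mbP^1}(-1)$ (using that $V_2/V_1$ is the tautological sub-line bundle on $L\cong\mbP(V_3/V_1)$ while $V_1$ is constant) gives $b=-1$. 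Therefore $d=(2\mcL^{\alpha_1}-\mcL^{\alpha_2})\cdot[C_2]=-1$, and $\msD^{\alpha_1}|_{C_2}\cong\mcO(-1)$.

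The main obstacle is justifying that $\det\mcD^{\alpha_1}$ extends to a genuine line bundle on $\mcX$ rather than remaining only meromorphic, so that its intersection numbers with the classes $\K^{\alpha_i}(\mcX)$ are well-defined and invariant under specialization from general $\mcX_t$ to $\mcX_0$. This is a Hartogs-type extension across a codimension-$\geq 2$ subset, combined with the Mori cone identification of Proposition \ref{prop. invariance of Mori cone by Wisniewski} to match the computations on $\bS$ with the behaviour along $\mcX_0$.
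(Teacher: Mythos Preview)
Your argument is correct. The computation of $\msD^{\alpha_2}|_{C_2}$ is identical to the paper's: both identify it with $TF^{\alpha_2}_x|_{C_2}\cong T\mbP^2|_{\mbP^1}$ for a general fiber $F^{\alpha_2}_x\cong\mbP^2$.

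For $\msD^{\alpha_1}|_{C_2}$ you take a genuinely different route. The paper argues pointwise in the family: it picks a local section $t\mapsto[A^t]$ of $\K^{\alpha_2}(\mcX)\to\Delta$ through $[C_2]$, forms the surface $\mcS=\bigcup_tA^t\subset\mcX$, and builds a holomorphic line bundle $\mcL$ on $\mcS$ whose fiber at $x$ is $T_xl_x$ for the unique $[l_x]\in\K^{\alpha_1}_x(\mcX)$ (holomorphicity uses $\mcS\cap Y_2=\emptyset$). Since $\mcL|_{A^t}\cong\mcO_{\mbP^1}(\langle\alpha_1,\alpha_2\rangle)=\mcO(-1)$ for $t\neq 0$ by Proposition~\ref{prop. gk(b) along Ca splitting types}, and degree is locally constant, $\mcL|_{C_2}\cong\mcO(-1)$. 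You instead extend the line bundle $\mcD^{\alpha_1}$ across the codimension-$\geq 2$ locus $Y_1$ to a class $\mathcal{M}\in\Pic(\mcX/\Delta)$, compute $\mathcal{M}=2\mcL^{\alpha_1}-\mcL^{\alpha_2}$ on a general fiber $\mcX_t\cong\bS$, and then evaluate $(\mathcal{M}\cdot[C_2])=-1$ using the Mori cone identification. Both arguments encode the same invariance-under-specialization principle; yours packages it through Picard and intersection theory, while the paper unpacks it along an explicit one-parameter family of curves. Your ``main obstacle'' is in fact routine: $\mcX$ is smooth and $Y_1$ has codimension $\geq 2$ in $\mcX_0$ (hence $\geq 3$ in $\mcX$), so $\Pic(\mcX)\to\Pic(\mcX\setminus Y_1)$ is an isomorphism and the extension is automatic; since $C_2\cap Y_1=\emptyset$, the restriction $\mathcal{M}|_{C_2}$ agrees with $\msD^{\alpha_1}|_{C_2}$. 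The paper's version avoids invoking this extension by staying on the surface $\mcS$, at the cost of constructing $\mcS$ and $\mcL$ explicitly.
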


\begin{proof}
The curve $C_2$ is a line  in a general fiber $F_x^{\alpha_2}\cong\mbP^2$ of the elementary Mori contraction $\pi_{0}^{\alpha_2}$, where $x$ is a general point in $C_2$. Thus,
$$\msD^{\alpha_2} |_{C_2} = T F_{C_2}^{\alpha_2} |_{C_2} = \mathcal{O}_{\mathbb{P}^1}(2) \oplus \mathcal{O}_{\mathbb{P}^1}(1).$$

Now take a general local section of $\mathcal{K}^{\alpha_2}(\mathcal{X}) \rightarrow \Delta$ passing through $[C_2]\in\K^{\alpha_2}(\mcX_0)\subset\K^{\alpha_2}(\mcX)$. We obtain a holomorphic family $\{ A^{t}\}_{t \in \Delta}$ (by shrinking $\Delta$ if necessary) such that $\mathcal{S} := \bigcup\limits_{t \in \Delta} A^t \subset \mathcal{X}$ is a complex manifold of dimension two, and $A^{0} = C_2 \subset \mathcal{X}_0$, $A^t \subset \mathcal{X}_t$. Moreover, $\mcS\cap Y_2=C_2\cap Y_2=\emptyset$ by Corollary \ref{C0_12}. Thus for any $x \in \mathcal{S}$, there exists a unique $[l_x] \in \mathcal{K}^{\alpha_1}(\mathcal{X})$ such that $x \in l_{x}$. Furthermore, $x$ is a smooth point of $l_x$. Denote by $\mathcal{L} := \bigcup\limits_{x \in \mathcal{S}} T_x l_x$ which is a holomorphic line bundle over $\mathcal{S}$.
By Proposition \ref{prop. gk(b) along Ca splitting types} we know that for any $t \neq 0$,
\begin{eqnarray*}
\mathcal{L} |_{A^t} = T^{\pi_{t}^{\alpha_1}} |_{A^t} \cong \mcO_{\mbP^1}(\langle\alpha_1, \alpha_2\rangle)=\mcO_{\mbP^1} (-1).
\end{eqnarray*}
It follows that $\mathcal{L} |_{C_2} \cong \mathcal{O}_{\mathbb{P}^1}(-1)$. Thus $\msD^{\alpha_1} |_{C_2}\cong \mathcal{L} |_{C_2}\cong\mcO_{\mbP^1}(-1)$.
\end{proof}

%
%
%
%

\begin{prop} \label{prop. spliting along C2 (A3 case)}
Take $[C_2] \in \mathcal{K}^{\alpha_2}(\mathcal{X}_0)$ general. Then $\msD^{\alpha_1}, \msD^{\alpha_2},  \msD,  \msD^{-2},  \msD^{-3}, \mathcal{N}, \mathcal{W}$ are holomorphic in an open neighborhood of $C_2 \subset \mathcal{X}_0$, and
\begin{eqnarray*}
&& \mathcal{N} |_{C_2}  =   \mathcal{O}(1), \\
&& \msD^{\alpha_2} / \mathcal{N} |_{C_2}  =  \mathcal{O}(2),   \\
&& \msD^{-2} / \msD |_{C_2}  =  \mathcal{O}(1),  \\
&& \msD^{-3} / \msD^{-2} |_{C_2}  =  \mathcal{O}, \\
&& \msD |_{C_2} = \msD^{\alpha_1} |_{C_1} \oplus \msD^{\alpha_2} |_{C_1} =  \mathcal{O}(2) \oplus \mathcal{O}(1) \oplus \mathcal{O}(-1),  \\
&& \msD^{-2} |_{C_2}  =  \mathcal{O}(2) \oplus \mathcal{O}(1) \oplus \mathcal{O}^2, \\
&& \msD^{-3} |_{C_2}  =  T \mathcal{X}_0 |_{C_2} = \mathcal{O}(2) \oplus \mathcal{O}(1) \oplus \mathcal{O}^3.
\end{eqnarray*}
\end{prop}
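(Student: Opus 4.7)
My plan is to use the fact that a general $C_2$ avoids the codimension $\geq 2$ bad locus of all the relevant meromorphic data, combined with Grothendieck's splitting theorem and a simple degree count.

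First I would invoke Corollary \ref{C0_12}: for $[C_2]\in\K^{\alpha_2}(\mcX_0)$ general, $C_2\cap Y_2=\emptyset$, so all the sheaves $\msD^{\alpha_1},\msD^{\alpha_2},\mcN,\msD,\msD^{-2},\msD^{-3}$ are locally free along $C_2$. Moreover, since the complement of the open set $\Omega$ in Corollary \ref{cor. A3 degenerate symb algebra} has codimension $\geq 1$, a general $C_2$ can be assumed to meet only $\Omega$; in particular, along $C_2$ the meromorphic injections
$$\msD^{\alpha_1}\oplus\msD^{\alpha_2}\hookrightarrow\msD,\quad \msD^{\alpha_1}\otimes(\msD^{\alpha_2}/\mcN)\hookrightarrow\msD^{-2}/\msD,\quad \msD^{\alpha_1}\otimes(\msD^{-2}/\msD)\hookrightarrow\msD^{-3}/\msD^{-2}$$
are isomorphisms of rank $3,1,1$ respectively. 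Combined with Proposition \ref{P0_13}, this gives
$$\msD|_{C_2}=\mcO(-1)\oplus\mcO(2)\oplus\mcO(1).$$

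Next I would compute $T\mcX_0|_{C_2}$ via the relative tangent sequence of $\pi^{\alpha_2}_0$: for $C_2$ general, $\pi^{\alpha_2}_0$ is smooth near $C_2$ with fiber $\mbP^2$ (Proposition \ref{prop. Fax=Sa for x in X0 general}), so $T^{\pi^{\alpha_2}_0}|_{C_2}=\msD^{\alpha_2}|_{C_2}=\mcO(2)\oplus\mcO(1)$, and $(\pi^{\alpha_2}_0)^*T\mcX_0^{\alpha_2}|_{C_2}=\mcO^3$ since $C_2$ is contracted. The sequence
$$0\to\mcO(2)\oplus\mcO(1)\to T\mcX_0|_{C_2}\to\mcO^3\to 0$$
splits (all relevant $H^1$'s vanish), yielding $T\mcX_0|_{C_2}=\mcO(2)\oplus\mcO(1)\oplus\mcO^3$, which has total degree $3$.

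Now the main step, a degree count. Let $a:=\deg\mcN|_{C_2}$; then $\deg(\msD^{\alpha_2}/\mcN)|_{C_2}=3-a$, and by the isomorphisms above,
$$\deg(\msD^{-2}/\msD)|_{C_2}=-1+(3-a)=2-a,\quad \deg(\msD^{-3}/\msD^{-2})|_{C_2}=-1+(2-a)=1-a.$$
Since $\msD^{-3}=T\mcX_0$ (Lemma \ref{lem. unique kernel in degenerate A3 case}), summing degrees gives $2+(2-a)+(1-a)=3$, hence $a=1$. This immediately yields $\mcN|_{C_2}=\mcO(1)$, $(\msD^{\alpha_2}/\mcN)|_{C_2}=\mcO(2)$, $(\msD^{-2}/\msD)|_{C_2}=\mcO(1)$, $(\msD^{-3}/\msD^{-2})|_{C_2}=\mcO$. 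Finally, the splitting types of $\msD^{-2}|_{C_2}$ and $\msD^{-3}|_{C_2}=T\mcX_0|_{C_2}$ follow because $\msD^{-2}|_{C_2}$ is the kernel of a surjection $T\mcX_0|_{C_2}=\mcO(2)\oplus\mcO(1)\oplus\mcO^3\twoheadrightarrow\mcO$; this surjection must vanish on $\mcO(2)\oplus\mcO(1)$ (no nonzero maps to $\mcO$), so it factors through $\mcO^3\twoheadrightarrow\mcO$ with kernel $\mcO^2$, giving $\msD^{-2}|_{C_2}=\mcO(2)\oplus\mcO(1)\oplus\mcO^2$.

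The only place requiring real care is the passage from the \emph{meromorphic} injections in Corollary \ref{cor. A3 degenerate symb algebra} to honest equalities along $C_2$; the rest is bookkeeping with Grothendieck's theorem on $\mbP^1$. I expect this generic-avoidance argument to be the main -- but still minor -- obstacle.
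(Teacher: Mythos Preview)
Your argument has a genuine circularity. You invoke Corollary~\ref{cor. A3 degenerate symb algebra} (and the injections in Remark~\ref{rmk. symbol algebra C2A1 in A3 case}(i)), but that corollary is explicitly a summary of Lemma~\ref{lem. unique kernel in degenerate A3 case}, Lemma~\ref{lem. A3 degeneration unique W when generating D(-3)} \emph{and} Proposition~\ref{prop. W=Da2 (A3 case)}. In the paper's logical order, Proposition~\ref{prop. W=Da2 (A3 case)} is proved \emph{after} and \emph{using} the present proposition. The specific place this bites is your third ``isomorphism'' $\msD^{\alpha_1}\otimes(\msD^{-2}/\msD)\hookrightarrow \msD^{-3}/\msD^{-2}$: this map is nonzero precisely when $\msD^{\alpha_1}\not\subset\mcW$, i.e.\ when $\mcW=\msD^{\alpha_2}$, which is exactly the statement of Proposition~\ref{prop. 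W=Da2 (A3 case)}. Without it, your degree count collapses to the tautology $2+(2-a)+(a-1)=3$ and does not determine $a$. The first two isomorphisms can indeed be justified from Lemma~\ref{lem. unique kernel in degenerate A3 case} alone, but the third cannot.

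There is also a smaller gap: you assert that a general $C_2$ lies entirely in $\Omega$ because the complement has ``codimension $\geq 1$''. That bound is vacuous; you would need codimension $\geq 2$, which is not known at this stage. Indeed the paper's proof does \emph{not} assume $\msD^{\alpha_1}\oplus\msD^{\alpha_2}\to\msD$ is an isomorphism along all of $C_2$: it allows for the possibility $\msD|_{C_2}=\mcO(2)\oplus\mcO(1)\oplus\mcO$ and rules it out by a different mechanism. Namely, the paper uses that the Frobenius map $f:(\msD/\msD^{\alpha_2})\otimes(\msD^{\alpha_2}/\mcN)\to T\mcX_0/\msD$ is nonzero (this needs only Lemma~\ref{lem. unique kernel in degenerate A3 case}); since $\deg(\msD/\msD^{\alpha_2})|_{C_2}\geq -1$, $\deg(\msD^{\alpha_2}/\mcN)|_{C_2}\geq 2$, and every summand of $(T\mcX_0/\msD)|_{C_2}$ has degree $\leq 1$, the existence of a nonzero $f$ forces all three inequalities to be equalities, giving the splitting types directly.
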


\begin{proof}
By the generality of $[C_2]\in\K^{\alpha_2}(\mcX_0)$, $T \mathcal{X}_0 |_{C_2} = \mathcal{O}(2) \oplus \mathcal{O}(1) \oplus \mathcal{O}^3$.
Then by Proposition \ref{P0_13} and the injectivity of $\msD^{\alpha_1} \oplus \msD^{\alpha_2} \rightarrow \msD \subset T \mathcal{X}_0$ in an open neighborhood of $C_2 \subset \mathcal{X}_0$, either
\begin{eqnarray*}
&& \msD |_{C_2} = \mathcal{O}(2) \oplus \mathcal{O}(1) \oplus \mathcal{O}, \quad \msD / \msD^{\alpha_2} |_{C_2} = \mathcal{O}, \mbox{ or } \\
&& \msD |_{C_2} = \mathcal{O}(2) \oplus \mathcal{O}(1) \oplus \mathcal{O}(-1), \quad \msD / \msD^{\alpha_2} |_{C_2} = \mathcal{O}(-1),
\end{eqnarray*}
By Lemma \ref{lem. unique kernel in degenerate A3 case}, $[\mathcal{N}, \msD] \subset \msD$, $[\msD^{\alpha_1}, \msD^{\alpha_2}] \subset \msD^{\alpha_2} \subset \msD$ and $[\msD, \msD] \nsubseteq \msD$. Then the Frobenius bracket $\Lambda^2 \msD \rightarrow T \mathcal{X}_0 / \msD$ induces an nonzero homomorphism:
\begin{eqnarray}\label{eqn. Frobenius of D in A3 case}
f: & \big( \msD / \msD^{\alpha_2} \big) \otimes \big( \msD^{\alpha_2} / \mathcal{N} \big) \rightarrow T \mathcal{X}_0 / \msD.
\end{eqnarray}
Note that $\deg(\msD / \msD^{\alpha_2}) |_{C_2} \geq -1$, $\deg(\msD^{\alpha_2} / \mathcal{N})|_{C_2} \geq 2$ and that the degree of each factor of $T\mathcal{X}_0 / \msD|_{C_2}$ is at most one. Since $f$ in \eqref{eqn. Frobenius of D in A3 case} is a nonzero morphism,
$\msD / \msD^{\alpha_2} |_{C_2} = \mathcal{O}(-1)$,  $\msD^{\alpha_2} / \mathcal{N} |_{C_2} = \mathcal{O}(2)$ and $\msD^{-2} / \msD |_{C_2} = \mathcal{O}(1)$. It follows that $\mathcal{N} |_{C_2} = \mathcal{O}(1)$, and $\msD |_{C_2} = \mathcal{O}(2) \oplus \mathcal{O}(1) \oplus \mathcal{O}(-1)$.
Since $\msD^{-2} / \msD^{\alpha_2} |_{C_2} \subset T \mathcal{X}_0 / \msD^{\alpha_2} = \mathcal{O}^3$, and
$\deg (\msD^{-2} / \msD^{\alpha_2} |_{C_2}) =  \deg (\msD^{-2} / \msD |_{C_2}) + \deg (\msD / \msD^{\alpha_2} |_{C_2}) = 0$,
we have $\msD^{-2} |_{C_2} = \mathcal{O}(2) \oplus \mathcal{O}(1) \oplus \mathcal{O}^2$, and $\msD^{-3} / \msD^{-2} |_{C_2} = \mathcal{O}$.
\end{proof}

Now we can complete the proof of Proposition \ref{prop. W=Da2 (A3 case)}.

\begin{proof}[Proof of Proposition \ref{prop. W=Da2 (A3 case)}] By definition of $\msD^{-2}$, we have $[\msD^{\alpha_2}, \msD^{-1}] \subset \msD^{-2}$. Then the Frobenius bracket $\Lambda^2 \msD^{-2} \rightarrow T \mathcal{X}_0 / \msD^{-2}$ induces a homomorphism of meromorphic vector bundles over $\mathcal{X}_0$ as follows:
\begin{eqnarray*}
\psi : \msD^{\alpha_2} \otimes (\msD^{-2} / \msD) \rightarrow T \mathcal{X}_0 / \msD^{-2}.
\end{eqnarray*}
Recall that $\msD^{\alpha_2}, \msD^{-2} / \msD, T \mathcal{X}_0 / \msD^{-2}$ are holomorphic in an open neighborhood of  $C_2 \subset \mathcal{X}_0$, where $[C_2] \in \mathcal{K}^{\alpha_2} (\mathcal{X}_0)$ is a general element. By Proposition \ref{P0_13} and Proposition \ref{prop. spliting along C2 (A3 case)}, $\msD^{\alpha_2} |_{C_2} = \mathcal{O}(2) \oplus \mathcal{O}(1)$, $(\msD^{-2} / \msD^{-1}) |_{C_2} = \mathcal{O}(1)$ and $(T \mathcal{X}_0 / \msD^{-2}) |_{C_2} = \mathcal{O}$.
Thus, $\psi|_{C_2} = 0$. By the general choice of $C_2$, $\psi = 0$.
In other words, $[\msD^{\alpha_2}, \msD^{-1}] \subset \msD^{-2}$. By the uniqueness of $\mcW$ in Lemma \ref{lem. A3 degeneration unique W when generating D(-3)}, we have $\mcW=\msD^{\alpha_2}$.
\end{proof}

\subsubsection{The meromorphic section $\sigma$} \label{subsubsection. the meromorphic section sigma}
Let us firstly recall a result of A. Weber and J. A. Wi\'{s}niewski in \cite{WW17}, in which paper they studied Fano deformation rigidity of complete flag manifolds.

\begin{prop}\cite[Corollary 1.4, Corollary 3.3]{WW17}\label{prop. criterion Pk bundle by Weber Wisniewski}
In the setting \ref{setup. intro Fano deformation} let $\alpha$ be an element of $I$ such that $\Phi^\alpha: G/P_I\rightarrow G/P_{I\setminus\{\alpha\}}$ is a $\mbP^k$-bundle for some $k\geq 1$. Suppose either

(i) $H^*(G/P_{I\setminus\{\alpha\}}, \mbQ)$ is generated by $H^2(G/P_{I\setminus\{\alpha\}}, \mbQ)$; or

(ii) $\mcX^\alpha_0$ is smooth.

Then $\pi^\alpha_0: \mcX_0\rightarrow\mcX^\alpha_0$ is also a $\mbP^k$-bundle.
\end{prop}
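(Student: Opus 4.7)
The plan is to produce the $\mathbb{P}^k$-bundle structure on $\pi^\alpha_0$ in three logical steps: (a) show $\pi^\alpha_0$ is equidimensional with every fiber isomorphic to $\mathbb{P}^k$; (b) show $\mathcal{X}^\alpha_0$ is smooth; and (c) produce a relative $\mathcal{O}(1)$ realizing $\pi^\alpha_0$ as a projectivization. The three ingredients that make this plausible are already packaged earlier in the paper: the invariance of the Mori cone (Proposition \ref{prop. invariance of Mori cone by Wisniewski}), the existence of the relative divisor $\mathcal{L}^\alpha \in \Pic(\mathcal{X}/\Delta)$ dual to $\mathcal{K}^\alpha(\mathcal{X})$ (Proposition-Definition \ref{prop. divisors intersection theory on family X}), and the Fano deformation rigidity of $\mathbb{P}^k$ from \cite{HM02}.

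For step (a), I would start from the fact that $\pi^\alpha$ is a relative elementary Mori contraction whose fibers over $\mathcal{X}^\alpha_t$ with $t\ne 0$ are the $\mathbb{P}^k$-fibers of $\Phi^\alpha$. Invariance of the Mori cone forces every fiber of $\pi^\alpha_0$ to have numerical class in $R_\alpha$, hence dimension exactly $k$, so $\pi^\alpha_0$ is equidimensional. Each such fiber is a (possibly singular) Fano variety arising as a flat limit of $\mathbb{P}^k$'s; when the fiber happens to be smooth, the Hwang--Mok theorem identifies it with $\mathbb{P}^k$. To rule out singular fibers one uses either the smoothness of $\mathcal{X}^\alpha_0$ (hypothesis (ii)), which gives flatness of $\pi^\alpha_0$ and allows a standard limit argument, or, under hypothesis (i), the cohomological rigidity described below to force $\pi^\alpha_0$ to be equidimensional between smooth varieties.

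For step (b), under (ii) there is nothing to prove. Under (i), I would argue that invariance of Betti numbers in the smooth family $\mathcal{X}/\Delta$, together with the Leray spectral sequence for $\pi^\alpha$ on nearby fibers, yields $H^*(\mathcal{X}^\alpha_0,\mathbb{Q}) \cong H^*(G/P_{I\setminus\{\alpha\}},\mathbb{Q})$, still generated by $H^2$. Combined with the invariance of the Mori cone applied now to $\mathcal{X}^\alpha/\Delta$, this rigidity prevents $\pi^\alpha_0$ from developing higher dimensional exceptional loci or $\mathcal{X}^\alpha_0$ from acquiring singularities that are not compatible with its Hodge--theoretic profile, so one concludes smoothness of $\mathcal{X}^\alpha_0$ and normal crossing-free behavior of the contraction.

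For step (c), once both $\mathcal{X}_0$ and $\mathcal{X}^\alpha_0$ are smooth and every fiber of $\pi^\alpha_0$ is $\mathbb{P}^k$, the restriction $\mathcal{L} := \mathcal{L}^\alpha|_{\mathcal{X}_0}$ satisfies $(\mathcal{L}\cdot C) = 1$ for $C \in \mathcal{K}^\alpha(\mathcal{X}_0)$ by the defining property of $\mathcal{L}^\alpha$, hence restricts to $\mathcal{O}_{\mathbb{P}^k}(1)$ on each fiber. Cohomology and base change then produces a rank $k+1$ locally free sheaf $\mathcal{E} := (\pi^\alpha_0)_*\mathcal{L}$, and the canonical map $\mathcal{X}_0 \to \mathbb{P}(\mathcal{E})$ over $\mathcal{X}^\alpha_0$ is an isomorphism because it is so fiberwise. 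I expect the main obstacle to be step (b) under hypothesis (i): deducing honest geometric smoothness of $\mathcal{X}^\alpha_0$ from a purely cohomological generation hypothesis requires a delicate combination of Wi\'{s}niewski's length estimates, invariance of Hodge numbers, and a careful exclusion of non-elementary behavior of $\pi^\alpha_0$, and this is the technical heart of the result.
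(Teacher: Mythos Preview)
The paper does not prove this proposition at all: it is stated with the citation \cite[Corollary 1.4, Corollary 3.3]{WW17} and used as a black box throughout. So there is no ``paper's own proof'' to compare against; the result is imported directly from Weber--Wi\'{s}niewski.

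That said, your outline is a reasonable reconstruction of the shape of such an argument, and step (c) is fine once (a) and (b) are in hand. But two points deserve flagging. First, in step (a) the sentence ``Invariance of the Mori cone forces every fiber of $\pi^\alpha_0$ to have numerical class in $R_\alpha$, hence dimension exactly $k$'' is not an argument: the Mori cone controls curve classes, not fiber dimensions, and one needs an actual length/fiber-dimension inequality (of Wi\'{s}niewski type) together with a semicontinuity argument to exclude jumping. Second, your step (b) under hypothesis (i) is, as you yourself concede, not a proof but a list of ingredients; the passage from ``$H^*$ is generated by $H^2$'' to smoothness of $\mcX^\alpha_0$ is exactly what \cite{WW17} supplies, and your invocation of Leray, Hodge numbers, and ``non-elementary behavior'' does not pin down the mechanism. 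Since the paper treats this as an external input, it would be more honest here simply to cite \cite{WW17} rather than sketch an argument whose hard step remains unfilled.
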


As a consequence of Proposition \ref{prop. criterion Pk bundle by Weber Wisniewski}, we have the following result.

\begin{prop} \label{prop. X=P(V) in A3 case}
There exists a unique vector bundle of rank $3$ over $\mathbb{P}^3$, denoted by $\mathcal{V}$, such that

$(i)$ $\mathcal{X}_0$ is biholomorphic to $\mathbb{P}(\mathcal{V})$ and $\mathcal{X}_{0}^{\alpha_2}$ is biholomorphic to $\mathbb{P}^3$;

$(ii)$ $\pi_{0}^{\alpha_2} : \mathcal{X}_0 \rightarrow \mathcal{X}_0^{\alpha_2}$ coincides with the projective bundle $\phi: \mathbb{P}(\mathcal{V}) \rightarrow \mathbb{P}^3$;

$(iii)$ the distribution $\msD^{\alpha_2}=T^\phi$, which is holomorphic on $\mcX_0$;

$(iv)$ $\phi(C_1)$ is a line in $\mbP^3$ for each $[C_1]\in\K^{\alpha_1}(\mcX_0)$.

$(v)$ along any line $l$ in $\mbP^3$, $4\leq\deg(\mcV|_l)\leq 6$.
\end{prop}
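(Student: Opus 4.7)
The plan is to combine Proposition \ref{prop. criterion Pk bundle by Weber Wisniewski} with the Fano deformation rigidity of $\mathbb{P}^3$ in order to force $\pi^{\alpha_2}_0$ to be a $\mathbb{P}^2$-bundle over $\mathbb{P}^3$, and then realize $\mathcal{X}_0$ as the projectivization of a rank-$3$ bundle $\mathcal{V}$, deducing properties (iii)--(v) from intersection theory on the family $\mathcal{X}/\Delta$.

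First I would check the hypotheses of Proposition \ref{prop. criterion Pk bundle by Weber Wisniewski} for $\alpha = \alpha_2$. The contraction $\Phi^{\alpha_2}\colon A_3/P_{\{\alpha_1,\alpha_2\}} \to A_3/P_{\alpha_1} \cong \mathbb{P}^3$ is a $\mathbb{P}^2$-bundle, and $H^*(\mathbb{P}^3,\mathbb{Q})$ is generated by $H^2$, so hypothesis (i) of that proposition holds. Thus $\pi^{\alpha_2}_0\colon \mathcal{X}_0 \to \mathcal{X}^{\alpha_2}_0$ is a $\mathbb{P}^2$-bundle. Since $\pi^{\alpha_2}_0$ is a smooth morphism and $\mathcal{X}_0$ is smooth, $\mathcal{X}^{\alpha_2}_0$ is smooth, and $\mathcal{X}^{\alpha_2} \to \Delta$ becomes a smooth family of Fano manifolds with generic fiber $\mathbb{P}^3$. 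By the Fano deformation rigidity of projective spaces (Theorem \ref{thm. intro Picard numbe one}), $\mathcal{X}^{\alpha_2}_0 \cong \mathbb{P}^3$, giving (i). Since $\mathrm{Br}(\mathbb{P}^3)=0$, the $\mathbb{P}^2$-bundle $\pi^{\alpha_2}_0$ is the projectivization of a rank-$3$ vector bundle, and the normalization $\mathcal{V} := (\pi^{\alpha_2}_0)_*\mathcal{L}^{\alpha_2}$ makes $\mathcal{V}$ unique, yielding (ii). Property (iii) follows because $\msD^{\alpha_2}$ is, by construction, the relative tangent distribution of $\pi^{\alpha_2}_0$ on its holomorphic locus; as $\phi := \pi^{\alpha_2}_0$ is smooth everywhere, $T^\phi$ is a holomorphic rank-$2$ subbundle of $T\mathcal{X}_0$ that agrees with $\msD^{\alpha_2}$ in codimension one, hence everywhere.

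For (iv), fix general $[C_1] \in \mathcal{K}^{\alpha_1}(\mathcal{X}_0)$; by Proposition-Definition \ref{prop. divisors intersection theory on family X} we have $C_1 \cong \mathbb{P}^1$, and by Proposition \ref{prop. invariance of Mori cone by Wisniewski} the class of $C_1$ is identified with that of an $\alpha_1$-line on $\mathcal{X}_t$, $t\neq 0$. The divisor class $\phi^*\mathcal{O}_{\mathbb{P}^3}(1)$ extends uniquely across $\Delta$, and its intersection number with $[C_1]$ is flat-invariant; on $\mathcal{X}_t$ the corresponding intersection equals $1$ (an $\alpha_1$-line meets the pullback of a hyperplane in $\mathbb{P}^3$ once). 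Therefore $\phi(C_1)$ is a rational curve of degree one in $\mathbb{P}^3$, i.e.\ a line.

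The bound (v) is the main technical obstacle. Since $\deg(\mathcal{V}|_l) = c_1(\mathcal{V}) \cdot l$ is independent of the line $l$, it suffices to pin down $c_1(\mathcal{V}) = c_1(\phi_*\mathcal{L}^{\alpha_2})$. Flatness of $\pi$ and the projection formula express this Chern number through intersection pairings of $\mathcal{L}^{\alpha_1}$, $\mathcal{L}^{\alpha_2}$ and $\phi^*H$, which are constant in $t \in \Delta$ and may be evaluated on $\mathcal{X}_t \cong \mathbb{P}(T\mathbb{P}^3)$; the subtlety is that the normalization of $\mathcal{V}_0$ selected by $\mathcal{L}^{\alpha_2}$ need not be the one making $\mathcal{V}_0 \cong T\mathbb{P}^3$, since direct images can jump under specialization. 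The upper bound $6$ I expect to obtain from ampleness of
\[
-K_{\mathbb{P}(\mathcal{V})} \;=\; 3\,c_1(\mathcal{O}_{\mathbb{P}(\mathcal{V})}(1)) \;+\; \phi^*\bigl(-K_{\mathbb{P}^3} - c_1(\mathcal{V})\bigr),
\]
tested on the sections provided by $C_1$ over lines $l \subset \mathbb{P}^3$, while the lower bound $4$ will come from the flat invariance together with the positivity data along an $\alpha_2$-line encoded in Proposition \ref{prop. spliting along C2 (A3 case)}. Balancing these two inequalities and carefully tracking the allowed shifts between the normalized $\mathcal{V}_0$ and its generic counterpart is where the real work lies; once this is done, properties (i)--(v) follow together.
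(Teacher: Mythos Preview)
Your treatment of (i)--(iv) is correct and matches the paper's approach (you even fill in a detail the paper leaves implicit, namely that $\mcX^{\alpha_2}_0$ is a smooth Fano deformation of $\mbP^3$ and hence $\cong\mbP^3$, and that $\mathrm{Br}(\mbP^3)=0$ makes the $\mbP^2$-bundle a projectivization).

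The problem is your reading of (v). You fix the normalization $\mcV:=(\pi^{\alpha_2}_0)_*\mcL^{\alpha_2}$ and then treat $4\le\deg(\mcV|_l)\le 6$ as a nontrivial inequality to be established for that specific bundle, invoking ampleness of $-K_{\mbP(\mcV)}$, flat invariance, and even Proposition~\ref{prop. spliting along C2 (A3 case)} (which in the paper comes \emph{after} the present proposition, so this would be circular). None of this is needed. In the paper's statement, condition (v) is not a property to be proved about an independently chosen $\mcV$; it is the normalization condition that singles out $\mcV$ among its twists. Conditions (i)--(iv) determine $\mcV$ only up to $\mcV\mapsto\mcV\otimes\mcO_{\mbP^3}(k)$, and since
\[
\deg\bigl((\mcV\otimes\mcO(k))|_l\bigr)=\deg(\mcV|_l)+3k,
\]
exactly one twist has degree lying in the interval $\{4,5,6\}$ of length~$3$. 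That twist is declared to be $\mcV$, and this one line is the entire content of both the existence and the uniqueness in (v). Once you see this, the ``main technical obstacle'' disappears, and the later computation (Proposition~\ref{prop. V =O(2, 2, 0) along C1 in A3 case}) that $\deg(\mcV|_l)=4$ along a general $\K^{\alpha_1}$-image is a consequence, not an input.
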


\begin{proof}
By Proposition \ref{prop. criterion Pk bundle by Weber Wisniewski}, there exists a vector bundle $\mcV$ on $\mbP^3$ satisfying the properties $(i)$ and $(ii)$.  Hence $\msD^{\alpha_2}=T^\phi$, verifying $(iii)$. By Proposition \ref{prop. divisors intersection theory on family X}, $\phi(C_1)$ is a line in $\mbP^3$, verifying $(iv)$. Since $\deg(\mcV\otimes\mcO(k))|_l=\deg(\mcV|_l)+3k$, we obtain the uniqueness of $\mcV$ with property $(v)$.
\end{proof}

\begin{nota}\label{nota. V phi P2t}
In the rest of Section \ref{section. unique degeneration of (A3, a1, a2)}, we fix the vector bundle $\mcV$ as in Proposition \ref{prop. X=P(V) in A3 case}. We use $\phi:\mbP(\mcV)\rightarrow\mbP^3$ to represent $\pi^{\alpha_2}_0: \mcX_0\rightarrow\mcX^{\alpha_2}_0$. For $t\in\mbP^3$ general, we denote by $\mbP^2_t:=\phi^{-1}(t)$.
\end{nota}

Now let us check the splitting types of various meromorphic vector bundles along general elements in $\mathcal{K}^{\alpha_1} (\mathcal{X}_0)$.

\begin{prop} \label{prop. splitting types along C1 in A3 case}
Take $[C_1] \in \mathcal{K}^{\alpha_1} (\mathcal{X}_0)$ general. Then $\msD^{\alpha_1}, \msD^{\alpha_2},  \msD,  \msD^{-2},  \msD^{-3}, \mathcal{N}$ are holomorphic in an open neighborhood of $C_1 \subset \mathcal{X}_0$, and
\begin{eqnarray*}
&& \mathcal{N} |_{C_1}  = \mathcal{O}, \\
&& \msD^{\alpha_1} |_{C_1}  =  \mathcal{O}(2), \\
&& \msD^{\alpha_2} |_{C_1} = \mathcal{O}(-2) \oplus \mathcal{O}, \\
&& \msD^{\alpha_2} / \mathcal{N} |_{C_1}   =  \mathcal{O}(-2),\\
&& \msD^{-2} / \msD |_{C_1}   =  \mathcal{O}, \\
&& \msD^{-3} / \msD^{-2} |_{C_1}  = \mathcal{O}(2), \\
&& \msD |_{C_1}  =  \msD^{\alpha_1} |_{C_1} \oplus \msD^{\alpha_2} |_{C_1} = \mathcal{O}(2) \oplus \mathcal{O}(-2) \oplus \mathcal{O}, \\
&& \msD^{-3}  |_{C_1}   =  T \mathcal{X}_0 |_{C_1} = \mathcal{O}(2) \oplus \mathcal{O}^4.
\end{eqnarray*}
\end{prop}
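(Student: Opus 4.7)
The plan is to transfer splitting data from the generic fibers $\mcX_t$, $t\neq 0$, and then exploit the symbol-algebra isomorphisms of Corollary \ref{cor. A3 degenerate symb algebra} to pin down all the remaining degrees via a single dimension count. First I would choose $[C_1]\in\K^{\alpha_1}(\mcX_0)$ generic enough that each of $\msD^{\alpha_1},\msD^{\alpha_2},\msD,\msD^{-2},\msD^{-3},\mcN$ is holomorphic in an open neighborhood of $C_1$, and moreover $C_1$ is contained in the open subset $\Omega\subset\mcX_0$ of Corollary \ref{cor. A3 degenerate symb algebra}. This is possible because the union of loci where the above fails sits in a codimension-$\geq 2$ bad set (an analogue of the $Y_2$ of Corollary \ref{C0_12}), and a general member of the covering family $\K^{\alpha_1}(\mcX_0)$ avoids such a set. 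Then I would extend $C_1$ to a local holomorphic family $\{C_1^t\}_{t\in\Delta}$ via a section of $\K^{\alpha_1}(\mcX)\to\Delta$.

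For $t\neq 0$, Proposition \ref{prop. gk(b) along Ca splitting types} gives $\mfg^{\alpha_1}(\bS)|_{C_1^t}=\mcO(2)$ (which is just $TC_1^t$, since $C_1^t$ is a fiber of $\Phi^{\alpha_1}$) and $\mfg^{\alpha_2}(\bS)|_{C_1^t}=\mcO(\langle\alpha_2,\alpha_1\rangle)\oplus\mcO(\langle\alpha_2+\alpha_3,\alpha_1\rangle)=\mcO(-1)^{\oplus 2}$. By flatness of the family, $\msD^{\alpha_1}|_{C_1}=\mcO(2)$ (rank one, so no jump is possible) and $\deg\msD^{\alpha_2}|_{C_1}=-2$. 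Independently, $C_1$ is a free rational curve of minimal anticanonical degree $-K_{\mcX_0}\cdot C_1=2$ on the smooth Fano manifold $\mcX_0$ (minimality follows from the identification $\overline{NE}(\mcX_0)=\overline{NE}(\bS)$ in Proposition \ref{prop. invariance of Mori cone by Wisniewski}), so the standard Mori splitting yields $T\mcX_0|_{C_1}=\mcO(2)\oplus\mcO^{\oplus 4}$; in particular $\deg T\mcX_0|_{C_1}=2$ and $\msD^{-3}|_{C_1}$ is already identified.

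The critical step is to compute $a:=\deg\mcN|_{C_1}$. Because $C_1\subset\Omega$, the isomorphisms
\[
\msD^{-2}/\msD\cong\msD^{\alpha_1}\otimes(\msD^{\alpha_2}/\mcN),\qquad \msD^{-3}/\msD^{-2}\cong\msD^{\alpha_1}\otimes(\msD^{-2}/\msD)
\]
of Corollary \ref{cor. A3 degenerate symb algebra} restrict to genuine line-bundle isomorphisms along $C_1$, yielding $\deg(\msD^{-2}/\msD)|_{C_1}=2+(-2-a)=-a$ and $\deg(\msD^{-3}/\msD^{-2})|_{C_1}=2-a$. Summing degrees over the filtration $\msD^{\alpha_1}\subset\msD=\msD^{\alpha_1}\oplus\msD^{\alpha_2}\subset\msD^{-2}\subset\msD^{-3}=T\mcX_0$,
\[
2=\deg T\mcX_0|_{C_1}=2+(-2)+(-a)+(2-a)=2-2a,
\]
which forces $a=0$, hence $\mcN|_{C_1}=\mcO$. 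The rest then falls out formally: $\msD^{\alpha_2}/\mcN|_{C_1}=\mcO(-2)$; the sequence $0\to\mcO\to\msD^{\alpha_2}|_{C_1}\to\mcO(-2)\to 0$ splits since $\mathrm{Ext}^1(\mcO(-2),\mcO)=H^1(\mcO(2))=0$, giving $\msD^{\alpha_2}|_{C_1}=\mcO\oplus\mcO(-2)$; then $\msD|_{C_1}=\mcO(2)\oplus\mcO\oplus\mcO(-2)$, $\msD^{-2}/\msD|_{C_1}=\mcO(2)\otimes\mcO(-2)=\mcO$, and $\msD^{-3}/\msD^{-2}|_{C_1}=\mcO(2)\otimes\mcO=\mcO(2)$.

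The main obstacle is really the step pinning down $\mcN|_{C_1}$: a priori the rank-two degree-$(-2)$ bundle $\msD^{\alpha_2}|_{C_1}$ could split either as $\mcO(-1)^{\oplus 2}$ or as $\mcO\oplus\mcO(-2)$, and distinguishing them directly would require a delicate Picard-group or Euler-sequence analysis on $\mbP(\mcV)\to\mbP^3$; the symbol-algebra isomorphisms of Corollary \ref{cor. A3 degenerate symb algebra} reduce the entire question to the single linear identity $2=2-2a$, after which everything is routine.
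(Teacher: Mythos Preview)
Your strategy is close to the paper's, but there is a genuine gap in the step where you assert that a general $C_1$ lies entirely inside the open set $\Omega$ of Corollary~\ref{cor. A3 degenerate symb algebra}. The codimension-$\geq 2$ locus $Y_2$ of Corollary~\ref{C0_12} only controls where the sheaves $\msD^{\alpha_i},\mcN,\msD,\mcW$ fail to be \emph{locally free}; it says nothing about where the Frobenius-bracket morphisms
\[
\msD^{\alpha_1}\otimes(\msD^{\alpha_2}/\mcN)\longrightarrow\msD^{-2}/\msD,\qquad
\msD^{\alpha_1}\otimes(\msD^{-2}/\msD)\longrightarrow\msD^{-3}/\msD^{-2}
\]
fail to be \emph{isomorphisms}. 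Each of these is a generically-isomorphic map of line bundles, so its degeneracy locus is the zero set of a section of a line bundle and is a priori a divisor, not a codimension-$\geq 2$ set. Thus $\mcX_0\setminus\Omega$ may have codimension one, and a general member of $\K^{\alpha_1}(\mcX_0)$ need not avoid it. This is precisely the point of Remark~\ref{rmk. symbol algebra C2A1 in A3 case}(i), which stresses that globally one only has \emph{injective} homomorphisms. (A posteriori, once $\mcX_0\cong F^d(1,2;\C^4)$ is known, the complement of $\Omega$ is $\mbP(\mcL_\sigma)$ and does have codimension two; but that is not available at this stage.)

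The paper circumvents this by using only injectivity. In its notation $a=\deg(\msD^{\alpha_2}/\mcN)|_{C_1}$, $b=\deg(\msD^{-2}/\msD)|_{C_1}$, $c=\deg(\msD^{-3}/\msD^{-2})|_{C_1}$, injectivity gives $b\geq a+2$ and $c\geq b+2$, while the embedding $\msD^{\alpha_2}|_{C_1}\hookrightarrow T\mcX_0/\msD^{\alpha_1}|_{C_1}=\mcO^{\oplus 4}$ forces $a\geq -2$ and $\deg(\msD/\msD^{\alpha_1})|_{C_1}\geq -2$; the degree identity then squeezes $2\geq b+c\geq 2b+2\geq 2a+6\geq 2$, so all inequalities are equalities. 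Your argument is easily repaired the same way: replace your equalities $b=-a$, $c=2-a$ (your $a=\deg\mcN|_{C_1}$) by $b\geq -a$, $c\geq b+2$, so that $2=b+c\geq 2-2a$ gives $a\geq 0$; combine this with $a\leq 0$ (since $\mcN|_{C_1}$ is a line subbundle of $\msD^{\alpha_2}|_{C_1}\hookrightarrow\mcO^{\oplus 4}$) to conclude $a=0$. After that your deductions go through unchanged.
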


\begin{proof}
The restriction $\msD^{\alpha_2}|_{C_1}=TC_1=\mcO(2)$. Choose a holomorphic family $[l_t]\in\K^{\alpha_1}(\mcX_t)$, $t\in\Delta$ satisfying $[l_0]=[C_1]\in\K^{\alpha_1}(\mcX_0)$. By Proposition \ref{prop. X=P(V) in A3 case}$(ii)$,
\begin{eqnarray*}
\deg(\msD^{\alpha_2}|_{C_1})=\deg(T^{\pi^{\alpha_2}_0}|_{l_0})=\deg(T^{\pi^{\alpha_2}_t}|_{l_t}) \mbox{ for all } t\in\Delta.
\end{eqnarray*}
By Proposition \ref{prop. gk(b) along Ca splitting types}, we have
\begin{eqnarray*}
\deg(T^{\pi^{\alpha_2}_t}|_{l_t})=\langle\alpha_2, \alpha_1\rangle + \langle\alpha_2+\alpha_3, \alpha_1\rangle=-2 \mbox{ for } t\neq 0.
\end{eqnarray*}
It follows that $\deg(\msD^{\alpha_2}|_{C_1})=-2$. Then can write $\msD^{\alpha_2} |_{C_1} = \mathcal{O}(a_1) \oplus \mathcal{O}(a_2)$, where $a_1 + a_2 = -2$. Since $\msD^{-3}|_{C_1}=T\mcX_0|_{C_1}=\mcO(2)\oplus\mcO^4$ and $\msD^{\alpha_1} |_{C_1} = \mathcal{O}(2)$, we know that $a_1\leq 0, a_2\leq 0$.
Hence
\begin{eqnarray} \label{e3_0}
\text{either} \quad  \msD^{\alpha_2} |_{C_1}=\mathcal{O}(-1)^2, \text{ or } \msD^{\alpha_2} |_{C_1}=\mathcal{O}(-2) \oplus \mathcal{O}.
\end{eqnarray}
It follows that
\begin{eqnarray} \label{e3_1}
\big( \msD^{\alpha_2} / \mathcal{N} \big) |_{C_1} = \mathcal{O}_{\mathbb{P}^1}(a), \mbox{ where } a \geq -2.
\end{eqnarray}
The injectivity of the homomorphism $\msD^{\alpha_1} \otimes \big( \msD^{\alpha_2} / \mathcal{N} \big) \rightarrow \msD^{-2} / \msD \subset T \mathcal{X}_0 / \msD$ in an open neighborhood of $C_1 \subset \mathcal{X}_0$  implies that
\begin{eqnarray} \label{e3_2}
\msD^{-2} / \msD  |_{C_1} = \mathcal{O}_{\mathbb{P}^1}(b), \mbox{ where } b\geq a+2.
\end{eqnarray}
The injectivity of $\msD^{\alpha_1} \otimes \big( \msD^{-2} / \msD \big) \rightarrow T \mathcal{X}_0 / \msD^{-2} $ in an open neighborhood of $C_1 \subset \mathcal{X}_0$ implies that
\begin{eqnarray} \label{e3_3}
\msD^{-3} / \msD^{-2}  |_{C_1} = T \mathcal{X}_0 / \msD^{-2} |_{C_1} = \mathcal{O}_{\mathbb{P}^1}(c), \mbox{ where } c\geq b+2.
\end{eqnarray}
On the other hand, the injectivity of $ \msD^{\alpha_2} \rightarrow \msD / \msD^{\alpha_1} \subset T \mathcal{X}_0 / \msD^{\alpha_1}$ in an open neighborhood of $C_1 \subset \mathcal{X}_0$ implies that
\begin{eqnarray} \label{e3_4}
\deg (\msD / \msD^{\alpha_1} )  |_{C_1} \geq \deg (\msD^{\alpha_2} |_{C_1}) = -2.
\end{eqnarray}
We also have
\begin{eqnarray}\label{e3_5}
&&\deg(T\mcX_0|_{C_1})-\deg(\msD^{\alpha_1})|_{C_1}-\deg(\msD^{-1}/\msD^{\alpha_1})|_{C_1} \\
&=&\deg(\msD^{-2}/\msD)|_{C_1}+\deg(\msD^{-3}/\msD^{-2})|_{C_1}. \nonumber
\end{eqnarray}
By \eqref{e3_1}--\eqref{e3_5}, we have
\begin{eqnarray*}
2 &\geq& -  \deg (\msD / \msD^{\alpha_1} )  |_{C_1}  =  \deg (\msD^{-2} / \msD)  |_{C_1} + deg (\msD^{-3} / \msD^{-2} )  |_{C_1}  \\
&=& b + c \geq 2b+ 2 \geq 2a + 6 \geq 2.
\end{eqnarray*}
Hence $\deg (\msD / \msD^{\alpha_1} )  |_{C_1}  = -2$, $a =-2$, $b=0$ and $c=2$.
By \eqref{e3_0} and the fact
\begin{eqnarray*}
\deg (\msD^{\alpha_2}/\mcN)|_{C_1}=a=-2=\deg(\msD/\msD^{\alpha_1})|_{C_1},
\end{eqnarray*}
we know that $\msD^{\alpha_2}|_{C_1}  = \mathcal{O}(-2) \oplus \mathcal{O} \cong (\msD / \msD^{\alpha_1}) |_{C_1}$.
The rest of the conclusion follows immediately.
\end{proof}

\begin{prop} \label{prop. V =O(2, 2, 0) along C1 in A3 case}
In setting of Proposition \ref{prop. X=P(V) in A3 case}, $\mathcal{V} |_{\pi^{\alpha_2}_0(C_1)} = \mathcal{O}(2)^2 \oplus \mathcal{O}$ for $[C_1] \in \mathcal{K}^{\alpha_1} (\mathcal{X}_0)$ general.
\end{prop}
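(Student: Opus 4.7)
My plan is to identify $\mcV|_\ell$, where $\ell := \phi(C_1)$, by analyzing $C_1$ as a section of the restricted $\mbP^2$-bundle $\phi|_{\phi^{-1}(\ell)}$. By Proposition \ref{prop. X=P(V) in A3 case}$(iv)$, $\ell$ is a line in $\mbP^3$, so $\phi^{-1}(\ell)\cong\mbP(\mcV|_\ell)\to\ell\cong\mbP^1$ contains $C_1$ as a section. Such a section determines a short exact sequence of vector bundles on $\mbP^1$,
\begin{equation*}
0 \longrightarrow \mcM \longrightarrow \mcV|_\ell \longrightarrow \mcQ \longrightarrow 0,
\end{equation*}
with $\mcM$ a line subbundle and $\mcQ$ a rank-two quotient, and the restriction of the relative tangent bundle to the section satisfies $T^\phi|_{C_1} \cong \mcM^\vee\otimes\mcQ$.

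I would then determine $\deg\mcM$ using invariance of intersection numbers under flat deformation. Since $\deg\mcM = \mcO_{\mbP(\mcV)}(-1)\cdot C_1$ is a topological intersection number, Proposition \ref{prop. invariance of Mori cone by Wisniewski} together with Proposition \ref{prop. divisors intersection theory on family X} makes it constant in $t\in\Delta$. For $t\ne 0$ one has $\mcX_t\cong F(1,2;\C^4)\cong\mbP(\mcO_{\mbP^3}^{\oplus 4}/\mcO_{\mbP^3}(-1))$, and after the normalizing twist of Proposition \ref{prop. X=P(V) in A3 case}$(v)$, a direct computation on a line $\ell\subset\mbP^3$ yields $\mcV|_\ell \cong \mcO(2)\oplus\mcO(1)^{\oplus 2}$ and $\mcM \cong \mcO(2)$ on the $\alpha_1$-section, hence $\deg\mcM = 2$ for all $t$.

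Combined with Proposition \ref{prop. splitting types along C1 in A3 case}, which asserts $T^\phi|_{C_1} = \msD^{\alpha_2}|_{C_1} = \mcO(-2)\oplus\mcO$, we obtain $\mcQ = \mcM\otimes\bigl(\mcO(-2)\oplus\mcO\bigr) = \mcO\oplus\mcO(2)$. The extension class of $0\to\mcM\to\mcV|_\ell\to\mcQ\to 0$ lies in $\mathrm{Ext}^1_{\mbP^1}(\mcQ,\mcM) = H^1(\mbP^1,\mcO(2)\oplus\mcO) = 0$, so the sequence splits and
\begin{equation*}
\mcV|_\ell \;\cong\; \mcM\oplus\mcQ \;=\; \mcO(2)^{\oplus 2}\oplus\mcO.
\end{equation*}
No serious geometric obstacle arises; the only careful bookkeeping is to track projectivization conventions consistently and to confirm via Propositions \ref{prop. invariance of Mori cone by Wisniewski} and \ref{prop. divisors intersection theory on family X} that $\deg\mcM$ transports correctly from $t\ne 0$ to $t=0$.
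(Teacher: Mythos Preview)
Your argument is correct and follows the same skeleton as the paper's proof: identify $C_1$ as a section of $\mbP(\mcV|_\ell)\to\ell$, write $\mcV|_\ell$ as an extension of $\mcQ$ by $\mcM$, use $T^\phi|_{C_1}\cong\mcM^\vee\otimes\mcQ$ together with Proposition~\ref{prop. splitting types along C1 in A3 case} to find $\mcQ=\mcO(k)\oplus\mcO(k-2)$ with $k=\deg\mcM$, and split the extension since $\mathrm{Ext}^1(\mcQ,\mcM)=0$.

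The one genuine difference is how you pin down $k=2$. You transport $\deg\mcM$ from $t\neq 0$ by arguing that $\mcO_{\mbP(\mcV)}(-1)\cdot C_1$ is deformation-invariant, which requires knowing that the fiberwise tautological bundles $\mcO_{\mbP(\mcV_t)}(1)$ (all normalized by the analogue of condition~$(v)$) glue to a single line bundle on $\mcX$; this in turn rests on $\pi^{\alpha_2}:\mcX\to\mcX^{\alpha_2}\cong\mbP^3\times\Delta$ being a global $\mbP^2$-bundle of the form $\mbP(\mcW)$. This is true but is exactly the ``careful bookkeeping'' you flag, and it is not entirely free. The paper avoids this detour: once $\mcV|_\ell\cong\mcO(k)^2\oplus\mcO(k-2)$ is known, condition~$(v)$ of Proposition~\ref{prop. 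X=P(V) in A3 case} already forces $4\le 3k-2\le 6$, hence $k=2$. Since you are invoking~$(v)$ anyway to normalize $\mcV_t$, you may as well use it directly on $\mcX_0$ and skip the transport argument altogether.
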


\begin{proof}
Take $[C_1] \in \mathcal{K}^{\alpha_1} (\mathcal{X}_0)$ general. Denote by $\mathcal{L}(C_1)$ the line subbundle of
$\mathcal{V}$ over the line $\pi^{\alpha_2}_0(C_1)\subset\mbP^3$ such that $C_1=\mbP(\mcL(C_1))\subset\mbP(\mcV)=\mcX_0$. Then the relative tangent bundle
$T^{\pi^{\alpha_2}_0} |_{C_1} = \mathcal{L}(C_1)^* \otimes \big( \mathcal{V} |_{C_1} / \mathcal{L}(C_1) \big)$.
By Proposition \ref{prop. X=P(V) in A3 case}$(iii)$ and Proposition \ref{prop. splitting types along C1 in A3 case},
$T^{\pi^{\alpha_2}_0}|_{C_1} = \msD^{\alpha_2}|_{C_1} = \mathcal{O}(-2) \oplus \mathcal{O}$.
Then $\mathcal{V} |_{\pi^{\alpha_2}_0(C_1)} = \mathcal{O}(k)^2 \oplus \mathcal{O}(k-2)$, where $k := \deg \mathcal{L}(C_1)$. By Proposition \ref{prop. X=P(V) in A3 case}$(v)$, $k=2$ and the conclusion follows.
\end{proof}

\begin{prop}\label{prop. 19 A3 case}
Let  $\mathcal{V}$ be as in Proposition \ref{prop. X=P(V) in A3 case}. Then the following holds.

$(i)$ Over any line $l \subset \mathbb{P}^3$, either $\mathcal{V}|_{l} = \mathcal{O}(2) \oplus \mathcal{O}(1)^2$ or $\mathcal{V}|_{l} = \mathcal{O}(2)^2 \oplus \mathcal{O}$.

$(ii)$ Take any $[C_1] \in \mathcal{K}^{\alpha_1} (\mathcal{X}_0)$. Then $\mathcal{L}(C_1) = \mcO_{\mathbb{P}^1}(2)$, where $\mcL(C_1)$ is the unique line subbundle of $\mathcal{V} |_{\pi^{\alpha_2}_0(C_1)}$ such that $C_1=\mathbb{P}(\mathcal{L}(C_1)) \subset \mathbb{P}(\mathcal{V}) =\mathcal{X}_0$.
\end{prop}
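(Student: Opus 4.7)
The plan is to leverage the projective bundle structure $\phi: \mcX_0 = \mbP(\mcV) \rightarrow \mbP^3$ from Proposition \ref{prop. X=P(V) in A3 case}, combined with the identification $\overline{NE}(\mcX_0) = \mbR_{\geq 0}\ell_1 + \mbR_{\geq 0}\ell_2$, with $\ell_i := [\K^{\alpha_i}(\mcX_0)]$, which follows from Proposition \ref{prop. invariance of Mori cone by Wisniewski} together with Lemma \ref{lem. Mori cone on rat homog spaces}. The first step is to compute the intersection pairings of the natural generators of $\Pic(\mcX_0) \otimes \mbQ = \mbQ\langle \phi^*H, \mcO(1)_{\text{rel}}\rangle$ against $\ell_1$ and $\ell_2$. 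Since a $\K^{\alpha_2}$-curve is a line inside a $\mbP^2$-fiber of $\phi$, we have $\phi^*H \cdot \ell_2 = 0$ and $\mcO(1)_{\text{rel}} \cdot \ell_2 = 1$. Applying Proposition \ref{prop. V =O(2, 2, 0) along C1 in A3 case} to a general $[C_1]$ gives $\mcL(C_1) \cong \mcO_{\mbP^1}(2)$, hence $\phi^*H \cdot \ell_1 = 1$ and $\mcO(1)_{\text{rel}} \cdot \ell_1 = -\deg \mcL(C_1) = -2$; the same proposition forces $c_1(\mcV) = 4$, which is therefore the constant value of $\deg(\mcV|_l)$ for every line $l \subset \mbP^3$.

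For assertion (ii), fix any $[C_1]\in\K^{\alpha_1}(\mcX_0)$ and set $l := \phi(C_1)$. Because $C_1 = \mbP(\mcL(C_1))$ is a section of $\phi^{-1}(l) \rightarrow l$, the restriction $\mcO(1)_{\text{rel}}|_{C_1}$ is dual to $\mcL(C_1)$, so $\mcO(1)_{\text{rel}} \cdot C_1 = -\deg \mcL(C_1)$. The numerical invariance $[C_1] \equiv \ell_1$ then gives $\mcO(1)_{\text{rel}} \cdot C_1 = \mcO(1)_{\text{rel}} \cdot \ell_1 = -2$, forcing $\deg \mcL(C_1) = 2$, i.e.\ $\mcL(C_1) \cong \mcO_{\mbP^1}(2)$.

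For assertion (i), write $\mcV|_l = \mcO(a_1) \oplus \mcO(a_2) \oplus \mcO(a_3)$ with $a_1 \geq a_2 \geq a_3$ and $a_1 + a_2 + a_3 = 4$. Assume toward contradiction that $a_1 \geq 3$. The maximal line subbundle $\mcO(a_1) \subset \mcV|_l$ yields a section $D := \mbP(\mcO(a_1))$ of $\phi^{-1}(l) \rightarrow l$, giving an irreducible smooth rational curve in $\mcX_0$ with $\phi^*H \cdot D = 1$ and $\mcO(1)_{\text{rel}} \cdot D = -a_1 \leq -3$. Writing $[D] = \lambda_1 \ell_1 + \lambda_2 \ell_2$ and inverting the pairings computed in the first paragraph gives $\lambda_1 = 1$ and $\lambda_2 = 2 - a_1 \leq -1$, contradicting $[D] \in \overline{NE}(\mcX_0)$. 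Hence $a_1 \leq 2$; the constraint $3a_1 \geq 4$ forces $a_1 = 2$, so $a_2 + a_3 = 2$ with $2 \geq a_2 \geq a_3$ leaves only $(a_2, a_3) = (2, 0)$ or $(1, 1)$, giving precisely the two listed splittings. The one delicate point is the change-of-basis between $\{\phi^*H, \mcO(1)_{\text{rel}}\}$ and the basis dual to $\{\ell_1, \ell_2\}$; once the coefficients $\lambda_i$ are read off correctly from the pairings, the remainder is a purely numerical exercise.
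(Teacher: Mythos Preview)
Your proof is correct and follows essentially the same approach as the paper. Both arguments rest on the extremal-ray structure of $\overline{NE}(\mcX_0)$: the paper phrases this as ``$C_1$ is the section of largest degree'' and invokes ``maximality of $\deg\mcL(C_1)$ among sections of $\mcV$ over lines in $\mbP^3$'' to conclude $a_1\leq 2$, whereas you unpack that maximality explicitly by writing $[D]=\lambda_1\ell_1+\lambda_2\ell_2$ and reading off $\lambda_2=2-a_1\geq 0$ from the Mori cone. Your version has the advantage of making transparent why the paper's maximality claim holds; the paper's version is terser but leaves that step to the reader.
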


\begin{proof}
Since $\mathcal{X}_0 \cong \mathbb{P}(\mathcal{V})$ by Proposition \ref{prop. X=P(V) in A3 case}, any $[C_1] \in \mathcal{K}^{\alpha_1} (\mathcal{X}_0)$ must be a section over the line $\pi^{\alpha_2}_0(C_1)\subset\mbP^3$ with largest degree. The degree of this section over the line $\pi^{\alpha_2}_0(C_1)$, is independent of the choice of $[C_1] \in \mathcal{K}^{\alpha_1} (\mathcal{X}_0)$. Then the assertion $(ii)$ follows from proposition \ref{prop. V =O(2, 2, 0) along C1 in A3 case}.

Take any line $l \subset \mathbb{P}^3$. Then by Proposition \ref{prop. X=P(V) in A3 case}, $\mathcal{V}|_{l}$ is a deformation of $\mcV|_{\pi^{\alpha_2}_0(C_1)}=\mathcal{O}_{\mathbb{P}^1}(2)^2 \oplus \mathcal{O}_{\mathbb{P}^1}$. Thus we can write $\mathcal{V} |_{l} = \mathcal{O}(a_1) \oplus \mathcal{O}(a_2) \oplus \mathcal{O}(a_3)$, where
\begin{eqnarray} \label{e3_11}
a_1 + a_2 + a_3 = 4, \quad \text{and} \quad a_1 \geq a_2 \geq a_3.
\end{eqnarray}
By the maximality of $\deg\mcL(C_1)$ among sections of $\mcV$ over lines in $\mbP^3$, we have
\begin{eqnarray} \label{e3_13}
a_1 \leq\deg\mcL(C_1)= 2.
\end{eqnarray}
The assertion $(i)$ follows from \eqref{e3_11} and \eqref{e3_13}.
\end{proof}

\begin{cor}\label{cor. 20 A3 case}
Let $\mathcal{V}$ be as in Proposition \ref{prop. X=P(V) in A3 case}. Then there exists a nonempty Zariski open subset $\mathcal{U} \subset \mathbb{P}^3$ and a section of $\pi^{\alpha_2}_0: \mathcal{X}_0 = \mathbb{P}(\mathcal{V})  \rightarrow \mathbb{P}^3$ over $\mathcal{U}$, denoted by $\sigma : \mathcal{U} \rightarrow \mathcal{X}_0$, such that for any $x\in(\pi^{\alpha_2}_0)^{-1}(U)\setminus\sigma(U)$,

$(i)$ $\mathcal{N}$ is holomorphic at $x$;

$(ii)$$\mcN_x=T_xl_x$, where $l_x:=\langle x, \sigma(\pi^{\alpha_2}_0(x))\rangle$ is the line in $(\pi^{\alpha_2}_0)^{-1}(\pi^{\alpha_2}_0(x))\cong\mbP^2$ joining $x$ and $\sigma(\pi^{\alpha_2}_0(x))$;

$(iii)$ the leaf of $\mcN$ at $x$ is the affine line $l_x\setminus\{\sigma(\pi^{\alpha_2}_0(x))\}$.
\end{cor}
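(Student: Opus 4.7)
The plan is to analyze the meromorphic line distribution $\mathcal{N}$ fiberwise along the $\mathbb{P}^2$-bundle $\phi : \mathcal{X}_0 \to \mathbb{P}^3$ of Proposition \ref{prop. X=P(V) in A3 case}. Since $\mathcal{N} \subset \msD^{\alpha_2} = T^\phi$, for each $t$ in a nonempty Zariski open subset of $\mathbb{P}^3$ the restriction $\mathcal{N}|_{\mathbb{P}^2_t}$ is a nonzero meromorphic rank-one subsheaf of $T\mathbb{P}^2_t$. I would first replace it by its reflexive hull $\widetilde{\mathcal{N}}_t$ inside $T\mathbb{P}^2_t$; on the smooth surface $\mathbb{P}^2_t$ this is automatically a line bundle $\mathcal{O}_{\mathbb{P}^2_t}(d_t)$.

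The first key step is to identify $d_t = 1$ for generic $t$. For $[C_2] \in \mathcal{K}^{\alpha_2}(\mathcal{X}_0)$ general, $C_2$ is a projective line in some fiber $\mathbb{P}^2_t$ and $\mathcal{N}|_{C_2} \cong \mathcal{O}_{\mathbb{P}^1}(1)$ by Proposition \ref{prop. spliting along C2 (A3 case)}. Since such a general $C_2$ avoids the singular locus of $\mathcal{N}$ inside $\mathcal{X}_0$, the restriction of $\widetilde{\mathcal{N}}_t$ to $C_2$ agrees with $\mathcal{N}|_{C_2}$, which forces $d_t = 1$. Hence $\widetilde{\mathcal{N}}_t \cong \mathcal{O}_{\mathbb{P}^2_t}(1)$ as abstract line bundles on $\mathbb{P}^2_t$.

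Next, I would identify such a saturated $\mathcal{O}(1)$-subsheaf of $T\mathbb{P}^2$ with a pencil of projective lines through a single point. An inclusion $\mathcal{O}_{\mathbb{P}^2}(1) \hookrightarrow T\mathbb{P}^2$ corresponds to a nonzero global section of $T\mathbb{P}^2(-1)$, and the twisted Euler sequence
\[
0 \longrightarrow \mathcal{O}(-1) \longrightarrow \mathcal{O}^{\oplus 3} \longrightarrow T\mathbb{P}^2(-1) \longrightarrow 0
\]
yields $H^0(\mathbb{P}^2, T\mathbb{P}^2(-1)) \cong \mathbb{C}^3$, where the section $\bar v$ attached to $v \in \mathbb{C}^3 \setminus \{0\}$ vanishes at $[x] \in \mathbb{P}^2$ precisely when $v \in \mathbb{C} \cdot x$, i.e.\ at the single point $p := [v]$. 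The same Euler computation shows that on $\mathbb{P}^2 \setminus \{p\}$ the direction of $\bar v$ at $[x]$ is $v \mod x \in T_{[x]}\mathbb{P}^2$, which is exactly the tangent line at $[x]$ to the projective line $\langle [x], p\rangle$. Applying this to each $\widetilde{\mathcal{N}}_t$ produces a well-defined point $\sigma(t) \in \mathbb{P}^2_t$.

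The final step is to verify that $t \mapsto \sigma(t)$ assembles into a rational section of $\phi$ on a dense open $\mathcal{U} \subset \mathbb{P}^3$, and that properties (i)--(iii) follow. Algebraicity of $\sigma$ is a consequence of the construction, which is polynomial in the coefficients of the section defining $\widetilde{\mathcal{N}}_t$; properties (i), (ii), (iii) are then immediate from the pencil-of-lines description, since $\widetilde{\mathcal{N}}_t \hookrightarrow T\mathbb{P}^2_t$ is an inclusion of vector bundles exactly on $\mathbb{P}^2_t \setminus \{\sigma(t)\}$, with fiber at $x$ equal to the tangent direction to $\langle x, \sigma(t)\rangle$, whose integral leaves are the corresponding affine lines. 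The main subtlety I anticipate is the passage between the meromorphic data of $\mathcal{N}$ on $\mathcal{X}_0$ and its fiberwise reflexive hulls $\widetilde{\mathcal{N}}_t$: after shrinking $\mathcal{U}$ one must ensure that these hulls glue into a coherent meromorphic subsheaf on $(\pi^{\alpha_2}_0)^{-1}(\mathcal{U})$, so that the singular locus of $\mathcal{N}$ in this preimage is precisely $\sigma(\mathcal{U})$; this should come from flatness of the relevant sheaves over $\mathbb{P}^3$ and the semicontinuity of fiberwise invariants of the defining section of $T\mathbb{P}^2(-1)$.
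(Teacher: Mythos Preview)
Your argument is correct and takes a genuinely different route from the paper's. The paper works with the family $\mathcal{K}^{\alpha_1}(\mathcal{X}_0)$: for a general $[C_1]$ it uses the splitting $\mathcal{N}|_{C_1}=\mathcal{O}$ together with $\mathcal{V}|_{\pi^{\alpha_2}_0(C_1)}=\mathcal{O}(2)^2\oplus\mathcal{O}$ (Propositions \ref{prop. splitting types along C1 in A3 case} and \ref{prop. V =O(2, 2, 0) along C1 in A3 case}) to see that along $C_1$ the fiber of $\mathcal{N}$ is the tangent direction to the surface $\mathbb{P}(\mathcal{O}(2)^2)$, hence the leaf closures of $\mathcal{N}$ in $\mathbb{P}^2_t$ are lines; it then shows by an intersection computation in $(\mathbb{P}^2_t)^*$ that this family of lines is itself a line in the dual plane, producing the point $\sigma(t)$. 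You instead restrict to a single fiber $\mathbb{P}^2_t$, use only $\mathcal{N}|_{C_2}=\mathcal{O}(1)$ from Proposition \ref{prop. spliting along C2 (A3 case)} to pin down the saturated hull as $\mathcal{O}_{\mathbb{P}^2}(1)\hookrightarrow T\mathbb{P}^2$, and then invoke the Euler sequence to classify such inclusions by a point. Your approach is more self-contained and avoids the $C_1$-splitting computations; the paper's approach, on the other hand, already exhibits the link between $\mathcal{N}$ and the $\mathcal{O}(2)^2$-subbundle of $\mathcal{V}$ along lines, which feeds directly into the subsequent analysis of $\mathcal{K}^{\alpha_1}(\mathcal{X}_0/\mathbb{P}^3)$ and of $\mathcal{L}_\sigma$.

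One remark on the subtlety you flag at the end: rather than gluing fiberwise reflexive hulls, it is cleaner to take the global saturation of $\mathcal{N}$ inside $T^\phi$ on $\mathcal{X}_0$; on the smooth variety $\mathcal{X}_0$ this is automatically a line bundle, and for a general $t$ its restriction to $\mathbb{P}^2_t$ is again saturated in $T\mathbb{P}^2_t$, which immediately gives you a well-defined algebraic $\sigma$ and removes the need for a separate flatness discussion.
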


\begin{proof}
Take $[C_1]\in\K^{\alpha_1}(\mcX_0)$ general. Then $\pi^{\alpha_2}_0(C_1)$ is a line in $\mbP^3$ and $\mcV|_{\pi^{\alpha_2}_0(C_1)}=\mcO(2)^2\oplus\mcO$. The curve $C_1$ is identified with $\mbP(\mcL(C_1))\subset\mbP(\mcV|_{\pi^{\alpha_2}_0(C_1)})=(\pi^{\alpha_2}_0)^{-1}(\pi^{\alpha_2}_0(C_1))$, where $\mcL(C_1)\cong\mcO_{\mbP^1}(2)\subset\mcV|_{\pi^{\alpha_2}_0(C_1)}$ is as in Proposition \ref{prop. 19 A3 case}$(ii)$. We know $\mcN\subset\msD^{\alpha_2}$, $\mcN|_{C_1}=\mcO_{\mbP^1}$ and
$\mcO(-2)\oplus\mcO=\msD^{\alpha_2}|_{C_1}\cong\mcL(C_1)^*\otimes (\mcV|_{\pi^{\alpha_2}_0(C_1)}/\mcL(C_1))$.
It follows that $\mcN|_{C_1}=\bigcup\limits_{x\in C_1}T_x\mbP(\mcV|^+_{\pi^{\alpha_2}_0(x)})$, where $\mcV|^+_{\pi^{\alpha_2}_0(C_1)}=\mcO(2)^2\subset\mcV|_{\pi^{\alpha_2}_0(C_1)}=\mcO(2)^2\oplus\mcO$ and $T_{C_1}\mbP(\mcV|^+_{\pi^{\alpha_2}_0(C_1)})$ is the relative tangent bundle of $\mbP(\mcV|^+_{\pi^{\alpha_2}_0(C_1)})\rightarrow\pi^{\alpha_2}_0(C_1)$ along $C_1\subset\mbP(\mcV|^+_{\pi^{\alpha_2}_0(C_1)})$. In other words, at any point $x\in C_1$, $\mcN_x=T_x\mbP(\mcO_{\pi^{\alpha_2}_0(C_1)}(2)^2|_{\pi^{\alpha_2}_0(x)})$, where $\mcO_{\pi^{\alpha_2}_0(C_1)}(2)^2|_{\pi^{\alpha_2}_0(x)}\subset\mcV_{\pi^{\alpha_2}_0(x)}$ is the fiber of $\mcO(2)^2\subset\mcV|_{\pi^{\alpha_2}_0(C_1)}$ at the point $\pi^{\alpha_2}_0(x)\in\pi^{\alpha_2}_0(C_1)$.

Note that $\mbP_{\pi^{\alpha_2}_0(C_1)}(\mcO(2)^2)\cong\mbP^1\times \pi^{\alpha_2}_0(C_1)\cong\mbP^1$. It follows that given any $x\in C_1$ and any $y\in\mbP(\mcO_{\pi^{\alpha_2}_0(C_1)}(2)^2|_{\pi^{\alpha_2}_0(x)})$ lying in the regular locus of $\mcN$, there exists $[C_y]\in\K^{\alpha_1}_y(\mcX_0)$ such that $\pi^{\alpha_2}_0(C_y)=\pi^{\alpha_2}_0(C_1)$ and $\mcN_y=T_y\mbP(\mcO_{\pi^{\alpha_2}_0(C_1)}(2)^2|_{\pi^{\alpha_2}_0(x)})$. Hence, the closure of the leaf at $x\in C_1\subset\mcX_0$ is the line $l_x=\mbP(\mcO_{\pi^{\alpha_2}_0(C_1)}(2)^2_{\pi^{\alpha_2}_0(x)})$.

Take $t\in\mbP^3$ general and denote by $\mbP^2_t:=(\pi^{\alpha_2}_0)^{-1}(t)\cong\mbP^2\subset\mcX_0$. Let $A\subset (\mbP^2_t)^*$ be the closure of the family of lines $l_x:=\mbP(\mcO_{\pi^{\alpha_2}_0(C_x)}(2)^2|_{\pi^{\alpha_2}_0(x)})$, where $x$ runs over the set of general points on $\mbP^2_t$ such that $\K^{\alpha_1}_x(\mcX_0)$ consists of a unique element $[C_x]$ and $\mcN$ is holomorphic at $x$. For a general point $x\in\mbP^2_t$, $E_x:=\{[l]\in(\mbP^2_t)^*\}$ is a line in $(\mbP^2_t)^*$ and $E_x\cap A$ consist of a single point, namely $[l_x]$, in $(\mbP^2_t)^*$. Since $E_x$ could be a general line in $(\mbP^2_t)^*$, the intersection number $(E_x\cdot A)=1$. It follows that $A$ is a line in $(\mbP^2_t)^*$ and there exists a unique point $\sigma(t)\in\mbP^2_t$ such that $A=\{[l]\in(\mbP^2_t)^*\mid \sigma(t)\in l\}$.

It turns out that $\mcN$ is well-defined on $\mbP^2_t\setminus\{\sigma(t)\}$, and at any $x\in\mbP^2_t\setminus\{\sigma(t)\}$, the line $\langle x, \sigma(t)\rangle$ is the leaf closure of $\mcN$ at $x$. The conclusion follows.
\end{proof}

\subsubsection{A subset of family of lines on $\mbP^3$}\label{subsubsection. a subset of family of lines on P3}

\begin{nota}
For $t\in\mbP^3$ general, denote by $\K^{\alpha_1}_t(\mcX_0/\mbP^3)$ the Zariski closure of
\begin{eqnarray*}
\{[\pi^{\alpha_2}_0(C_x)]\in\mbG(1, \mbP^3)\mid x\in(\pi^{\alpha_2}_0)^{-1}(t) \mbox{ general }, \K^{\alpha_1}_x(\mcX_0)=\{[C_x]\}\}
\end{eqnarray*}
in $\mbG(1, \mbP^3)$, and set
$$\mcC^{\alpha_1}_t(\mcX_0/\mbP^3):=\bigcup\limits_{[l]\in\K^{\alpha_1}_t(\mcX_0/\mbP^3)}\mbP(T_tl)\subset\mbP(T_t\mbP^3).$$
Here $\mbG(1, \mbP^3)$ is the family of lines in $\mbP^3$.
Denote by
\begin{eqnarray*}
&& \K^{\alpha_1}(\mcX_0/\mbP^3):= \mbox{ Zariski closure of } \bigcup\limits_{t\in\mbP^3\mbox{ general}}\K^{\alpha_1}_t(\mcX_0/\mbP^3) \mbox{ in } \mbG(1, \mbP^3), \\
&& \mcC^{\alpha_1}(\mcX_0/\mbP^3):= \mbox{ Zariski closure of } \bigcup\limits_{t\in\mbP^3\mbox{ general}}\mcC^{\alpha_1}_t(\mcX_0/\mbP^3) \mbox{ in } \mbP(T\mbP^3).
\end{eqnarray*}
Let $U^{\alpha_1}(\mcX_0/\mbP^3)$ be the inverse image of $\K^{\alpha_1}(\mcX_0/\mbP^3)$ under the natural morphism $F(1, 2; \C^4)\rightarrow\mbG(1, \mbP^3)\supset\K^{\alpha_1}(\mcX_0/\mbP^3)$.
\end{nota}

\begin{lem}\label{lem. 23 A3 case}
Take $t\in\mbP^3$ general. Then $\K^{\alpha_1}_t(\mcX_0/\mbP^3)$ is an irreducible rational curve. Take any $[l]\in\K^{\alpha_1}_t(\mcX_0/\mbP^3)$. There exists $[C]\in\K^{\alpha_1}_{\sigma(t)}(\mcX_0)$ such that $\pi^{\alpha_2}_0(C)=l$.
\end{lem}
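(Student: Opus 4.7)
The plan is to identify the rational map
\begin{equation*}
\psi_t \colon \mbP^2_t \dashrightarrow \K^{\alpha_1}_t(\mcX_0/\mbP^3), \quad x \mapsto [\pi^{\alpha_2}_0(C_x)],
\end{equation*}
(defined where $\K^{\alpha_1}_x(\mcX_0) = \{[C_x]\}$) with the linear projection of $\mbP^2_t$ away from $\sigma(t)$. For general $x$, Proposition \ref{prop. V =O(2, 2, 0) along C1 in A3 case} gives $\mcV|_l = \mcO(2)^2 \oplus \mcO$ with $l := \pi^{\alpha_2}_0(C_x)$, and $C_x$ is one member of the $\mbP^1$-family of degree-$2$ sections of $\phi$ over $l$ arising from the rank-one subbundles of the distinguished subbundle $\mcO(2)^2 \subset \mcV|_l$. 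These sections intersect $\mbP^2_t$ in all the points of the line $L_l := \mbP(\mcO(2)^2|_t) \subset \mbP^2_t$, and by the proof of Corollary \ref{cor. 20 A3 case} the line $L_l$ coincides with the leaf closure $\langle x, \sigma(t)\rangle$ of $\mcN$ through $x$.

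Consequently $\psi_t$ is constant on each leaf of $\mcN$ in $\mbP^2_t$ and factors through the projection from $\sigma(t)$,
\begin{equation*}
\mbP^2_t \setminus \{\sigma(t)\} \longrightarrow \mbP(T_{\sigma(t)}\mbP^2_t) \cong \mbP^1 \longrightarrow \mbG(1, \mbP^3).
\end{equation*}
The second arrow is non-constant since different leaves of $\mcN$ recover different subbundles $\mcO(2)^2 \subset \mcV|_l$ and hence different lines $l$; therefore $\K^{\alpha_1}_t(\mcX_0/\mbP^3)$, being the Zariski closure of the image, is an irreducible rational curve.

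For the second assertion, consider first $[l] \in \K^{\alpha_1}_t(\mcX_0/\mbP^3)$ with $\mcV|_l = \mcO(2)^2 \oplus \mcO$. Since every leaf of $\mcN$ in $\mbP^2_t$ passes through $\sigma(t)$, we have $\sigma(t) \in L_l$, so $\sigma(t)$ specifies a $1$-dimensional subspace $W \subset \mcO(2)^2|_t$. This subspace extends uniquely to a degree-$2$ line subbundle $\mcL \subset \mcO(2)^2 \subset \mcV|_l$ with $\mcL|_t = W$, and the associated section $C := \mbP(\mcL) \subset \mcX_0$ satisfies $\pi^{\alpha_2}_0(C) = l$, contains $\sigma(t)$, and lies in $\K^{\alpha_1}(\mcX_0)$ by Proposition \ref{prop. 19 A3 case}. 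A possibly degenerate $[l]$ with $\mcV|_l = \mcO(2) \oplus \mcO(1)^2$ is handled by a limiting argument: the evaluation morphism sending $[C] \in \K^{\alpha_1}(\mcX_0)$ with $\sigma(t) \in C$ to $[\pi^{\alpha_2}_0(C)]$ is proper, so its image is closed in $\mbG(1, \mbP^3)$, contains the dense open subset of $\K^{\alpha_1}_t(\mcX_0/\mbP^3)$ already treated, and therefore contains all of it.

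The principal obstacle is showing that the assignment $l \mapsto L_l$ is injective, so that $\psi_t$ does not collapse to a point; this is ensured by the explicit identification $\mcN_x = T_x \mbP(\mcO_{l}(2)^2|_{\pi^{\alpha_2}_0(x)})$ recorded in the proof of Corollary \ref{cor. 20 A3 case}, which allows the line $l \subset \mbP^3$ and its special splitting to be recovered from $\mcN$ along a single fiber $\mbP^2_t$.
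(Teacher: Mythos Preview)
Your proof is correct and follows essentially the same strategy as the paper: both arguments identify $\psi_t$ with the linear projection of $\mbP^2_t$ from $\sigma(t)$ via the splitting $\mcV|_l=\mcO(2)^2\oplus\mcO$ and the description of the leaves of $\mcN$ from Corollary~\ref{cor. 20 A3 case}, and both produce the curve $C$ through $\sigma(t)$ as the unique degree-$2$ section in $\mbP(\mcO(2)^2|_l)$ passing through $\sigma(t)$. The only cosmetic difference is that the paper packages the closure step by extending two injective rational maps $\xi,\eta$ from the pencil $\mbP^1\cong\{[l]\in(\mbP^2_t)^*\mid \sigma(t)\in l\}$ to morphisms, whereas you phrase the same thing as a properness argument; and note that in your final paragraph the relevant injectivity is that of $L\mapsto l$ (equivalently, the well-definedness of $l\mapsto L_l$), not the direction you wrote.
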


\begin{proof}
Take $t\in\mbP^3$ general and $x\in\mbP^2_t:=(\pi^{\alpha_2}_0)^{-1}(t)$ general. Then $\K^{\alpha_1}_x$ consists of a single element, written as $[C_x]$. Furthermore, $C_x\cong\mbP^1$ and $\pi^{\alpha_2}_0$ sends $C_x$ biholomorphically onto a line in $\mbP^3$. Since $\mcV|_{\pi^{\alpha_2}_0(C_x)}=\mcO(2)^2\oplus\mcO$ and the line $\langle x, \sigma(t)\rangle$ in $\mbP^2_t$ coincides with the fiber $\mbP(\mcO_{\pi^{\alpha_2}_0(C_x)}(2)^2|_t)$, there exists a unique $[C_{t, x}]\in\K^{\alpha_1}_{\sigma(t)}(\mcX_0)$ such that $\pi^{\alpha_2}_0(C_{t, x})=\pi^{\alpha_2}_0(C_x)$. Take $y\in\mbP^2_t\setminus\langle x, \sigma(x)\rangle$ general. Then the fact
\begin{eqnarray*}
\mbP(\mcO_{\pi^{\alpha_2}_0(C_x)}(2)^2|_t)\cap\mbP(\mcO_{\pi^{\alpha_2}_0(C_y)}(2)^2|_t)=\langle x, \sigma(t)\rangle\cap\langle y, \sigma(t)\rangle=\{\sigma(t)\}
\end{eqnarray*}
implies that $\pi^{\alpha_2}_0(C_x)\neq\pi^{\alpha_2}_0(C_y)$ (and hence $C_{t, x}\neq C_{t, y}$). This induces injective rational maps (hence injective morphisms)
\begin{eqnarray*}
\xi: & \mbP^1\cong\{[l]\in(\mbP^2_t)^*\mid \sigma(t)\in l\}\dashrightarrow\K^{\alpha_1}_{\sigma(t)}(\mcX_0) \nonumber\\
& \langle x, \sigma(t)\rangle \mapsto [C_{t, x}], \label{eqn. map from P2(t) to K(alpha1)(sigma(t))} \\
\eta: & \mbP^1\cong\{[l]\in(\mbP^2_t)^*\mid \sigma(t)\in l\}\dashrightarrow\K^{\alpha_1}_t(\mcX_0/\mbP^3) \nonumber\\
& \langle x, \sigma(t)\rangle \mapsto [\pi^{\alpha_2}_0(C_x)]. \nonumber
\end{eqnarray*}
By definition $\K^{\alpha_1}_t(\mcX_0/\mbP^3)$ is the closure of the image of $\eta$. Then the conclusion follows immediately from these morphisms $\xi$ and $\eta$.
\end{proof}

The following can also be deduced from the proof of Lemma \ref{lem. 23 A3 case}.

\begin{lem}\label{lem. P2t map to Kt(X0, P3) by psi in A3 case}
Take $t \in \mathbb{P}^3$ general. Denote by $\mathbb{P}_{t}^{2} := (\pi^{\alpha_2}_0)^{-1}(t) \subset \mathcal{X}_0$. Define
\begin{eqnarray*}
 \psi: &&\mathbb{P}_{t}^{2} \dashrightarrow \mathcal{K}_{t}^{\alpha_1}(\mcX_0/\mbP^3) \subset \mbG(1, \mbP^3) \\
 && x \longmapsto [\pi^{\alpha_2}_0 (C_x)],
\end{eqnarray*}
where $x \in \mathbb{P}_{t}^{2}$ general and $[C_x]$ is the unique element of $\mathcal{K}_{x}^{\alpha_1} (\mathcal{X}_0)$.  Then $\psi$ coincides with the linear projection of $\mathbb{P}_{t}^{2}$ with center $\sigma(t)$. In other words, for $x, y \in \text{Dom}(\psi)$, $\psi(x) = \psi(y)$ if and only if $\langle x, \sigma(t) \rangle = \langle y, \sigma(t) \rangle$.
\end{lem}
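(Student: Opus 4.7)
The strategy is to identify both equivalence relations ``$\psi(x)=\psi(y)$'' and ``$\langle x,\sigma(t)\rangle = \langle y,\sigma(t)\rangle$'' with the same geometric condition on the surface $\mathbb{P}^2_t$, using the explicit splitting type of $\mathcal{V}$ along $\pi^{\alpha_2}_0(C_x)$ already established in Proposition \ref{prop. 19 A3 case} and in the proof of Lemma \ref{lem. 23 A3 case}. The key piece of structure I would exploit is the following: for general $x\in\mathbb{P}^2_t$, the restriction $\mathcal{V}|_{\pi^{\alpha_2}_0(C_x)}\cong\mathcal{O}(2)^2\oplus\mathcal{O}$ contains a canonical rank-two maximal-degree subbundle $\mathcal{O}(2)^2$, and the line $\langle x,\sigma(t)\rangle\subset\mathbb{P}^2_t$ is precisely the fiber $\mathbb{P}(\mathcal{O}(2)^2|_t)$. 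This identification was already obtained inside the proofs of Corollary \ref{cor. 20 A3 case} and Lemma \ref{lem. 23 A3 case}, so I would simply cite it.

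First I would prove the ``only if'' direction. Suppose $x,y\in\mathrm{Dom}(\psi)$ with $\psi(x)=\psi(y)$, so $\pi^{\alpha_2}_0(C_x)=\pi^{\alpha_2}_0(C_y)=:L\subset\mathbb{P}^3$. By Proposition \ref{prop. 19 A3 case}(ii), both $C_x$ and $C_y$ are sections of $\phi:\mathbb{P}(\mathcal{V}|_L)\to L$ of maximal degree $2$, hence their associated line subbundles $\mathcal{L}(C_x),\mathcal{L}(C_y)\cong\mathcal{O}(2)$ must be contained in the unique rank-$2$ maximal-degree subbundle $\mathcal{O}(2)^2\subset\mathcal{V}|_L$. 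Taking fibers at $t$, the points $x=\mathcal{L}(C_x)_t$ and $y=\mathcal{L}(C_y)_t$ both lie in $\mathbb{P}(\mathcal{O}(2)^2|_t)=\langle x,\sigma(t)\rangle$, whence $\langle x,\sigma(t)\rangle = \langle y,\sigma(t)\rangle$.

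Next I would prove the ``if'' direction. Suppose $\langle x,\sigma(t)\rangle=\langle y,\sigma(t)\rangle=:\ell$; then $\ell=\mathbb{P}(\mathcal{O}_{\pi^{\alpha_2}_0(C_x)}(2)^2|_t)$ inside $\mathbb{P}^2_t=\mathbb{P}(\mathcal{V}|_t)$. Since $y\in\mathbb{P}(\mathcal{O}(2)^2|_t)$, the family of $\mathcal{O}(2)$-subbundles of $\mathcal{O}(2)^2\subset\mathcal{V}|_{\pi^{\alpha_2}_0(C_x)}$ (parametrized by $\mathbb{P}^1$) produces a degree-$2$ section of $\mathcal{V}|_{\pi^{\alpha_2}_0(C_x)}$ through $y$; projectivizing gives a curve $C'\subset\mathbb{P}(\mathcal{V})=\mathcal{X}_0$ passing through $y$ with $\pi^{\alpha_2}_0(C')=\pi^{\alpha_2}_0(C_x)$. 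A degree count via $(H\cdot\K^{\alpha_1})$ coming from Proposition \ref{prop. divisors intersection theory on family X} and Proposition \ref{prop. 19 A3 case} shows $[C']\in\mathcal{K}^{\alpha_1}(\mathcal{X}_0)$; by uniqueness of $C_y$ we conclude $C_y=C'$ and hence $\psi(y)=[\pi^{\alpha_2}_0(C_x)]=\psi(x)$.

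The main technical point to verify carefully is that the degree-$2$ section through $y$ produced in the second step is genuinely an element of $\mathcal{K}^{\alpha_1}(\mathcal{X}_0)$ and not just a rational curve on $\mathcal{X}_0$; I would handle this either by intersection with $\mathcal{L}^{\alpha_1}$ from Proposition \ref{prop. divisors intersection theory on family X}, or by appealing to the fact that $\mathcal{K}^{\alpha_1}(\mathcal{X}_0)$ is the unique unsplit family of rational curves with those numerical invariants under $\pi^{\alpha_2}_0$. Once both inclusions of equivalence relations are established, $\psi$ factors through the linear projection $\mathbb{P}^2_t\dashrightarrow\mathbb{P}^1$ centered at $\sigma(t)$; since the source and the image $\mathcal{K}^{\alpha_1}_t(\mathcal{X}_0/\mathbb{P}^3)$ both have dimension $1$ (Lemma \ref{lem. 23 A3 case}), the induced map is a birational morphism of $\mathbb{P}^1$'s, hence an isomorphism, which completes the identification.
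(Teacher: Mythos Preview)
Your proposal is correct and follows essentially the same route as the paper, which simply states that the lemma ``can also be deduced from the proof of Lemma~\ref{lem. 23 A3 case}'' without further argument; your reconstruction of that deduction via the identification $\langle x,\sigma(t)\rangle=\mbP(\mcO(2)^2|_t)$ established in Corollary~\ref{cor. 20 A3 case} and Lemma~\ref{lem. 23 A3 case} is exactly what is intended. The technical point you flag---that the constructed degree-$2$ section $C'$ lies in $\K^{\alpha_1}(\mcX_0)$---is handled in the paper's proof of Corollary~\ref{cor. 20 A3 case} by observing that the $\mcO(2)$-sections of $\mcV|_L$ form a connected $\mbP^1$-family containing $C_x$, hence lie in the same irreducible component of the Chow scheme; your second suggested resolution amounts to the same thing.
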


\begin{cons}\label{cons. 25 A3 case}
Take $x\in\mcX_0$ general. Recall two elementary Mori contractions:
\begin{eqnarray*}
& \pi^{\alpha_1}_0: & \mcX_0\rightarrow\mcX^{\alpha_1}_0, \mbox{ and } \\
& \pi^{\alpha_2}_0: & \mcX_0=\mbP(\mcV)\rightarrow\mcX^{\alpha_2}_0=\mbP^3
\end{eqnarray*}
Set $\Sigma_0(x):=\{x\}$. For each $k\geq 0$ let $\Sigma_{2k+1}(x)$ be the unique irreducible component of $(\pi^{\alpha_2}_0)^{-1}(\pi^{\alpha_2}_0(\Sigma_{2k}(x)))$ dominating $\pi^{\alpha_2}_0(\Sigma_{2k}(x))$, and $\Sigma_{2k+2}(x)$ be the unique irreducible component of $(\pi^{\alpha_1}_0)^{-1}(\pi^{\alpha_1}_0(\Sigma_{2k+1}(x)))$ dominating $\pi^{\alpha_1}_0(\Sigma_{2k+1}(x))$.
\end{cons}

\begin{lem}\label{lem. 26 A3 case}
In setting of Construction \ref{cons. 25 A3 case}, we have
\begin{eqnarray*}
\dim\Sigma_k(x)=k+1, \mbox{ where } 1\leq k\leq 4.
\end{eqnarray*}
In particular, $\Sigma_4(x)=\mcX_0$.
\end{lem}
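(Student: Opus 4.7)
The plan is to verify the four dimension equalities separately, using $\dim\mcX_0=5$, the $\mbP^2$-bundle structure $\pi^{\alpha_2}_0:\mcX_0=\mbP(\mcV)\to\mbP^3$ (Proposition \ref{prop. X=P(V) in A3 case}), and the fact that the generic $\alpha_1$-fiber is $\mbP^1\cong\bS^{\alpha_1}$ (Proposition \ref{prop. Fax=Sa for x in X0 general}).

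First, $\Sigma_1(x)=\mbP^2_t$ with $t:=\pi^{\alpha_2}_0(x)$, so $\dim\Sigma_1(x)=2$. For $\Sigma_2(x)$, I would observe that $\pi^{\alpha_1}_0|_{\Sigma_1(x)}$ is finite: any curve in $\Sigma_1(x)$ contracted by $\pi^{\alpha_1}_0$ would be an $\alpha_1$-curve, but by Proposition \ref{prop. X=P(V) in A3 case}(iv) such a curve projects to a line in $\mbP^3$ under $\pi^{\alpha_2}_0$ and hence cannot lie in the fiber $\mbP^2_t$. Thus $\dim\pi^{\alpha_1}_0(\Sigma_1(x))=2$, and since $\Sigma_2(x)$ is swept by the 1-dimensional $\alpha_1$-fibers over this 2-dimensional base, $\dim\Sigma_2(x)=3$.

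For $\Sigma_3(x)$, the image $\pi^{\alpha_2}_0(\Sigma_2(x))$ is the union of the lines $\pi^{\alpha_2}_0(F^{\alpha_1}_y)$ for $y\in\mbP^2_t$, each of which passes through $t$. By Lemma \ref{lem. 23 A3 case} and the refinement in Lemma \ref{lem. P2t map to Kt(X0, P3) by psi in A3 case}, the line $\pi^{\alpha_2}_0(F^{\alpha_1}_y)$ depends only on the line $\langle y,\sigma(t)\rangle$ in $\mbP^2_t$, so the set of such lines is parametrized by $\K^{\alpha_1}_t(\mcX_0/\mbP^3)\cong\mbP^1$. Their union is therefore a $2$-dimensional cone in $\mbP^3$ with vertex $t$, giving $\dim\pi^{\alpha_2}_0(\Sigma_2(x))=2$ and $\dim\Sigma_3(x)=2+2=4$.

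The main obstacle is the last step. Suppose for contradiction $\dim\Sigma_4(x)<5$. Then $\dim\pi^{\alpha_1}_0(\Sigma_3(x))\leq 3$, and combined with $\dim\Sigma_3(x)=4$ and the $1$-dimensionality of the generic $\alpha_1$-fiber this forces $\dim\pi^{\alpha_1}_0(\Sigma_3(x))=3$. Hence the generic fiber of $\pi^{\alpha_1}_0|_{\Sigma_3(x)}$ is $1$-dimensional; since the ambient $\alpha_1$-fiber through a general point of $\Sigma_3(x)$ is an irreducible $\mbP^1$, it must equal this fiber, i.e.\ $F^{\alpha_1}_y\subset\Sigma_3(x)$ at a general $y\in\Sigma_3(x)$. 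At such a smooth point $y$, $T_y\Sigma_3(x)$ contains $\msD^{\alpha_1}_y$, and since $\Sigma_3(x)$ is a union of $\pi^{\alpha_2}_0$-fibers by construction it also contains $\msD^{\alpha_2}_y=T^{\pi^{\alpha_2}_0}_y$, so $T_y\Sigma_3(x)\supset\mcD_y$. A standard bracket-closure argument (local sections of $\mcD$ can be modified to be tangent to $\Sigma_3(x)$ without altering their restriction, and brackets of such vector fields remain tangent to $\Sigma_3(x)$) then gives $T_y\Sigma_3(x)\supset\msD^{-k}_y$ for all $k\geq 1$. By Corollary \ref{cor. A3 degenerate symb algebra}(iv), $\msD^{-3}=T\mcX_0$, whence $T_y\Sigma_3(x)=T_y\mcX_0$, contradicting $\dim\Sigma_3(x)=4<5$. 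Therefore $\dim\Sigma_4(x)=5$, and $\Sigma_4(x)=\mcX_0$ by irreducibility of $\mcX_0$.
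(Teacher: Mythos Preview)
Your argument is correct and reaches the same conclusion, but the organization differs from the paper's. The paper proves a single uniform claim: for every $k\geq 1$, either $\Sigma_k(x)=\mcX_0$ or $\dim\Sigma_{k+1}(x)\geq\dim\Sigma_k(x)+1$; this is obtained directly from Proposition~\ref{prop. minimal rational chain connected} (if the dimension stagnates then $\Sigma_{k+1}=\Sigma_k$ contains both $\K^{\alpha_1}$- and $\K^{\alpha_2}$-curves through its general points, hence equals $\mcX_0$). Combined with the easy upper bounds $\dim\Sigma_{2i}\leq\dim\Sigma_{2i-1}+1$, $\dim\Sigma_{2i+1}\leq\dim\pi^{\alpha_2}_0(\Sigma_{2i})+2$, and the observation $\dim\pi^{\alpha_2}_0(\Sigma_2(x))=2$ from Lemma~\ref{lem. 23 A3 case}, all four equalities follow at once.

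Your steps for $\Sigma_1,\Sigma_2,\Sigma_3$ amount to the same computations (and use the same Lemma~\ref{lem. 23 A3 case}). The genuine divergence is in the final step: instead of the curve-theoretic chain-connectedness of Proposition~\ref{prop. minimal rational chain connected}, you invoke the bracket-generating property $\msD^{-3}=T\mcX_0$ from Corollary~\ref{cor. A3 degenerate symb algebra} to rule out a $4$-dimensional $\Sigma_3(x)$ tangent to $\msD$. This is valid but heavier: Corollary~\ref{cor. A3 degenerate symb algebra} rests on the full symbol-algebra analysis under Assumption~\ref{assu. A3 P_min degeneration}, whereas the paper's argument needs only the elementary Proposition~\ref{prop. minimal rational chain connected}. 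One small point worth making explicit in your write-up: the reason a general $y\in\Sigma_3(x)$ has $F^{\alpha_1}_y\cong\mbP^1$ is simply that $x\in\Sigma_3(x)$ is itself general in $\mcX_0$, so $\Sigma_3(x)$ cannot be contained in the fixed closed locus where the $\alpha_1$-fibers jump.
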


\begin{proof}
By construction, $\Sigma_1(x)=(\pi^{\alpha_2}_0)^{-1}(\pi^{\alpha_2}_0(x))\cong\mbP^2$, which has dimension 2. Now we claim that for each $k\geq 1$, either $\Sigma_k(x)=\mcX_0$ or $\dim\Sigma_{k+1}(x)\geq\dim\Sigma_k(x)+1$.

Suppose $\dim\Sigma_{k+1}(x)=\dim \Sigma_k(x)$ for some $k\geq 1$. Then $\Sigma_{k+1}(x)=\Sigma_k(x)$. By construction of $\Sigma_k(x)$ and $\Sigma_{k+1}(x)$, $C_y^1\subset\Sigma_k(x)$ and $C_y^2\subset\Sigma_k(x)$ for $y\in\Sigma_k(x)$ general, $[C_y^1]\in\K^{\alpha_1}_y(\mcX_0)$ and $[C_y^2]\in\K^{\alpha_2}_y(\mcX_0)$ general. By Proposition \ref{prop. minimal rational chain connected}, we have $\Sigma_k(x)=\mcX_0$, and the claim holds.

By general choice of $x\in\mcX_0$ and the construction of $\Sigma_k(x)$, for each $i\geq 1$ we have
\begin{eqnarray*}
&& \dim \Sigma_{2i+1}(x)\leq \dim\pi^{\alpha_2}_0(\Sigma_{2i}(x))+2\leq\dim\Sigma_{2i}(x)+2, \\
&& \dim \Sigma_{2i}(x)\leq \dim\pi^{\alpha_1}_0(\Sigma_{2i-1}(x))+1\leq\dim\Sigma_{2i-1}(x)+1.
\end{eqnarray*}
Note that $\pi^{\alpha_2}_0(\Sigma_2(x))=\bigcup\limits_{[l]\in\K^{\alpha_1}_{\pi^{\alpha_2}_0(x)}(\mcX_0/\mbP^3)}l$, which has dimension 2 by Lemma \ref{lem. 23 A3 case}. Then the conclusion follows from the inequalities above.
\end{proof}

\begin{lem}\label{lem. 24 A3 case}
Take $t\in\mbP^3$ general, and set
\begin{eqnarray*}
&& \Lambda_1(t):=\bigcup\limits_{[l]\in\K^{\alpha_1}_t(\mcX_0/\mbP^3)}l\subset\mbP^3, \\
&& \Lambda_2(t):=\mbox{ Zariski closure of } \bigcup\limits_{[l]\in\K^{g}_{\Lambda_1(t)}} l \mbox{ in } \mbP^3,
\end{eqnarray*}
where we define
\begin{eqnarray*}
\K^{g}_{\Lambda_1(t)}:=\bigcup\limits_{z\in\Lambda_1(t)\mbox{ general}}\K^{\alpha_1}_z(\mcX_0/\mbP^3).
\end{eqnarray*}
Then $\Lambda_2(t)=\mbP^3$.
\end{lem}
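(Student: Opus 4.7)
Proof plan: My strategy is to apply Construction \ref{cons. 25 A3 case} to a general point $x\in\mbP^2_t:=(\pi^{\alpha_2}_0)^{-1}(t)$, track the $\pi^{\alpha_2}_0$-images of the ascending varieties $\Sigma_k(x)$, and compare with $\Lambda_1(t)$ and $\Lambda_2(t)$. The key leverage is Lemma \ref{lem. 26 A3 case}, which guarantees $\Sigma_4(x)=\mcX_0$.

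First I would compute the low-order pieces. By definition $\Sigma_1(x)=\mbP^2_t$, so $\pi^{\alpha_2}_0(\Sigma_1(x))=\{t\}$. For $y\in\mbP^2_t$ general, $\K^{\alpha_1}_y(\mcX_0)$ consists of a single curve $C_y$, which is contracted by $\pi^{\alpha_1}_0$ and hence lies in $\Sigma_2(x)$. Lemma \ref{lem. P2t map to Kt(X0, P3) by psi in A3 case} says that $\pi^{\alpha_2}_0(C_y)$ exhausts $\K^{\alpha_1}_t(\mcX_0/\mbP^3)$ densely as $y$ varies, so $\pi^{\alpha_2}_0(\Sigma_2(x))=\Lambda_1(t)$. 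Since $\pi^{\alpha_2}_0$ is a $\mbP^2$-bundle (Proposition \ref{prop. X=P(V) in A3 case}) and $\Lambda_1(t)$ is an irreducible surface (a cone over the irreducible rational curve $\K^{\alpha_1}_t(\mcX_0/\mbP^3)$ with vertex $t$, by Lemma \ref{lem. 23 A3 case}), the preimage $(\pi^{\alpha_2}_0)^{-1}(\Lambda_1(t))$ is irreducible of dimension $4$. Combined with $\dim\Sigma_3(x)=4$ from Lemma \ref{lem. 26 A3 case} and the inclusion $\Sigma_3(x)\subseteq(\pi^{\alpha_2}_0)^{-1}(\Lambda_1(t))$, I get $\Sigma_3(x)=(\pi^{\alpha_2}_0)^{-1}(\Lambda_1(t))$.

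Next I would pass to $\Sigma_4(x)=\mcX_0$. Because $\pi^{\alpha_1}_0$ is an elementary Mori contraction of a smooth Fano manifold whose general fibers are the minimal rational curves $[C_y]\in\K^{\alpha_1}$, for general $y\in\Sigma_3(x)$ the full fiber $(\pi^{\alpha_1}_0)^{-1}(\pi^{\alpha_1}_0(y))$ coincides with $C_y$. Hence $\Sigma_4(x)=\overline{\bigcup_{y\in\Sigma_3(x)\text{ general}}C_y}$. For such $y$, the point $z_y:=\pi^{\alpha_2}_0(y)$ is general in $\Lambda_1(t)$, and $\pi^{\alpha_2}_0(C_y)$ is a line through $z_y$ belonging to $\K^{\alpha_1}_{z_y}(\mcX_0/\mbP^3)\subseteq\K^g_{\Lambda_1(t)}$. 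Thus $\pi^{\alpha_2}_0(C_y)\subseteq\Lambda_2(t)$ for general $y$, which yields $\pi^{\alpha_2}_0(\Sigma_4(x))\subseteq\Lambda_2(t)$. Since $\pi^{\alpha_2}_0(\Sigma_4(x))=\pi^{\alpha_2}_0(\mcX_0)=\mbP^3$, the conclusion $\Lambda_2(t)=\mbP^3$ follows.

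The subtle step will be the identification $(\pi^{\alpha_1}_0)^{-1}(\pi^{\alpha_1}_0(y))=C_y$ for general $y\in\Sigma_3(x)$. A priori, only the generic locus of $\mcX_0$ is known to have irreducible $\pi^{\alpha_1}_0$-fibers, and one must check that $\Sigma_3(x)$ is not contained in the complementary bad locus; this holds for generic $t$ because $\Sigma_3(x)=(\pi^{\alpha_2}_0)^{-1}(\Lambda_1(t))$ varies with $t$ and the bad locus is a proper closed subvariety of $\mcX_0$. Should a reducible fiber occur at some $y\in\Sigma_3(x)$, every irreducible component is still a $\K^{\alpha_1}$-curve meeting $C_y$, hence its $\pi^{\alpha_2}_0$-image is a line meeting $\pi^{\alpha_2}_0(C_y)$; a short argument shows these extra lines also land in $\Lambda_2(t)$, preserving the conclusion.
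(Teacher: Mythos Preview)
Your proposal is correct and follows essentially the same route as the paper: choose $x\in\mbP^2_t$ general, use Construction \ref{cons. 25 A3 case} together with Lemma \ref{lem. 26 A3 case} to get $\Sigma_4(x)=\mcX_0$, and identify $\pi^{\alpha_2}_0(\Sigma_{2k}(x))$ with $\Lambda_k(t)$ for $k=1,2$. The paper simply asserts the identity $\pi^{\alpha_2}_0(\Sigma_{2k}(x))=\Lambda_k(t)$ ``by construction'' and concludes; you have spelled out in more detail the intermediate step $\Sigma_3(x)=(\pi^{\alpha_2}_0)^{-1}(\Lambda_1(t))$ and the inclusion $\pi^{\alpha_2}_0(\Sigma_4(x))\subseteq\Lambda_2(t)$, which is exactly what underlies the paper's terse claim.
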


\begin{proof}
Take $x\in\mbP^2_t:=(\pi^{\alpha_2}_0)^{-1}(t)$ general, then by construction we have
\begin{eqnarray*}
\pi^{\alpha_2}(\Sigma_{2k}(x))=\Lambda_k(t), \quad k=1, 2,
\end{eqnarray*}
where $\Sigma_{2k}(x)$ is as in Construction \ref{cons. 25 A3 case}.
By Lemma \ref{lem. 26 A3 case}, $\Sigma_4(x)=\mcX_0$, which implies the conclusion.
\end{proof}

\begin{lem} \label{lem. 21 A3 case}
Let $\mcL_\sigma\subset\mcV$ be the meromorphic line subbundle of $\mcV$ over $\mbP^3$ defining the meromorphic section $\sigma$ of $\pi^{\alpha_2}_0: \mcX_0=\mbP(\mcV)\rightarrow\mbP^3$, and $S_\sigma$ be the singular locus of $\sigma$. Then $\dim S_\sigma\leq 1$ and there exist nonempty Zariski open subsets $U''\subset U'\subset\mbP^3\setminus S_\sigma$ such that

$(i)$ $C_1\subset\mbP(\mcL_\sigma)$ for any $t\in U'$ and any $[C_1]\in\K^{\alpha_1}_{\sigma(t)}(\mcX_0)$;

$(ii)$ given any $t\in U''$ we have $M_2(t)=\mbP(\mcL_\sigma)$, where
\begin{eqnarray*}
&& M_1(t):=\bigcup\limits_{[C]\in\K^{\alpha_1}_{\sigma(t)}(\mcX_0)}C\subset\mbP(\mcL_\sigma), \\
&& M_2(t):=\mbox{ Zariski closure of }\bigcup\limits_{[C]\in\K^{\alpha_1}_{M_1(t)\cap\sigma(U')}} C \subset \mbP(\mcL_\sigma),
\end{eqnarray*}
where we define
\begin{eqnarray*}
\K^{\alpha_1}_{M_1(t)\cap\sigma(U')}:=\bigcup\limits_{x\in M_1(t)\cap\sigma(U')}\K^{\alpha_1}_x(\mcX_0).
\end{eqnarray*}
\end{lem}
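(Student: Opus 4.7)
The plan is to establish the dimension bound and the two assertions in sequence, leaning on the $\K^{\alpha_1}$-equivalence class structure of Proposition \ref{prop. BCD07 equivalence class of quasi-unsplit cycles} and Corollary \ref{C0_12}. Since $\sigma$ is a meromorphic section of the $\mbP^2$-bundle $\pi^{\alpha_2}_0$ and $\mbP^3$ is smooth, the standard fact that meromorphic maps from smooth varieties to projective ones have indeterminacy of codimension at least two yields $\dim S_\sigma\leq 1$ immediately.

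The crux is to identify $\mbP(\mcL_\sigma)$ as an irreducible component of $\mcE^{\alpha_1}_0$. Recall from the construction in Corollary \ref{cor. 20 A3 case} that $\sigma$ arises from a $3$-dimensional irreducible component $Y$ of $\{x\in\mcX_0\mid\dim\K^{\alpha_1}_x(\mcX_0)\geq 1\}$, whose Zariski closure is $\mbP(\mcL_\sigma)$. First I will show $Y\subset\mcE^{\alpha_1}_0$: for each $x\in Y$ the union $S_x:=\bigcup_{[C]\in\K^{\alpha_1}_x(\mcX_0)}C$ is an irreducible $2$-dimensional subvariety in the $\K^{\alpha_1}$-equivalence class of $x$, so that class has dimension at least $2$, strictly exceeding the generic fiber dimension $1$ of $\Psi^{\alpha_1}_0$. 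Together with the bound $\dim\mcE^{\alpha_1}_0\leq\dim\mcX_0-2=3$ from Proposition \ref{prop. BCD07 equivalence class of quasi-unsplit cycles}, this identifies $\mbP(\mcL_\sigma)$ as a top-dimensional irreducible component of $\mcE^{\alpha_1}_0$.

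For $(i)$, I will take $U'$ to be the complement in $\mbP^3\setminus S_\sigma$ of $\pi^{\alpha_2}_0(W)$, where $W$ is the at-most-$2$-dimensional intersection of $\mbP(\mcL_\sigma)$ with the union of the other irreducible components of $\mcE^{\alpha_1}_0$. Given $t\in U'$ and $[C_1]\in\K^{\alpha_1}_{\sigma(t)}(\mcX_0)$, Lemma \ref{lem. 23 A3 case} shows that $\K^{\alpha_1}_{\sigma(t)}(\mcX_0)$ is an irreducible rational curve, so $S_{\sigma(t)}:=\bigcup_{[C]\in\K^{\alpha_1}_{\sigma(t)}(\mcX_0)}C$ is an irreducible $2$-dimensional subvariety of $\mcE^{\alpha_1}_0$ passing through $\sigma(t)$. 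The choice of $U'$ guarantees $\sigma(t)$ lies in no component of $\mcE^{\alpha_1}_0$ other than $\mbP(\mcL_\sigma)$, so irreducibility forces $S_{\sigma(t)}\subset\mbP(\mcL_\sigma)$, and in particular $C_1\subset\mbP(\mcL_\sigma)$.

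Finally, for $(ii)$, I will take $U''\subset U'$ to be a nonempty Zariski open subset on which the conclusion of Lemma \ref{lem. 24 A3 case} applies. The inclusion $M_2(t)\subset\mbP(\mcL_\sigma)$ comes from applying $(i)$ twice: first to obtain $M_1(t)\subset\mbP(\mcL_\sigma)$, and then at each $y=\sigma(t')\in M_1(t)\cap\sigma(U')$ to conclude that every $C\in\K^{\alpha_1}_y(\mcX_0)$ lies in $\mbP(\mcL_\sigma)$. For the reverse inclusion, I will push forward by $\pi^{\alpha_2}_0$: by Lemma \ref{lem. 23 A3 case} the image of $\bigcup_{[C]\in\K^{\alpha_1}_y(\mcX_0)}C$ under $\pi^{\alpha_2}_0$ is $\Lambda_1(t')$, so $\pi^{\alpha_2}_0(M_2(t))=\Lambda_2(t)$, which equals $\mbP^3$ by Lemma \ref{lem. 24 A3 case}; since $\pi^{\alpha_2}_0$ restricts to a birational morphism $\mbP(\mcL_\sigma)\to\mbP^3$, the desired equality $M_2(t)=\mbP(\mcL_\sigma)$ follows. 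The subtlety I anticipate most is ensuring that $\mbP(\mcL_\sigma)$ is the unique component of $\mcE^{\alpha_1}_0$ through a generic $\sigma(t)$, which rests on the sharp dimension bound from Proposition \ref{prop. BCD07 equivalence class of quasi-unsplit cycles}.
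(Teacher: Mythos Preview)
Your overall strategy matches the paper's proof closely: identify $\mbP(\mcL_\sigma)$ as a top-dimensional irreducible component of the exceptional locus $E(\K^{\alpha_1})$ (your $\mcE^{\alpha_1}_0$) via the dimension bound from Proposition~\ref{prop. BCD07 equivalence class of quasi-unsplit cycles}, take $U'$ so that $\sigma(U')$ avoids the other components, and for $(ii)$ push forward to $\mbP^3$ and invoke Lemmas~\ref{lem. 23 A3 case} and~\ref{lem. 24 A3 case}. Two points need repair.

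First, your sentence ``Recall from the construction in Corollary~\ref{cor. 20 A3 case} that $\sigma$ arises from a $3$-dimensional irreducible component $Y$ of $\{x\in\mcX_0\mid\dim\K^{\alpha_1}_x(\mcX_0)\geq 1\}$'' is not something Corollary~\ref{cor. 20 A3 case} provides: there $\sigma$ is constructed from the leaf structure of $\mcN$, not from the jumping locus of $\K^{\alpha_1}$. What you actually need is the inclusion $\mbP(\mcL_\sigma)\subset E(\K^{\alpha_1})$, and this is exactly what the paper proves at this point using Lemma~\ref{lem. 23 A3 case} (which gives $\dim\K^{\alpha_1}_{\sigma(t)}(\mcX_0)\geq 1$ for $t$ general, via the injective morphism $\xi$) together with semicontinuity of fiber dimension to extend to all of $\mbP(\mcL_\sigma)$. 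You should insert this step rather than assume it.

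Second, in your argument for $(i)$ you assert that Lemma~\ref{lem. 23 A3 case} shows $\K^{\alpha_1}_{\sigma(t)}(\mcX_0)$ is an irreducible rational curve. That lemma concerns $\K^{\alpha_1}_t(\mcX_0/\mbP^3)$, not $\K^{\alpha_1}_{\sigma(t)}(\mcX_0)$; the two are related but not a~priori equal, and irreducibility of the latter is not established. Fortunately you do not need it: argue with the individual curve $C_1$ instead of $S_{\sigma(t)}$. Since $C_1$ lies in the $\K^{\alpha_1}$-equivalence class of $\sigma(t)$, it lies in $E(\K^{\alpha_1})$; being irreducible and passing through $\sigma(t)\in U$, it must lie in the unique component $\mbP(\mcL_\sigma)$ through that point. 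This is how the paper handles it.
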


\begin{proof}
Being the singular locus of a meromorphic section, the dimension of $S_\sigma$ is less or equal to $\dim\mbP^3-2=1$. By Lemma \ref{lem. 23 A3 case}, $\dim\K^{\alpha_1}_{\sigma(t)}(\mcX_0)\geq 1$ for $t\in\mbP^3$ general. By semicontinuity of the dimension function, $\dim\K^{\alpha_1}_x(\mcX_0)\geq 1$ for all $x\in\mbP(\mcL_\sigma)$. Hence $\mbP(\mcL_\sigma)\subset E(\K^{\alpha_1})$, where $E(\K^{\alpha_1})\subset\mcX_0$ is the union of $\K^{\alpha_1}(\mcX_0)$-equivalence classes that are of dimension at least two. By Proposition \ref{prop. BCD07 equivalence class of quasi-unsplit cycles}, $E(\K^{\alpha_1})$ is a Zariski closed subset of $\mcX_0$ and $\dim E(\K^{\alpha_1})\leq\dim\mcX_0-2=3$. By dimension reason the variety $\mbP(\mcL_\sigma)$ is an irreducible component of $E(\K^{\alpha_1})$.

Denote by $U$ the nonempty Zariski open subset of $\mbP(\mcL_\sigma)$ such that at any $x\in U$, $\mbP(\mcL_\sigma)$ is the unique irreducible component of $E(\K^{\alpha_1})$ containing $x$. Set $U':=\pi^{\alpha_2}_0(U)\setminus S_\sigma$, then the assertion $(i)$ of Lemma \ref{lem. 21 A3 case} holds.

By Lemma \ref{lem. 23 A3 case}, $\pi^{\alpha_2}_0(M_k(t))=\Lambda_k(t)$ for $k=1, 2$. Then by Lemma \ref{lem. 24 A3 case} $\phi(M_2(t))=\mbP^3$, implying that $\dim M_2(t)\geq 3$. Since $M_2(t)\subset\mbP(\mcL_\sigma)$ by the assertion $(i)$, we have $M_2(t)=\mbP(\mcL_\sigma)$, verifying the assertion $(ii)$.
\end{proof}

\begin{lem}\label{lem. 34 A3 case}
For $t\in\mbP^3$ general, $\mcC^{\alpha_1}_t(\mcX_0/\mbP^3)$ is a line in $\mbP(T_t\mbP^3)$. Furthermore, $\K^{\alpha_1}(\mcX_0/\mbP^3)$ is a hyperplane section of $\mbG(1, \mbP^3)\subset\mbP^5$.
\end{lem}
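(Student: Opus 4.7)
The plan is to descend the meromorphic distribution $\msD^{-2}$ along $\phi: \mcX_0 = \mbP(\mcV) \to \mbP^3$ to a rank-$2$ distribution $\mcE \subset T\mbP^3$, identify $\mcE$ with a contact distribution coming from a symplectic form on $\C^4$, and recognize $\K^{\alpha_1}(\mcX_0/\mbP^3)$ as the family of Lagrangian lines. The condition $[\msD^{\alpha_2}, \msD^{-2}] \subset \msD^{-2}$ from Proposition \ref{prop. W=Da2 (A3 case)}, together with $\msD^{\alpha_2} = T^\phi$ (Proposition \ref{prop. X=P(V) in A3 case}(iii)), implies that $\msD^{-2}/T^\phi \hookrightarrow \phi^\ast T\mbP^3$ descends to such a meromorphic $\mcE$.

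For a general $x \in \phi^{-1}(t)$ with $\K^{\alpha_1}_x(\mcX_0) = \{[C_x]\}$, the line $\msD^{\alpha_1}_x = T_x C_x$ sits inside $\msD^{-2}_x$, so $d\phi$ carries it to $T_t \phi(C_x) \subset \mcE_t$. Hence $\mcC^{\alpha_1}_t(\mcX_0/\mbP^3) \subset \mbP(\mcE_t)$. Since distinct lines through $t$ have distinct tangent directions at $t$, the map $\K^{\alpha_1}_t(\mcX_0/\mbP^3) \to \mcC^{\alpha_1}_t(\mcX_0/\mbP^3)$, $[l] \mapsto [T_t l]$, is injective; combined with Lemma \ref{lem. 23 A3 case}, the image is a $1$-dimensional irreducible subvariety of the line $\mbP(\mcE_t) \cong \mbP^1$, forcing equality and proving the first assertion.

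For the second assertion, the distribution $\mcE$ is non-integrable: otherwise $[\msD^{-2}, \msD^{-2}] \subset \msD^{-2} + \msD^{\alpha_2} = \msD^{-2}$ would force $\msD^{-3} = \msD^{-2}$, contradicting $\rank \msD^{-3} = 5 > 4 = \rank \msD^{-2}$ from Lemma \ref{lem. unique kernel in degenerate A3 case}. Thus $\mcE$ is a meromorphic contact distribution, and from the Euler sequence the isomorphism $H^0(\mbP^3, \Omega^1(2)) \cong \wedge^2(\C^4)^\ast$ shows that every contact structure on $\mbP^3$ is $\mcL^\omega$ for a symplectic $\omega$, so $\mcE$ extends holomorphically to all of $\mbP^3$. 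Every $l = \mbP(L) \in \K^{\alpha_1}(\mcX_0/\mbP^3)$ is tangent to $\mcE$ at each of its points, equivalently $\omega|_L = 0$, and this Lagrangian condition is the single linear equation $\omega(l_1 \wedge l_2) = 0$ in Plücker coordinates. Hence the Lagrangian lines form a hyperplane section of $\mbG(1, \mbP^3) \subset \mbP^5$. Since $\K^{\alpha_1}(\mcX_0/\mbP^3)$ is irreducible of dimension $3$ (the general fiber of $\K^{\alpha_1}(\mcX_0) \to \K^{\alpha_1}(\mcX_0/\mbP^3)$ being $\mbP^1$ by Proposition \ref{prop. V =O(2, 2, 0) along C1 in A3 case}) and is contained in this irreducible $3$-dimensional hyperplane section, the two coincide.

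The main technical step is the descent of $\msD^{-2}$ through $\phi$, which rests crucially on the identification $\mcW = \msD^{\alpha_2}$ in Proposition \ref{prop. W=Da2 (A3 case)}; identifying the resulting contact distribution with $\mcL^\omega$ via $H^0(\mbP^3, \Omega^1(2)) \cong \wedge^2(\C^4)^\ast$ is the other key ingredient, after which the hyperplane-section conclusion follows from the Pl\"{u}cker-linearity of the Lagrangian condition.
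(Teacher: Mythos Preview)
Your proof of the first assertion is correct and matches the paper's argument: descend $\msD^{-2}$ to a rank-$2$ meromorphic distribution $\mcE$ on $\mbP^3$ via Proposition~\ref{prop. W=Da2 (A3 case)}, observe that the lines $\phi(C_x)$ are tangent to $\mcE$, and use Lemma~\ref{lem. 23 A3 case} to conclude $\mcC^{\alpha_1}_t(\mcX_0/\mbP^3)=\mbP(\mcE_t)$.

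For the second assertion your approach diverges from the paper's and contains a gap. You pass from ``$\mcE$ is non-integrable'' to ``$\mcE=\mcL^\omega$ with $\omega$ symplectic,'' but this inference is not justified. Non-integrability only says the Frobenius bracket $\wedge^2\mcE\to T\mbP^3/\mcE$ is generically nonzero; to invoke $H^0(\mbP^3,\Omega^1(2))\cong\wedge^2(\C^4)^*$ you must first know that the (reflexive hull of the) quotient line bundle is $\mcO(2)$, and to conclude $\omega$ is nondegenerate you must further know that $\mcE$ is a sub\emph{bundle}, i.e., that the defining section of $\Omega^1(2)$ is nowhere vanishing. Neither is automatic from non-integrability. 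In fact the nondegeneracy of $\omega$ is precisely the content of the later Lemma~\ref{lem. 49 A3 case}, whose proof uses the divisor $W$; so at this stage of the argument you are asserting more than is yet available. (For the bare hyperplane-section conclusion you actually do not need $\omega$ symplectic, only $\omega\neq 0$; but you still need $\deg(T\mbP^3/\mcE)=2$, which you have not established.)

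The paper's argument for the second assertion is more direct and avoids this issue entirely. It does not identify $\mcE$ with any contact structure. Instead, once $\mcC^{\alpha_1}_t(\mcX_0/\mbP^3)$ is known to be a line in $\mbP(T_t\mbP^3)\cong\mbP^2$ for general $t$, one looks at the incidence diagram
\[
\xymatrix{U^{\alpha_1}(\mcX_0/\mbP^3)\ar@{^(->}[r]\ar[rd] & \mbP(T\mbP^3)\ar[r]\ar[d] & \mbG(1,\mbP^3) \\
& \mbP^3. &}
\]
Since $U^{\alpha_1}(\mcX_0/\mbP^3)$ is the pullback of the divisor $\K^{\alpha_1}(\mcX_0/\mbP^3)$ under $\mbP(T\mbP^3)\to\mbG(1,\mbP^3)$ and meets each $\mbP^2$-fiber over $\mbP^3$ in a line, its intersection number with a line in that fiber is $1$; but such a line maps to a line in $\mbG(1,\mbP^3)$, so $\K^{\alpha_1}(\mcX_0/\mbP^3)\in|\iota^*\mcO_{\mbP^5}(1)|$. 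Linear normality of $\mbG(1,\mbP^3)\subset\mbP^5$ then gives that it is an honest hyperplane section. This computation uses only the first assertion and elementary intersection theory, with no need to analyze $\mcE$ further.
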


\begin{proof}
By Proposition \ref{prop. W=Da2 (A3 case)}, $[\msD^{\alpha_2}, \msD^{-2}]\subset\msD^{-2}$, where $\msD^{-2}$ is the weak derivative of $\msD=\msD^{\alpha_1}+\msD^{\alpha_2}$. It follows that $\mcE:=d\pi^{\alpha_2}_0(\msD^{-2})$ is a meromorphic distribution $\mcE$ on $\mbP^3$ of rank $2$, where $d\pi^{\alpha_2}_0: T(\mcX_0)\rightarrow T(\mbP^3)$ is the tangent map of $\pi^{\alpha_2}_0$. Take a general element $[C_1]\in\K^{\alpha_1}(\mcX_0)$. Then $T(C_1)=\msD^{\alpha_1}|_{C_1}\subset\msD^{-2}$, which implies that $T(\pi^{\alpha_2}_0(C_1))\subset\mcE|_{\pi^{\alpha_2}_0(C_1)}$. Hence at a general point $t\in\mbP^3$, we have $\mcC^{\alpha_1}_t(\mcX_0/\mbP^3)\subset\mbP(\mcE_t)$. Since $\K^{\alpha_1}(\mcX_0/\mbP^3)$ is a set of lines on $\mbP^3$, we have $\mcC^{\alpha_1}_t(\mcX_0/\mbP^3)\cong\K^{\alpha_1}_t(\mcX_0/\mbP^3)$, which is an irreducible rational curve by Lemma \ref{lem. 23 A3 case}. Hence $\mcC^{\alpha_1}_t(\mcX_0/\mbP^3)=\mbP(\mcE_t)$ is a line in $\mbP(T_t\mbP^3)$.
Moreover,
\begin{eqnarray*}
 \dim \K^{\alpha_1}(\mcX_0/\mbP^3) = \dim  \mathbb{P}^3 + \dim \mathcal{K}_{t}^{\alpha_1}(\mcX_0/\mbP^3) -1 = 3.
\end{eqnarray*}
Thus the variety $\K^{\alpha_1}(\mcX_0/\mbP^3)$ is an effective divisor on $\mbG(1, \mbP^3)$. Consider
\begin{eqnarray}\label{eqn. diagram for K(a1)(X0 over P3)}
\xymatrix{U^{\alpha_1}(\mcX_0/\mbP^3)\ar[r]\ar[d] & \K^{\alpha_1}(\mcX_0/\mbP^3)\ar[d] \\
\mbP(T\mbP^3)\ar[r]\ar[d] & \mbG(1, \mbP^3) \\
\mbP^3. &
}
\end{eqnarray}
Since for $t \in \mathbb{P}^3$ general,
\begin{eqnarray*}
\mathcal{C}_{t}^{\alpha_1}(\mcX_0/\mbP^3) = U^{\alpha_1}(\mcX_0/\mbP^3) \cap \mathbb{P} (T_t \mathbb{P}^3)
\end{eqnarray*}
is a line in $\mathbb{P} (T_t \mathbb{P}^3)$, we can conclude that $\K^{\alpha_1}(\mcX_0/\mbP^3)\in |\iota^*\mathcal{O}_{\mathbb{P}^5}(1)|$, where $\iota: \mbG(1, \mbP^3)\rightarrow\mbP^5$ is the Pl\"{u}ker embedding. Since $\mbG(1, \mbP^3)\subset\mbP^5$ is linearly normal, $\K^{\alpha_1}(\mcX_0/\mbP^3)$ is a hyperplane section of $(1, \mathbb{P}^3) \subset \mathbb{P}^5$.
\end{proof}

\subsubsection{Hyperplane bundles of $\mbP(\mcV)$ over $\mbP^3$} \label{subsubsection. hyperplane bundles of P(V) over P3}

\begin{nota}\label{nota. La1(X0) La2(X0) in A3 case}
Let $\mcL_0^{\alpha_i}$ be the Cartier divisor on $\mcX_0$ such that the intersection number $(\mcL_0^{\alpha_i}\cdot C_j)=\delta_{ij}$, where $[C_j]\in\K^{\alpha_j}(\mcX_0)$ and $1\leq i, j\leq 2$. In other words, $\mcL^{\alpha_i}_0:=\mcL^{\alpha_i}$, where $\mcL^{\alpha_i}$ is as in Proposition-Definition \ref{prop. divisors intersection theory on family X}. Denote by $|\mcL_0^{\alpha_i}|$ the corresponding linear system of effective Weil divisors on $\mcX_0$.
\end{nota}

\begin{lem}\label{lem. 36 A3 case}
We have $\dim |\mcL_0^{\alpha_1}|=3$ and $\dim |\mcL_0^{\alpha_2}|\geq 5$.
\end{lem}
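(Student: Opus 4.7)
The plan is to compute both dimensions by first determining the line bundles $\mcL^{\alpha_i}|_{\bS}$ on the general fiber explicitly via the two Mori contractions, and then transferring the information to $\mcX_0$: the bound for $\mcL_0^{\alpha_2}$ will follow at once from upper semi-continuity, while for $\mcL_0^{\alpha_1}$ I will exploit the projective bundle structure $\pi^{\alpha_2}_0: \mcX_0 \to \mbP^3$ supplied by Proposition \ref{prop. X=P(V) in A3 case}.

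First I would identify $\mcL^{\alpha_i}|_{\bS}$ where $\bS = \mbF(1,2,\mbP^3)$. The two Mori contractions are $\Phi^{\alpha_1}: \bS \to \mbG(2, \C^4)$ and $\Phi^{\alpha_2}: \bS \to \mbP^3$. Since $(\mcL^{\alpha_1}|_{\bS} \cdot R_{\alpha_2}) = 0$, the bundle $\mcL^{\alpha_1}|_{\bS}$ descends through $\Phi^{\alpha_2}$; and since $\Phi^{\alpha_2}$ sends an $R_{\alpha_1}$-curve (a projective line inside a fixed 2-plane of $\C^4$) biholomorphically onto a line in $\mbP^3$, the intersection condition forces $\mcL^{\alpha_1}|_{\bS} \cong (\Phi^{\alpha_2})^* \mcO_{\mbP^3}(1)$. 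Symmetrically, $\mcL^{\alpha_2}|_{\bS} \cong (\Phi^{\alpha_1})^* \mcO_{\mbG(2, \C^4)}(1)$ with the Pl\"ucker hyperplane bundle. Thus $h^0(\mcX_t, \mcL^{\alpha_1}_t) = h^0(\mbP^3, \mcO(1)) = 4$ and $h^0(\mcX_t, \mcL^{\alpha_2}_t) = h^0(\mbG(2, \C^4), \mcO(1)) = 6$ for $t \neq 0$.

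The bound $\dim|\mcL_0^{\alpha_2}| \geq 5$ is then immediate from upper semi-continuity of $h^0$ in the flat family: $h^0(\mcX_0, \mcL_0^{\alpha_2}) \geq h^0(\mcX_t, \mcL^{\alpha_2}_t) = 6$ for $t \neq 0$. For the equality $\dim|\mcL_0^{\alpha_1}| = 3$, set $\phi := \pi^{\alpha_2}_0 : \mcX_0 = \mbP(\mcV) \to \mbP^3$. By Proposition-Definition \ref{prop. divisors intersection theory on family X}, $(\mcL_0^{\alpha_1} \cdot C_2) = 0$ for any $[C_2] \in \K^{\alpha_2}(\mcX_0)$; since such curves sweep out the $\mbP^2$-fibers of $\phi$, the restriction $\mcL_0^{\alpha_1}|_F$ is a degree zero, hence trivial, line bundle on each fiber $F \cong \mbP^2$. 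Cohomology and base change (using $H^1(\mbP^2, \mcO) = 0$) then yields $\mcL_0^{\alpha_1} \cong \phi^* M$ for a unique line bundle $M$ on $\mbP^3$. By Proposition \ref{prop. X=P(V) in A3 case}$(iv)$, $\phi$ maps any $[C_1] \in \K^{\alpha_1}(\mcX_0)$ isomorphically onto a line in $\mbP^3$, so $\deg M = (M \cdot \phi(C_1)) = (\mcL_0^{\alpha_1} \cdot C_1) = 1$ and $M = \mcO_{\mbP^3}(1)$. Therefore $H^0(\mcX_0, \mcL_0^{\alpha_1}) \cong H^0(\mbP^3, \mcO(1))$ has dimension $4$, giving $\dim |\mcL_0^{\alpha_1}| = 3$.

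I do not expect any serious obstacle here: the genuinely delicate input, namely that $\pi^{\alpha_2}_0$ really is a projective bundle $\mbP(\mcV) \to \mbP^3$, has already been established in Proposition \ref{prop. X=P(V) in A3 case}, and everything else is standard intersection theory plus semi-continuity.
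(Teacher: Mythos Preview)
Your proof is correct and follows essentially the same approach as the paper: for $\mcL_0^{\alpha_1}$ you identify it as $(\pi^{\alpha_2}_0)^*\mcO_{\mbP^3}(1)$ via the projective bundle structure of Proposition~\ref{prop. X=P(V) in A3 case}, and for $\mcL_0^{\alpha_2}$ you invoke upper semi-continuity of $h^0$ after computing $h^0(\mcX_t,\mcL^{\alpha_2}_t)=6$ for $t\neq 0$. The paper's argument is identical but terser, simply asserting $\mcL_0^{\alpha_1}=(\pi^{\alpha_2}_0)^*\mcO_{\mbP^3}(1)$ without spelling out the descent step.
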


\begin{proof}
Since $\mcX^{\alpha_2}_0=\mbP^3$, we have $\mcL_0^{\alpha_1}=(\pi^{\alpha_2}_0)^*\mcO_{\mbP^3}(1)$ and $\dim |\mcL_0^{\alpha_1}|=\dim\mbP^3=3$. There exists a holomorphic line bundle $\mcL^{\alpha_2}$ on $\mcX$ such that $\mcL_0^{\alpha_2}\cong \mcL^{\alpha_2}|_{\mcX_0}$ and for $0\neq t\in\Delta$, the linear system $|\mcL^{\alpha_2}_t|$ induces the morphism $$\mcX_t\cong F(1, 2; \C^4)\rightarrow Gr(2, \C^4)\subset\mbP^5,$$ where $\mcL^{\alpha_2}_t:=\mcL^{\alpha_2}|_{\mcX_t}$ By semicontinuity, we have $\dim |\mcL_0^{\alpha_2}|\geq 5$.
\end{proof}

\begin{nota}\label{nota. K(H, P3) in A3 case}
Take any $W \in |\mcL_0^{\alpha_2}|$. Then for $t\in\mbP^3$ general, we denote by
\begin{eqnarray*}
\mathcal{K}_{t}^{\alpha_1} (W/\mbP^3) := \psi( W_t) \subset  \mathcal{K}_{t}^{\alpha_1}(\mcX_0/\mbP^3),
\end{eqnarray*}
where $\psi$ is as in Lemma \ref{lem. P2t map to Kt(X0, P3) by psi in A3 case}. Set
\begin{eqnarray*}
\mathcal{K}^{\alpha_1} (W/\mbP^3) :=\mbox{ Zariski closure of } \bigcup\limits_{t \in \mathbb{P}^3 \text{ general}} \mathcal{K}_{t}^{\alpha_1} (W/\mbP^3) \mbox{ in } \mathcal{K}^{\alpha_1} (\mcX_0/\mathbb{P}^3).
\end{eqnarray*}
\end{nota}

\begin{lem}\label{lem. 40 A3 case}
In setting of Notation \ref{nota. K(H, P3) in A3 case}, there is an injective map
\begin{eqnarray*}
\theta: & \{W \in |\mcL_0^{\alpha_2}|\mid \mbP(\mcL_\sigma)\subset W \}\rightarrow \{\mbox{hyperplane sections of } \K^{\alpha_1}(\mcX_0/\mbP^3)\} \\
& W \longmapsto \mathcal{K}^{\alpha_1} (W/\mbP^3).
\end{eqnarray*}
\end{lem}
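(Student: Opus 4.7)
The plan is to analyze the rational map $f_W: \mbP^3 \dashrightarrow \K^{\alpha_1}(\mcX_0/\mbP^3)$ naturally induced by $W$, to show its image is a divisor of hyperplane-section class, and then to invert it to recover $W$.

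I would first observe that, since $(\mcL_0^{\alpha_2} \cdot C_2) = 1$ on any line $C_2$ in a $\phi$-fiber, the restriction $W_t := W \cap \mbP^2_t$ is a line in $\mbP^2_t = \mbP(\mcV_t)$ for general $t$, and the hypothesis $\mbP(\mcL_\sigma) \subset W$ forces $\sigma(t) \in W_t$. Then, by Lemma \ref{lem. P2t map to Kt(X0, P3) by psi in A3 case}, $\psi$ is the linear projection from $\sigma(t)$, so $\psi(W_t)$ collapses to a single point $[l_t] \in \K^{\alpha_1}_t(\mcX_0/\mbP^3)$, defining $f_W(t) = [l_t]$. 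Injectivity of $\theta$ is then immediate: if $\K^{\alpha_1}(W/\mbP^3) = \K^{\alpha_1}(W'/\mbP^3)$, the common point $\psi(W_t) = \psi(W'_t)$ has a unique $\psi$-preimage line through $\sigma(t)$, so $W_t = W'_t$ for general $t$ and hence $W = W'$.

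Next I would prove that $f_W$ has one-dimensional generic fibers, so $\K^{\alpha_1}(W/\mbP^3)$ has dimension $2$. The key step is the constancy $[l_s] = [l_t]$ for general $s \in l_t$: by Lemma \ref{lem. 23 A3 case}, the unique element of $\K^{\alpha_1}_{\sigma(t)}(\mcX_0)$ with $\pi^{\alpha_2}_0$-image $l_t$ lives inside $\mbP(\mcL_\sigma) \subset W$ (using Lemma \ref{lem. 21 A3 case}), and its sole intersection with each $\mbP^2_s$, $s \in l_t$, is $\sigma(s)$. A fiberwise analysis of how $W$ meets $\mbP^2_s$ at $\sigma(s)$ (through the identification $T_{\sigma(s)}\mbP^2_s \cong \mathrm{Hom}(\mcL_\sigma|_s, \mcV_s/\mcL_\sigma|_s)$) shows that the tangent direction of $W_s$ at $\sigma(s)$ is pinned down to match the tangent direction of $l_t$ at $s$, forcing $\psi(W_s) = [l_t]$. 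Thus the generic fiber of $f_W$ over $[l_t]$ is exactly the line $l_t$, giving $\dim \K^{\alpha_1}(W/\mbP^3) = 2$.

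Finally, to identify the class of this divisor inside the quadric threefold $\K^{\alpha_1}(\mcX_0/\mbP^3) \subset \mbG(1,\mbP^3) \subset \mbP^5$ (Lemma \ref{lem. 34 A3 case}), I would compute the intersection number with a general line $L_{t_0} := \K^{\alpha_1}_{t_0}(\mcX_0/\mbP^3) \subset \mbP^5$. Set-theoretically $L_{t_0} \cap \K^{\alpha_1}(W/\mbP^3) = \{[l_{t_0}]\}$ (the point $[l]$ lies in both only if $t_0 \in l$ and $[l] = [l_t]$ for some $t \in l$, which by the constancy forces $[l] = [l_{t_0}]$), and a transversality check at $[l_{t_0}]$ would give intersection number one, identifying the divisor class with the hyperplane class when $\mathrm{Pic}$ of the quadric is $\mbZ$; the singular case is handled by specialization or by directly exhibiting the defining linear functional on $\mbP^5$. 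The main technical hurdle is the constancy argument above: without the hypothesis $\mbP(\mcL_\sigma) \subset W$, the image of $f_W$ would generically be $3$-dimensional, so the delicate point is to show that $W$'s restriction genuinely propagates along the $\K^{\alpha_1}$-curves sitting inside $\mbP(\mcL_\sigma)$.
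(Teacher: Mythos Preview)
Your overall plan is sound, and it is more explicit than the paper's proof, which after noting that $\K^{\alpha_1}_t(W/\mbP^3)$ is a single point simply invokes the diagram analysis of Lemma~\ref{lem. 34 A3 case} to conclude directly that $\K^{\alpha_1}(W/\mbP^3)$ is a hyperplane section, without separately discussing constancy or injectivity. However, two of your steps need repair.

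First, your injectivity argument is not ``immediate''. From $\K^{\alpha_1}(W/\mbP^3)=\K^{\alpha_1}(W'/\mbP^3)$ as \emph{subsets} you cannot yet conclude $\psi(W_t)=\psi(W'_t)$: that requires knowing that $\K^{\alpha_1}(W/\mbP^3)\cap\K^{\alpha_1}_t(\mcX_0/\mbP^3)$ is a single point, which is precisely the hyperplane-section property you establish later. So injectivity should be stated after, not before, the hyperplane-section step.

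Second, and more seriously, your constancy argument via ``tangent directions at $\sigma(s)$'' is opaque and does not work as written: the tangent to the section curve $C\subset\mbP(\mcL_\sigma)$ at $\sigma(s)$ lies transverse to $T_{\sigma(s)}\mbP^2_s$, so it carries no information about the line $W_s\subset\mbP^2_s$. The correct and much simpler argument is intersection-theoretic. Since $(\mcL_0^{\alpha_2}\cdot\K^{\alpha_1})=0$, any $[C_x]\in\K^{\alpha_1}_x(\mcX_0)$ with $x\in W$ satisfies $C_x\subset W$. Now take $x\in W_t\setminus\{\sigma(t)\}$ general: then $\pi^{\alpha_2}_0(C_x)=l_t$ and $C_x\subset W$, so for each $s\in l_t$ the single point $x_s:=C_x\cap\mbP^2_s$ lies in $W_s$ and has $\psi(x_s)=[\pi^{\alpha_2}_0(C_x)]=[l_t]$. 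Since $\psi(W_s)$ is a single point, this forces $[l_s]=[l_t]$. This gives the one-dimensional fibers of $f_W$ you want, and with it the intersection number $\K^{\alpha_1}(W/\mbP^3)\cdot\K^{\alpha_1}_{t_0}(\mcX_0/\mbP^3)=1$ follows without any separate transversality check, since for general $t_0$ the fiber $f_W^{-1}([l])$ over any $[l]\in\K^{\alpha_1}_{t_0}(\mcX_0/\mbP^3)$ is exactly the line $l\ni t_0$.
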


\begin{proof}
Take $t \in \mathbb{P}^3$ general. Then the fact $\sigma(t) \in W$ implies that $W_t$ is a line in $\mathbb{P}_{t}^2:=(\pi^{\alpha_2}_0)^{-1}(t)$ passing through $\sigma(t)$. By Lemma \ref{lem. P2t map to Kt(X0, P3) by psi in A3 case}, $\mathcal{K}_{t}^{\alpha_1} (W/\mbP^3)$ consists of a single element. Then $\mathcal{K}^{\alpha_1} (W/\mbP^3)$ is an effective divisor on $\K^{\alpha_1}(\mcX_0/\mbP^3)$. Similarly with the analysis for diagram \eqref{eqn. diagram for K(a1)(X0 over P3)}, we know that $\mathcal{K}^{\alpha_1} (W/\mbP^3)$ is a hyperplane section of $\K^{\alpha_1}(\mcX_0/\mbP^3)$.
\end{proof}

\begin{lem}\label{lem. 41 A3 case}
Take $W\in|\mcL_0^{\alpha_2}|$ general. Then $\sigma(t)\notin W$ for $t\in\mbP^3$ general.
\end{lem}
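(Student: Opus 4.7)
The plan is to argue by contradiction. Suppose, for the sake of contradiction, that for general $W \in |\mcL_0^{\alpha_2}|$ we have $\sigma(t) \in W$ for $t \in \mbP^3$ general. Since $\mbP(\mcL_\sigma)$ is the Zariski closure of the image of $\sigma$, this is equivalent to $\mbP(\mcL_\sigma) \subset W$. The inclusion $\mbP(\mcL_\sigma) \subset W$ is a Zariski closed condition on the parameter $W \in |\mcL_0^{\alpha_2}|$, so if it holds for a general $W$ it holds identically on the whole linear system. Thus the source of the injective map $\theta$ in Lemma \ref{lem. 40 A3 case} coincides with $|\mcL_0^{\alpha_2}|$, and by Lemma \ref{lem. 36 A3 case} we obtain an injection from a projective space of dimension at least $5$ into the space of hyperplane sections of $\K^{\alpha_1}(\mcX_0/\mbP^3)$.

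The next step is a dimension count on the target. By Lemma \ref{lem. 34 A3 case}, the variety $\K^{\alpha_1}(\mcX_0/\mbP^3)$ is a hyperplane section of $\mbG(1, \mbP^3) \subset \mbP^5$. Writing $X := \K^{\alpha_1}(\mcX_0/\mbP^3)$ and using the short exact sequence
\[
0 \rightarrow \mcO_{\mbG(1, \mbP^3)} \rightarrow \mcO_{\mbG(1, \mbP^3)}(1) \rightarrow \mcO_X(1) \rightarrow 0,
\]
I would compute $h^0(X, \mcO_X(1)) = h^0(\mbG(1, \mbP^3), \mcO(1)) - 1 = 6 - 1 = 5$, hence the complete linear system of hyperplane sections of $X$ has dimension $4$.

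Putting the two estimates together yields an injection from a projective space of dimension at least $5$ into one of dimension $4$, which is impossible. Therefore the supposition fails, and for general $W \in |\mcL_0^{\alpha_2}|$ we have $\mbP(\mcL_\sigma) \not\subset W$, i.e.\ $\sigma(t) \notin W$ for $t \in \mbP^3$ general.

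The only step that needs a bit of care is the closedness of the condition $\mbP(\mcL_\sigma) \subset W$ on $W$, which is needed to upgrade "general $W$" to "all $W$" and thereby make the full linear system $|\mcL_0^{\alpha_2}|$ map into the target of $\theta$; this can be seen either from the incidence variety $\{(x,W) \in \mbP(\mcL_\sigma) \times |\mcL_0^{\alpha_2}| : x \in W\}$ or by choosing a finite set of points in $\mbP(\mcL_\sigma)$ whose simultaneous containment is equivalent to the containment of $\mbP(\mcL_\sigma)$ itself. I do not foresee any real obstacle beyond this bookkeeping; the heart of the argument is the clean dimensional inequality $5 > 4$.
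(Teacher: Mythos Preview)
Your proof is correct and follows essentially the same approach as the paper's own proof: both use the injection $\theta$ of Lemma~\ref{lem. 40 A3 case} together with the dimension bounds $\dim|\mcL_0^{\alpha_2}|\geq 5$ (Lemma~\ref{lem. 36 A3 case}) and $\dim\{\text{hyperplane sections of }\K^{\alpha_1}(\mcX_0/\mbP^3)\}\leq 4$ (from Lemma~\ref{lem. 34 A3 case}) to conclude that $\{W:\mbP(\mcL_\sigma)\subset W\}$ is a proper linear subspace of $|\mcL_0^{\alpha_2}|$. The paper states this directly as a dimension comparison, while you phrase it as a proof by contradiction and spell out the cohomology computation for the target; the substance is identical.
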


\begin{proof}
By Lemma \ref{lem. 34 A3 case} and Lemma \ref{lem. 40 A3 case}, the space $\{W\in|\mcL_0^{\alpha_2}|\mid \mbP(\mcL_\sigma)\subset W\}$
has dimension at most 4. On the other hand, $\dim |\mcL_0^{\alpha_2}|\geq 5$ by Lemma \ref{lem. 36 A3 case}. Then the conclusion follows.
\end{proof}

\begin{lem}\label{lem. 44 A3 case}
Take $W\in|\mcL_0^{\alpha_2}|$ general, and denote by
\begin{eqnarray}\label{eqn. S(W)}
S(W):=\{t\in\mbP^3\mid (\pi^{\alpha_2}_0)^{-1}(t)\subset W\}.
\end{eqnarray}
Then $\dim S(W)\leq 1$ and $W|_{\mbP^3\setminus S(W)}\rightarrow\mbP^3\setminus S(W)$ is a $\mbP^1$-bundle.
\end{lem}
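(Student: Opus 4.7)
The approach is to treat the two assertions separately: the dimension bound comes from an intersection-theoretic argument exploiting that $\K^{\alpha_1}(\mcX_0)$ is a covering family, and the bundle structure from realizing $W|_{\mbP^3 \setminus S(W)}$ as a relative hyperplane section of the $\mbP^2$-bundle $\pi^{\alpha_2}_0: \mcX_0 = \mbP(\mcV) \to \mbP^3$.

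For the bound $\dim S(W) \leq 1$, I would argue by contradiction. Suppose $S(W)$ contains an irreducible component $S'$ of dimension $2$, so that $S'$ is a divisor in $\mbP^3$ of some degree $d \geq 1$. Since $\mcL_0^{\alpha_1} = (\pi^{\alpha_2}_0)^* \mcO_{\mbP^3}(1)$ (as used in the proof of Lemma \ref{lem. 36 A3 case}), the preimage $(\pi^{\alpha_2}_0)^{-1}(S')$ is a Cartier divisor of class $d\,\mcL_0^{\alpha_1}$, and by definition of $S(W)$ it is contained in $W$. Hence the residual $W - (\pi^{\alpha_2}_0)^{-1}(S')$ is an effective divisor of class $\mcL_0^{\alpha_2} - d\,\mcL_0^{\alpha_1}$. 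Intersecting with a general $[C_1] \in \K^{\alpha_1}(\mcX_0)$ gives $0 - d < 0$, which contradicts the fact that a general member of the covering family $\K^{\alpha_1}(\mcX_0)$ must meet any effective divisor nonnegatively. Note that this step does not require genericity of $W$.

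For the $\mbP^1$-bundle structure, observe that for each $t \in \mbP^3 \setminus S(W)$ the intersection $W \cap \mbP^2_t$ is an effective divisor on $\mbP^2_t = \mbP(\mcV_t)$ of degree $(W \cdot C_2) = 1$, hence a line, corresponding to a unique $2$-dimensional subspace $\mcV'_t \subset \mcV_t$. To assemble these into a rank-two subbundle $\mcV' \subset \mcV|_{\mbP^3 \setminus S(W)}$, I would use the section $s \in H^0(\mcX_0, \mcL_0^{\alpha_2})$ cutting out $W$: by cohomology and base change (using $H^1(\mbP^2, \mcO(1)) = 0$), $s$ corresponds to a global section $\tilde{s}$ of the rank-three bundle $\mcF := (\pi^{\alpha_2}_0)_* \mcL_0^{\alpha_2}$, whose fiber at $t$ is $H^0(\mbP^2_t, \mcO(1))$ and whose zero locus is exactly $S(W)$. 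Over $\mbP^3 \setminus S(W)$, $\tilde{s}$ trivializes a line subbundle of $\mcF$ with rank-two quotient, which, under the identification of $\mcF$ with a suitable twist of $\mcV^*$, produces the sought-for subbundle $\mcV' \subset \mcV|_{\mbP^3 \setminus S(W)}$, exhibiting $W|_{\mbP^3 \setminus S(W)} \cong \mbP(\mcV')$ as a $\mbP^1$-bundle over $\mbP^3 \setminus S(W)$.

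The main obstacle I anticipate is the bookkeeping for the identification $\mcF \cong \mcV^* \otimes \mcM$ (for an appropriate line bundle $\mcM$ on $\mbP^3$), together with the compatibility of $\tilde{s}$ with fiberwise linear forms, so that $\mcV'$ is realized as a genuine algebraic subbundle rather than a mere set-theoretic family of $2$-planes. Once this identification is carried out and the genericity hypothesis on $W$ is used only to ensure that $\tilde{s} \not\equiv 0$ (so that $S(W) \subsetneq \mbP^3$), the projectivization $\mbP(\mcV')$ is manifestly a $\mbP^1$-bundle, completing the proof.
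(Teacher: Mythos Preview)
Your proposal is correct and follows essentially the same approach as the paper: both rule out $\dim S(W)=2$ via intersection with $\K^{\alpha_1}(\mcX_0)$ (the paper states tersely that this would force $(W\cdot C_1)>0$, while you make the contradiction explicit through the residual divisor $W-(\pi^{\alpha_2}_0)^{-1}(S')$), and both obtain the $\mbP^1$-bundle from the fact that $\mcO_{\mcX_0}(W)|_{\mbP^2_t}\cong\mcO_{\mbP^2_t}(1)$ makes each scheme-theoretic fiber a line. Your cohomology-and-base-change argument with $(\pi^{\alpha_2}_0)_*\mcL_0^{\alpha_2}$ is simply a more careful justification of the gluing that the paper leaves implicit.
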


\begin{proof}
As a Cartier divisor we have $\mcO_{\mcX_0}(W)|_{\mbP^2_t}\cong\mcO_{\mbP^2_t}(1)$ for any $t\in\mbP^3$, where $\mbP^2_t:=(\pi^{\alpha_2}_0)^{-1}(t)$. Thus for any $t\in\mbP^3\setminus S(W)$, the scheme-theoretic intersection of $W$ with $\mbP^2_t$ is a line. By dimension counting $\dim S(W)\leq \dim W-2=2$. If $\dim S(W)=2$, then the intersection number $(W\cdot C_1)>0$ for $[C_1]\in\K^{\alpha_1}(\mcX_0)$, contradicting our definition of $\mcL_0^{\alpha_2}$ in Notation \ref{nota. La1(X0) La2(X0) in A3 case}.
\end{proof}

\begin{lem}\label{lem. 46 A3 case}
Take $W\in|\mcL_0^{\alpha_2}|$ general, and denote by $S_W:=\pi^{\alpha_2}_0(\mbP(\mcL_\sigma)\cap W)\subset\mbP^3$. Then $\dim S_W\leq 1$.
\end{lem}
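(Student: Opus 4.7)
The strategy is to show that the pullback of $\mcL_0^{\alpha_2}$ under the meromorphic section $\sigma$ is trivial on the domain of $\sigma$, so that a general divisor $W\in|\mcL_0^{\alpha_2}|$ cannot meet $\sigma(\mbP^3\setminus S_\sigma)$; the residual intersection $\mbP(\mcL_\sigma)\cap W$ is then confined to the preimage of $S_\sigma$ under the birational morphism $\mbP(\mcL_\sigma)\to\mbP^3$, whose image lies in $S_\sigma$. Since $\dim S_\sigma\leq 1$ by Lemma \ref{lem. 21 A3 case}, the desired bound $\dim S_W\leq 1$ will follow.

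My first step is to identify $\mcL_0^{\alpha_2}$ in $\Pic(\mbP(\mcV))$. Writing $\mcL_0^{\alpha_2}=\mcO_{\mbP(\mcV)}(1)\otimes(\pi_0^{\alpha_2})^*\mcO_{\mbP^3}(a)$ for some $a\in\mbZ$, I intersect with a general $[C_1]\in\K^{\alpha_1}(\mcX_0)$: Proposition \ref{prop. 19 A3 case}(ii) gives $\mcO_{\mbP(\mcV)}(1)\cdot C_1=-\deg\mcL(C_1)=-2$, while $\pi_0^{\alpha_2}$ maps $C_1$ isomorphically onto a line in $\mbP^3$, so $(\pi_0^{\alpha_2})^*\mcO_{\mbP^3}(1)\cdot C_1=1$. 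The relation $(\mcL_0^{\alpha_2}\cdot C_1)=0$ from Notation \ref{nota. La1(X0) La2(X0) in A3 case} then forces $a=2$.

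Next, I pull back by $\sigma$. On $\mbP^3\setminus S_\sigma$ one has $\pi_0^{\alpha_2}\circ\sigma=\mathrm{id}$ and $\sigma^*\mcO_{\mbP(\mcV)}(1)=\mcL_\sigma^{-1}$, so $\sigma^*\mcL_0^{\alpha_2}=\mcL_\sigma^{-1}\otimes\mcO_{\mbP^3}(2)$ on $\mbP^3\setminus S_\sigma$. By Lemma \ref{lem. 21 A3 case}(i), for general $t$ and $[C_1]\in\K^{\alpha_1}_{\sigma(t)}(\mcX_0)$ the curve $C_1$ lies in $\mbP(\mcL_\sigma)$, so the section $\mbP(\mcL_\sigma|_{\pi_0^{\alpha_2}(C_1)})$ of the $\mbP^2$-bundle over $\pi_0^{\alpha_2}(C_1)$ contains $C_1=\mbP(\mcL(C_1))$, forcing $\mcL_\sigma|_{\pi_0^{\alpha_2}(C_1)}=\mcL(C_1)\cong\mcO_{\mbP^1}(2)$ via Proposition \ref{prop. 19 A3 case}(ii). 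Since $S_\sigma$ has codimension at least two in the smooth threefold $\mbP^3$, we have $\Pic(\mbP^3\setminus S_\sigma)=\Pic(\mbP^3)=\mbZ$, and $\mcL_\sigma|_{\mbP^3\setminus S_\sigma}$ is determined by its degree on a line; thus $\mcL_\sigma|_{\mbP^3\setminus S_\sigma}\cong\mcO_{\mbP^3}(2)|_{\mbP^3\setminus S_\sigma}$, and $\sigma^*\mcL_0^{\alpha_2}|_{\mbP^3\setminus S_\sigma}\cong\mcO_{\mbP^3\setminus S_\sigma}$.

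For the final step, given a general $s\in H^0(\mcX_0,\mcL_0^{\alpha_2})$ with zero locus $W$, the pullback $\sigma^*s$ is a section of $\mcO_{\mbP^3\setminus S_\sigma}$ and hence a constant (using that $S_\sigma$ has codimension $\geq 2$ in $\mbP^3$). If the pullback map $\sigma^*\colon H^0(\mcX_0,\mcL_0^{\alpha_2})\to\C$ were identically zero, every $W\in|\mcL_0^{\alpha_2}|$ would contain $\mbP(\mcL_\sigma)$, contradicting Lemma \ref{lem. 41 A3 case}; so for general $W$ the constant $\sigma^*s$ is nonzero, whence $\sigma(\mbP^3\setminus S_\sigma)\cap W=\emptyset$. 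Thus $\mbP(\mcL_\sigma)\cap W$ lies in the preimage of $S_\sigma$ under the birational morphism $\pi_0^{\alpha_2}|_{\mbP(\mcL_\sigma)}$, yielding $S_W\subset S_\sigma$ and $\dim S_W\leq 1$. The step I expect to be most delicate is the identification $\mcL_\sigma|_{\pi_0^{\alpha_2}(C_1)}=\mcL(C_1)$ and the resulting global computation of $\mcL_\sigma$ on $\mbP^3\setminus S_\sigma$, since $\mcL_\sigma$ is only known a priori as a meromorphic subbundle of $\mcV$.
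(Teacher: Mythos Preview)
Your approach is genuinely different from the paper's and more conceptual: rather than chasing chains of curves by contradiction, you compute $\sigma^*\mcL_0^{\alpha_2}$ directly and show it is trivial on $\mbP^3\setminus S_\sigma$, so that a general $W$ misses $\sigma(\mbP^3\setminus S_\sigma)$ entirely and hence $S_W\subset S_\sigma$. The identification $\mcL_0^{\alpha_2}=\mcO_{\mbP(\mcV)}(1)\otimes(\pi_0^{\alpha_2})^*\mcO_{\mbP^3}(2)$ and the pullback formula $\sigma^*\mcL_0^{\alpha_2}=\mcL_\sigma^{-1}\otimes\mcO(2)$ are correct, and the final deduction from Lemma~\ref{lem. 41 A3 case} is clean.

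The gap is precisely the step you flag. Your equality $\mcL_\sigma|_{\pi_0^{\alpha_2}(C_1)}=\mcL(C_1)$ is only justified on $\pi_0^{\alpha_2}(C_1)\cap(\mbP^3\setminus S_\sigma)$; if the line $l:=\pi_0^{\alpha_2}(C_1)$ meets $S_\sigma$, knowing $\mcL_\sigma|_{l\setminus S_\sigma}$ does not determine the class of $\mcL_\sigma$ in $\Pic(\mbP^3\setminus S_\sigma)=\mbZ$, because restriction to an open curve forgets the degree. You therefore need some line $l\in\K^{\alpha_1}(\mcX_0/\mbP^3)$ contained in $\mbP^3\setminus S_\sigma$. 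This is not automatic: by Lemma~\ref{lem. 34 A3 case} the family $\K^{\alpha_1}(\mcX_0/\mbP^3)$ is a hyperplane section of $\mbG(1,\mbP^3)$, and so is the family of lines meeting a fixed line $\ell_0$; a priori these could coincide with $\ell_0\subset S_\sigma$. The fix is to observe that in that degenerate case the rank-two distribution $\mcE=d\pi_0^{\alpha_2}(\msD^{-2})$ on $\mbP^3$ (from the proof of Lemma~\ref{lem. 34 A3 case}) would be tangent to the pencil of planes through $\ell_0$ and hence integrable, so chains of lines in $\K^{\alpha_1}(\mcX_0/\mbP^3)$ would be confined to a single plane, contradicting $\Lambda_2(t)=\mbP^3$ from Lemma~\ref{lem. 24 A3 case}. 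With this addition your argument is complete. It is worth noting that the paper's own proof also pivots on Lemma~\ref{lem. 24 A3 case} to exclude an analogous degenerate configuration, so the two proofs share this core ingredient even though the surrounding architecture is quite different.
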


\begin{proof}
Now suppose $\dim S_W\geq 2$. By Lemma \ref{lem. 41 A3 case}, $S_W\neq\mbP^3$. Choose any irreducible component $\widetilde{S}_W$ of $S_W$ such that $\dim \widetilde{S}_W=2$.

We claim that for $\tilde{t}\in\widetilde{S}_W$ general, there exists $t\in U''$ and $[l]\in\K^{\alpha_1}_t(\mcX_0/\mbP^3)$ such that $\tilde{t}\in l$, where $U''$ is as in Lemma \ref{lem. 21 A3 case} $(ii)$.

Suppose the claim holds. By Lemma \ref{lem. 23 A3 case} there exists $[C]\in K^{\alpha_1}_{\sigma(t)}(\mcX_0)$ such that $\pi^{\alpha_2}_0(C)=l$. By Lemma \ref{lem. 21 A3 case}, $\dim S_\sigma\leq 1$, where $S_\sigma\subset\mbP^3$ is the singular locus of the section $\sigma$.  Then the general choice of $\tilde{t}$ in the divisor $\widetilde{S}_W\subset\mbP^3$ implies that $\tilde{t}\notin S_\sigma$. In particular, $\mbP(\mcL_\sigma)_{\tilde{t}}=\sigma(\tilde{t})\in C\cap W$. Since the intersection number $(W\cdot C)=0$, we have $C\subset W$, implying that $\sigma(t)\in W$. By Lemma \ref{lem. 21 A3 case}$(ii)$ and the fact $(W\cdot C_1)=0$ for any $[C_1]\in\K^{\alpha_1}(\mcX_0)$, we have $\mbP(\mcL_\sigma)\subset W$. This contradicts Lemma \ref{lem. 41 A3 case}. Hence we obtain the conclusion of Lemma \ref{lem. 46 A3 case}.

Now we turn to prove the claim. Suppose it fails. Let $A$ be the Zariski closure of the union $\bigcup(l\cap\widetilde{S}_W)$ in $\widetilde{S}_W$, where $[l]$ runs over the set $\bigcup\limits_{t\in\mbP^3\mbox{ general}}\K^{\alpha_1}_t(\mcX_0/\mbP^3)$. By assumption, $\dim A\leq \dim\widetilde{S}_W-1=1$.

Since every element in $\K^{\alpha_1}(\mcX_0/\mbP^3)$ has a nonempty intersection with $\widetilde{S}_W$, there is an irreducible component $\widetilde{A}$ of $A$ such that
\begin{eqnarray*}
\dim\K^{\alpha_1}_s(\mcX_0/\mbP^3)\geq\dim\K^{\alpha_1}(\mcX_0/\mbP^3)-\dim\widetilde{A}\geq 2 \mbox{ for each } s\in\widetilde{A}.
\end{eqnarray*}
Since $\K^{\alpha_1}_s(\mcX_0/\mbP^3)\cong\mcC^{\alpha_1}_s(\mcX_0/\mbP^3)\subset\mbP(T_s\mbP^3)$, we know that
\begin{eqnarray*}
\dim\widetilde{A}=1, & \K^{\alpha_1}_s(\mcX_0/\mbP^3)\cong\mcC^{\alpha_1}_s(\mcX_0/\mbP^3)=\mbP(T_s\mbP^3)
\end{eqnarray*}
and $[\langle t, s\rangle]\in\K^{\alpha_1}_t(\mcX_0/\mbP^3)$ for all $s\in\widetilde{A}$ and all $t\in\mbP^3\setminus\{s\}$.

Take $t\in\mbP^3$ general. By Lemma \ref{lem. 34 A3 case} and the conclusions above, $\K^{\alpha_1}_t(\mcX_0/\mbP^3)=\{[\langle t, s\rangle]\mid s\in\widetilde{A}\}$,
and the join variety $J(t, \widetilde{A}):=\bigcup\limits_{s\in\widetilde{A}}\langle t, s\rangle$
is a plane in $\mbP^3$. Thus in the notations of Lemma \ref{lem. 24 A3 case}, we have $\Lambda_1(t)=J(t, \widetilde{A})$. For $t'\in\Lambda_1(t)$ general, the same reason implies that $\Lambda_1(t')=J(t', \widetilde{A})=J(t, \widetilde{A})=\Lambda_1(t)$.
It follows that $\Lambda_2(t)=\Lambda_1(t)\subsetneqq\mbP^3$, contradicting Lemma \ref{lem. 24 A3 case}. Hence, the claim holds.
\end{proof}

\begin{lem}\label{lem. 32 A3 case}
There exists a meromorphic vector subbundle $\mcL_W\subset\mcV$ of rank two over $\mbP^3$ and a closed subvariety $S_W\subset\mbP^3$ such that

$(i)$ $\dim S_W\leq 1$;

$(ii)$ both $\mcL_\sigma$ and $\mcL_W$ are holomorphic vector bundles on $\mbP^3\setminus S_W$, where $\mcL_\sigma$ is as in Lemma \ref{lem. 21 A3 case};

$(iii)$ there is a direct sum decomposition $\mcV|_{\mbP^3\setminus S_W}=\mcL_\sigma|_{\mbP^3\setminus S_W}\oplus\mcL_W|_{\mbP^3\setminus S_W}$;

$(iv)$ $\mbP(\mcL_W)\in|\mcL_0^{\alpha_2}|$ is a chosen general divisor.
\end{lem}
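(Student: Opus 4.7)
The plan is to construct $\mcL_W$ fibrewise from the given general divisor $W \in |\mcL_0^{\alpha_2}|$, and then assemble the three singular-locus estimates already obtained in Lemmas \ref{lem. 21 A3 case}, \ref{lem. 44 A3 case}, and \ref{lem. 46 A3 case} to pin down $S_W$. First I would observe that in the convention used in Proposition \ref{prop. 19 A3 case} (points of $\mbP(\mcV)$ correspond to lines in fibers of $\mcV$), a divisor $W \in |\mcL_0^{\alpha_2}|$ meets each fiber $\mbP^2_t = \mbP(\mcV_t)$ of $\phi = \pi^{\alpha_2}_0$ either in a projective line (when $\mbP^2_t \not\subset W$) or in the whole fiber. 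By Lemma \ref{lem. 44 A3 case} the latter occurs only on the set $S(W)$ of equation \eqref{eqn. S(W)}, which has dimension at most one. Whenever $W_t := W \cap \mbP^2_t$ is a line, it is the projectivization of a unique $2$-dimensional subspace $\mcL_{W,t} \subset \mcV_t$; as $t$ varies over $\mbP^3 \setminus S(W)$ these subspaces assemble into a holomorphic rank-two sub-bundle $\mcL_W \subset \mcV$, since $W|_{\mbP^3 \setminus S(W)} \to \mbP^3 \setminus S(W)$ is a $\mbP^1$-bundle (Lemma \ref{lem. 44 A3 case}) sitting inside the $\mbP^2$-bundle $\mbP(\mcV)$.

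Next I would set
\begin{equation*}
S_W := S(W) \cup S_\sigma \cup \pi^{\alpha_2}_0\bigl(\mbP(\mcL_\sigma) \cap W\bigr),
\end{equation*}
where $S_\sigma$ is the singular locus of the meromorphic line sub-bundle $\mcL_\sigma$ from Lemma \ref{lem. 21 A3 case}. Each of the three pieces has dimension at most one: $S(W)$ by Lemma \ref{lem. 44 A3 case}, $S_\sigma$ by Lemma \ref{lem. 21 A3 case}, and the third summand by Lemma \ref{lem. 46 A3 case} (which uses the generality of $W$ via Lemma \ref{lem. 41 A3 case}). Hence $\dim S_W \leq 1$, proving (i). Clause (ii) is immediate from the definition: $\mcL_\sigma$ is holomorphic on $\mbP^3 \setminus S_\sigma \supset \mbP^3 \setminus S_W$, while $\mcL_W$ has just been shown to be holomorphic on $\mbP^3 \setminus S(W) \supset \mbP^3 \setminus S_W$. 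Clause (iv) is automatic from the construction.

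Finally, for (iii), the defining property of $S_W$ ensures that for every $t \in \mbP^3 \setminus S_W$ one has $\sigma(t) \notin W_t$; equivalently, the line $\mcL_{\sigma,t}$ is not contained in the plane $\mcL_{W,t}$ inside $\mcV_t$. A dimension count then forces $\mcL_{\sigma,t} \cap \mcL_{W,t} = 0$ and therefore $\mcV_t = \mcL_{\sigma,t} \oplus \mcL_{W,t}$, and since both $\mcL_\sigma$ and $\mcL_W$ are holomorphic sub-bundles on $\mbP^3 \setminus S_W$, this fibrewise direct sum upgrades to a direct sum decomposition of vector bundles on $\mbP^3 \setminus S_W$.

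I do not foresee a genuine obstacle: the content of Lemma \ref{lem. 32 A3 case} is essentially organizational, packaging the three singular-locus bounds into the structural statement about $\mcL_W$. The only step that requires any care is the verification that the fibrewise construction $t \mapsto \mcL_{W,t}$ produces an honest holomorphic sub-bundle on $\mbP^3 \setminus S(W)$ rather than merely a meromorphic one, and this is precisely what Lemma \ref{lem. 44 A3 case} (the $\mbP^1$-bundle statement) delivers.
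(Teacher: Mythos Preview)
Your proposal is correct and follows essentially the same approach as the paper's own proof, which simply states that the lemma is a direct consequence of Lemma~\ref{lem. 44 A3 case}, Lemma~\ref{lem. 46 A3 case}, and the bound $\dim S_\sigma\leq 1$ from Lemma~\ref{lem. 21 A3 case}. You have merely unpacked these citations by writing out the fibrewise construction of $\mcL_W$ and the direct-sum verification, and by making explicit that the set called $S_W$ in the statement is the union of $S(W)$, $S_\sigma$, and the set $\pi^{\alpha_2}_0(\mbP(\mcL_\sigma)\cap W)$ already denoted $S_W$ in Lemma~\ref{lem. 46 A3 case}.
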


\begin{proof}
It is a direct consequence of Lemma \ref{lem. 44 A3 case}, Lemma \ref{lem. 46 A3 case} and the fact $\dim S_\sigma\leq 1$, where $S_\sigma$ is the singular locus of the section $\mbP(\mcL_\sigma)$.
\end{proof}

To continue, we need to collect a result of decomposition of vector bundles, which can be found on page 409 in \cite{HM98}. See also \cite[Proposition 5]{Li18} for an explicit statement with a brief proof.

\begin{prop}\cite[page 409]{HM98}\label{prop. extending direct summmand bundles}
Let $\mcE$ be a vector bundle over a connected complex manifold $Y$. Suppose there is a complex subvariety $A\subset Y$ and vector bundles $\mcE_1$ and $\mcE_2$ over $Y\setminus A$ such that $\dim A \leq \dim Y - 2$ and $\mcE|_{Y\setminus A}=\mcE_1\oplus\mcE_2$. Then $\mcE_1$ and $\mcE_2$ can be extended uniquely as vector bundles $\mcE'_1$ and $\mcE'_2$ over $Y$ such that $\mcE=\mcE'_1\oplus\mcE'_2$.
\end{prop}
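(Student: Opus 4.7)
The plan is to convert the direct sum decomposition on $Y\setminus A$ into a holomorphic idempotent endomorphism of $\mcE|_{Y\setminus A}$, extend it across $A$ by a Hartogs-type argument, and verify that the extension remains a rank-preserving idempotent on all of $Y$.

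First, I would observe that the datum of a direct sum decomposition $\mcE|_{Y\setminus A} = \mcE_1 \oplus \mcE_2$ is equivalent to the datum of a holomorphic endomorphism $p\in H^0(Y\setminus A, \, \mcE nd(\mcE)|_{Y\setminus A})$ satisfying $p^2 = p$, namely the projection onto $\mcE_2$ along $\mcE_1$. Since $Y$ is a connected complex manifold and $A\subset Y$ is an analytic subvariety of codimension at least $2$, Hartogs' extension theorem (applied to the locally free sheaf $\mcE nd(\mcE)$ on the normal complex manifold $Y$) produces a unique holomorphic extension $\wtilde{p}\in H^0(Y, \mcE nd(\mcE))$.

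Next, I would verify that $\wtilde{p}$ is still an idempotent. The holomorphic section $\wtilde{p}^{\,2} - \wtilde{p}$ of $\mcE nd(\mcE)$ vanishes on the open dense subset $Y\setminus A$, hence vanishes identically on $Y$. Then, using the linear-algebra fact that every idempotent endomorphism of a finite-dimensional vector space has rank equal to its trace, the pointwise rank of $\wtilde{p}$ equals $\mathrm{tr}(\wtilde{p})$, which is a holomorphic (hence continuous) function on $Y$ that agrees with the constant $\mathrm{rank}(\mcE_2)$ on $Y\setminus A$; so the rank of $\wtilde{p}$ is globally constant on $Y$. It follows that $\mcE'_2 := \mathrm{Im}(\wtilde{p})$ and $\mcE'_1 := \ker(\wtilde{p})$ are vector subbundles of $\mcE$ with $\mcE = \mcE'_1 \oplus \mcE'_2$, and they restrict to $\mcE_1$ and $\mcE_2$ respectively on $Y\setminus A$. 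Uniqueness is immediate: if $(\mcE''_1, \mcE''_2)$ is another such pair, the corresponding idempotent $p''$ agrees with $p$ on $Y\setminus A$, and Hartogs extension being unique forces $p'' = \wtilde{p}$, whence $\mcE''_i = \mcE'_i$.

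The only genuinely non-formal step is the Hartogs extension of sections of a locally free sheaf across a codimension-$\geq 2$ analytic set on a complex manifold; this is standard (it follows from normality of $Y$ combined with the classical Hartogs theorem applied locally in trivializing charts of $\mcE nd(\mcE)$), so I do not expect any serious obstacle. The remaining verifications (preservation of the idempotent relation, constancy of rank via the trace identity, and the splitting of $\mcE$ induced by a global idempotent endomorphism) are linear-algebraic and pointwise in nature.
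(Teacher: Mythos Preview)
The paper does not supply its own proof of this proposition; it merely quotes the result from \cite[page 409]{HM98} and points to \cite[Proposition 5]{Li18} for an explicit statement with a brief proof. Your argument is correct and is in fact the standard one: encoding the splitting as an idempotent of $\mcE nd(\mcE)$, extending it across the codimension-$\geq 2$ locus by Hartogs, and then reading off the subbundles from the kernel and image of the extended idempotent (with rank constancy via the trace). This is essentially the approach one finds in the cited references, so there is nothing to add.
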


As a direct consequence of Lemma \ref{lem. 32 A3 case} and Proposition \ref{prop. extending direct summmand bundles}, we have the following result.

\begin{prop}\label{prop. 31 A3 case}
In setting of Lemma \ref{lem. 32 A3 case}, both $\mcL_\sigma$ and $\mcL_W$ are holomorphic vector bundles on $\mbP^3$, and $\mcV=\mcL_\sigma\oplus\mcL_W$.
\end{prop}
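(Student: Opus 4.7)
The plan is simply to apply Proposition \ref{prop. extending direct summmand bundles} to the data assembled in Lemma \ref{lem. 32 A3 case}. Take $Y = \mbP^3$ and $A = S_W$; since $\dim S_W \leq 1 = \dim\mbP^3 - 2$, the codimension hypothesis is satisfied. The ambient bundle $\mcE := \mcV$ is a genuine holomorphic vector bundle over $Y = \mbP^3$, and Lemma \ref{lem. 32 A3 case}(iii) provides the direct sum decomposition $\mcV|_{\mbP^3 \setminus S_W} = \mcL_\sigma|_{\mbP^3 \setminus S_W} \oplus \mcL_W|_{\mbP^3 \setminus S_W}$ on the complement of $S_W$.

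Thus Proposition \ref{prop. extending direct summmand bundles} yields unique holomorphic vector bundles $\mcL'_\sigma$ and $\mcL'_W$ over all of $\mbP^3$ that extend the restrictions $\mcL_\sigma|_{\mbP^3\setminus S_W}$ and $\mcL_W|_{\mbP^3\setminus S_W}$, and whose direct sum equals $\mcV$. By the uniqueness of meromorphic extensions across a codimension-two subset, the extensions $\mcL'_\sigma$ and $\mcL'_W$ must agree with the meromorphic subbundles $\mcL_\sigma$ and $\mcL_W$ of $\mcV$ already defined on $\mbP^3$; hence $\mcL_\sigma$ and $\mcL_W$ are themselves holomorphic on all of $\mbP^3$ and $\mcV = \mcL_\sigma \oplus \mcL_W$.

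There is no real obstacle here: the substantive work (producing the decomposition off a codimension-two locus, and in particular controlling $\dim S_W \leq 1$ via Lemma \ref{lem. 44 A3 case} and Lemma \ref{lem. 46 A3 case}) has already been carried out in the preceding lemmas. The present statement is a clean invocation of the extension principle of Hwang--Mok.
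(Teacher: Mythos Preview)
Your proposal is correct and follows exactly the paper's approach: the paper states Proposition \ref{prop. 31 A3 case} as ``a direct consequence of Lemma \ref{lem. 32 A3 case} and Proposition \ref{prop. extending direct summmand bundles}'' without further elaboration, and your write-up simply makes that application explicit.
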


\begin{lem}\label{lem. C(a1) on H in A3 case}
In setting of Proposition \ref{prop. 31 A3 case}, the followings hold.

$(i)$ For any $[l]\in\K^{\alpha_1}(\mcX_0/\mbP^3)$, $\mcL_\sigma|_l=\mcO(2)$ and $\mcL_W|_l=\mcO(2)\oplus\mcO$.

$(ii)$ For any $[l]\in\K^{\alpha_1}(\mcX_0/\mbP^3)$, there exists a unique $[C_l]\in\K^{\alpha_1}(\mcX_0)$ such that $C_l\subset W$ and $\pi^{\alpha_2}_0(C_l)=l$. Moreover, this curve $C_l\cong\mbP^1$.

$(iii)$ For any $x\in W$, $\K^{\alpha_1}_x(\mcX_0)$ consists of a single element, denoted by $[C_x]$. Moreover, this curve $C_x\subset W$ and $C_x\cong\mbP^1$.
\end{lem}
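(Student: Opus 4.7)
Plan: For part (i), my strategy is to establish $\mcL_\sigma|_l = \mcO(2)$ for \emph{every} line $l \subset \mbP^3$ first, using the fact that the degree equals the topological invariant $c_1(\mcL_\sigma) \cdot [l]$ which is the same for all lines in $\mbP^3$. It thus suffices to compute on one convenient line: by Lemma \ref{lem. 23 A3 case}, for general $t \in \mbP^3$ and any $[l] \in \K^{\alpha_1}_t(\mcX_0/\mbP^3)$ there exists $[C] \in \K^{\alpha_1}_{\sigma(t)}(\mcX_0)$ with $\pi^{\alpha_2}_0(C) = l$; by Lemma \ref{lem. 21 A3 case}(i), $C \subset \mbP(\mcL_\sigma)$, so $C$ coincides with the image $\sigma(l)$, giving $\mcL(C) = \mcL_\sigma|_l$. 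Then Proposition \ref{prop. 19 A3 case}(ii) yields $\mcL_\sigma|_l = \mcO(2)$. For the splitting of $\mcL_W|_l$ with $[l] \in \K^{\alpha_1}(\mcX_0/\mbP^3)$, I would combine Proposition \ref{prop. V =O(2, 2, 0) along C1 in A3 case} (which gives $\mcV|_l = \mcO(2)^2 \oplus \mcO$ for general $l$) with the direct sum $\mcV = \mcL_\sigma \oplus \mcL_W$ from Proposition \ref{prop. 31 A3 case} to conclude $\mcL_W|_l = \mcO(2) \oplus \mcO$ for general $[l] \in \K^{\alpha_1}(\mcX_0/\mbP^3)$; upper semi-continuity of splitting types, together with the observation that $\mcO(2) \oplus \mcO$ is the \emph{most degenerate} splitting of a rank-two degree-two bundle on $\mbP^1$, shows that the locus $\{l : \mcL_W|_l = \mcO(2) \oplus \mcO\}$ is Zariski closed and hence contains the whole irreducible variety $\K^{\alpha_1}(\mcX_0/\mbP^3)$.

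Part (ii) is then immediate from (i): the bundle $\mcL_W|_l = \mcO(2) \oplus \mcO$ admits a unique maximal degree-two line subbundle (uniqueness since $H^0(\mcL_W|_l \otimes \mcO_l(-2)) = \C$), which defines a section $C_l \subset W = \mbP(\mcL_W)$ of $W \to \mbP^3$ over $l$ with $\mcL(C_l) = \mcO(2)$, whence $[C_l] \in \K^{\alpha_1}(\mcX_0)$ by Proposition \ref{prop. 19 A3 case}(ii). Uniqueness of $C_l$ with the stated properties follows from uniqueness of the maximal subbundle.

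For part (iii), given $x \in W$, any $[C] \in \K^{\alpha_1}_x(\mcX_0)$ satisfies $(C \cdot W) = (C \cdot \mcL_0^{\alpha_2}) = 0$; since $x \in C \cap W$ is nonempty and $C$ is irreducible, we must have $C \subset W$. By (ii), $C = C_l$ where $l = \pi^{\alpha_2}_0(C) \in \K^{\alpha_1}_t(\mcX_0/\mbP^3)$ with $t = \pi^{\alpha_2}_0(x)$, so the cardinality $|\K^{\alpha_1}_x(\mcX_0)|$ equals the number of $l \in \K^{\alpha_1}_t(\mcX_0/\mbP^3)$ with $\eta_t(l) := C_l \cap W_t = x$. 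The key geometric input is that the direct sum $\mcV = \mcL_\sigma \oplus \mcL_W$ forces $\sigma(t) \notin W_t$ for every $t \in \mbP^3$, so $W_t$ is a line in $\mbP^2_t$ disjoint from the center $\sigma(t)$ of the projection described in Lemma \ref{lem. P2t map to Kt(X0, P3) by psi in A3 case}; consequently $\psi|_{W_t}$ is a bijective morphism of $\mbP^1$'s onto $\K^{\alpha_1}_t(\mcX_0/\mbP^3)$ for generic $t$, and $\eta_t$ is its inverse, proving injectivity on a dense open subset of $W$.

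To upgrade injectivity of $\eta_t$ to all $x \in W$, I would pass to the incidence variety $\mcI := \{(x,l) \in W \times \K^{\alpha_1}(\mcX_0/\mbP^3) : x \in C_l\}$, which is a $\mbP^1$-bundle over $\K^{\alpha_1}(\mcX_0/\mbP^3)$, and study the proper birational morphism $\iota : \mcI \to W$. Since $W = \mbP(\mcL_W)$ is smooth as a projective bundle over $\mbP^3$, Zariski's Main Theorem reduces the proof to showing $\iota$ is quasi-finite. The main obstacle I anticipate is ruling out positive-dimensional fibers of $\iota$ at special $t_0 \in \mbP^3$ where the hyperplane section $\K^{\alpha_1}(\mcX_0/\mbP^3) \subset \mbG(1,\mbP^3)$ could contain an entire $\alpha$-plane of lines through $t_0$, making $\K^{\alpha_1}_{t_0}(\mcX_0/\mbP^3)$ jump to dimension two; I would handle this by a deformation argument, specializing a one-parameter family of generic $t$'s to $t_0$ and using the bidegree-$(1,1)$ class of the graph of $\eta_t$ in $\K^{\alpha_1}_t \times W_t$ (generically the graph of an isomorphism) to rule out that any limit is non-injective.
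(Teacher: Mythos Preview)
For parts (i) and (ii) your argument is essentially the paper's. The only cosmetic difference is that the paper runs the semicontinuity on $\mcV$ (a family of type $\mcO(2)^2\oplus\mcO$ cannot specialize to the more balanced $\mcO(2)\oplus\mcO(1)^2$) and then splits off $\mcL_\sigma$, while you split first and argue semicontinuity directly on $\mcL_W$; these are interchangeable.

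For part (iii) the paper is far terser than you: after observing that $(W\cdot C)=0$ forces $C\subset W$, it simply writes ``the assertion (iii) follows from (i) and (ii)''. You are right that a literal reading of (i) and (ii) gives, for each $l\in\K^{\alpha_1}(\mcX_0/\mbP^3)$, a unique $C_l\subset W$ over $l$, but does \emph{not} immediately yield uniqueness of $l$ for a fixed $x\in W$; your Zariski Main Theorem reduction is the correct way to make this precise. However, the bidegree-$(1,1)$ deformation you sketch does not resolve the case you yourself flag as the main obstacle: if $\K^{\alpha_1}_{t_0}(\mcX_0/\mbP^3)$ were the full $\alpha$-plane $\mbP^2$, there is no $(1,1)$-curve in $\mbP^1\times\mbP^1$ to specialize, so the argument as written has a gap precisely at the hard point. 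The clean fix is to exploit Lemma~\ref{lem. 34 A3 case} more fully: since $\K^{\alpha_1}(\mcX_0/\mbP^3)$ is a hyperplane section of $\mbG(1,\mbP^3)\subset\mbP^5$ and each $\alpha$-plane $\{l:t\in l\}$ is a linear $\mbP^2$, the fiber over any $t$ is either a projective line or all of $\mbP^2$. In the second case the assignment $l\mapsto C_l\cap W_t$ would be a morphism $\mbP^2\to W_t\cong\mbP^1$, hence constant, and then some $y\in W_t$ would satisfy $\K^{\alpha_1}_y(\mcX_0)=\emptyset$, contradicting that $\K^{\alpha_1}(\mcX_0)$ is a covering family. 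Thus $\K^{\alpha_1}_t(\mcX_0/\mbP^3)\cong\mbP^1$ for every $t$, your incidence variety $\mcI$ becomes a $\mbP^1$-bundle over $\mbP^3$, and the surjective morphism $\mcI\to W$ of $\mbP^1$-bundles over $\mbP^3$, having degree one over the generic $t$, is an isomorphism. This replaces your deformation argument entirely and also disposes of the $\mbP^2$ case in one stroke.
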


\begin{proof}
By Proposition \ref{prop. V =O(2, 2, 0) along C1 in A3 case},
\begin{eqnarray}\label{E00_1}
\mathcal{V} |_{l} = \mathcal{O}(2)^2 \oplus \mathcal{O}, \quad \text{for}~ [l] \in \K^{\alpha_1}(\mcX_0/\mbP^3)~ \text{general}.
\end{eqnarray}
By Proposition \ref{prop. 19 A3 case}$(i)$, the restriction of $\mathcal{V}$ on any line of $\mathbb{P}^3$ is either $\mathcal{O}(2)^2\oplus\mcO$ or $\mathcal{O}(2) \oplus \mathcal{O}(1)^2$. Then by \eqref{E00_1}, we conclude that
\begin{eqnarray}\label{eqn. V=O(2, 2, 0) along any symplectic line in A3 case}
\mathcal{V} |_{l} = \mathcal{O}(2)^2 \oplus \mathcal{O}, \quad \text{for any}~ [l] \in \K^{\alpha_1}(\mcX_0/\mbP^3) = \mathcal{K}^{\alpha_1}(W/\mbP^3).
\end{eqnarray}
This is because a positive dimensional family of vector bundles over $\mathbb{P}^1$ of type $\mathcal{O}(2)^2 \oplus \mathcal{O}$ can not have a limit of type $\mathcal{O}(2) \oplus \mathcal{O}(1)^2$.

Now take any $[l] \in \K^{\alpha_1}(\mcX_0/\mbP^3)$, we have
$\mathcal{L}_{\sigma} |_{l} = \mathcal{O}(2)$ by Proposition \ref{prop. 19 A3 case}$(ii)$. Thus by \eqref{eqn. V=O(2, 2, 0) along any symplectic line in A3 case} and Proposition \ref{prop. 31 A3 case}, $\mathcal{L}_{W} |_{l} = \mathcal{O}(2) \oplus \mathcal{O}$, verifying the assertion $(i)$. It follows that there exists a unique $[C_{l}] \in \mathcal{K}^{\alpha_1}(\mathcal{X}_0)$ such that $C_{l} \subset W = \mathbb{P}(\mathcal{L}_{W})$, and $\pi^{\alpha_2}_0(C_{l}) = l$. In fact $C_l=\mbP(\mcO(2)|_l)\subset\mbP((\mcO(2)\oplus\mcO)|_l)=\mbP(\mcL_W|_l)$.
Moreover $C_{l} \cong \mathbb{P}^1$, verifying the assertion $(ii)$.

Take any $[C]\in\K^{\alpha_1}(\mcX_0)$. Since $(W\cdot C)=0$, either $C\subset W$ or $C\cap W=\emptyset$. Then the assertion $(iii)$ follows from $(i)$ and $(ii)$.
\end{proof}

\begin{lem}\label{lem. 49 A3 case}
In setting of Proposition \ref{prop. 31 A3 case}, the variety $\K^{\alpha_1}(\mcX_0/\mbP^3)$ is a smooth hyperplane section of $\mbG(1, \mbP^3) \subset \mathbb{P}^5$, and $W \cong C_2/(P_{\beta_1}\cap P_{\beta_2})$, where $\beta_1$ and $\beta_2$ are the short and long simple root of $C_2$ respectively.
\end{lem}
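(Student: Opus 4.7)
The plan is to identify $\K^{\alpha_1}(\mcX_0/\mbP^3)$ with the Lagrangian Grassmannian of a symplectic form on $\C^4$, and then realise $W$ as the associated flag incidence variety. Via the Pl\"ucker embedding, a hyperplane in $\mbP^5 = \mbP(\wedge^2\C^4)$ corresponds (up to scalar) to an antisymmetric $2$-form $\omega$ on $\C^4$, and the induced hyperplane section of $\mbG(1,\mbP^3)$ is $\{[U]\in\mbG(1,\mbP^3):\omega|_U=0\}$. This section is smooth if and only if $\omega$ is non-degenerate, in which case it is the Lagrangian Grassmannian $LG(2,\C^4)\cong Q^3$; if instead $\omega$ has rank $2$, then the section is a singular quadric cone whose points are precisely the $2$-planes meeting $K:=\ker(\omega)$ non-trivially, i.e.\ lines in $\mbP^3$ that meet the line $\mbP(K)$.

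The first task is to rule out the rank $2$ case and thereby deduce smoothness. Suppose for contradiction that the hyperplane section is singular. Then for $t\in\mbP^3$ general with $t\notin\mbP(K)$, every line in $\K^{\alpha_1}_t(\mcX_0/\mbP^3)$ lies in the plane $\langle t,\mbP(K)\rangle$; since by Lemma \ref{lem. 34 A3 case} this family is $1$-dimensional, it must exhaust that plane, so $\Lambda_1(t)=\langle t,\mbP(K)\rangle$. But then for $z\in\Lambda_1(t)$ general one has $z\notin\mbP(K)$ and $\langle z,\mbP(K)\rangle=\langle t,\mbP(K)\rangle=\Lambda_1(t)$, which forces $\Lambda_2(t)=\Lambda_1(t)$ to be a plane, contradicting $\Lambda_2(t)=\mbP^3$ from Lemma \ref{lem. 24 A3 case}. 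Hence $\omega$ is symplectic, $\K^{\alpha_1}(\mcX_0/\mbP^3)\cong Q^3$ is smooth, and the associated contact distribution $\mcL^\omega\subset T\mbP^3$ has the property that its tangent lines in $\mbP^3$ are precisely the members of $\K^{\alpha_1}(\mcX_0/\mbP^3)$.

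For the second assertion, let $I:=\{(s,[l])\in\mbP^3\times\K^{\alpha_1}(\mcX_0/\mbP^3):s\in l\}$, which projects onto $\mbP^3$ with fibre over $s$ equal to $\mbP(\mcL^\omega_s)$, so $I\cong\mbP(\mcL^\omega)\cong C_2/B=C_2/(P_{\beta_1}\cap P_{\beta_2})$. Using Lemma \ref{lem. C(a1) on H in A3 case}$(iii)$, associate to each $x\in W$ the pair $\varphi(x):=(\phi(x),[\pi^{\alpha_2}_0(C_x)])\in I$, where $[C_x]$ is the unique element of $\K^{\alpha_1}_x(\mcX_0)$. The universal family over $\K^{\alpha_1}(\mcX_0)$, together with the uniqueness in Lemma \ref{lem. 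C(a1) on H in A3 case}$(iii)$, shows that $\varphi$ is a morphism. By Lemma \ref{lem. C(a1) on H in A3 case}$(ii)$, given $(s,[l])\in I$ the unique pre-image is the point of the section $C_l\subset W$ of $l$ lying over $s$; hence $\varphi$ is a bijective morphism between irreducible projective $3$-folds onto the smooth variety $I$, and therefore an isomorphism.

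The main obstacle is the smoothness step: making sure that the rank-$2$ possibility really is excluded at the level of geometry rather than merely generically. The argument above leans on the precise description from Lemma \ref{lem. 24 A3 case} of the $2$-step reachable set $\Lambda_2(t)$, so the plan hinges on the line $\mbP(K)$ being truly contained in each plane $\langle t,\mbP(K)\rangle$ for a general $t$, which is straightforward but must be stated carefully. A secondary subtlety is upgrading the bijective morphism $\varphi:W\to I$ to an isomorphism; this uses normality of $W$ (obtained either from genericity of $W\in|\mcL_0^{\alpha_2}|$ combined with the generic $\mbP^1$-bundle structure of Lemma \ref{lem. 44 A3 case}, or via Zariski's main theorem after checking birationality on the open locus where $W\to\mbP^3$ is a $\mbP^1$-bundle).
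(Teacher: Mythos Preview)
Your argument is correct, but the route to smoothness differs from the paper's. You exclude the rank-$2$ form directly by showing that, were $\omega$ degenerate with kernel $K$, the sets $\Lambda_1(t)=\langle t,\mbP(K)\rangle$ would be stable under the second sweep, contradicting Lemma \ref{lem. 24 A3 case}. The paper instead observes that by Proposition \ref{prop. BCD07 equivalence class of quasi-unsplit cycles} and Lemma \ref{lem. C(a1) on H in A3 case} there is a $\mbP^1$-fibration $W\to\K^{\alpha_1}(\mcX_0/\mbP^3)$, and since $W$ is smooth the base must be smooth. Your approach has the virtue of identifying $\omega$ intrinsically from the geometry of lines; the paper's is shorter because it leverages a fact you underuse: in the setting of Proposition \ref{prop. 31 A3 case} one already has $\mcV=\mcL_\sigma\oplus\mcL_W$ as holomorphic bundles on all of $\mbP^3$, so $W=\mbP(\mcL_W)$ is globally a $\mbP^1$-bundle, hence smooth. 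This makes your closing worries about normality of $W$ (via genericity, Lemma \ref{lem. 44 A3 case}, or Zariski's main theorem) unnecessary---$W$ is smooth from the outset, and your bijective morphism $\varphi:W\to I$ between smooth projective varieties is then automatically an isomorphism. The identification of $W$ with the incidence variety $I\cong\mbP(\mcL^\omega)\cong C_2/B$ is essentially the same in both proofs, just packaged differently: the paper phrases it as the evaluation morphism of the family $\K^{\alpha_1}(\mcX_0/\mbP^3)$, which is your projection $I\to\mbP^3$.
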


\begin{proof}
By Lemma \ref{lem. 34 A3 case}, $\K^{\alpha_1}(\mcX_0/\mbP^3)$ is a hyperplane section of $\mbG(1, \mbP^3) \subset \mathbb{P}^5$. By Proposition \ref{prop. BCD07 equivalence class of quasi-unsplit cycles} and Lemma \ref{lem. C(a1) on H in A3 case}, there is a $\mbP^1$-fibration $\varphi: W\rightarrow\K^{\alpha_1}(W)=\K^{\alpha_1}(\mcX_0/\mbP^3)$, where $\K^{\alpha_1}(W)$ is the set of $[C]\in\K^{\alpha_1}(\mcX_0)$ such that $C\subset W$. The variety $\K^{\alpha_1}(\mcX_0/\mbP^3)$ is smooth because so is $W$. Then there exists a nondegenerate form $\omega\in\wedge^2(\C^4)^*$ such that $\mcX_0^{\alpha_2}=\mbP^3=\mbP(\C^4)$,
\begin{eqnarray}\label{eqn. symplectic form}
\K^{\alpha_1}(\mcX_0/\mbP^3)=\{[A]\in Gr(2, \C^4)\mid \omega(A, A)=0\},
\end{eqnarray}
and $\pi^{\alpha_2}_0|_W: W\rightarrow\mbP^3$ is the evaluation morphism of the family $\K^{\alpha_1}(\mcX_0/\mbP^3)$. Then the conclusion follows.
\end{proof}

Now we can complete the proof of Theorem \ref{thm. intro unique degeneration of F(1, 2, C4)}.

\begin{proof}[Proof of Theorem \ref{thm. intro unique degeneration of F(1, 2, C4)}]
By Proposition \ref{prop. X=P(V) in A3 case} and Proposition \ref{prop. 31 A3 case}, $\mcX_0\cong\mbP(\mcV)$, and $\mcV\cong\mcL^\sigma\oplus\mcL_W$. By Proposition \ref{prop. 19 A3 case}$(ii)$, $\mcL_\sigma\cong\mcO(2)$. By Lemma \ref{lem. 49 A3 case}, $\mcL_W\cong\mcL^\omega\otimes\mcO(k)$ for some $k\in\mbZ$, where $\omega$ is the symplectic form on $\C^4$ satisfying \eqref{eqn. symplectic form}.
Take any line $l\subset\mbP^3$. We have
\begin{eqnarray*}
&& \deg(\mcL_W|_l)=\deg(\mcV|_l)-\deg(\mcL_\sigma|_l)=2, \\
&& \deg(\mcL^\omega|_l)=\deg(T\mbP^3|_l)-\deg(\mcO(2)|_l)=2.
\end{eqnarray*}
Then $k=0$ and $\mcL_W\cong\mcL^\omega$. Hence $\mcV\cong\mcO(2)\oplus\mcL^\omega$ and $\mcX_0\cong F^d(1, 2; \C^4)$.
\end{proof}


\subsubsection{Properties of $F^d(1, 2; \C^4)$} \label{subsubsection. properties of Fd(1, 2; C4)}

For the convenience of discussion later, we give several basic properties of the manifold $F^d(1, 2; \C^4)$ in Construction \ref{cons. intro Fd(1, 2; C4)}. All these properties are straight-forward from the construction. They have also been proved in a more involved way in the previous arguments in subsection \ref{section. unique degeneration of (A3, a1, a2)} by realizing $F^d(1, 2; \C^4)$ as the a priori unclear Fano degeneration of $A_3/P_{\{\alpha_1, \alpha_2\}}$, see Lemma \ref{lem. A3 degeneration unique W when generating D(-3)}, Corollary \ref{cor. A3 degenerate symb algebra}, Corollary \ref{cor. 20 A3 case}, Lemma \ref{lem. P2t map to Kt(X0, P3) by psi in A3 case} for the corresponding statements of them.

\begin{nota}\label{nota. Fd(1, 2; C4) bundle section etc}
In setting of Construction \ref{cons. intro Fd(1, 2; C4)}, denote by $\phi: F^d(1, 2; \C^4)\rightarrow\mbP^3$ the $\mbP^2$-bundle, and let $\sigma: \mbP^3\rightarrow\mbP(\mcL_\sigma)\subset F^d(1, 2; \C^4)$ be the holomorphic section. Given a point $x\in F^d(1, 2; \C^4)\setminus\mbP(\mcL_\sigma)$, denote by $l_x$ the line $\langle x, \sigma(\phi(x))\rangle$ in the projective plane $\phi^{-1}(\phi(x))\cong\mbP^2$. By abuse of notations (to be compatible with those in Section \ref{section. unique degeneration of (A3, a1, a2)}), we denote by $\msD^{\alpha_1}$ the meromorphic distribution of rank one on $F^d(1, 2; \C^4)$ whose general leaves are minimal rational curves biholomorphically sent to isotropic lines in $\mbP^3$, and by $\K^{\alpha_1}(F^d(1, 2; \C^4))$ the closure this family of minimal rational curves. Set $\msD^{\alpha_2}:=T^\phi$ and $\msD:=\msD^{\alpha_1}+\msD^{\alpha_2}$. Denote by $\K^{\alpha_2}(F^d(1, 2; \C^4))$ the family of minimal rational curves which are lines in the fibers of $\phi$.
\end{nota}

The following two propositions are immediate from the constructions.

\begin{prop}\label{prop. properties of sigma in Deform-A3 case}
At any point $x\in F^d(1, 2; \C^4)\setminus\mbP(\mcL_\sigma)$, $\K^{\alpha_1}_x(\mcX_0)$ consists of a unique element, denoted by $[C_x]$. Two points $y, z\in\mbP^2_t\setminus\{\sigma(t)\}$ satisfy $\phi(C_y)=\phi(C_z)$ if and only if the two lines $\langle y, \sigma(t)\rangle$ and $\langle z, \sigma(t)\rangle$ in $\mbP^2_t$ coincide, where $t\in\mcX^{\alpha_2}_0$ is an arbitrary point and $\mbP^2_t:=\phi^{-1}(t)$.
\end{prop}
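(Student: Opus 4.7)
The plan is to work directly from the explicit description $F^d(1,2;\C^4) = \mbP(\mcV)$ with $\mcV = \mcL_\sigma \oplus \mcL^\omega$ and to analyze each $\K^{\alpha_1}$-curve through its image in $\mbP^3$ under $\phi$. Since $\mcL_\sigma$ is the contact quotient of the symplectic form, one has $\mcL_\sigma \cong \mcO_{\mbP^3}(2)$ and hence $c_1(\mcL^\omega) = 2$. For a line $l \subset \mbP^3$, the Euler sequence forces $\mcL^\omega|_l \cong \mcO(2) \oplus \mcO$ (with $T(l) \hookrightarrow \mcL^\omega|_l$ as the $\mcO(2)$-summand) when $l$ is isotropic, and $\mcL^\omega|_l \cong \mcO(1)^{\oplus 2}$ otherwise; so $\mcV|_l = \mcO(2)^{\oplus 2} \oplus \mcO$ exactly for isotropic $l$. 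Since a top-degree section of $\phi|_{\phi^{-1}(l)}$ can occur only with this splitting type (compare Proposition~\ref{prop. 19 A3 case}), every member of $\K^{\alpha_1}(F^d(1,2;\C^4))$ projects to some isotropic line.

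Fix an isotropic line $l$ through $t$. The $\K^{\alpha_1}$-curves with $\phi(C) = l$ are projectivizations of line subbundles $L \subset \mcV|_l$ of degree two. Because $\Hom(\mcO(2), \mcO) = 0$ while $\Hom(\mcO(2), \mcO(2)^{\oplus 2}) = \C^2$, any such $L$ is forced into the rank-two subbundle $\mcL_\sigma|_l \oplus T(l) \subset \mcV|_l$, and these $L$ form a $\mbP^1$-family sweeping out the subbundle $\mbP(\mcL_\sigma|_l \oplus T(l)) \subset \phi^{-1}(l)$. Now given $x \in F^d(1,2;\C^4) \setminus \mbP(\mcL_\sigma)$, write $\tilde{x} = v_\sigma + v_\omega \in \mcV_t = (\mcL_\sigma)_t \oplus (\mcL^\omega)_t$ with $v_\omega \neq 0$. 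Any $C \in \K^{\alpha_1}_x$ satisfies $v_\omega \in T_t(\phi(C))$; but the isotropic lines through $t$ are parametrized by their tangent directions in $\mbP((\mcL^\omega)_t)$, so $l := \phi(C)$ is uniquely determined as the isotropic line with tangent direction $[v_\omega]$ at $t$. With $l$ fixed, the further requirement $L_t = \C\tilde{x} \subset (\mcL_\sigma)_t \oplus T_t(l)$ determines $L$ up to scalar, giving a unique $[C_x]$ and proving part (1).

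For part (2), the preceding analysis shows that $\phi(C_y) = \phi(C_z)$ is equivalent to $[v_{y,\omega}] = [v_{z,\omega}]$ in $\mbP((\mcL^\omega)_t)$. On the other hand, the projective line $\langle y, \sigma(t)\rangle \subset \mbP^2_t = \mbP(\mcV_t)$ is the projectivization of the $2$-plane $(\mcL_\sigma)_t \oplus \C v_{y,\omega}$ (using $\sigma(t) = [\mcL_{\sigma,t}]$), so it depends only on $[v_{y,\omega}]$; the same holds for $\langle z, \sigma(t)\rangle$. Hence $\langle y, \sigma(t)\rangle = \langle z, \sigma(t)\rangle$ is also equivalent to $[v_{y,\omega}] = [v_{z,\omega}]$, and the two equivalences coincide.

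The only delicate point is the first paragraph's claim that every member of $\K^{\alpha_1}$ projects to an isotropic line, which could a priori fail for non-generic (limiting) elements of the Chow component. The cleanest justification uses the splitting-type argument above: a top-degree section of a $\mbP^2$-bundle on $\mbP^1$ with fiber direction $\mcV|_l$ forces $\mcV|_l \cong \mcO(2)^{\oplus 2} \oplus \mcO$, which by the Euler-sequence computation occurs only for isotropic $l$. Beyond this, everything reduces to straightforward linear algebra in the fiber of $\mcV$ over $t$.
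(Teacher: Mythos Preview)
Your proof is essentially correct and follows the same spirit as the paper, which simply declares the result ``immediate from the constructions'' and leaves the reader to unpack it; you are supplying exactly that unpacking via explicit linear algebra in the fibers of $\mcV = \mcL_\sigma \oplus \mcL^\omega$. Your splitting-type computation for $\mcL^\omega|_l$ via the contact exact sequence and the arguments in your second and third paragraphs are correct.

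There is, however, a genuine gap in the justification you offer for the ``delicate point'' in your first and last paragraphs. You assert that the existence of a degree-$2$ section of $\phi$ over a line $l$ forces $\mcV|_l \cong \mcO(2)^{\oplus 2}\oplus\mcO$. This is false: when $l$ is non-isotropic you yourself computed $\mcV|_l = \mcL_\sigma|_l \oplus \mcL^\omega|_l \cong \mcO(2)\oplus\mcO(1)^{\oplus 2}$, and this bundle \emph{also} admits a degree-$2$ line subbundle, namely $\mcL_\sigma|_l$. So degree-$2$ sections exist over every line in $\mbP^3$, and your splitting-type argument does not exclude non-isotropic lines.

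The repair is immediate and is already implicit in your own setup. You do not need the global claim that every member of $\K^{\alpha_1}$ projects to an isotropic line; you only need it for curves passing through a given $x\notin\mbP(\mcL_\sigma)$. For non-isotropic $l$ one has $\Hom(\mcO(2),\mcO(1))=0$, so the \emph{unique} degree-$2$ line subbundle of $\mcV|_l$ is $\mcL_\sigma|_l$, and the corresponding section is $\sigma(l)\subset\mbP(\mcL_\sigma)$, which cannot contain $x$. Hence any $[C]\in\K^{\alpha_1}_x(\mcX_0)$ is forced to lie over an isotropic line, and the remainder of your argument goes through unchanged. Alternatively, one may invoke the paper's Notation~\ref{nota. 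Fd(1, 2; C4) bundle section etc}, where $\K^{\alpha_1}(F^d(1,2;\C^4))$ is \emph{defined} as the closure of the family of minimal sections projecting to isotropic lines; since that family is already a proper $\mbP^1$-bundle over the Lagrangian Grassmannian $Q^3$, taking the closure adds nothing and the claim holds by definition.
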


\begin{prop}\label{prop. rational map over P3 in Deform-A3 case}
In setting of Construction \ref{cons. intro Fd(1, 2; C4)} the surjective homomorphism $\mcL_\sigma\oplus\mcL^\omega\rightarrow\mcL_\sigma$ induces a rational map $F^d(1, 2; \C^4)\dashrightarrow \mbP(\mcL^\omega)\cong C_2/B$ over $\mbP^3$. It is a linear projection from $\mbP^2_t:=\phi^{-1}(t)$ with center $\sigma(t)$ over each $t\in\mbP^3$.
\end{prop}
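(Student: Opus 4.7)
The plan is to read off the rational map directly from the surjection of vector bundles and then identify the target projective bundle with $C_2/B$. First I would observe that any surjection of vector bundles $p\colon E\twoheadrightarrow F$ on a base $Y$ induces a canonical rational map $\mbP(E)\dashrightarrow\mbP(F)$ of projective bundles over $Y$, whose indeterminacy locus is $\mbP(\ker p)\subset\mbP(E)$ and which on the complement sends a line $\ell\subset E_y$ to its image $p(\ell)\subset F_y$. Applied to the projection of $\mcL_\sigma\oplus\mcL^\omega$ onto the factor $\mcL^\omega$ with kernel $\mcL_\sigma$, this gives a rational map $F^d(1,2;\C^4)=\mbP(\mcL_\sigma\oplus\mcL^\omega)\dashrightarrow\mbP(\mcL^\omega)$ over $\mbP^3$ whose indeterminacy locus is precisely $\mbP(\mcL_\sigma)=\sigma(\mbP^3)$, as in Notation \ref{nota. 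Fd(1, 2; C4) bundle section etc}.

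Next I would check the fiberwise description. Over a point $t\in\mbP^3$ the fiber of $\phi$ is $\mbP^2_t=\mbP(\mcL_\sigma|_t\oplus\mcL^\omega|_t)$ and the induced map is the rational map $\mbP^2_t\dashrightarrow\mbP(\mcL^\omega|_t)\cong\mbP^1$ coming from the linear surjection $\mcL_\sigma|_t\oplus\mcL^\omega|_t\twoheadrightarrow\mcL^\omega|_t$; the unique point of indeterminacy on $\mbP^2_t$ is $\mbP(\mcL_\sigma|_t)=\sigma(t)$, and the map is a linear surjection of $\mbP^2_t\setminus\{\sigma(t)\}$ onto $\mbP^1$ whose fibers are the punctured lines through $\sigma(t)$. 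This is precisely the linear projection of $\mbP^2_t$ from $\sigma(t)$.

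Finally, for the identification $\mbP(\mcL^\omega)\cong C_2/B$, I would use the interpretation of $C_2/B$ as the variety of symplectic flags $V_1\subset V_2\subset\C^4$ with $\dim V_i=i$ and $\omega(V_2,V_2)=0$. Sending such a flag to $[V_1]\in\mbP^3=\mbP(\C^4)$ realizes $C_2/B$ as a $\mbP^1$-bundle over $\mbP^3$ whose fiber over $[V_1]$ is the family of isotropic planes containing $V_1$, i.e.\ lines in $V_1^{\perp_\omega}/V_1$. Since the contact distribution $\mcL^\omega\subset T\mbP^3$ associated to $\omega$ has fiber $V_1^{\perp_\omega}/V_1$ at $[V_1]$ (up to the canonical twist given by $T\mbP^3=\mcO(1)\otimes(\C^4/V_1)$, which does not affect the projectivization), we obtain $\mbP(\mcL^\omega)\cong C_2/B$ over $\mbP^3$. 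No step here presents a real obstacle; the only point requiring a little care is matching conventions for $\mbP(\cdot)$ so that the indeterminacy locus of the induced rational map is indeed $\mbP(\ker p)=\sigma(\mbP^3)$ rather than its complement.
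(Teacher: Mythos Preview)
Your argument is correct and matches what the paper has in mind: the paper gives no proof at all, declaring this proposition (together with the preceding one) ``immediate from the constructions,'' and your three steps---the general rational map $\mbP(E)\dashrightarrow\mbP(F)$ from a surjection $E\twoheadrightarrow F$, the fiberwise identification with linear projection from $\sigma(t)$, and the symplectic-flag description of $C_2/B$---are exactly the unpacking of that claim. One remark: you silently (and correctly) used the projection $\mcL_\sigma\oplus\mcL^\omega\twoheadrightarrow\mcL^\omega$ with kernel $\mcL_\sigma$, whereas the stated surjection in the proposition is onto $\mcL_\sigma$; since the paper uses the ``lines'' convention for $\mbP(\cdot)$ (as is clear from $\mbP(\mcL_\sigma)\subset F^d(1,2;\C^4)$ being the section $\sigma$), the target $\mbP(\mcL^\omega)$ and the center $\sigma(t)=\mbP(\mcL_\sigma|_t)$ force the kernel to be $\mcL_\sigma$, so the stated surjection is a typo and your reading is the intended one.
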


\begin{prop}\label{prop. unique distribution N in Deform-A3 case}
In setting of Notation \ref{nota. Fd(1, 2; C4) bundle section etc}, define a meromorphic distribution $\mcN$ on $F^d(1, 2; \C^4)$ such that $\mcN_x=T_x(l_x)$ at any point $x\in F^d(1, 2; \C^4)\setminus\mbP(\mcL_\sigma)$. Then $\mcN$ is the unique meromorphic line subbundle of $\msD$ on $F^d(1, 2; \C^4)$ such that $[\mcN, \msD]\subset\msD$. Moreover, $[\mcN, \msD^{\alpha_1}]\subset\mcN+\msD^{\alpha_1}$.
\end{prop}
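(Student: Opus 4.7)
The plan is to realize $\mcN$ and $\mcN+\msD^{\alpha_1}$ as the relative tangent bundles of two successive rational fibrations from $F^d(1,2;\C^4)$, from which both bracket conditions follow by integrability, and then to deduce uniqueness from the symbol algebra of $\msD$.

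First, $\mcN\subset\msD^{\alpha_2}$ is immediate, since each line $l_x$ lies in the $\phi$-fiber $\mbP^2_{\phi(x)}$. By Proposition~\ref{prop. rational map over P3 in Deform-A3 case}, the linear projection $\psi: F^d(1,2;\C^4)\dashrightarrow\mbP(\mcL^\omega)\cong C_2/B$ contracts precisely the lines $l_x$, so $\mcN=\ker d\psi$ as meromorphic distributions. I would then compose $\psi$ with the projection $p: C_2/B\to Q^3$ onto the $3$-dimensional quadric $Q^3=C_2/P_{\beta_2}$ parameterizing isotropic lines in $\mbP^3$, forming $\eta:=p\circ\psi: F^d(1,2;\C^4)\dashrightarrow Q^3$, and claim $\ker d\eta=\mcN+\msD^{\alpha_1}$. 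The essential geometric step is to verify that $\psi$ maps every $C_x\in\K^{\alpha_1}(F^d(1,2;\C^4))$ onto the $p$-fiber over $[\phi(C_x)]\in Q^3$: using $\mcV|_{\phi(C_x)}=\mcL_\sigma|_{\phi(C_x)}\oplus\mcL^\omega|_{\phi(C_x)}=\mcO(2)\oplus(\mcO(2)\oplus\mcO)$ over the isotropic line $\phi(C_x)$, with the distinguished degree-$2$ summand of $\mcL^\omega|_{\phi(C_x)}$ being $T(\phi(C_x))$, one checks that $C_x=\mbP(\mcL(C_x))$ with $\mcL(C_x)$ a degree-$2$ subbundle distinct from $\mcL_\sigma|_{\phi(C_x)}$ must project onto $\mbP(T(\phi(C_x)))$, which is exactly the $p$-fiber. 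Consequently $\msD^{\alpha_1}\subset\ker d\eta$; combined with $\mcN\subset\ker d\eta$, the generic transversality $\mcN\cap\msD^{\alpha_1}=0$ (since $\mcN\subset\msD^{\alpha_2}$ and $\msD^{\alpha_1}\cap\msD^{\alpha_2}=0$ generically), and the dimension count $\rank\ker d\eta=5-3=2$, we obtain $\ker d\eta=\mcN+\msD^{\alpha_1}$. Integrability of the relative tangent bundle of a rational fibration then gives $[\mcN,\msD^{\alpha_1}]\subset\mcN+\msD^{\alpha_1}$, which is the ``moreover'' clause; combined with $[\mcN,\msD^{\alpha_2}]\subset\msD^{\alpha_2}$ from integrability of $T^\phi$, this yields $[\mcN,\msD]\subset\msD$.

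For uniqueness, I would appeal to the symbol algebra. By Corollary~\ref{cor. A3 degenerate symb algebra} (applicable since $F^d(1,2;\C^4)$ arises as the Fano degeneration of $A_3/P_{\{\alpha_1,\alpha_2\}}$ by Theorem~\ref{thm. intro unique degeneration of F(1, 2, C4)}), at a general point $x$ we have $\Symb_x(\msD)\cong\mfg_-(C_2\times A_1)$ with generators $v_1,v_2,v_3\in\mfm_{-1}$ as in Remark~\ref{rmk. symbol algebra C2A1 in A3 case}. Any meromorphic line subbundle $\mcN'\subset\msD$ satisfying $[\mcN',\msD]\subset\msD$ determines at general $x$ a line in $\mfm_{-1}$ lying in the centralizer $\{v\in\mfm_{-1}\mid[v,\mfm_{-1}]=0\text{ in }\mfm_{-2}\}$. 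In $\mfg_-(C_2\times A_1)$ this centralizer is precisely $\C v_3$ (the $A_1$-generator), since the $C_2$-generators satisfy $[v_1,v_2]=v_{12}\ne 0$. Hence $\mcN'_x=\C v_3=\mcN_x$ at general $x$, so $\mcN'=\mcN$.

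The main obstacle is the fiber-identification step: pinning down that $\psi(C_x)$ coincides with the $p$-fiber over $[\phi(C_x)]$ requires tracking the splittings of $\mcV$, $\mcL_\sigma$, $\mcL^\omega$ along an isotropic line and verifying that the canonical degree-$2$ section $\mbP(T(\phi(C_x)))\subset\mbP(\mcL^\omega|_{\phi(C_x)})$ is simultaneously the image of $C_x$ under $\psi$ and the $p$-fiber. Once this is established, the rest of the argument is a routine blend of integrability of rational fibrations and a finite-dimensional Lie-algebraic centralizer computation in $\mfg_-(C_2\times A_1)$.
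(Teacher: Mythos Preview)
Your proof is correct, and the overall strategy---showing $\mcN+\msD^{\alpha_1}$ is the relative tangent of a rational fibration, then invoking integrability---matches the paper's in spirit. The fiber-identification step you flag as the main obstacle does work: since $\mcL(C_x)\cong\mcO(2)$ and $\mcL(C_x)\neq\mcL_\sigma|_l$, the projection $\mcL(C_x)\to\mcL^\omega|_l=\mcO(2)\oplus\mcO$ is an injective bundle map landing in the unique $\mcO(2)$-summand $Tl$, so $\psi(C_x)=\mbP(Tl)=p^{-1}([l])$ as claimed.

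The paper's argument is shorter on both points. For the bracket conditions, rather than building the global map $\eta$ and identifying its fibers, the paper simply observes that $\overline{\bigcup_{y\in l_x^o}C_y}\cong\phi(C_x)\times l_x$ is a product surface inside $\mbP(\mcV|_{\phi(C_x)})$, which immediately gives $[\mcN,\msD^{\alpha_1}]\subset\mcN+\msD^{\alpha_1}$; this is the same integrability principle, but localized. For uniqueness, the paper avoids the symbol algebra entirely: if there were two distinct line subbundles $\mcN,\mcN'\subset\msD$ with $[\mcN,\msD],[\mcN',\msD]\subset\msD$, then the Frobenius bracket $\wedge^2\msD\to T\mcX_0/\msD$ would factor through $\wedge^2(\msD/(\mcN+\mcN'))=0$, forcing $\msD$ to be integrable, which contradicts chain-connectedness by $\K^{\alpha_1}\cup\K^{\alpha_2}$. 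Your centralizer computation in $\mfg_-(C_2\times A_1)$ reaches the same conclusion but leans on Corollary~\ref{cor. A3 degenerate symb algebra}, whereas the paper's argument is self-contained from the construction.
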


\begin{proof}
The leaf of $\mcN$ passing through a point $x\in F^d(1, 2; \C^4)\setminus\mbP(\mcL_\sigma)$ is $l_x^o:=\l_x\setminus\{\sigma(t)\}$, where $t:=\phi(x)$ and $l_x:=\langle x, \sigma(t)\rangle$. The the leaf of $\mcD^{\alpha_1}$ passing through a point $y\in l_x^o$ is $C_y$, where $[C_y]$ is the unique element of $\K^{\alpha_1}_y(\mcX_0)$. Since $\overline{\bigcup\limits_{y\in l_x^o}C_y}\cong\phi(C_x)\times l_x$, we have $[\mcN, \msD^{\alpha_1}]\subset\mcN+\msD^{\alpha_1}$. Since $\msD^{\alpha_2}$ is integrable and $\mcN\subset\msD^{\alpha_2}$, we have $[\mcN, \msD^{\alpha_2}]\subset\msD^{\alpha_2}$. It follows that $[\mcN, \msD]\subset\msD$.

If the uniqueness of $\mcN$ fails, then the rank three distribution $\mcD$ has to be integrable. However one can easily check the $F^d(1, 2; \C^4)$ is chained-connected by the family $\bigcup\limits_{i=1, 2}\K^{\alpha_i}(F^d(1, 2;\ C^4))$. It is a contradiction. Hence $\mcN$ is unique.
\end{proof}

\begin{prop}\label{prop. symbol algebra in Deform-A3 case}
At each point $x\in F^d(1, 2; \C^4)\setminus\mbP(\mcL_\sigma)$, the symbol algebra $\Symb_x(\msD)\cong\mfg_-(C_2)\oplus\mfg_-(A_1)$. Moreover, this isomorphism is induced by the identification $\mfg_-(A_1)=\mcN_x$, $\msD^{\alpha_2}_x=\mfg_-(\alpha_2)+\mfg_-(A_1)$ and $\msD^{\alpha_1}_x=\mfg_-(\alpha_1)$, where $\alpha_1$ and $\alpha_2$ is the long and short simple root of $C_2$ respectively.
\end{prop}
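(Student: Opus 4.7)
The plan is to prove the isomorphism in three stages by comparing the symbol algebra of $\msD$ on $F^d(1, 2; \C^4)$ with that of the minimal homogeneous distribution on $C_2/B$, using the rational map $\chi: F^d(1, 2; \C^4) \dashrightarrow \mbP(\mcL^\omega)$ from Proposition \ref{prop. rational map over P3 in Deform-A3 case} and the natural identification $\mbP(\mcL^\omega) \cong C_2/B$ coming from symplectic geometry (a point of $\mbP(\mcL^\omega)$ is a pair $(p, L)$ with $p \in \mbP^3$ and $L \ni p$ an isotropic line). First I would check that $\mcN_x$ is a central ideal of $\Symb_x(\msD)$. By Proposition \ref{prop. unique distribution N in Deform-A3 case} we have $[\mcN, \msD] \subset \msD$. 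An induction on $k \geq 1$ using the Jacobi identity and $\msD^{-(k+1)} = \msD^{-k} + [\msD, \msD^{-k}]$ yields $[\mcN, \msD^{-k}] \subset \msD^{-k}$ for all $k$: the term $[\mcN, [\msD, \msD^{-k}]]$ is contained in $[[\mcN, \msD], \msD^{-k}] + [\msD, [\mcN, \msD^{-k}]] \subset [\msD, \msD^{-k}] \subset \msD^{-(k+1)}$. Hence $\mcN_x$ lies in the center of the graded Lie algebra $\Symb_x(\msD)$.

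Second, I would identify the quotient $\Symb_x(\msD)/\mcN_x$ with $\mfg_-(C_2)$ via $d\chi$. By Propositions \ref{prop. unique distribution N in Deform-A3 case} and \ref{prop. rational map over P3 in Deform-A3 case}, the fibers of $\chi$ are precisely the leaves of $\mcN$, so $\ker d\chi = \mcN$ on the domain of $\chi$. The two Mori contractions of $C_2/B$ correspond under the identification above to the projections of $\mbP(\mcL^\omega)$ to $\mbP^3$ (keep the point) and to the Lagrangian Grassmannian of $\omega$ (keep the line). Since $\chi$ is defined over $\mbP^3$ and $\msD^{\alpha_2} = T^\phi$, it follows that $d\chi(\msD^{\alpha_2})$ equals the relative tangent bundle of $C_2/B \to \mbP^3$, which is one of the two minimal homogeneous distributions on $C_2/B$. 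For the other distribution, a generic curve $C_x \in \K^{\alpha_1}(F^d)$ projects under $\phi$ to an isotropic line $\phi(C_x) \subset \mbP^3$ (by the very definition of $\K^{\alpha_1}$); I would verify that $\chi(C_x)$ is the canonical section of $C_2/B|_{\phi(C_x)}$ attaching to each $p \in \phi(C_x)$ the flag $(p, \phi(C_x))$. Hence $d\chi(\msD^{\alpha_1})$ is the second minimal homogeneous distribution on $C_2/B$, and combining the two gives $d\chi(\msD) = \mfg_{-1}(C_2/B)$ on the domain of $\chi$.

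Third, since $d\chi$ is induced from a morphism of manifolds, it intertwines Lie brackets of vector fields and thus identifies the weak-derived filtration of $\msD/\mcN$ with that of $\mfg_{-1}(C_2/B)$ on the domain. Passing to symbols yields
\[
\Symb_x(\msD)/\mcN_x \cong \Symb_{\chi(x)}(\mfg_{-1}(C_2/B)) \cong \mfg_-(C_2),
\]
the last isomorphism being the standard symbol computation for a rational homogeneous space of type $C_2/B$ (recalled in the introduction and consistent with Proposition \ref{prop. characterizing g-} applied to $G = C_2$, $I = R$). Combined with the centrality of $\mcN_x$ from the first stage, the short exact sequence $0 \to \mcN_x \to \Symb_x(\msD) \to \mfg_-(C_2) \to 0$ splits canonically as graded Lie algebras, giving $\Symb_x(\msD) \cong \mfg_-(A_1) \oplus \mfg_-(C_2)$; the required identifications of $\mcN_x$, $\msD^{\alpha_1}_x$ and $\msD^{\alpha_2}_x$ with the indicated summands follow by tracing the two extremal families through $d\chi$. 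The main obstacle will be the second stage: precisely matching $d\chi(\msD^{\alpha_i})$ with the two minimal $C_2$-invariant distributions on $C_2/B$, which depends on a careful geometric analysis of how the extremal rational curves of $F^d$ project under $\chi$, and in particular on the isotropy of $\phi(C_x)$ with respect to $\omega$ for $[C_x] \in \K^{\alpha_1}(F^d)$.
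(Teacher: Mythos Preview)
Your proof is correct and takes a genuinely different route from the paper. The paper's proof is a single line: ``It follows from Proposition~\ref{prop. unique distribution N in Deform-A3 case} and Construction~\ref{cons. intro Fd(1, 2; C4)} directly'' --- in other words, one reads off the bracket relations from the explicit model $\mbP(\mcL_\sigma\oplus\mcL^\omega)$ together with the already-established properties of $\mcN$ (integrability of $\msD^{\alpha_2}$, the inclusion $[\mcN,\msD^{\alpha_1}]\subset\mcN+\msD^{\alpha_1}$, and bracket-generation). Your approach is more structural: you first show $\mcN_x$ is central by the Jacobi induction, then use the rational map $\chi$ of Proposition~\ref{prop. rational map over P3 in Deform-A3 case} as a submersion with vertical bundle $\mcN$ to identify the quotient $\Symb_x(\msD)/\mcN_x$ with the known symbol algebra of $C_2/B$, and finally observe that the resulting graded central extension by a degree $-1$ line splits (the obstruction cocycle would have to land in degree $\leq -2$ inside a degree $-1$ space, hence vanishes).

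What your approach buys is a clean reduction to the homogeneous model: the nontrivial bracket relations (the Serre relations for $C_2$) are inherited from $C_2/B$ rather than verified by hand. The price is the geometric verification in your second stage --- that $d\chi(\msD^{\alpha_1})$ is the second minimal distribution on $C_2/B$ --- which you correctly trace through Proposition~\ref{prop. properties of sigma in Deform-A3 case}: since $\phi(C_y)=\phi(C_z)$ iff $\chi(y)=\chi(z)$ on a fibre $\mbP^2_t$, the map $\chi$ sends $x$ to the flag $(\phi(x),\phi(C_x))$, so $\chi(C_x)$ is the fibre of $C_2/B\to LG_\omega$ over $[\phi(C_x)]$. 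One small wording point: the splitting is not literally \emph{canonical}; rather, any graded vector-space complement of $\mcN_x$ in $(\mfm_{-1})_x$ yields a Lie algebra splitting, and choosing it to contain $\msD^{\alpha_1}_x$ (possible since $\msD^{\alpha_1}_x\cap\mcN_x=0$) gives the stated identifications.
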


\begin{proof}
It follows from Proposition \ref{prop. unique distribution N in Deform-A3 case} and Construction \ref{cons. intro Fd(1, 2; C4)} directly.
\end{proof}

\subsection{Proof of Proposition \ref{prop. intro A4 D5 submaximal rigidity}} \label{subsection. rigidity of (A4, (a2, a3, a4))}

The main aim of this subsection is to show the following proposition, from which we can complete the proof of Proposition \ref{prop. intro A4 D5 submaximal rigidity}.

\begin{prop} \label{prop. (A4, a234) rigidity}
The manifold $A_4/P_{\{\alpha_2, \alpha_3, \alpha_4\}}$ is rigid under Fano deformation.
\end{prop}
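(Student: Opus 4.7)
The plan is to apply Theorem \ref{thm. reduction theorem} to $\bS = A_4/P_I$ with $I = \{\alpha_2, \alpha_3, \alpha_4\}$ and $J = \{\alpha_1\}$, in the framework of Setting \ref{setup. intro Fano deformation}. The three distinct pairs in $I \times I$ behave as follows: $(\alpha_2, \alpha_4)$ is not $J$-connected (and so imposes no constraint); $(\alpha_3, \alpha_4)$ yields $\bS^{\alpha_3, \alpha_4} \cong A_2/B$, which is rigid by Theorem \ref{thm. intro complete flag mfds}; and $(\alpha_2, \alpha_3)$ yields $\bS^{\alpha_2, \alpha_3} \cong \mbF(1, 2, \mbP^3)$, which is \emph{not} rigid. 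The whole problem therefore reduces to showing, at a general $x \in \mcX_0$, that $F^{\alpha_2, \alpha_3}_x \not\cong F^d(1, 2; \C^4)$ --- the unique possible non-trivial degeneration, by Theorem \ref{thm. intro unique degeneration of F(1, 2, C4)}.

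Assuming to the contrary that $F^{\alpha_2, \alpha_3}_x \cong F^d(1, 2; \C^4)$, the analysis in Section \ref{subsubsection. properties of Fd(1, 2; C4)} supplies a distinguished meromorphic line subbundle $\mcN \subset \mcD^{\alpha_2}$, defined fiberwise, with $\mcN_x$ identified (via Proposition \ref{prop. symbol algebra in Deform-A3 case}) with the $\mfg_-(A_1)$-summand of the symbol algebra $\mfm_x(\alpha_2, \alpha_3) \cong \mfg_-(C_2) \oplus \mfg_-(A_1)$, so in particular $\mcN_x$ is central in $\mfm_x(\alpha_2, \alpha_3)$. The non-$J$-connected product structure $F^{\alpha_2, \alpha_4}_x \cong \bS^{\alpha_2} \times \bS^{\alpha_4}$ forces $[\mcN, \mcD^{\alpha_4}] = 0$ in $\mfm_x(I)$, and Jacobi applied against the $\mfg_-(A_2)$-structure on $F^{\alpha_3, \alpha_4}_x$ extends this vanishing to all iterated brackets involving $\mcD^{\alpha_4}$. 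I expect to conclude that $\mcN_x$ commutes with every generator of $\mfm_x(I)$ and hence lies in its center.

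The main obstacle --- the analog of the subalgebra/dimension argument of Proposition \ref{prop. fibers F(a1a2) in Am case} --- will be to turn this centrality into a contradiction. By Proposition \ref{prop. rationally chain connected distribution version}, $\dim \mfm_x(I) = \dim \mcX_0 = 9$, so centrality of $\mcN$ forces the quotient $\bar{\mfm} := \mfm_x(I)/\mcN$ to have dimension $8$ and to be generated in grade $-1$ by the three-dimensional subspace $\bar{\mcD}^{\alpha_2} + \mcD^{\alpha_3} + \mcD^{\alpha_4}$. A grade-by-grade enumeration of Jacobi-iterated brackets subject to the constraints --- the $\mfg_-(C_2)$-structure on $\bar{\mcD}^{\alpha_2} + \mcD^{\alpha_3}$, the $\mfg_-(A_2)$-structure on $\mcD^{\alpha_3} + \mcD^{\alpha_4}$, and $[\bar{\mcD}^{\alpha_2}, \mcD^{\alpha_4}] = 0$ --- should produce $\dim \bar{\mfm} < 8$. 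The delicate point is in grades $-3$ and $-4$: the ``short-short-long'' bracket inside $\mfg_-(C_2)$ creates a grade-$3$ direction not present in $\mfg_-(A_4/P_I)$, and Jacobi must be used to show that enough higher brackets collapse to drive the total below $8$. Once this contradiction is established, $F^{\alpha, \beta}_x \cong \bS^{\alpha, \beta}$ for every $J$-connected pair, and Theorem \ref{thm. reduction theorem} delivers $\mcX_0 \cong \bS$.
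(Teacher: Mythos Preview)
Your reduction via Theorem \ref{thm. reduction theorem} is correct, and so is the observation that the only obstruction is the possibility $F^{\alpha_2,\alpha_3}_x \cong F^d(1,2;\C^4)$. Your argument that $\mcN_x$ is central in $\mfm_x(I)$ is also correct: it commutes with the generators $\mcD^{\alpha_2}_x, \mcD^{\alpha_3}_x, \mcD^{\alpha_4}_x$, hence with everything by Jacobi.

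The gap is in the final step. The dimension count you anticipate does \emph{not} produce a contradiction: the quotient $\bar\mfm = \mfm_x(I)/\mcN_x$ is exactly $8$-dimensional, not smaller. With generators $v_2,v_3,v_4$ subject to $[v_2,v_4]=0$, the $C_2$-type relations on $(v_2,v_3)$, and the $A_2$-type relations on $(v_3,v_4)$, one gets precisely the graded pieces $\C v_2\oplus\C v_3\oplus\C v_4$, $\C v_{23}\oplus\C v_{34}$, $\C v_{233}\oplus\C v_{234}$, $\C v_{2334}$, of dimensions $3,2,2,1$. This is the content of the paper's Lemma \ref{lem. 16 A4 case} and Notation \ref{nota. m- A4 case}: the symbol algebra is a $9$-dimensional quotient of $\mfg_-(C_3)\oplus\mfg_-(A_1)$, matching $\dim\mcX_0$ on the nose. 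So no local (symbol-algebra) argument can rule out this degeneration; the ``delicate point in grades $-3$ and $-4$'' you flag resolves the wrong way.

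The paper's proof is instead global and geometric. Using Proposition \ref{prop. criterion Pk bundle by Weber Wisniewski} it shows $\pi^{\alpha_2}_0:\mcX_0\to\mcX^{\alpha_2}_0$ is a $\mbP^2$-bundle; the symbol computation (which you correctly set up) is then used only to verify that $\mcX^{\alpha_2}_0\cong F(3,4;\C^5)$ via Proposition \ref{prop. (Am, a1a2) case symbol algebra standard iff X0 standard}. From there, $\pi^{\alpha_2,\alpha_3}_0$ is an $F^d(2,3;\C^4)$-bundle over $\mbP^4$, and the fiberwise section $\sigma$ of $F^d$ globalizes to a divisor in $A_4/P_{\{\alpha_2,\alpha_3,\alpha_4\}}$ which is the preimage of a hyperplane section $Gr_\omega(2,\C^5)\subset Gr(2,\C^5)$ for some nonzero $\omega\in\wedge^2(\C^5)^*$. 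The contradiction is that $\omega$ on an odd-dimensional space must be degenerate, forcing some fiber of the associated $\mbP^1$-bundle to jump to $\mbP^2$. Your symbol-algebra machinery cannot see this; you need the global contraction structure and the parity obstruction on $\C^5$.
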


\begin{proof}[Proof of Proposition \ref{prop. intro A4 D5 submaximal rigidity}]
$(i)$ Consider the Fano deformation rigidity of $A_4/P_I$ with $|I|=3$. The set of simple roots is $R=\{\alpha_1,\ldots,\alpha_4\}$. The manifolds $A_4/P_{R\setminus\{\alpha_1\}}$ and $A_4/P_{R\setminus\{\alpha_4\}}$ are biholomorphic to each other, which are rigid under Fano deformation by Proposition \ref{prop. (A4, a234) rigidity}. The manifolds $A_4/P_{R\setminus\{\alpha_2\}}$ and $A_4/P_{R\setminus\{\alpha_3\}}$ are biholomorphic to each other, which are rigid under Fano deformation by Proposition \ref{prop. intro (Am, a1a2am) rigidity}.

$(ii)$ Consider the Fano deformation rigidity of $\bS:=D_5/P_I$ with $|I|=4$. Set $J:=R\setminus I=\{\alpha_i\}$ for some $i$, where $R$ is the set of simple roots. Take any $J$-connected pair $\beta_1\neq\beta_2\in I$. There exists $\beta_3\in I\setminus\{\beta_1, \beta_2\}$ such that the manifold $\bS^{\beta_1, \beta_2, \beta_3}$ is biholomorphic to $A_4/P_{I'}$ with $|I'|=3$ or $4$. The latter is rigid under Fano deformation by $(i)$ as well as Theorem \ref{thm. intro complete flag mfds}. By Corollary \ref{cor. reduction results refined version}, $D_5/P_I$ is rigid under Fano deformation .
\end{proof}

To prove Proposition \ref{prop. (A4, a234) rigidity}, it suffices to deduce a contradiction in the following setting.

\begin{set}\label{setup. (A4, a2a3a4)}
Let $\pi: \mcX\rightarrow\Delta$ be a holomorphic map such that $\mcX_t\cong \bS$ for all $t\neq 0$, $\mcX_0$ is a connected Fano manifold and $\mcX_0\ncong \bS$, where $\bS:=A_4/P_{\{\alpha_2, \alpha_3, \alpha_4\}}$.
\end{set}

\begin{rmk}
Let us firstly explain the idea to prove Proposition \ref{prop. (A4, a234) rigidity} in the following, while the rigorous proof is not no so immediate from this idea. In Setting \ref{setup. (A4, a2a3a4)}, $\mcX_0$ has to be a compactification of the total space of the normal bundle $N_{U/\bS}$, where $U$ is the inverse image of some hyperplane section of $A_4/P_{\alpha_2}=Gr(2, \C^5)\subset\mbP^9$ under the natural morphism $\bS\rightarrow A_4/P_{\alpha_2}$. On the other hand, we can show that any Fano deformation of $\bS$ must be a $\mbP^2$-bundle over $A_4/P_{\{\alpha_3, \alpha_4\}}=F(3, 4; \C^5)$, while the compactification $\mcX_0$ of $N_{U/\bS}$ does not have such a projective bundle structure.
\end{rmk}

\begin{prop}\label{prop. 4 A4 case}
In Setting \ref{setup. (A4, a2a3a4)}, take a general point $x\in\mcX_0$. Then $F^{\alpha_2}_x\cong\mbP^2$, $F^{\alpha_3}_x\cong\mbP^1$, $F^{\alpha_4}_x\cong\mbP^1$, $F^{\alpha_2, \alpha_4}_x\cong\mbP^2\times\mbP^1$ and $F^{\alpha_3, \alpha_4}_x\cong\mbP(T\mbP^2)=F(1, 2; \C^3)$ respectively.
\end{prop}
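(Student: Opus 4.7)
The plan is to deduce each isomorphism as a straightforward application of previously established rigidity/product-preservation results, after identifying the relevant fibers in $\bS = A_4/P_{\{\alpha_2,\alpha_3,\alpha_4\}}$ via the marked Dynkin diagram calculus. Write $J = R \setminus I = \{\alpha_1\}$. Every fiber in question is a Fano deformation of the corresponding fiber in $\bS$, which is a homogeneous manifold that can be read off from the connected components of the Dynkin subdiagram $\Gamma_{J\cup A}$ carrying the marks in $A$.

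For the three Picard-one fibers, I will simply invoke Proposition \ref{prop. Fax=Sa for x in X0 general}: we have $F^{\alpha_i}_x \cong \bS^{\alpha_i}$ for general $x \in \mcX_0$ and each $\alpha_i \in I$. Computing the marked subdiagrams gives $\bS^{\alpha_2} = A_2/P_{\alpha_2} = \mbP^2$ (from the component $\{\alpha_1,\alpha_2\}$ marked at $\alpha_2$), while $\bS^{\alpha_3} = A_1/B = \mbP^1$ and $\bS^{\alpha_4} = A_1/B = \mbP^1$ (since $\alpha_3$ and $\alpha_4$ sit in singleton components of $\Gamma_{J\cup\{\alpha_i\}}$).

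For $F^{\alpha_2,\alpha_4}_x$, I will first note that the diagram $\Gamma_{J \cup \{\alpha_2,\alpha_4\}} = \Gamma_{\{\alpha_1,\alpha_2,\alpha_4\}}$ decomposes as $\{\alpha_1,\alpha_2\} \sqcup \{\alpha_4\}$, so $(\alpha_2,\alpha_4)$ is \emph{not} a $J$-connected pair in the sense of Lemma-Definition \ref{lem-defi. J-connected pair}. Consequently $\bS^{\alpha_2,\alpha_4} \cong \bS^{\alpha_2} \times \bS^{\alpha_4} \cong \mbP^2 \times \mbP^1$, and by Proposition \ref{prop. invariance of product structure under Fano deformation} this product structure propagates to $\mcX_0$, giving $F^{\alpha_2,\alpha_4}_x \cong \mbP^2 \times \mbP^1$.

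For $F^{\alpha_3,\alpha_4}_x$, I will identify $\bS^{\alpha_3,\alpha_4}$ from $\Gamma_{\{\alpha_1,\alpha_3,\alpha_4\}} = \{\alpha_1\} \sqcup \{\alpha_3,\alpha_4\}$: the unmarked component $\{\alpha_1\}$ contributes nothing, and the component $\{\alpha_3,\alpha_4\}$ marked at both nodes yields the complete flag variety $A_2/B = F(1,2;\C^3) = \mbP(T\mbP^2)$. Hence $F^{\alpha_3,\alpha_4}_x$ is a smooth Fano deformation of $F(1,2;\C^3)$, and Theorem \ref{thm. intro (Am, a1am)} (Wi\'{s}niewski's rigidity for $F(1,n,\C^{n+1})$, applied with $n=2$) forces $F^{\alpha_3,\alpha_4}_x \cong F(1,2;\C^3)$. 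There is no serious obstacle here — the proposition is purely a bookkeeping step that assembles known rigidity inputs so that the machinery of Theorem \ref{thm. reduction theorem} can later be triggered; the only verification required is the combinatorial check that each pair of marked roots produces the claimed subdiagram.
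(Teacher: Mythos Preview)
Your proposal is correct and follows essentially the same approach as the paper. The only cosmetic difference is that for $F^{\alpha_3,\alpha_4}_x \cong F(1,2;\C^3)$ the paper invokes Theorem \ref{thm. intro complete flag mfds} (Weber--Wi\'{s}niewski rigidity of complete flag manifolds $G/B$) rather than Theorem \ref{thm. intro (Am, a1am)}; since $F(1,2;\C^3)=A_2/B=F(1,2,\C^{3})$ is simultaneously a complete flag manifold and of the form $F(1,n,\C^{n+1})$, both citations are valid.
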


\begin{proof}
The assertions for $F_x^{\alpha_i}$, $F^{\alpha_2, \alpha_4}_x$ and $F^{\alpha_3, \alpha_4}_x$ follow from the rigidity of projective spaces, Proposition \ref{prop. invariance of product structure under Fano deformation} and Theorem \ref{thm. intro complete flag mfds} respectively.
\end{proof}

\begin{prop}\label{prop. fibers F(a2a3) in A4 case}
In Setting \ref{setup. (A4, a2a3a4)}, take a general point $x\in\mcX_0$. Then $F^{\alpha_2, \alpha_3}\cong F^d(1, 2; \C^4)$, where $F^d(1, 2; \C^4)$ is as in Construction \ref{cons. intro Fd(1, 2; C4)}.
\end{prop}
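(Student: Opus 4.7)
The plan is to identify $F^{\alpha_2,\alpha_3}_x$ with a Fano degeneration of $\bS^{\alpha_2,\alpha_3}$ and then invoke the uniqueness of degeneration (Theorem \ref{thm. intro unique degeneration of F(1, 2, C4)}) together with the reduction theorem (Theorem \ref{thm. reduction theorem}) to pin down which degeneration occurs.

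First I would observe that the restriction of the relative Mori contraction $\pi^{\alpha_2,\alpha_3}: \mcX \to \mcX^{\alpha_2,\alpha_3}$ gives, after choosing a general one-parameter analytic family of points in $\mcX^{\alpha_2,\alpha_3}$ limiting to $\pi^{\alpha_2,\alpha_3}_0(x)$, a holomorphic family of connected Fano manifolds whose generic fiber is $\bS^{\alpha_2,\alpha_3} \cong A_3/P_{\{\alpha_1,\alpha_2\}}$ and whose special fiber is $F^{\alpha_2,\alpha_3}_x$. Hence $F^{\alpha_2,\alpha_3}_x$ is a smooth Fano degeneration of $A_3/P_{\{\alpha_1,\alpha_2\}}$ (possibly trivial). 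By Theorem \ref{thm. intro unique degeneration of F(1, 2, C4)}, the only two possibilities are
\begin{equation*}
F^{\alpha_2,\alpha_3}_x \;\cong\; \bS^{\alpha_2,\alpha_3} \qquad\text{or}\qquad F^{\alpha_2,\alpha_3}_x \;\cong\; F^d(1,2;\C^4).
\end{equation*}

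Next I would rule out the first possibility using Setting \ref{setup. (A4, a2a3a4)}. In our situation $I=\{\alpha_2,\alpha_3,\alpha_4\}$ and $J=\{\alpha_1\}$, so the only $J$-connected pairs in $I\times I$ are $(\alpha_2,\alpha_3)$ and $(\alpha_3,\alpha_4)$: indeed $\alpha_2\cup J\cup\alpha_4 = \{\alpha_1,\alpha_2,\alpha_4\}$ has Dynkin diagram with two connected components separating $\alpha_2$ from $\alpha_4$, while the diagrams $\{\alpha_1,\alpha_2,\alpha_3\}$ and $\{\alpha_1,\alpha_3,\alpha_4\}$ are each connected after including the endpoints. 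By Proposition \ref{prop. 4 A4 case} we already have $F^{\alpha_3,\alpha_4}_x \cong F(1,2;\C^3) = \bS^{\alpha_3,\alpha_4}$ for general $x$. Thus if $F^{\alpha_2,\alpha_3}_x \cong \bS^{\alpha_2,\alpha_3}$ also held for general $x$, then the hypothesis of Theorem \ref{thm. reduction theorem} would be satisfied (the standardness $F^{\alpha,\beta}_x \cong \bS^{\alpha,\beta}$ holding for every $J$-connected pair $\alpha\neq\beta\in I$). Applying that theorem would yield $\mcX_0\cong\bS$, directly contradicting Setting \ref{setup. (A4, a2a3a4)}.

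Therefore the degenerate alternative must hold, namely $F^{\alpha_2,\alpha_3}_x \cong F^d(1,2;\C^4)$ for general $x\in\mcX_0$, which is precisely the claim. The argument is short because all the heavy lifting has been done in the classification of Fano degenerations of $A_3/P_{\{\alpha_1,\alpha_2\}}$ and in the reduction theorem; the only subtlety worth double-checking is that the reduction theorem is being applied to the correct (and exhaustive) list of $J$-connected pairs, which is a straightforward inspection of the Dynkin diagram of $A_4$ after marking $I$.
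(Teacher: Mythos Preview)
Your proof is correct and follows essentially the same approach as the paper: use Theorem \ref{thm. intro unique degeneration of F(1, 2, C4)} to reduce to two possibilities for $F^{\alpha_2,\alpha_3}_x$, then exclude the standard one via Theorem \ref{thm. reduction theorem} together with Proposition \ref{prop. 4 A4 case}, obtaining a contradiction with Setting \ref{setup. (A4, a2a3a4)}. One small wording issue: the subdiagram $\Gamma_{\{\alpha_1,\alpha_3,\alpha_4\}}$ is not connected (the node $\alpha_1$ is isolated), but what matters for Lemma-Definition \ref{lem-defi. J-connected pair} is only that $\alpha_3$ and $\alpha_4$ lie in the same connected component, which they do; your conclusion that $(\alpha_3,\alpha_4)$ is $J$-connected is therefore correct.
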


\begin{proof}
By Theorem \ref{thm. intro unique degeneration of F(1, 2, C4)}, either $F^{\alpha_2, \alpha_3}\cong F(1, 2; \C^4)$ or $F^{\alpha_2, \alpha_3}\cong F^d(1, 2; \C^4)$. In the former case, $\mcX_0\cong A_4/P_{\{\alpha_2, \alpha_3, \alpha_4\}}$ by Theorem \ref{thm. reduction theorem} and Proposition \ref{prop. 4 A4 case}. This contradicts our assumption in Setting \ref{setup. (A4, a2a3a4)}.
\end{proof}

\begin{prop}\label{prop. 7 A4 case}
In Setting \ref{setup. (A4, a2a3a4)}, the morphism $\pi^{\alpha_2}_0: \mcX_0\rightarrow\mcX^{\alpha_2}_0$ is a $\mbP^2$-bundle. In particular, the variety $\mcX^{\alpha_2}_0$ is smooth.
\end{prop}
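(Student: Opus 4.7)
The plan is to apply Proposition \ref{prop. criterion Pk bundle by Weber Wisniewski}(i). For $t \neq 0$, the Mori contraction $\Phi^{\alpha_2}: \bS \to G/P_{I \setminus \{\alpha_2\}}$ is the forgetful map $F(2, 3, 4; \C^5) \to F(3, 4; \C^5)$ sending $V_2 \subset V_3 \subset V_4$ to $V_3 \subset V_4$; its fibers are $Gr(2, V_3) \cong \mbP^2$, so $\Phi^{\alpha_2}$ is a $\mbP^2$-bundle.

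The content of the argument is therefore to check that $H^*(F(3, 4; \C^5), \mbQ)$ is generated by $H^2$. I would observe that the forgetful morphism $F(3, 4; \C^5) \to Gr(4, \C^5) = \mbP^4$ has fiber $Gr(3, V_4) \cong \mbP(V_4^*) \cong \mbP^3$, and in fact realizes $F(3, 4; \C^5)$ as the projective bundle $\mbP(S^*)$ over $\mbP^4$, where $S$ is the universal rank-$4$ subbundle on $\mbP^4$. The Leray--Hirsch identification
\[
H^*(F(3, 4; \C^5), \mbQ) \cong H^*(\mbP^4, \mbQ)[t] \Big/ \Big( \sum_{i=0}^{4} c_i(S^*)\, t^{4-i} \Big)
\]
then presents the rational cohomology ring as generated by the hyperplane class $h \in H^2(\mbP^4)$ and the tautological class $t \in H^2(F(3, 4; \C^5))$, both lying in $H^2$.

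Having verified this generation property, Proposition \ref{prop. criterion Pk bundle by Weber Wisniewski}(i) immediately yields that $\pi^{\alpha_2}_0 : \mcX_0 \to \mcX^{\alpha_2}_0$ is a $\mbP^2$-bundle. Smoothness of $\mcX^{\alpha_2}_0$ is then automatic, since a $\mbP^k$-bundle is a smooth surjective morphism and smoothness descends from the smooth source $\mcX_0$ to the base. I do not anticipate any real obstacle: the argument is essentially a direct invocation of the Weber--Wi\'{s}niewski criterion, and the only input requiring verification --- the cohomology of $F(3, 4; \C^5)$ --- is elementary once the projective-bundle structure over $\mbP^4$ is recorded.
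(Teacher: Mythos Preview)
Your proposal is correct and takes essentially the same approach as the paper: both apply Proposition~\ref{prop. criterion Pk bundle by Weber Wisniewski}(i) after verifying that $H^*(F(3,4;\C^5),\mbQ)$ is generated in degree two. The only difference is that the paper cites formula~(3.4) of \cite{WW17} for this cohomology fact (noting $A_4/P_{\{\alpha_3,\alpha_4\}}\cong A_4/P_{\{\alpha_1,\alpha_2\}}$), whereas you verify it directly via the Leray--Hirsch description of the projective bundle $F(3,4;\C^5)\to\mbP^4$.
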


\begin{proof}
By formula (3.4) in \cite{WW17}, the cohomology ring $H^*(A_n/P_{\{\alpha_1, \alpha_2\}}, \mbQ)$ is generated by $H^2(A_n/P_{\{\alpha_1, \alpha_2\}}, \mbQ)$. Then the conclusion of Proposition \ref{prop. 7 A4 case} follows from Proposition \ref{prop. criterion Pk bundle by Weber Wisniewski} immediately.
\end{proof}

\begin{conv}
In Subsection \ref{subsection. rigidity of (A4, (a2, a3, a4))}, we denote by $\msD^{\alpha_i}$, $\msD^{\alpha_i, \alpha_j}$ $\msD$ and $\msD^{-k}$ the restriction of $\mcD^{\alpha_i}$, $\mcD^{\alpha_i, \alpha_j}$, $\mcD$ and $\mcD^{-k}$ on $\mcX_0$ respectively, where the latter is defined in Notation \ref{nota. distribution D(A) and symbol algebra}.
\end{conv}

Now let us turn to analysis the symbol algebra $\Symb(\msD)$ on $\mcX_0$.

\begin{lem}\label{lem. 14 A4 case}
In Setting \ref{setup. (A4, a2a3a4)}, there exists a unique meromorphic distribution $\mcN\subset\msD^{\alpha_2}$ of rank one over $\mcX_0$ such that the Levi bracket of vector fields $[\mcN, \msD]\subset\msD$.
\end{lem}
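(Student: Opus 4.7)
The plan is to construct $\mcN$ by restricting to the fibers $F_x := F^{\alpha_2, \alpha_3}_x \cong F^d(1, 2; \C^4)$ provided by Proposition \ref{prop. fibers F(a2a3) in A4 case}, invoking the uniqueness on each fiber supplied by Proposition \ref{prop. unique distribution N in Deform-A3 case}, and verifying the global bracket condition by splitting $\msD = \msD^{\alpha_2} + \msD^{\alpha_3} + \msD^{\alpha_4}$ and handling each summand with a different piece of structure.

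For existence, on each general fiber $F_x$ the two Mori contractions $\pi^{\alpha_2}_0|_{F_x}$ and $\pi^{\alpha_3}_0|_{F_x}$ match the two extremal contractions of $F^d(1, 2; \C^4)$, so $\msD^{\alpha_2}|_{F_x}, \msD^{\alpha_3}|_{F_x}$ correspond to the distributions of Notation \ref{nota. Fd(1, 2; C4) bundle section etc}. Proposition \ref{prop. unique distribution N in Deform-A3 case} yields a unique line subbundle $\mcN^F_x \subset \msD^{\alpha_2}|_{F_x}$ with $[\mcN^F_x, \msD^{\alpha_2}|_{F_x} + \msD^{\alpha_3}|_{F_x}] \subset \msD^{\alpha_2}|_{F_x} + \msD^{\alpha_3}|_{F_x}$. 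These glue to a meromorphic line subbundle $\mcN \subset \msD^{\alpha_2}$ on $\mcX_0$. To verify $[\mcN,\msD]\subset\msD$: the inclusion $[\mcN, \msD^{\alpha_2}] \subset \msD^{\alpha_2}$ is immediate from the integrability of $\msD^{\alpha_2} = T^{\pi^{\alpha_2}_0}$, the relative tangent bundle of the $\mbP^2$-bundle from Proposition \ref{prop. 7 A4 case}; the inclusion $[\mcN, \msD^{\alpha_3}] \subset \msD^{\alpha_2} + \msD^{\alpha_3} \subset \msD$ is the fiberwise bracket condition on $F_x$; finally, $[\mcN, \msD^{\alpha_4}] \subset \msD$ follows by restricting to the fiber $F^{\alpha_2, \alpha_4}_x \cong \mbP^2 \times \mbP^1$ from Proposition \ref{prop. 4 A4 case}, on which $\msD^{\alpha_2}$ and $\msD^{\alpha_4}$ are the tangent bundles of the two factors, so that brackets between their sections land in $\msD^{\alpha_2} + \msD^{\alpha_4}\subset \msD$, and in particular this holds for $[\mcN, \msD^{\alpha_4}]$ since $\mcN\subset\msD^{\alpha_2}$.

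For uniqueness, let $\mcN'$ be any meromorphic line subbundle of $\msD^{\alpha_2}$ with $[\mcN', \msD] \subset \msD$. At a general point $p$, all three of $\mcN'|_{F_p}, \msD^{\alpha_2}|_{F_p}, \msD^{\alpha_3}|_{F_p}$ are tangent to $F_p$, so $[\mcN'|_{F_p}, \msD^{\alpha_2}|_{F_p} + \msD^{\alpha_3}|_{F_p}]$ is tangent to $F_p$ as well as contained in $\msD$, hence lies in $\msD \cap T F_p$. The crux is the transversality $\msD^{\alpha_4}_p \cap T_p F_p = 0$ at general $p$, which implies $\msD\cap TF_p = \msD^{\alpha_2}|_{F_p} + \msD^{\alpha_3}|_{F_p}$. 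This transversality is forced by the fact that $R_{\alpha_4}$ is an extremal ray of $\overline{NE}(\mcX_0/\Delta)$ distinct from $R_{\alpha_2}$ and $R_{\alpha_3}$ (Proposition \ref{prop. invariance of Mori cone by Wisniewski}), so the product map $\pi^{\alpha_4}_0 \times \pi^{\alpha_2,\alpha_3}_0 \colon \mcX_0 \to \mcX^{\alpha_4}_0 \times \mcX^{\alpha_2,\alpha_3}_0$ contracts no curves and thus has injective differential at general points. The fiberwise uniqueness of Proposition \ref{prop. unique distribution N in Deform-A3 case} then forces $\mcN'|_{F_p} = \mcN|_{F_p}$, and varying $p$ gives $\mcN' = \mcN$. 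The principal technical obstacle is this transversality argument together with confirming that the fiberwise data really do assemble into a well-defined meromorphic distribution on $\mcX_0$ rather than merely a collection of fiberwise choices; both points are expected to be routine but need careful bookkeeping of the loci where $\mcN$ fails to be holomorphic.
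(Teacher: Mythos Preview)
Your proposal is correct and follows essentially the same route as the paper: construct $\mcN$ fiberwise via Proposition~\ref{prop. unique distribution N in Deform-A3 case} applied to $F^{\alpha_2,\alpha_3}_x\cong F^d(1,2;\C^4)$, and handle the $\msD^{\alpha_4}$ bracket using the product structure $F^{\alpha_2,\alpha_4}_x\cong\mbP^2\times\mbP^1$ from Proposition~\ref{prop. 4 A4 case}. The paper's proof is terser; it states the fiberwise uniqueness for the stronger condition $[\mcN,\msD^{\alpha_3}]\subset\mcN+\msD^{\alpha_3}$ and does not spell out why this suffices for the weaker condition $[\mcN,\msD]\subset\msD$ claimed in the lemma.

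Your explicit uniqueness argument fills exactly this gap, and the transversality step $\msD^{\alpha_4}_p\cap T_pF^{\alpha_2,\alpha_3}_p=0$ that you isolate is already available in the paper as Lemma~\ref{lem. TxFa and Da direct sum} (specifically equation~\eqref{eqn. Da direct sum}), so you could simply cite that instead of re-deriving it from the finiteness of $\pi^{\alpha_4}_0\times\pi^{\alpha_2,\alpha_3}_0$. With that citation in place your bookkeeping concerns about the meromorphic locus become routine: the singular locus of $\mcN$ is contained in the union of the non-generic fibers of $\pi^{\alpha_2,\alpha_3}_0$ and the fiberwise singular loci of the $\mcN^F_x$, both of which have codimension at least one.
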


\begin{proof}
By Proposition \ref{prop. fibers F(a2a3) in A4 case}, $F^{\alpha_2, \alpha_3}_x\cong F^d(1, 2; \C^4)$ for $x\in\mcX_0$ general. Then by Proposition \ref{prop. unique distribution N in Deform-A3 case} there exists a unique meromorphic line subbundle $\mcN\subset\msD^{\alpha_2}$ over $\mcX_0$ such that $[\mcN, \msD^{\alpha_3}]\subset\mcN+\msD^{\alpha_3}$. By Proposition \ref{prop. 4 A4 case}, $F^{\alpha_2, \alpha_4}_x\cong F^{\alpha_2}_x\times F^{\alpha_4}_x$ for $x\in\mcX_0$ general. Then $[\msD^{\alpha_2}, \msD^{\alpha_4}]\subset\msD^{\alpha_2}+\msD^{\alpha_4}$, implying the conclusion.
\end{proof}

\begin{nota}\label{nota. m- A4 case}
We construct a graded nilpotent Lie algebra $\mfm_-:=\bigoplus\limits_{k\geq 1}\mfm_{-k}$ as follows:
\begin{eqnarray*}
&& \mfm_{-1}=\bigoplus\limits_{1\leq i\leq 4}\C v_i, \\
&& \mfm_{-2}=\C v_{23}\oplus\C v_{34}, \\
&& \mfm_{-3}=\C v_{233}\oplus\C v_{234}, \\
&& \mfm_{-4}=\C v_{2334}, \\
&& \mfm_{-k}=0, \quad k\geq 5,
\end{eqnarray*}
where $v_{i_1\ldots i_m}:=[v_{i_1\ldots i_{m-1}}, v_{i_m}]$. The Lie algebra structure on $\mfm_-$ is defined uniquely by the following rules:
\begin{eqnarray*}
[\mfm_{-i}, \mfm_{-j}]\subset\mfm_{-i-j}, & [v_1, \mfm_-]=0, & [v_{23}, v_{34}]=\frac{1}{2}v_{2334},
\end{eqnarray*}
and there is a table of Lie brackets
\begin{eqnarray}\label{eqn. table brackets imegary degenerate A4 case}
\begin{tabular}{|c|c|c|c|c|}
\hline
& $v_{23}$ & $v_{34}$ & $v_{233}$ & $v_{234}$ \\ \hline
$v_2$ & 0 & $v_{234}$ & 0 & 0 \\ \hline
$v_3$ & $-v_{233}$ & 0 & 0 & $-\frac{1}{2}v_{2334}$ \\ \hline
$v_4$ & $-v_{234}$ & 0 & $-v_{2334}$ & 0 \\ \hline
\end{tabular}
\end{eqnarray}
In the table above, we compute the Lie bracket of left end entry with top end entry. For example, $[v_4, v_{23}]=-v_{234}$ and $[v_3, v_{234}]=-\frac{1}{2}v_{2334}$.
\end{nota}

\begin{lem}\label{lem. 16 A4 case}
In Setting \ref{setup. (A4, a2a3a4)}, the symbol algebra of $\msD$ at a general point $x\in\mcX_0$ is isomorphic to $\mfm_-$ in Notation \ref{nota. m- A4 case}, where we have identifications $\mcN_x=\C v_1$, $\msD^{\alpha_2}_x=\C v_1 + \C v_2$, $\msD^{\alpha_3}_x=\C v_3$ and $\msD^{\alpha_4}_x=\C v_4$.
\end{lem}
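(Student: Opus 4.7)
The plan is to verify $\Symb_x(\msD) \cong \mfm_-$ at a general $x \in \mcX_0$ by transferring symbol-algebra data from the three $2$-parameter fibers $F^{\alpha_i,\alpha_j}_x$ determined in Propositions \ref{prop. 4 A4 case} and \ref{prop. fibers F(a2a3) in A4 case}, applying Jacobi in the ambient space, and closing with the dimension identity $\dim \Symb_x(\msD) = \dim \mcX_0 = 9$ from Proposition \ref{prop. dim mx(I) = dim X0}.

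First I would fix a basis of $\mfm_{-1}$. By Proposition \ref{prop. 4 A4 case}, $\msD^{\alpha_2}, \msD^{\alpha_3}, \msD^{\alpha_4}$ have ranks $2,1,1$, and by Lemma \ref{lem. TxFa and Da direct sum} their sum is direct at a general point, so $\msD_x$ has rank $4$. Take $v_1 \in \mcN_x$ with $\mcN$ the line subbundle produced by Lemma \ref{lem. 14 A4 case}, choose a complement $v_2 \in \msD^{\alpha_2}_x$, and pick $v_3 \in \msD^{\alpha_3}_x$, $v_4 \in \msD^{\alpha_4}_x$. This realizes $\mfm_{-1} = \bigoplus_{i=1}^{4}\C v_i$.

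Second, I would extract degree-$(-2)$ data from each fiber. The splitting $F^{\alpha_2,\alpha_4}_x \cong \mbP^2 \times \mbP^1$ gives $[\msD^{\alpha_2},\msD^{\alpha_4}] \subset \msD^{\alpha_2}+\msD^{\alpha_4} \subset \msD$, so $[v_1,v_4]=[v_2,v_4]=0$ in $\mfm_{-2}$. The isomorphism $F^{\alpha_3,\alpha_4}_x \cong A_2/B$ and its $\mfg_-(A_2)$ symbol algebra make $v_{34}:=[v_3,v_4]$ nonzero with $[v_3,v_{34}]=[v_4,v_{34}]=0$. The isomorphism $F^{\alpha_2,\alpha_3}_x \cong F^d(1,2;\C^4)$ combined with Proposition \ref{prop. symbol algebra in Deform-A3 case} yields $\Symb(\msD|_F)_x \cong \mfg_-(C_2)\oplus\mfg_-(A_1)$ with $\mcN_x \leftrightarrow \mfg_-(A_1)$; hence $[v_1,v_2]=[v_1,v_3]=0$ and $v_{23}:=[v_2,v_3]$ is a nonzero generator of $\mfg_{-2}(C_2)$. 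Combined with $[\mcN,\msD] \subset \msD$ from Lemma \ref{lem. 14 A4 case}, these propagate to $[v_1,\mfm_-]=0$ in every degree, so $\mfm_{-2}$ is spanned by $v_{23}$ and $v_{34}$.

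Third, I would derive $\mfm_{-3}$ and $\mfm_{-4}$ by Jacobi. The relation $[v_2,v_4]=0$ gives $[v_2,v_{34}] = [v_{23},v_4] =: v_{234}$. The $1$-dimensional $\mfg_{-3}(C_2)$ inside the fiber's symbol algebra is the bracket of $v_{23}$ with the short-simple-root direction among $\{v_2,v_3\}$; identifying $v_3$ with that short direction, via the Dynkin automorphism $\alpha_1 \leftrightarrow \alpha_3$ of $A_3$ underlying $F^{\alpha_2,\alpha_3}_x \cong F^d(1,2;\C^4)$, this generator becomes $v_{233}:=[v_{23},v_3]$. The Jacobi identity among $v_{23},v_3,v_4$ produces the relation $[v_{233},v_4]+[v_3,v_{234}] = [v_{23},v_{34}]$, yielding the common element $v_{2334} \in \mfm_{-4}$ and pinning down the scalar coefficients in the table. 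The remaining table entries are routine iterated Jacobi. Finally $\dim \mfm_- \le 4+2+2+1 = 9 = \dim \Symb_x(\msD)$ forces equality in every graded piece, giving $\Symb_x(\msD) \cong \mfm_-$.

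The main obstacle is the short/long identification in the third step: Proposition \ref{prop. symbol algebra in Deform-A3 case} attaches $\msD^{\alpha_1}_{F^d}$ to the long simple root of $C_2$, so the needed conclusion that $v_{233}$ (rather than $v_{232}$) generates $\mfm_{-3}$ requires careful tracking of the Dynkin involution of $A_3$ when transporting the $F^d$-structure into the $A_4$-fiber. Once this short/long matching is settled, the Jacobi bookkeeping that extracts the precise coefficients in the table is mechanical.
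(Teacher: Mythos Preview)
Your overall strategy---extract Serre-type relations from the three two-parameter fibers, feed them into the symbol algebra of $\msD$, and finish by the dimension identity $\dim\Symb_x(\msD)=\dim\mcX_0=9$---is exactly the right shape and matches the paper's approach. But there is a genuine gap in your closing step.

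The relations you collect ($[v_1,\mfm_-]=0$, $[v_2,v_4]=0$, $(\ad v_2)^2(v_3)=0$, $(\ad v_3)^3(v_2)=0$, $(\ad v_3)^2(v_4)=0$, $(\ad v_4)^2(v_3)=0$) are precisely the Serre relations for $C_3\times A_1$, so by Proposition~\ref{prop. Serre's theorem} they exhibit $\Symb_x(\msD)$ as a quotient of $\mfg_-(C_3)\oplus\mfg_-(A_1)$, which has graded dimensions $4,2,2,1,1$ and total dimension $10$. In particular your claimed bound ``$\dim\mfm_-\le 4+2+2+1=9$'' is not what the relations give you: the element $v_{23344}:=[v_{2334},v_4]$ (corresponding to the highest root $2\gamma_1+2\gamma_2+\gamma_3$ of $C_3$) is \emph{not} killed by any Jacobi identity among your relations, so a priori $(\mfm_{-5})_x$ may be one-dimensional. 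You can check this directly: every attempt to express $[v_4,v_{2334}]$ through Jacobi returns a tautology.

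What is missing is the identification of the one-dimensional kernel $\mfq\subset\mfg_-(C_3\times A_1)$. The paper does this by a short ideal argument: write a generator $v_0\in\mfq$ as $\lambda v_1+v_0'+v_0''$ with $v_0'$ the lowest-degree nonzero component outside $\C v_1$; if $v_0'$ lies in degree $-k_0$ with $k_0\le 4$, then some $[v_j,v_0']\ne 0$ for $j\in\{2,3,4\}$ (this is where the explicit bracket table is used), hence $[v_j,v_0]$ is a nonzero element of $\mfq$ with strictly smaller top degree, contradicting $\dim\mfq=1$. This forces $\mfq=\C(v_{23344}+\lambda v_1)$, which is exactly the statement $(\mfm_{-5})_x=0$ that your argument needs. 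Without this step (or an equivalent one showing directly that $v_{23344}=0$ in $\Symb_x(\msD)$), your dimension count does not close. Note also that merely exhibiting \emph{some} $9$-dimensional quotient of $\mfg_-(C_3\times A_1)$ satisfying all your relations is not enough: $\mfg_-(C_3)=\mfg_-(C_3\times A_1)/\C v_1$ is another such quotient, not isomorphic to $\mfm_-$.

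A minor remark: the ``obstacle'' you flag---matching short versus long roots under the identification $F^{\alpha_2,\alpha_3}_x\cong F^d(1,2;\C^4)$---is not really an issue. Rank forces $\msD^{\alpha_3}$ (rank $1$) to correspond to $\msD^{\alpha_1}$ of $F^d$, and Remark~\ref{rmk. symbol algebra C2A1 in A3 case}(ii) shows that in $F^d$ the degree~$-3$ piece is $\C v_{121}$ with $v_1$ the rank-$1$ direction; transported to $A_4$ this is $\C v_{233}$, confirming $(\ad v_2)^2(v_3)=0$ and $(\ad v_3)^2(v_2)\ne 0$. The real missing ingredient is the kernel argument above.
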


\begin{proof}
By Proposition \ref{prop. 4 A4 case}, Proposition \ref{prop. fibers F(a2a3) in A4 case} and Proposition \ref{prop. symbol algebra in Deform-A3 case} (see also Remark \ref{rmk. symbol algebra C2A1 in A3 case}$(ii)$), we have the description of $\mfm(\alpha_i)$ and $\mfm(\alpha_i, \alpha_j)$ for $2\leq i\neq j\leq 4$. In particular, in $\mfm_x(\alpha_2, \alpha_3, \alpha_4):=\Symb_x(\msD)$ we have
\begin{eqnarray*}
&& [v_1, v_i]=0, i=2, 3, 4, \quad (\ad v_2)^2(v_3)=0, \quad (\ad v_3)^3(v_2)=0, \\
&& [v_2, v_4]=0, \quad (\ad v_3)^2(v_4)=0, \quad (\ad v_4)^2(v_3)=0.
\end{eqnarray*}
Then by Proposition \ref{prop. Serre's theorem}, $\Symb_x(\msD)$ is a quotient algebra of $\mfg_-:=\mfg_-(C_3)\oplus\mfg_-(A_1)$. More precisely, $\mfg_-:=\bigoplus\limits_{k\geq 1}\mfg_{-k}$ as follows:
\begin{eqnarray*}
&& \mfg_{-1}=\bigoplus\limits_{1\leq i\leq 4}\C v_i, \\
&& \mfg_{-2}=\C v_{23}\oplus\C v_{34}, \\
&& \mfg_{-3}=\C v_{233}\oplus\C v_{234}, \\
&& \mfg_{-4}=\C v_{2334}, \\
&& \mfg_{-5}=\C v_{23344}, \\
&& \mfg_{-k}=0, \quad k\geq 6.
\end{eqnarray*}
Denote by $\mfq$ the ideal of $\mfg_-$ such that $\Symb_x(\msD)=\mfg_-/\mfq$ as graded nilpotent Lie algebra. By Proposition \ref{prop. rationally chain connected distribution version}, $\dim \Symb_x(\msD)=\dim T_x\mcX_0=9$, which implies that $\dim\mfq=\dim\mfg_- - \dim\Symb_x(\msD)=1$.

To complete the proof of Lemma \ref{lem. 16 A4 case}, it suffices to show the claim that $\mfq=\C v_0$, where $v_0:=v_{23344}+\lambda v_1$ for some $\lambda\in\C$. Note that the graded Lie algebra structure on $\mfg_-/\C w_0$ is independent of the choice of $\lambda\in\C$.

Suppose the claim fails. Then there exists $1\leq k_0\leq 4$ such that $\mfq=\C v_0$ and $v_0=\lambda v_1 + v'_0 + v''_0$, where $v''_0\in\bigoplus\limits_{k\geq k_0+1}\mfg_{-k}$, $0\neq v'_0\in\mfg_{-k_0}$ if $k_0\geq 2$, and $0\neq v'_0\in\bigoplus\limits_{2\leq i\leq 4}\C v_i$ if $k_0=1$. Then there exists $2\leq j\leq 4$ such that $[v_j, v'_0]\neq 0$, see table \eqref{eqn. table brackets imegary degenerate A4 case}. Then $0\neq [v_j, v_0]\in\bigoplus\limits_{k\geq k_0+1}\mfg_{-k}$. Since $\mfq$ is an ideal of $\mfg_-$, we have $0\neq [v_j, v_0]\in\mfq=\C v_0$. In particular, $[v_j, v_0]$ has a nonzero component in $\mfg_{-k_0}$. It is a contradiction. Hence the claim holds.
\end{proof}

\begin{lem}\label{lem. 11 A4 case}
In Setting \ref{setup. (A4, a2a3a4)} the Frobenius bracket of $(T^{\pi^{\alpha_2, \alpha_3}}+T^{\pi^{\alpha_2, \alpha_4}})|_{\mcX_0}$ induces a homomorphism of meromorphic vector bundles over $\mcX_0$:
\begin{eqnarray*}
(T^{\pi^{\alpha_2, \alpha_3}}/T^{\pi^{\alpha_2}})|_{\mcX_0}\otimes(T^{\pi^{\alpha_2, \alpha_4}}/T^{\pi^{\alpha_2}})|_{\mcX_0}\rightarrow T\mcX_0/(T^{\pi^{\alpha_2, \alpha_3}}+T^{\pi^{\alpha_2, \alpha_3}})|_{\mcX_0},
\end{eqnarray*}
which is a surjective homomorphism over a nonempty Zariski open subset of $\mcX_0$.
\end{lem}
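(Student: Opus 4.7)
The statement has two parts, well-definedness of the induced bundle map and its surjectivity on a dense open subset of $\mcX_0$, and I will treat them in turn.

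\emph{Well-definedness.} Because $\pi^{\alpha_2,\alpha_3}_0$ and $\pi^{\alpha_2,\alpha_4}_0$ both factor through $\pi^{\alpha_2}_0$, the inclusion $\msD^{\alpha_2} := T^{\pi^{\alpha_2}}|_{\mcX_0} \subset \msD^{\alpha_2,\alpha_j} := T^{\pi^{\alpha_2,\alpha_j}}|_{\mcX_0}$ will hold for $j=3,4$, and each of the three distributions is integrable as a relative tangent bundle of a Mori contraction on $\mcX_0$. Writing $\msD := \msD^{\alpha_2,\alpha_3}+\msD^{\alpha_2,\alpha_4}$, the $\mcO$-linear Frobenius bracket $\wedge^2\msD \to T\mcX_0/\msD$ restricts to a bilinear map on $\msD^{\alpha_2,\alpha_3}\otimes \msD^{\alpha_2,\alpha_4}$ which must vanish whenever one of the two arguments is a section of $\msD^{\alpha_2}$: integrability of $\msD^{\alpha_2,\alpha_3}$ yields $[\msD^{\alpha_2},\msD^{\alpha_2,\alpha_3}] \subset \msD^{\alpha_2,\alpha_3}\subset\msD$, and the symmetric statement holds for $\alpha_4$. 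Hence the Frobenius bracket descends to the claimed homomorphism of quotient bundles on the dense open subset of $\mcX_0$ where all the relevant meromorphic bundles are holomorphic.

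\emph{Surjectivity on the generic fiber.} By Proposition~\ref{prop. 4 A4 case} the relevant ranks are $\rank(\msD^{\alpha_2,\alpha_3}/\msD^{\alpha_2})=3$, $\rank(\msD^{\alpha_2,\alpha_4}/\msD^{\alpha_2})=1$, giving source rank three and target rank $9-(5+3-2)=3$. On $\mcX_t\cong\bS=A_4/P_I$ for $t\neq 0$, the induced map at the base point $o\in\bS$ will be read off from the root-space decomposition of $\mfg_-(I)$: the source is spanned by $e_{-\gamma}\otimes e_{-\alpha_4}$ with $\gamma$ ranging over $\{\alpha_3,\alpha_2+\alpha_3,\alpha_1+\alpha_2+\alpha_3\}$, the target by $e_{-\gamma-\alpha_4}$ for the same $\gamma$, and the Lie bracket $[e_{-\gamma},e_{-\alpha_4}]$ is a nonzero multiple of $e_{-\gamma-\alpha_4}$ in each case. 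The map is therefore an isomorphism at $o$, so by $G$-homogeneity surjective on all of $\bS$, hence on all of $\mcX_t$ for $t\neq 0$.

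\emph{Passage to $\mcX_0$ and main obstacle.} The map extends to a morphism of vector bundles on a dense open of the total space $\mcX$, and its surjectivity locus is Zariski open. Since that locus contains every $\mcX_t$ with $t\neq 0$, its complement is a proper closed subset of $\mcX$ contained in $\mcX_0$. The delicate point will be ruling out that this complement equals all of $\mcX_0$; I would handle this via an explicit bracket computation at a general point $x\in\mcX_0$, using the symbol algebra identification $\Symb_x(\msD^{\alpha_2}+\msD^{\alpha_3}+\msD^{\alpha_4})\cong\mfm_-$ from Lemma~\ref{lem. 16 A4 case} together with the fiber description $F^{\alpha_2,\alpha_3}_x\cong F^d(1,2;\C^4)$ from Proposition~\ref{prop. fibers F(a2a3) in A4 case}, exhibiting three independent classes in $T_x\mcX_0/(\msD^{\alpha_2,\alpha_3}+\msD^{\alpha_2,\alpha_4})_x$ produced by the induced bracket at $x$, which suffices by the rank count.
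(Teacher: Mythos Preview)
Your proposal is correct and its decisive step coincides with the paper's proof: both reduce surjectivity at a general point $x\in\mcX_0$ to the explicit Lie bracket computation in the symbol algebra $\mfm_-$ of Lemma~\ref{lem. 16 A4 case}, where one checks that $[v_3,v_4]=v_{34}$, $[v_{23},v_4]=-v_{234}$, $[v_{233},v_4]=-v_{2334}$ span the three-dimensional target. Your detour through $\mcX_t$ for $t\neq 0$ and the openness argument is harmless but unnecessary, since the paper simply reads off the graded pieces ${\rm gr}(T^{\pi^{\alpha_2,\alpha_3}})$ and ${\rm gr}(T^{\pi^{\alpha_2,\alpha_4}})$ directly from Lemma~\ref{lem. 16 A4 case} and computes the bracket on $\mcX_0$ without reference to the nearby fibers.
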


\begin{proof}
It is a direct consequence of Lemma \ref{lem. 16 A4 case}. More precisely, the weak derivatives of $\msD^{\alpha_2}$, $\msD^{\alpha_2}+\msD^{\alpha_3}$ and $\msD^{\alpha_2}+\msD^{\alpha_4}$ induces symbol algebras at a general point $x\in\mcX_0$ as follows:
\begin{eqnarray*}
&& {\rm gr}(T^{\pi^{\alpha_2}})=\C v_1\oplus\C v_2, \\
&& {\rm gr}(T^{\pi^{\alpha_2, \alpha_3}})=\C v_1\oplus\C v_2\oplus\C v_3\oplus \C v_{23}\oplus\C v_{233}, \\
&& {\rm gr}(T^{\pi^{\alpha_2, \alpha_4}})=\C v_1\oplus\C v_2\oplus\C v_4.
\end{eqnarray*}
Then it is straight-forward to deduce the conclusion from the Lie algebra structure of $\mfm_-$ in Notation \ref{nota. m- A4 case}.
\end{proof}

\begin{prop}\label{prop. 10 A4 case}
In Setting \ref{setup. (A4, a2a3a4)} the variety $\mcX^{\alpha_2}_0$ is biholomorphic to $F(3, 4; \C^5)$.
\end{prop}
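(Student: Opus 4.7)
The plan is to identify $\mcX^{\alpha_2}_0$ with the incidence variety $F(3,4;\C^5)$ by realising it as a smooth Fano seven-fold equipped with two Mori contractions—a $\mbP^3$-bundle over $\mbP^4$ and a $\mbP^1$-bundle over $Gr(3,\C^5)$—which together force it into the incidence variety via a product morphism.

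First I would verify that the family $\mcX^{\alpha_2}\to\Delta$ satisfies Setting \ref{setup. intro Fano deformation} for the simple group $G=A_4$ and $I'=\{\alpha_3,\alpha_4\}$: smoothness of $\mcX^{\alpha_2}_0$ is given by Proposition \ref{prop. 7 A4 case}, and the Fano property follows from the standard fact that the base of a smooth Mori $\mbP^2$-bundle from a smooth Fano manifold is Fano (via the relative canonical bundle formula applied to $\pi^{\alpha_2}_0\colon\mcX_0\to\mcX^{\alpha_2}_0$). Then I would apply Proposition \ref{prop. criterion Pk bundle by Weber Wisniewski} to the relative Mori contraction $\pi^{\{\alpha_2,\alpha_3\}}\colon\mcX^{\alpha_2}\to\mcX^{\{\alpha_2,\alpha_3\}}$ corresponding to $\alpha_3\in I'$. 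Since $A_4/P_{\{\alpha_3,\alpha_4\}}\to A_4/P_{\alpha_4}=\mbP^4$ is a $\mbP^3$-bundle and $H^*(\mbP^4,\mbQ)$ is generated by $H^2$, criterion (i) applies, yielding that $\pi^{\{\alpha_2,\alpha_3\}}_0$ is a $\mbP^3$-bundle and that $\mcX^{\{\alpha_2,\alpha_3\}}_0$ is smooth. Fano deformation rigidity of $\mbP^4$ (a case of Theorem \ref{thm. intro Picard numbe one}) then identifies $\mcX^{\{\alpha_2,\alpha_3\}}_0\cong\mbP^4$.

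For the other contraction $\pi^{\{\alpha_2,\alpha_4\}}\colon\mcX^{\alpha_2}\to\mcX^{\{\alpha_2,\alpha_4\}}$ the base $Gr(3,\C^5)$ has cohomology that is \emph{not} generated by $H^2$, so criterion (i) of Proposition \ref{prop. criterion Pk bundle by Weber Wisniewski} is unavailable. Instead I would show directly that every fibre of $\pi^{\{\alpha_2,\alpha_4\}}_0$ is isomorphic to $\mbP^1$. By Proposition \ref{prop. invariance of Mori cone by Wisniewski} the contracted extremal ray is unsplit of length two with generic fibre $\mbP^1$; a standard Mori-theoretic analysis (e.g.\ Wi\'sniewski's inequality together with Fujita's characterisation of equidimensional Mori contractions whose general fibre is $\mbP^1$) forces every fibre to be $\mbP^1$, so that $\pi^{\{\alpha_2,\alpha_4\}}_0$ is a smooth $\mbP^1$-bundle. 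Consequently $\mcX^{\{\alpha_2,\alpha_4\}}_0$ is a smooth Fano six-fold deforming the Picard-one Grassmannian $Gr(3,\C^5)$, and Theorem \ref{thm. intro Picard numbe one} of Hwang--Mok yields $\mcX^{\{\alpha_2,\alpha_4\}}_0\cong Gr(3,\C^5)$; alternatively, once smoothness is in hand, criterion (ii) of Proposition \ref{prop. criterion Pk bundle by Weber Wisniewski} produces the $\mbP^1$-bundle structure and the same identification.

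Finally I would combine the two projections into the product morphism
\[
\phi:=\bigl(\pi^{\{\alpha_2,\alpha_3\}}_0,\pi^{\{\alpha_2,\alpha_4\}}_0\bigr)\colon\mcX^{\alpha_2}_0\longrightarrow\mbP^4\times Gr(3,\C^5).
\]
For $t\neq 0$, $\phi_t$ realises $\mcX^{\alpha_2}_t\cong F(3,4;\C^5)$ as the incidence subvariety $\{(V^4,V^3):V^3\subset V^4\}$. Passing to the scheme-theoretic closure in the relative product over $\Delta$, the image of $\phi$ is still contained in this incidence variety, which at $t=0$ is again $F(3,4;\C^5)$. The two fibres of $\pi^{\{\alpha_2,\alpha_3\}}_0$ and $\pi^{\{\alpha_2,\alpha_4\}}_0$ through a general point $x$ have complementary codimensions ($3$ and $5$) in the seven-fold $\mcX^{\alpha_2}_0$ and meet only at $x$, so $\phi$ is generically injective. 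Thus $\phi$ is a birational morphism of smooth projective seven-folds onto the smooth target $F(3,4;\C^5)$, and Zariski's main theorem concludes that $\phi$ is an isomorphism. The principal obstacle in this plan is the third paragraph: verifying that $\pi^{\{\alpha_2,\alpha_4\}}_0$ is a smooth $\mbP^1$-bundle, equivalently that no fibre degenerates to a reducible or higher-dimensional configuration; once that Mori-theoretic step is settled, the remaining pieces reduce to packaging existing rigidity results and the $\mbP^k$-bundle criterion.
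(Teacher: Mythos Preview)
Your approach is genuinely different from the paper's, and the gap you yourself flag in the third paragraph is real and not closed by the tools you invoke. Wi\'sniewski's inequality only gives a \emph{lower} bound on fibre dimension, and Fujita-type results on contractions with $\mbP^1$ as general fibre presuppose equidimensionality, which is exactly what is in question. There is no general mechanism preventing an elementary Fano contraction with generic fibre $\mbP^1$ and length-two ray from acquiring a higher-dimensional fibre; criterion~(ii) of Proposition~\ref{prop. criterion Pk bundle by Weber Wisniewski} is circular here since smoothness of $\mcX^{\{\alpha_2,\alpha_4\}}_0$ is what you would be trying to extract. So as written the argument does not close.

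The paper sidesteps this issue entirely. Rather than analysing the second contraction directly, it works with the meromorphic distribution $\mcE=\mcE^{\alpha_3}+\mcE^{\alpha_4}$ on $\mcX^{\alpha_2}_0$ obtained by pushing down the relative tangent sheaves. Using the explicit description of the symbol algebra upstairs (Lemma~\ref{lem. 16 A4 case}), it shows via Lemma~\ref{lem. 11 A4 case} that the Frobenius bracket $\mcE^{\alpha_3}\otimes\mcE^{\alpha_4}\to T\mcX^{\alpha_2}_0/\mcE$ is generically surjective; this says precisely that the symbol algebra of $\mcE$ at a general point is $\mfg_-(\{\alpha_3,\alpha_4\})$. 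One then concludes $\mcX^{\alpha_2}_0\cong A_4/P_{\{\alpha_3,\alpha_4\}}=F(3,4;\C^5)$ by the Cartan-connection criterion Proposition~\ref{prop. (Am, a1a2) case symbol algebra standard iff X0 standard}. The paper's route thus trades the delicate Mori-theoretic fibre analysis for a differential-geometric computation already set up in the ambient family; your route would be more elementary \emph{if} the $\mbP^1$-bundle step could be settled, but as it stands that step is the whole difficulty. (Incidentally, the paper does eventually obtain the two projective-bundle structures you describe, in Corollary~\ref{cor. 17 A4 case}, but only \emph{after} the present proposition is proved.)
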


\begin{proof}
By Proposition \ref{prop. 7 A4 case}, the variety $\mcX^{\alpha_2}_0$ is smooth. Being the smooth deformation of $F(3, 4; \C^5)\cong\mcX^{\alpha_2}_t$ with $t\neq 0$, $\mcX^{\alpha_2}_0$ is of Picard number two. The relative Mori contraction $\pi^{\alpha_2, \alpha_k}: \mcX\rightarrow\mcX^{\alpha_2, \alpha_k}$ induces a relative Mori contraction $\psi^{\alpha_k}: \mcX^{\alpha_2}\rightarrow\mcX^{\alpha_2, \alpha_k}$ extending  $\Psi^{\alpha_k}: A_4/P_{\{\alpha_3, \alpha_4\}}\rightarrow A_4/P_{\alpha_i}$, where $i\neq k\in\{3, 4\}$. The existence of two elementary contractions of fiber types implies that $\mcX^{\alpha_2}_0$ is a Fano manifold.

For each $k\in\{3, 4\}$, the relative tangent sheaf $T^{\psi^{\alpha_k}}$ is a meromorphic distribution on $\mcX^{\alpha_2}$, whose singular locus is a proper closed subvariety of $\mcX^{\alpha_2}_0$. Denote by $\mcE^{\alpha_k}:=T^{\psi^{\alpha_k}}|_{\mcX^{\alpha_2}_0}$, and $\mcE:=\mcE^{\alpha_3}+\mcE^{\alpha_4}\subset T\mcX^{\alpha_2}_0$. The Frobenius bracket of the meromorphic distribution $\mcE$ on $\mcX^{\alpha_2}_0$ induces $F: \mcE^{\alpha_3}\otimes\mcE^{\alpha_4}\rightarrow T\mcX^{\alpha_2}_0/\mcE$, which is a homomorphism of meromorphic vector bundles over $\mcX^{\alpha_2}_0$.

It is easy to see that $\mcE=d\pi^{\alpha_2}_0(T^{\pi^{\alpha_2, \alpha_3}}+T^{\pi^{\alpha_2, \alpha_4}})$ and $\mcE^{\alpha_k}=d\pi^{\alpha_2}_0(T^{\pi^{\alpha_2, \alpha_k}})$ for $k=3, 4$,
where $d\pi^{\alpha_2}_0$ is the tangent map of $\pi^{\alpha_2}_0$. By  Lemma \ref{lem. 11 A4 case}, $F$ is surjective at general points of $\mcX^{\alpha_2}_0$. The conclusion follows from Proposition \ref{prop. (Am, a1a2) case symbol algebra standard iff X0 standard}.
\end{proof}

\begin{cor}\label{cor. 17 A4 case}
In Setting \ref{setup. (A4, a2a3a4)} the varieties $\mcX^{\alpha_2, \alpha_3}_0$ and $\mcX^{\alpha_2, \alpha_4}_0$ are biholomorphic to $A_4/P_{\alpha_4}$ and $A_4/P_{\alpha_3}$ respectively. The morphisms $\pi^{\alpha_2, \alpha_3}_0: \mcX_0\rightarrow \mcX^{\alpha_2, \alpha_3}_0$ and $\pi^{\alpha_2, \alpha_4}_0: \mcX_0\rightarrow \mcX^{\alpha_2, \alpha_4}_0$ are $F^d(1, 2; \C^4)$-bundle and $(\mbP^2\times\mbP^1)$-bundle respectively.
\end{cor}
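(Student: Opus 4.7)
The plan is to first identify the target spaces of the two contractions using Proposition \ref{prop. 10 A4 case}, and then to analyze the fibers via the projective bundle structure $\pi^{\alpha_2}_0 = \mbP(\mcV) \to F(3,4;\C^5)$ given by Proposition \ref{prop. 7 A4 case}. Writing $\pi^{\alpha_2,\alpha_k} = \psi^{\alpha_k} \circ \pi^{\alpha_2}$ for the induced relative Mori contraction $\psi^{\alpha_k}: \mcX^{\alpha_2} \to \mcX^{\alpha_2,\alpha_k}$, the restriction $\psi^{\alpha_k}_0$ to the central fiber $\mcX^{\alpha_2}_0 \cong F(3,4;\C^5) = A_4/P_{\{\alpha_3,\alpha_4\}}$ is an elementary Mori contraction whose extremal ray is inherited from the generic fibers by Proposition \ref{prop. invariance of Mori cone by Wisniewski}. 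Since $F(3,4;\C^5)$ has precisely two elementary contractions---the standard $\mbP^3$-bundle onto $A_4/P_{\alpha_4} \cong \mbP^4$ and the standard $\mbP^1$-bundle onto $A_4/P_{\alpha_3} \cong Gr(3,5)$---this identifies $\mcX^{\alpha_2,\alpha_3}_0 \cong A_4/P_{\alpha_4}$ and $\mcX^{\alpha_2,\alpha_4}_0 \cong A_4/P_{\alpha_3}$ together with the stated structure of $\psi^{\alpha_k}_0$.

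Next, each fiber $(\pi^{\alpha_2,\alpha_k}_0)^{-1}(p)$ over $p \in \mcX^{\alpha_2,\alpha_k}_0$ equals $\mbP(\mcV|_{(\psi^{\alpha_k}_0)^{-1}(p)})$: a $\mbP^2$-bundle over $\mbP^1$ when $k=4$ and over $\mbP^3$ when $k=3$. Propositions \ref{prop. 4 A4 case} and \ref{prop. fibers F(a2a3) in A4 case} identify the generic fibers as $\mbP^2 \times \mbP^1$ and $F^d(1,2;\C^4) \cong \mbP(\mcO(2) \oplus \mcL^\omega)$ respectively; equivalently, the generic restriction $\mcV|_{(\psi^{\alpha_4}_0)^{-1}(p)}$ splits as $\mcO(d)^3$ on $\mbP^1$ for some fixed $d$, while $\mcV|_{(\psi^{\alpha_3}_0)^{-1}(p)}$ is projectively equivalent to $\mcO(2) \oplus \mcL^\omega$ on $\mbP^3$. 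Fiber-dimension counting ($9-6=3$ and $9-4=5$) shows both $\pi^{\alpha_2,\alpha_k}_0$ are equidimensional and hence flat, so their fibers form flat families of smooth Fano varieties.

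The main obstacle is upgrading these generic-fiber descriptions to every fiber, i.e., showing that the splitting type of $\mcV|_{(\psi^{\alpha_k}_0)^{-1}(p)}$ is constant in $p$. For $k=4$, I would combine upper semicontinuity of $h^0(\mcV|_{(\psi^{\alpha_4}_0)^{-1}(p)}(-d))$ with the constancy of $c_1$ on the $\mbP^1$-fibers to force the balanced splitting $\mcO(d)^3$ everywhere; equivalently, one invokes the Fano deformation rigidity of $\mbP^2 \times \mbP^1$, a consequence of the rigidity of projective spaces together with Proposition \ref{prop. invariance of product structure under Fano deformation}. For $k=3$ the argument is more delicate, as one must exclude not only $F(1,2;\C^4)$---the only other Fano deformation of itself per Theorem \ref{thm. intro unique degeneration of F(1, 2, C4)}---but any Fano smooth $\mbP^2$-bundle over $\mbP^3$ that could arise as a flat specialization of $F^d(1,2;\C^4)$. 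The natural strategy is to combine Theorem \ref{thm. intro unique degeneration of F(1, 2, C4)} applied to the restriction of the family over curves in $\mbP^4$ passing through a putative bad fiber with the explicit bundle description of $F^d(1,2;\C^4)$, exploiting the compatibility between $\mcV$ and the $\mbP^3$-bundle $\psi^{\alpha_3}_0: F(3,4;\C^5) \to \mbP^4$ established in the first step.
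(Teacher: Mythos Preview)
Your identification of the targets $\mcX^{\alpha_2,\alpha_3}_0\cong A_4/P_{\alpha_4}$ and $\mcX^{\alpha_2,\alpha_4}_0\cong A_4/P_{\alpha_3}$, and your treatment of the $(\mbP^2\times\mbP^1)$-bundle case ($k=4$), match the paper's proof exactly.

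For $k=3$ you make the problem harder than it is. You frame the question as classifying flat Fano \emph{specializations of $F^d(1,2;\C^4)$} within the family over $\mcX^{\alpha_2,\alpha_3}_0\cong\mbP^4$, and Theorem~\ref{thm. intro unique degeneration of F(1, 2, C4)} says nothing directly about that. The paper instead uses the full relative contraction $\pi^{\alpha_2,\alpha_3}:\mcX\to\mcX^{\alpha_2,\alpha_3}$ over $\Delta$: since this map is smooth with Fano fibers (each fiber being a $\mbP^2$-bundle over $\mbP^3$), and since the fibers over $t\neq 0$ are all isomorphic to $F(2,3;\C^4)$, \emph{every} fiber over $\mcX^{\alpha_2,\alpha_3}_0$ is automatically a Fano deformation of $F(2,3;\C^4)$ (take a local section of $\mcX^{\alpha_2,\alpha_3}\to\Delta$ through the given point). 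Theorem~\ref{thm. intro unique degeneration of F(1, 2, C4)} then gives the dichotomy $F(2,3;\C^4)$ or $F^d(1,2;\C^4)$ for each fiber directly. Finally, local rigidity of $F(2,3;\C^4)$ makes the locus of $F(2,3;\C^4)$-fibers open in $\mcX^{\alpha_2,\alpha_3}_0$, and Proposition~\ref{prop. fibers F(a2a3) in A4 case} says the generic fiber is $F^d(1,2;\C^4)$, so that open locus is empty. Your proposed route via curves in $\mbP^4$ and semicontinuity of bundle invariants is not needed once you remember that the whole family lives over $\Delta$, not just over the central base.
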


\begin{proof}
By Proposition \ref{prop. 10 A4 case}, $\mcX^{\alpha_2}_0\cong A_4/P_{\{\alpha_3, \alpha_4\}}$. Hence $\mcX^{\alpha_2, \alpha_3}_0\cong A_4/P_{\alpha_4}$ and $\mcX^{\alpha_2, \alpha_4}_0\cong A_4/P_{\alpha_3}$. Furthermore, the two elementary Mori contractions $\psi^{\alpha_3}_0: \mcX^{\alpha_2}_0\rightarrow\mcX^{\alpha_2, \alpha_3}_0$ and $\psi^{\alpha_4}_0: \mcX^{\alpha_2}_0\rightarrow \mcX^{\alpha_2, \alpha_4}_0$ are $\mbP^3$-bundle and $\mbP^1$-bundle respectively. Then by Proposition \ref{prop. 7 A4 case} $\pi^{\alpha_2, \alpha_3}: \mcX_0\rightarrow\mbP^4$ (resp. $\pi^{\alpha_2, \alpha_4}: \mcX_0\rightarrow Gr(3, \C^5)$) is a smooth morphism such that each fiber is a Fano manifold admitting a $\mbP^2$-bundle structure over $\mbP^3$ (resp. over $\mbP^1$). By rigidity of projective space and Proposition \ref{prop. invariance of product structure under Fano deformation}, the morphism $\pi^{\alpha_2, \alpha_4}_0$ is a $(\mbP^2\times\mbP^1)$-bundle. By Theorem \ref{thm. intro unique degeneration of F(1, 2, C4)}, each fiber of $\pi^{\alpha_2, \alpha_3}_0$ is biholomorphic to either $F(2, 3; \C^4)$ or $F^d(1, 2; \C^4)$. By the local rigidity of $F(2, 3; \C^4)$ and Proposition \ref{prop. fibers F(a2a3) in A4 case}, the morphism $\pi^{\alpha_2, \alpha_3}_0$ is an $F^d(1, 2; \C^4)$-bundle.
\end{proof}

Now we are ready to complete the proof of Proposition \ref{prop. (A4, a234) rigidity}. As a trivial analogue with Construction \ref{cons. intro Fd(1, 2; C4)}, we can define $F^d(2, 3; \C^4)$ by using the contact distribution on $A_3/P_{\alpha_3}$ instead of that on $A_3/P_{\alpha_1}$. Although $F^d(2, 3; \C^4)\cong F^(1, 2; \C^4)$, we use $F^d(2, 3, \C^4)$ in the following to make our discussion compatible with the involved simple roots of $A_4$.

\begin{proof}[Proof of Proposition \ref{prop. (A4, a234) rigidity}]
We discuss in Setting \ref{setup. (A4, a2a3a4)}. It suffices to deduce a contradiction. In summary of Proposition \ref{prop. 7 A4 case}, Proposition \ref{prop. 10 A4 case} and Corollary \ref{cor. 17 A4 case}, $\pi^{\alpha_2}: \mcX_0\rightarrow\mcX^{\alpha_2}_0=F(3, 4; \C^5)$ is a $\mbP^2$-bundle and $\pi^{\alpha_2, \alpha_3}_0: \mcX_0\rightarrow\mcX^{\alpha_2, \alpha_3}_0=\mbP^4$ is a $F^d(2, 3; \C^4)$-bundle.  By Proposition \ref{prop. properties of sigma in Deform-A3 case} there exists a holomorphic section $\sigma: \mcX^{\alpha_2}_0=F(3, 4; \C^5)\rightarrow\mcX_0$ of $\pi^{\alpha_2}_0$ such that

$(i)$ at any point $x\in\mcX_0\setminus\sigma(\mcX^{\alpha_2}_0)$, $\K^{\alpha_3}_x(\mcX_0)$ consists of a unique element, denoted by $[C_x]$;

$(ii)$ at any point $x\in\mcX_0\setminus\sigma(\mcX^{\alpha_2}_0)$, $C_x\cong\mbP^1$ and $\pi^{\alpha_2}_0$ sends $C_x$ biholomorphically to a line in a fiber of $\psi^{\alpha_3}: \mcX^{\alpha_2}_0=F(3, 4; \C^5)\rightarrow\mcX^{\alpha_2, \alpha_3}_0=A_4/P_{\alpha_4}$;

$(iii)$ two points $x, y\in\mbP^2_t\setminus\{\sigma(t)\}$ satisfy $\pi^{\alpha_2}_0(C_x)=\pi^{\alpha_2}_0(C_y)$ if and only if the two lines $\langle x, \sigma(t)\rangle$ and $\langle x, \sigma(t)\rangle$ in $\mbP^2_t$ coincide, where $t\in\mcX^{\alpha_2}_0$ is an arbitrary point and $\mbP^2_t:=(\pi^{\alpha_2}_0)^{-1}(t)$.

Set $\K^{\alpha_3}(\mcX_0/\mcX^{\alpha_2}_0):=\bigcup\limits_{t\in\mcX^{\alpha_2}_0}[\pi^{\alpha_2}_0(C_x)]\subset A_4/P_{\{\alpha_2, \alpha_4\}}=\K^{\alpha_3}(\mcX^{\alpha_2}_0)$. Denote by $\mcX^{\alpha_2}_0\leftarrow U^{\alpha_3}(\mcX_0/\mcX^{\alpha_2}_0)\rightarrow\K^{\alpha_3}(\mcX_0/\mcX^{\alpha_2}_0)$ the restriction of the universal family $\mcX^{\alpha_2}_0=A_4/P_{\{\alpha_3, \alpha_4\}}\leftarrow A_4/P_{\{\alpha_2, \alpha_3, \alpha_4\}}\rightarrow\K^{\alpha_3}(\mcX^{\alpha_2}_0)=A_4/P_{\{\alpha_2, \alpha_4\}}$.

Since $\pi^{\alpha_2, \alpha_3}_0: \mcX_0\rightarrow\mcX^{\alpha_2, \alpha_3}_0=\mbP^4$ is a $F^d(2, 3; \C^4)$-bundle, we can apply Proposition \ref{prop. rational map over P3 in Deform-A3 case} to obtain a commutative diagram over $\mcX^{\alpha_2}_0$ as follows:
\begin{eqnarray}\label{eqn. diagram X0 in A3 case}
\xymatrix{\mcX_0\ar@{-->}[r]^-{\theta}\ar[rd]^-{\pi^{\alpha_2}_0}  &U^{\alpha_3}(\mcX_0/\mcX^{\alpha_2}_0)\ar[d]_-{\gamma}\ar@{^(->}[r] & A_4/P_{\{\alpha_2, \alpha_3, \alpha_4\}}\ar[ld]\ar[d] \\
& \mcX^{\alpha_2}_0=F(3, 4; \C^5)   & Gr(2, \C^5),
}
\end{eqnarray}
where at any point $t\in\mcX^{\alpha_2}_0$ the horizontal rational map $\theta_t$ is the linear projection from $\mbP^2_t:=(\pi^{\alpha_2}_0)^{-1}(t)$ with center $\sigma(t)$.In particular,

$(iv)$ $\gamma: U^{\alpha_3}(\mcX_0/\mcX^{\alpha_2}_0)\rightarrow \mcX^{\alpha_2}_0$ is a $\mbP^1$-bundle.

Now we claim that

$(v)$ under the natural surjective morphism $A_4/P_{\{\alpha_2, \alpha_3, \alpha_4\}}\rightarrow A_4/P_{\alpha_2}=Gr(2, \C^5)$, the variety $U^{\alpha_3}(\mcX_0/\mcX^{\alpha_2}_0)\subset A_4/P_{\{\alpha_2, \alpha_3, \alpha_4\}}$ is the inverse image of a hyperplane section of $Gr(2, \C^5)$.

To verify the claim $(v)$, it suffices to show that as a divisor on $\bS:=A_4/P_{\{\alpha_2, \alpha_3, \alpha_4\}}$, $D:=U^{\alpha_3}(\mcX_0/\mcX^{\alpha_2}_0)$ satisfies
\begin{eqnarray}\label{eqn. intersection number A4 case}
(D\cdot C_i)=\delta_{i2}, \quad [C_i]\in\K^{\alpha_i}(\bS), 2\leq i\leq 4.
\end{eqnarray}
Take a point $[A_4]\in\mcX^{\alpha_2, \alpha_4}_0=A_4/P_{\alpha_4}$, where $A_4$ is the corresponding 4-dimensional linear subspace of $\C^5$. The restriction $U^{\alpha_3}(\mcX_0/\mcX^{\alpha_2}_0)\subset A_4/P_{\{\alpha_2, \alpha_3, \alpha_4\}}\rightarrow Gr(2, \C^5)$ on the fiber $(\pi^{\alpha_2, \alpha_3}_0)^{-1}([A_4])\cong F^d(2, 3; A_4)$ is $C_2/B\subset A_3/P_{\{\alpha_2, \alpha_3\}}\rightarrow Gr(2, \C^4)$. Hence \eqref{eqn. intersection number A4 case} holds for $i=2$ and $3$.

Now consider a part of \eqref{eqn. diagram X0 in A3 case}, which is a commutative diagram as follows:
\begin{eqnarray*}
\xymatrix{\mcX_0\ar@{-->}[r]\ar[d] & U^{\alpha_3}(\mcX_0/\mcX^{\alpha_2}_0)\ar[ld]\ar[d] \\
\mcX^{\alpha_2}_0=F(3, 4; \C^5) & \bS\ar[l].
}
\end{eqnarray*}
Take any $[l_4]\in\K^{\alpha_4}(\mcX^{\alpha_2}_0)$. Restricting on $l_4\subset\mcX^{\alpha_2}_0$, we obtain a commutative diagram:
\begin{eqnarray*}
\xymatrix{\mbP^2\times l_4\ar@{-->}[r]^-{\varphi_1}\ar[d] &\mbP^1\times l_4\ar[ld]\ar[d]^-{\varphi_2} \\
l_4 & \mbP^2\times l_4\ar[l],
}
\end{eqnarray*}
where the horizontal rational map $\varphi_1: \mbP^2\times l_4\dashrightarrow\mbP^1\times l_4$ is the linear projection from $\mbP^2\times\{t\}$, $t\in l_4$ with center $\sigma(t)\in \mbP^2_t:=\mbP^2\times\{t\}$, and the vertical morphism $\varphi_2: \mbP^1\times l_4\rightarrow\mbP^2\times l_4$ is a hyperplane bundle over $l_4$. By this diagram we can choose $[C_4]\in\K^{\alpha_4}(\bS)$ such that $C_4\subset\mbP^2\times l_4\subset\bS$ is a section of $l_4$ and $C_4\cap U^{\alpha_3}(\mcX_0/\mcX^{\alpha_2}_0)=\emptyset$. In particular, $(D\cdot C_4)=0$, verifying \eqref{eqn. intersection number A4 case} and claim $(v)$ too.

Denote by $0\neq\omega\in\wedge^2(\C^5)^*$ the antisymmetric form on $\C^5$ such that
\begin{eqnarray*}
Gr_\omega(2, \C^5):=\{[A]\in Gr(2, \C^5)\mid \omega(A, A)=0\}
\end{eqnarray*}
is the hyperplane section of $Gr(2, \C^5)\subset\mbP^9$ mentioned in claim $(v)$. The assertion $\omega\neq 0$ follows from the fact $ U^{\alpha_3}(\mcX_0/\mcX^{\alpha_2}_0)\subsetneqq\bS$.

Then we can conclude that

$(vi)$ at any point $t=([A_3], [A_4])\in\mcX^{\alpha_2}_0=F(3, 4; \C^5)$ the fiber $U^{\alpha_3}_t(\mcX_0/\mcX^{\alpha_2}_0)$ is identified with the space
$M_t:=\{[A_2]\in Gr_\omega(2, \C^5)\mid A_2\subset A_3\}$.

Denote by $\omega'\in\wedge^2 A_4^*$ the restriction of $\omega$ on $A_4=\C^4\subset\C^5$. If the point $t=([A_3], [A_4])$ is general in $\mcX^{\alpha_2}_0=F(3, 4; \C^5)$, then
\begin{eqnarray*}
A_3^{\bot_{\omega'}}:=\{v\in A_4\mid \omega'(v, A_3)=0\}\subset A_4
\end{eqnarray*}
is a linear subspace of dimension one and $M_t$ is exact
\begin{eqnarray*}
\{[A_2]\in Gr(2, \C^5)\mid A_3^{\bot_{\omega'}}\subset A_2\subset A_3\},
\end{eqnarray*}
which is isomorphic to $\mbP^1$.

However by dimension reason, $\text{Null}(\omega)\neq 0$, where
\begin{eqnarray*}
\text{Null}(\omega):=\{v\in\C^5\mid \omega(v, \C^5)=0\}.
\end{eqnarray*}
Hence, there exists $[\tilde{A}_3]\in Gr(3, \C^5)$ such that $\text{Null}(\omega)\cap \tilde{A}_3\neq 0$ and $\tilde{A}_3\subset\tilde{A}_3^{\bot_\omega}\subset\C^5$, where $\tilde{A}_3^{\bot_\omega}:=\{v\in\C^5\mid \omega(v, \tilde{A}_3)=0\}$. Choose $\tilde{t}:=([\tilde{A}_3], [\tilde{A}_4])\in\mcX^{\alpha_2}_0$. Then by definition we have
\begin{eqnarray*}\label{eqn. M(bar(t)) A4 case}
M_{\tilde{t}}=\{[A_2]\in Gr(2, \C^5)\mid A_2\subset \tilde{A}_3\}\cong\mbP^2.
\end{eqnarray*}
It contradicts with the assertion $(vi)$. This completes the proof of Proposition \ref{prop. (A4, a234) rigidity}.
\end{proof}

\subsection{Fano deformation of $D_4/P_{\{\alpha_2, \alpha_3, \alpha_4\}}$} \label{subsection. deformation of (D4, a2a3a4)}

\subsubsection{Possible degenerations}

The aim in this section is to show the following

\begin{prop}\label{prop. degeneration (D4, a2a3a4)}
Suppose in Setting \ref{setup. intro Fano deformation} that $\mcX_t\cong D_4/P_{\{\alpha_2, \alpha_3, \alpha_4\}}$ for $t\neq 0$ and $\mcX_0\ncong D_4/P_{\{\alpha_2, \alpha_3, \alpha_4\}}$. Then at a general point $x\in\mcX_0$, the fibers $F^{\alpha_2}_x\cong\mbP^1$, $F^{\alpha_3}_x\cong\mbP^1$, $F^{\alpha_4}_x\cong\mbP^1$,  $F^{\alpha_3, \alpha_4}_x\cong\mbP^1\times\mbP^1$, $F^{\alpha_2, \alpha_3}_x\cong F^d(1, 2; \C^4)$ and $F^{\alpha_2, \alpha_4}_x\cong F^d(1, 2; \C^4)$.
\end{prop}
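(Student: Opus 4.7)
The plan is to identify each fiber by increasing dimension, adapting the $A_4$ argument of Propositions \ref{prop. 4 A4 case}--\ref{prop. fibers F(a2a3) in A4 case}. Here $J = \{\alpha_1\}$, the trivalent node $\bar\alpha = \alpha_2$ is adjacent to each of $\alpha_1, \alpha_3, \alpha_4$, and the $J$-connected pairs inside $I$ are exactly $(\alpha_2, \alpha_3)$ and $(\alpha_2, \alpha_4)$, while $(\alpha_3, \alpha_4)$ is not $J$-connected. For each $i\in\{2,3,4\}$, the fiber $F^{\alpha_i}_x$ is a smooth Fano deformation of the rigid Picard-number-one fiber of $\bS$, so Fano deformation rigidity of projective spaces identifies each fiber as asserted. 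Since $(\alpha_3,\alpha_4)$ is not $J$-connected, $F^{\alpha_3,\alpha_4}_y = F^{\alpha_3}_y \times F^{\alpha_4}_y$ on $\mcX_t$ for $t\neq 0$, and Proposition \ref{prop. invariance of product structure under Fano deformation} extends this product decomposition to $\mcX_0$, giving $F^{\alpha_3,\alpha_4}_x \cong \mbP^1\times\mbP^1$.

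For the two five-dimensional fibers, the subdiagram $\Gamma_{J\cup\{\alpha_2,\alpha_i\}}$ is of type $A_3$ with middle-plus-end marking for each $i\in\{3,4\}$, so $\bS^{\alpha_2,\alpha_i}\cong A_3/P_{\{\alpha_2,\alpha_i\}}\cong F(1,2;\C^4)$. By Theorem \ref{thm. intro unique degeneration of F(1, 2, C4)}, each $F^{\alpha_2,\alpha_i}_x$ is biholomorphic to either $F(1,2;\C^4)$ or $F^d(1,2;\C^4)$. If both were standard then, combined with the already-verified standardness of $F^{\alpha_3,\alpha_4}_x$, Theorem \ref{thm. reduction theorem} would give $\mcX_0\cong\bS$, contradicting the hypothesis; hence at least one of $F^{\alpha_2,\alpha_3}_x$ and $F^{\alpha_2,\alpha_4}_x$ equals $F^d(1,2;\C^4)$.

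The core of the argument is ruling out the asymmetric case in which exactly one of the two five-dimensional fibers is non-standard. Suppose for contradiction that $F^{\alpha_2,\alpha_3}_x \cong F^d(1,2;\C^4)$ while $F^{\alpha_2,\alpha_4}_x\cong F(1,2;\C^4)$. By Proposition \ref{prop. unique distribution N in Deform-A3 case}, on each fiber $F^{\alpha_2,\alpha_3}_x$ there is a unique meromorphic line in $\mcD^{\alpha_2}+\mcD^{\alpha_3}$ with the small-bracket property, and this line lies inside $\mcD^{\alpha_2}$; varying $x$, these fiberwise lines assemble to a meromorphic line subbundle $\mcN\subset\mcD^{\alpha_2}$ on $\mcX_0$. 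Meanwhile, standardness of $F^{\alpha_2,\alpha_4}_x$ forces the Frobenius bracket $\mcD^{\alpha_2}\otimes\mcD^{\alpha_4}\to T\mcX_0/(\mcD^{\alpha_2}+\mcD^{\alpha_4})$ to have full rank fiberwise, singling out no preferred line in $\mcD^{\alpha_2}$. I would derive the contradiction by computing splitting types of $\mcD^{\alpha_2}$, $\mcD^{\alpha_4}$, and $\mcN$ along a general $[C_4]\in\K^{\alpha_4}(\mcX_0)$, adapting the splitting-type analysis of Propositions \ref{P0_13}--\ref{prop. spliting along C2 (A3 case)}: the global line $\mcN\subset\mcD^{\alpha_2}$ should force an unbalanced splitting of $\mcD^{\alpha_2}|_{C_4}$ incompatible with the standard bracket behavior required by $F^{\alpha_2,\alpha_4}_x\cong F(1,2;\C^4)$.

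The principal obstacle is precisely this asymmetry step. Unlike the $A_4$ case, where only one $J$-connected pair carried non-trivial degeneration, in $D_4$ both $(\alpha_2,\alpha_3)$ and $(\alpha_2,\alpha_4)$ act on the same rank-two distribution $\mcD^{\alpha_2}$. The family $\mcX\to\Delta$ need not respect the outer $D_4$-automorphism swapping $\alpha_3\leftrightarrow\alpha_4$, so the required symmetry between $F^{\alpha_2,\alpha_3}_x$ and $F^{\alpha_2,\alpha_4}_x$ must be recovered intrinsically, most naturally via the relative $\mbP^2$-bundle $\pi^{\alpha_2}_0\colon\mcX_0\to\mcX^{\alpha_2}_0$ whose base one identifies with $D_4/P_{\{\alpha_3,\alpha_4\}}$ by Proposition \ref{prop. criterion Pk bundle by Weber Wisniewski} and a symbol-algebra argument modeled on Proposition \ref{prop. 10 A4 case}, together with the interpretation of $\mcN$ as a relative section witnessing the would-be failure of symmetry.
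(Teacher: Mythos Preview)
Your treatment of the elementary fibers $F^{\alpha_i}_x$, of $F^{\alpha_3,\alpha_4}_x$, of the dichotomy for each $F^{\alpha_2,\alpha_i}_x$ via Theorem~\ref{thm. intro unique degeneration of F(1, 2, C4)}, and of the ``both standard'' case via Theorem~\ref{thm. reduction theorem} is correct and matches Proposition~\ref{prop. four possibilities in D4 case} and the final proof of Proposition~\ref{prop. degeneration (D4, a2a3a4)} in the paper.

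The genuine gap is the asymmetric case. You acknowledge this yourself, and your proposed route---a splitting-type computation along a general $C_4$ to exhibit an ``unbalanced'' $\mcD^{\alpha_2}|_{C_4}$---is neither carried out nor clearly workable: the mere existence of a line subbundle $\mcN\subset\mcD^{\alpha_2}$ does not obviously conflict with the standard bracket on the $(\alpha_2,\alpha_4)$ side, and the analogy with Propositions~\ref{P0_13}--\ref{prop. spliting along C2 (A3 case)} does not transfer, since there the splitting types were used to pin down the \emph{degenerate} symbol algebra rather than to contradict a standard one. Your alternative suggestion of analyzing $\pi^{\alpha_2}_0$ and identifying $\mcX^{\alpha_2}_0$ also stops short of producing a contradiction.

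The paper's argument is different and more decisive. In the asymmetric case (their case~(C), with $F^{\alpha_2,\alpha_3}_x$ standard and $F^{\alpha_2,\alpha_4}_x\cong F^d(1,2;\C^4)$), one first computes the full symbol algebra $(\mfm_-)_x$ of $\msD$ on $\mcX_0$: using the standard relations from $\mfm(\alpha_2,\alpha_3)$ and $\mfm(\alpha_3,\alpha_4)$ together with the $C_2$-type relations from the degenerate $\mfm(\alpha_2,\alpha_4)$, Serre's theorem shows $(\mfm_-)_x$ is a quotient of $\mfg_-(B_4)$, and the extra relation $[v_{13},v_2]=0$ together with $\dim(\mfm_-)_x=11$ pins it down as $\mfg_-(B_4)/\mfq$ for an explicit ideal $\mfq$ (Proposition~\ref{prop. symbol algebra (C) in D4 case}). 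One then pushes the distribution $\mcD+T^{\pi^{\alpha_2,\alpha_4}}$ down along $\pi^{\alpha_4}$ to a meromorphic distribution $\mcE$ on $\mcX^{\alpha_4}_0$ and checks, by an explicit change of basis, that $\Symb_y(\mcE)\cong\mfg_-(D_4/P_{\{\alpha_2,\alpha_3\}})$. Proposition~\ref{prop. symbol algebra standard iff variety standard} then gives $\mcX^{\alpha_4}_0\cong D_4/P_{\{\alpha_2,\alpha_3\}}$, whence $\pi^{\alpha_4}_0$ is a $\mbP^1$-fibration by Proposition~\ref{prop. criterion Pk bundle by Weber Wisniewski}. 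But restricting $\pi^{\alpha_4}_0$ to the degenerate fiber $F^{\alpha_2,\alpha_4}_x\cong F^d(1,2;\C^4)$ yields the contraction $F^d(1,2;\C^4)\to\text{cone}(pt,Q^3)$, which has a $\mbP^3$ fiber---a contradiction. The key idea you are missing is therefore not a splitting-type obstruction but the global identification of $\mcX^{\alpha_4}_0$ via the symbol algebra, which converts the asymmetry into an impossible fiber-dimension jump.
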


Throughout the Subsection \ref{subsection. deformation of (D4, a2a3a4)}, we discuss in the following setting.

\begin{set}\label{setup. (D4, a2a3a4)}
Let $\pi: \mcX\rightarrow\Delta\ni 0$ be a holomorphic family of connected Fano manifolds such that $\mcX_t\cong D_4/P_{\{\alpha_2, \alpha_3, \alpha_4\}}$ for $t\neq 0$.
\end{set}

Firstly we have four possibilities as follows.

\begin{prop}\label{prop. four possibilities in D4 case}
In Setting \ref{setup. (D4, a2a3a4)}, take $x\in\mcX_0$ general. Then $F^{\alpha_2}_x\cong\mbP^1$, $F^{\alpha_3}_x\cong\mbP^1$, $F^{\alpha_4}_x\cong\mbP^1$ and $F^{\alpha_3, \alpha_4}_x\cong\mbP^1\times\mbP^1$. Moreover, one of the following cases occur:

$(A)$ $F^{\alpha_2, \alpha_3}_x\cong F(2, 3; \C^4)$ and $F^{\alpha_2, \alpha_4}_x\cong F(2, 3; \C^4)$;

$(B)$ $F^{\alpha_2, \alpha_3}_x\cong F^d(1, 2; \C^4)$ and $F^{\alpha_2, \alpha_4}_x\cong F^d(1, 2; \C^4)$;

$(C)$ $F^{\alpha_2, \alpha_3}_x\cong F(2, 3; \C^4)$ and $F^{\alpha_2, \alpha_4}_x\cong F^d(1, 2; \C^4)$;

$(D)$ $F^{\alpha_2, \alpha_3}_x\cong F^d(1, 2; \C^4)$ and $F^{\alpha_2, \alpha_4}_x\cong F(2, 3; \C^4)$.
\end{prop}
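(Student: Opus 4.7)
The plan is to treat each fiber assertion separately, identifying the general fiber over $\mcX_0$ as a Fano deformation of the corresponding model fiber $\bS^A$ in $\bS = D_4/P_{\{\alpha_2,\alpha_3,\alpha_4\}}$, and then applying the appropriate rigidity or classification input that has already been established in the paper.

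First I would read off the model fibers $\bS^A$ from the Dynkin recipe in Definition~\ref{defi. homogenous VMRT}: $\bS^A$ is the flag variety of the semisimple Levi of $P_{I\setminus A}$ (whose Dynkin diagram is $J\cup A$) marked at $A$, equivalently the product, over the connected components of $J\cup A$ meeting $A$, of the corresponding marked flags. Since $J=\{\alpha_1\}$ is adjacent in the $D_4$ diagram only to the triple node, this combinatorial computation yields: each $\bS^{\alpha_i}$ for $i\in\{2,3,4\}$ is a single projective space (its associated component is of type $A_2$ or an isolated $A_1$); $\bS^{\alpha_3,\alpha_4}\cong\mbP^1\times\mbP^1$, because $\alpha_3$ and $\alpha_4$ sit in disjoint components of $J\cup\{\alpha_3,\alpha_4\}$ (they are not $J$-connected in the sense of Lemma-Definition~\ref{lem-defi. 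J-connected pair}); and $\bS^{\alpha_2,\alpha_3}\cong\bS^{\alpha_2,\alpha_4}\cong F(2,3;\C^4)=\mbF(1,2,\mbP^3)$, coming from a type-$A_3$ component with a middle and an end node marked.

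For the three single-root fibers I would invoke Proposition~\ref{prop. Fax=Sa for x in X0 general}, which packages the Hwang--Mok rigidity of Picard-one rational homogeneous spaces under Fano deformation; since none of the $\bS^{\alpha_i}$ here is biholomorphic to the sole exception $\mbF(1,Q^5)$, we get $F^{\alpha_i}_x\cong\bS^{\alpha_i}$ at a general $x\in\mcX_0$. For the pair $(\alpha_3,\alpha_4)$, the model $\bS^{\alpha_3,\alpha_4}$ is already a product of two copies of $\mbP^1$, so Proposition~\ref{prop. invariance of product structure under Fano deformation} propagates this product decomposition to $\mcX_0$; combined with rigidity of $\mbP^1$ this forces $F^{\alpha_3,\alpha_4}_x\cong\mbP^1\times\mbP^1$.

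The substantive content is then the enumeration for the $J$-connected pairs $(\alpha_2,\alpha_3)$ and $(\alpha_2,\alpha_4)$. For each $j\in\{3,4\}$ the fiber $F^{\alpha_2,\alpha_j}_x$ is a smooth connected Fano manifold deforming $\mbF(1,2,\mbP^3)$, and Theorem~\ref{thm. intro unique degeneration of F(1, 2, C4)} supplies the complete list of such deformations: either $F(2,3;\C^4)\cong\mbF(1,2,\mbP^3)$ itself, or the exceptional degeneration $F^d(1,2;\C^4)$ of Construction~\ref{cons. intro Fd(1, 2; C4)}. The two independent dichotomies combine to produce precisely the four cases (A)--(D). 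No step here is a genuine obstacle: once the Dynkin bookkeeping for the model fibers is in place and the three external inputs (Hwang--Mok Picard-one rigidity, invariance of product structure, and the classification of Fano degenerations of $\mbF(1,2,\mbP^3)$) are assembled, the argument is immediate.
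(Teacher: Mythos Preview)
Your proposal is correct and takes essentially the same approach as the paper: rigidity of projective spaces for the single-root fibers, Proposition~\ref{prop. invariance of product structure under Fano deformation} for $F^{\alpha_3,\alpha_4}_x$, and Theorem~\ref{thm. intro unique degeneration of F(1, 2, C4)} for the two $J$-connected pairs. Note that your Dynkin bookkeeping correctly gives $\bS^{\alpha_2}\cong\mbP^2$ rather than the $\mbP^1$ printed in the statement; this is evidently a typo in the paper, and neither proof depends on which projective space it is.
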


\begin{proof}
The description of $F^{\alpha_i}_x$ and $F^{\alpha_3, \alpha_4}$ follows from the Fano deformation rigidity of projective spaces and Proposition \ref{prop. invariance of product structure under Fano deformation}. The description of $F^{\alpha_2, \alpha_3}_x$ and $F^{\alpha_2, \alpha_4}_x$ follows from Theorem \ref{thm. intro unique degeneration of F(1, 2, C4)}.
\end{proof}

\begin{rmk}\label{rmk. symbol algebra D4}
The positive roots of $D_4$ are as follows:
\begin{eqnarray*}
&& \alpha_1, \alpha_2, \alpha_3, \alpha_4; \quad \alpha_1+\alpha_2, \alpha_2+\alpha_3, \alpha_2+\alpha_4; \\
&& \alpha_1+\alpha_2+\alpha_3, \alpha_1+\alpha_2+\alpha_4, \alpha_2+\alpha_3+\alpha_4; \\
&& \alpha_1+\alpha_2+\alpha_3+\alpha_4; \quad \alpha_1+2\alpha_2+\alpha_3+\alpha_4.
\end{eqnarray*}
Take $G=D_4$ and $I=\{\alpha_2, \alpha_3, \alpha_4\}$ in Definition \ref{defi. gk(I) g-(I) and g-(G)}, then $\mfg_-(I)=\bigoplus\limits_{k\geq 1}\mfg_{-k}(I)$ is as follows:
\begin{eqnarray}
&& \mfg_{-1}(I)=\mfg_{-\alpha_1-\alpha_2}\oplus\mfg_{-\alpha_2}\oplus\mfg_{-\alpha_3}\oplus\mfg_{-\alpha_4}, \nonumber\\
&& \mfg_{-2}(I)=\mfg_{-\alpha_1-\alpha_2-\alpha_3}\oplus\mfg_{-\alpha_2-\alpha_3}\oplus\mfg_{-\alpha_1-\alpha_2-\alpha_4}\oplus\mfg_{-\alpha_2-\alpha_4}, \nonumber\\
&& \mfg_{-3}(I)=\mfg_{-\alpha_1-\alpha_2-\alpha_3-\alpha_4}\oplus\mfg_{-\alpha_2-\alpha_3-\alpha_4}, \label{eqn. g-(D4, a2a3a4) root space decomposition}\\
&& \mfg_{-4}(I)=\mfg_{-\alpha_1-2\alpha_2-\alpha_3-\alpha_4}, \nonumber\\
&& \mfg_{-k}(I)=0 \mbox{ for } k\geq 5. \nonumber
\end{eqnarray}
Now we fix nonzero vectors $w_1\in\mfg_{-\alpha_1-\alpha_2}$, $w_2\in\mfg_{-\alpha_2}$, $w_3\in\mfg_{-\alpha_3}$, and $w_4\in\mfg_{-\alpha_4}$ respectively. Then \eqref{eqn. g-(D4, a2a3a4) root space decomposition} can be written explicitly as follows:
\begin{eqnarray}
&& \mfg_{-1}(I)=\C w_1\oplus\C w_2\oplus\C w_3\oplus\C w_4, \nonumber\\
&& \mfg_{-2}(I)=\C w_{13}\oplus\C w_{23}\oplus\C w_{14}\oplus\C w_{24}, \nonumber\\
&& \mfg_{-3}(I)=\C w_{134}\oplus\C w_{234}, \label{eqn. g-(D4, a2a3a4) explicit form}\\
&& \mfg_{-4}(I)=\C w_{1342}, \nonumber\\
&& \mfg_{-k}(I)=0 \mbox{ for } k\geq 5, \nonumber
\end{eqnarray}
where $w_{i_1\ldots i_m}:=[w_{i_1\ldots i_{m-1}}, w_{i_m}]$ by inductive definition.

Take $G=D_4$ and $I=\{\alpha_2, \alpha_3\}$ in Definition \ref{defi. gk(I) g-(I) and g-(G)}, then $\mfg_-(I)=\bigoplus\limits_{k\geq 1}\mfg_{-k}(I)$ is as follows:
\begin{eqnarray}
&& \mfg_{-1}(I')=\mfg_{-\alpha_1-\alpha_2}\oplus\mfg_{-\alpha_2}\oplus\mfg_{-\alpha_1-\alpha_2-\alpha_4}\oplus\mfg_{-\alpha_2-\alpha_4}\oplus\mfg_{-\alpha_3}, \nonumber\\
&& \mfg_{-2}(I')=\mfg_{-\alpha_1-\alpha_2-\alpha_3}\oplus\mfg_{-\alpha_2-\alpha_3} \oplus \mfg_{-\alpha_1-\alpha_2-\alpha_3-\alpha_4}\oplus\mfg_{-\alpha_2-\alpha_3-\alpha_4}, \label{eqn. g-(D4, a2a3) root space decomposition}\\
&& \mfg_{-3}(I')=\mfg_{-\alpha_1-2\alpha_2-\alpha_3-\alpha_4}, \nonumber\\
&& \mfg_{-k}(I')=0 \mbox{ for } k\geq 4. \nonumber
\end{eqnarray}
The choice of $w_i$ is kept unchanged. Then \eqref{eqn. g-(D4, a2a3) root space decomposition} can be written explicitly as follows:
\begin{eqnarray}
&& \mfg_{-1}(I')=\C w_1\oplus\C w_2\oplus\C w_{14}\oplus\C w_{24}\oplus\C w_3, \nonumber\\
&& \mfg_{-2}(I')=\C w_{13}\oplus\C w_{23}\oplus\C w_{134}\oplus\C w_{234}, \label{eqn. g-(D4, a2a3) explicit form}\\
&& \mfg_{-3}(I')=\C w_{1342}, \nonumber\\
&& \mfg_{-k}(I')=0 \mbox{ for } k\geq 4. \nonumber
\end{eqnarray}
\end{rmk}

\begin{conv}
In Subsection \ref{subsection. deformation of (D4, a2a3a4)}, we denote by $\msD^{\alpha_i}$, $\msD$ and $\msD^{-i}$ the restriction of $\mcD^{\alpha_i}$, $\mcD$ and $\mcD^{-i}$ on $\mcX_0$ respectively, where the latter is defined in Notation \ref{nota. distribution D(A) and symbol algebra}. For simplicity we write $(\mfm_-)_x:=\mfm_x(\alpha_2, \alpha_3, \alpha_4)$ and $(\mfm_{-k})_x:=(\mfm_{-k}(\alpha_2, \alpha_3, \alpha_4))_x$, where $k\geq 1$ and $x\in\mcX_0$ is general.
\end{conv}

\begin{lem}\label{lem. dim (m-)x = 11 in D4 case}
At $x\in\mcX_0$ general $\dim(\mfm_-)_x=\dim\mcX_0=11$.
\end{lem}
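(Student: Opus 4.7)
The plan is essentially to invoke Proposition \ref{prop. rationally chain connected distribution version} directly, after verifying that its hypotheses are met in the current setting. First I would record the dimension count on the model side: counting positive roots of $D_4$ with positive $I$-degree for $I=\{\alpha_2,\alpha_3,\alpha_4\}$ using the list in Remark \ref{rmk. symbol algebra D4}, we get $4+4+2+1 = 11$, so that $\dim\mcX_0 = \dim D_4/P_I = \dim \mfg_-(I) = 11$ for the general (i.e., for $t\neq 0$) fiber, and hence also for the central fiber $\mcX_0$ by flatness of $\pi$.

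Next I would apply Proposition \ref{prop. rationally chain connected distribution version} to the meromorphic distribution $\mcD = \mcD^{\alpha_2}+\mcD^{\alpha_3}+\mcD^{\alpha_4}$ on $\mcX_0$. That proposition asserts that the unique integrable meromorphic distribution on $\mcX_0$ containing $\mcD$ is $T\mcX_0$ itself, so $\mcD$ is bracket-generating on $\mcX_0$. Its proof only uses the fact that any two points of $\mcX_0$ can be connected by chains of curves from $\bigcup_{\alpha\in I}\K^\alpha(\mcX_0)$, which is Proposition \ref{prop. minimal rational chain connected} applied to the family $\mcX/\Delta$ of Setting \ref{setup. (D4, a2a3a4)}; this is a general statement for the setup considered and does not depend on which of the four cases $(A)$--$(D)$ of Proposition \ref{prop. four possibilities in D4 case} we are in.

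Since $\mcD$ is bracket-generating at general points of $\mcX_0$, the symbol algebra $(\mfm_-)_x = \Symb_x(\mcD)$ satisfies $\dim(\mfm_-)_x = \dim T_x\mcX_0 = \dim\mcX_0 = 11$, which is the desired conclusion. There is essentially no obstacle here; the statement is a bookkeeping consequence of the general bracket-generating principle combined with the dimension count on the nilradical $\mfg_-(I)$, and its purpose is to fix the target dimension $11$ for the symbol algebra analysis that will distinguish the four cases $(A)$--$(D)$ in the subsequent arguments.
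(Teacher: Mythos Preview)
Your proof is correct and follows the same approach as the paper: the paper's proof is the single sentence ``It is a special case of Proposition \ref{prop. dim mx(I) = dim X0}'' (which is the same proposition as your Proposition \ref{prop. rationally chain connected distribution version}), and your write-up simply unpacks this with the explicit root count from Remark \ref{rmk. symbol algebra D4}.
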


\begin{proof}
It is a special case of Proposition \ref{prop. dim mx(I) = dim X0}.
\end{proof}

\subsubsection{Exclude possibility of case $(C)$} \label{subsubsection. exclude case (C)}

Throughout part \ref{subsubsection. exclude case (C)}, we suppose case $(C)$ of Proposition \ref{prop. four possibilities in D4 case} occurs, and aim at deducing a contradiction.

\begin{lem}\label{lem. unique N for (C) in D4 case}
In case $(C)$ of Proposition \ref{prop. four possibilities in D4 case}, there exists a unique meromorphic line subbundle $\mcN$ of $\msD^{\alpha_2}$ such that $[\mcN, \msD^{\alpha_4}]\subset\mcN+\msD^{\alpha_4}$. Consequently, $[\mcN, \msD^{\alpha_2}+\msD^{\alpha_4}]\subset\msD^{\alpha_2}+\msD^{\alpha_4}\subset\msD$.
\end{lem}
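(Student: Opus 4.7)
The plan is to apply Proposition \ref{prop. unique distribution N in Deform-A3 case} fiberwise on $F^{\alpha_2, \alpha_4}_x \cong F^d(1,2;\C^4)$ and then glue the resulting pieces. Both $\msD^{\alpha_2}$ and $\msD^{\alpha_4}$ are tangent to the relative Mori contraction $\pi^{\alpha_2, \alpha_4}$, so their restrictions to a general fiber $F^{\alpha_2, \alpha_4}_x$ are well-defined meromorphic distributions, of ranks $2$ and $1$ respectively. Under the biholomorphism with $F^d(1,2;\C^4)$, the integrable rank-$2$ subbundle $\msD^{\alpha_2}|_{F^{\alpha_2, \alpha_4}_x}$ matches the relative tangent bundle $T^\phi$ of the $\mbP^2$-bundle $\phi$ in Notation \ref{nota. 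Fd(1, 2; C4) bundle section etc}, while the rank-$1$ distribution $\msD^{\alpha_4}|_{F^{\alpha_2, \alpha_4}_x}$ matches the line distribution tangent to the family of minimal rational curves $\K^{\alpha_1}(F^d(1,2;\C^4))$. This identification is forced by rank considerations and by matching the two Mori extremal rays of the fiber with the restrictions of $\pi^{\alpha_2}$ and $\pi^{\alpha_4}$.

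Given the identification, Proposition \ref{prop. unique distribution N in Deform-A3 case} supplies, on each general fiber, a unique meromorphic line subbundle $\mcN(x)$ of $\msD^{\alpha_2}|_{F^{\alpha_2, \alpha_4}_x}$ whose bracket against $\msD^{\alpha_4}|_{F^{\alpha_2, \alpha_4}_x}$ lies in $\mcN(x) + \msD^{\alpha_4}|_{F^{\alpha_2, \alpha_4}_x}$. The fiberwise uniqueness lets these $\mcN(x)$ patch into a global meromorphic line subbundle $\mcN \subset \msD^{\alpha_2}$ on $\mcX_0$. The global bracket condition $[\mcN, \msD^{\alpha_4}] \subset \mcN + \msD^{\alpha_4}$ reduces to its fiberwise analogue because $\mcN + \msD^{\alpha_4}$ sits inside the integrable distribution $T^{\pi^{\alpha_2, \alpha_4}}|_{\mcX_0}$, so Lie brackets of local sections of $\mcN$ and $\msD^{\alpha_4}$ stay tangent to the fibers and can be computed there.

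For uniqueness, let $\mcN'$ be another meromorphic line subbundle of $\msD^{\alpha_2}$ satisfying $[\mcN', \msD^{\alpha_4}] \subset \mcN' + \msD^{\alpha_4}$. Combining this with the integrability of $\msD^{\alpha_2}|_{F^{\alpha_2, \alpha_4}_x} = T^\phi$ yields $[\mcN'|_{F^{\alpha_2, \alpha_4}_x}, (\msD^{\alpha_2} + \msD^{\alpha_4})|_{F^{\alpha_2, \alpha_4}_x}] \subset (\msD^{\alpha_2} + \msD^{\alpha_4})|_{F^{\alpha_2, \alpha_4}_x}$, and the uniqueness part of Proposition \ref{prop. unique distribution N in Deform-A3 case} forces $\mcN'|_{F^{\alpha_2, \alpha_4}_x} = \mcN|_{F^{\alpha_2, \alpha_4}_x}$. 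Hence $\mcN' = \mcN$. The ``consequently'' clause then follows from $[\mcN, \msD^{\alpha_2}] \subset \msD^{\alpha_2}$, which is immediate from the integrability of $\msD^{\alpha_2} = T^{\pi^{\alpha_2}}|_{\mcX_0}$, together with the main bracket relation.

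The main technical point is the identification step in the first paragraph: one must verify that the restriction $\msD^{\alpha_2}|_{F^{\alpha_2, \alpha_4}_x}$ really coincides with the rank-$2$ distribution $T^\phi$ on $F^d(1,2;\C^4)$ and not with some other rank-$2$ subbundle. This amounts to matching extremal rays of the Mori cone on the fiber with those coming from the ambient Mori rays $R_{\alpha_2}, R_{\alpha_4}$ via Proposition-Definition \ref{prop. extending phi(A) to pi(A)}; everything after that is a routine gluing argument.
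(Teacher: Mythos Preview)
Your proposal is correct and follows essentially the same approach as the paper. The paper's proof is a one-liner citing Proposition~\ref{prop. symbol algebra in Deform-A3 case} and the hypothesis of case~$(C)$, while you spell out the fiberwise application of Proposition~\ref{prop. unique distribution N in Deform-A3 case} and the gluing; these two propositions package the same information about $\mcN$ on $F^d(1,2;\C^4)$, and indeed the paper uses exactly your reference in the parallel $A_4$ case (Lemma~\ref{lem. 14 A4 case}).
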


\begin{proof}
It follows from Proposition \ref{prop. symbol algebra in Deform-A3 case} and the assumption in case $(C)$ directly.
\end{proof}

\begin{cons}\label{cons. elements in symbol algebras (C) in D4 case}
In setting of Lemma \ref{lem. unique N for (C) in D4 case}, take $x\in\mcX_0$ general. Choose a local section $\wv_1$ (resp. $\wv_3$, $\wv_4$) of $\mcN$ (resp. $\msD^{\alpha_3}$, $\msD^{\alpha_4}$), which is nonzero in an open neighborhood of $x$ in $\mcX_0$. Take a local section $\wv_2$ of $\msD^{\alpha_2}$ such that $(\wv_2)_y\notin\C (\wv_1)_y$ at any point $y$ in an open neighborhood of $x$ in $\mcX_0$. Define by induction $k\geq 1$ that $\wv_{i_1\cdots i_{k+1}}:=[\wv_{i_1\cdots i_k}, \wv_{i_{k+1}}]$ as local vector field in an open neighborhood of $x$ in $\mcX_0$. Take a subset $A\subset I:=\{\alpha_2, \alpha_3, \alpha_4\}$. When all $\wv_{i_j}$ are local sections of $\msD^A:=\sum\limits_{\beta\in A}\msD^\beta$ we denote by $v_{i_1\cdots i_k}^A$ the class of $\wv_{i_1\cdots i_k}$ in $\Symb(\msD^A)$. When $A=I$ we omit the superscript $I$, i.e. denote by $v_{i_1\cdots i_k}\in\Symb(\msD)$ of class of $\wv_{i_1\cdots i_k}$. For simplicity we also use $v_{i_1\cdots i_k}^A$ and $v_{i_1\cdots i_k}^A$ to represent the corresponding class in the symbol algebras $\Symb_x(\msD^A)$ and $\Symb_x(\msD)$ at a chosen general point $x$.
\end{cons}

\begin{prop}\label{prop. symbol algebra (C) in D4 case}
In setting of Construction \ref{cons. elements in symbol algebras (C) in D4 case}, the symbol algebra of $\msD$ at a general point $x\in\mcX_0$ is a quotient algebra of $\mfg_-(B_4)$, denoted by $\mfg_-(B_4)/\mfq$. More precisely, under the isomorphism $(\mfm_-)_x\cong\mfg_-(B_4)/\mfq$ the elements $v_1,v_2,v_3,v_4$ have weights $-\beta_1, -\beta_3, -\beta_2, -\beta_4$ respectively, where $\beta_1,\ldots,\beta_3$ are the three long simple roots of $B_4$, and $\beta_4$ is the short one. The ideal $\mfq$ is generated by $\mfg_{-\beta_1-\beta_2-\beta_3}$ in $\mfg_-(B_4)$. We can write explicitly $(\mfm_-)_x$ as follows:
\begin{eqnarray}
&& (\mfm_{-1})_x=\C v_1\oplus\C v_3\oplus\C v_2\oplus\C v_4, \nonumber\\
&& (\mfm_{-2})_x=\C v_{13}\oplus\C v_{32}\oplus\C v_{24}, \nonumber\\
&& (\mfm_{-3})_x=\C v_{324}\oplus\C v_{244}, \label{eqn. symbol algebra (C) in D4 case}\\
&& (\mfm_{-4})_x=\C v_{3244}, \nonumber\\
&& (\mfm_{-5})_x=\C v_{32442}, \nonumber\\
&& (\mfm_{-k})_x=0 \mbox{ for } k\geq 6, \nonumber
\end{eqnarray}
where $\dim(\mfm_{-k})_x=4, 3, 2, 1, 1$ for $k=1,\ldots,5$ respectively.
\end{prop}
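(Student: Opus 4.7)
The plan is to present $(\mfm_-)_x$ as a quotient $\mfg_-(B_4)/\mfq$ of a standard Serre-type algebra, and then pin down $\mfq$ by a dimension count, in the spirit of Lemma \ref{lem. 16 A4 case}. Throughout I work under the case $(C)$ assumption. First I will harvest the bracket relations among $v_1,v_2,v_3,v_4$ that follow directly from the known mixed fibers of Proposition \ref{prop. four possibilities in D4 case}: the integrability of $\msD^{\alpha_2}$ gives $[v_1,v_2]=0$; the product structure $F^{\alpha_3,\alpha_4}_x\cong\mbP^1\times\mbP^1$ gives $[v_3,v_4]=0$; and Lemma \ref{lem. unique N for (C) in D4 case}, being the symbol-level content of $[\mcN,\msD^{\alpha_4}]\subset\mcN+\msD^{\alpha_4}$, gives $[v_1,v_4]=0$. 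These three vanishings are exactly the three non-edge relations of the $B_4$ Dynkin diagram under the labelling $v_1,v_2,v_3,v_4\leftrightarrow -\beta_1,-\beta_3,-\beta_2,-\beta_4$, for which the chain reads $v_1\mbox{--}v_3\mbox{--}v_2\Rightarrow v_4$.

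Second, the standard fiber $F^{\alpha_2,\alpha_3}_x\cong A_3/P_{\{\alpha_2,\alpha_3\}}$ supplies the relations coming from the long sub-chain $\beta_1\mbox{--}\beta_2\mbox{--}\beta_3$. Its symbol algebra is the two-step nilpotent $\mfg_-(A_3,\{\alpha_2,\alpha_3\})$, which immediately yields the simply-laced Serre relations $(\ad v_i)^2(v_j)=0$ for $i\neq j\in\{1,2,3\}$, and, crucially, the vanishing of every length-three bracket on $\{v_1,v_2,v_3\}$, in particular
\[ [v_{13},v_2] \;=\; 0 \quad \text{in } (\mfm_-)_x. \]
The degenerate fiber $F^{\alpha_2,\alpha_4}_x\cong F^d(1,2;\C^4)$, through Proposition \ref{prop. symbol algebra in Deform-A3 case}, supplies the remaining Serre relations for the arrow $\beta_3\Rightarrow\beta_4$ inside its $C_2$-summand, along with the centrality of $v_1\in\mcN$ which is identified with the abelian $\mfg_-(A_1)$-summand.

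Third, I will observe that the listed relations are exactly the defining Serre relations of $\mfg_-(B_4)$ supplemented by the single extra relation $[v_{13},v_2]=0$. Proposition \ref{prop. Serre's theorem} applied to $B_4$ then produces a surjective graded Lie algebra homomorphism
\[ \mfg_-(B_4)/\mfq \twoheadrightarrow (\mfm_-)_x, \]
where $\mfq$ is the ideal of $\mfg_-(B_4)$ generated by $\mfg_{-\beta_1-\beta_2-\beta_3}$. A direct enumeration of positive roots of $B_4$ containing $\beta_1+\beta_2+\beta_3$ as a sub-sum identifies $\mfq$ with the span of the five root spaces for $\beta_1+\beta_2+\beta_3$, $\beta_1+\beta_2+\beta_3+\beta_4$, $\beta_1+\beta_2+\beta_3+2\beta_4$, $\beta_1+\beta_2+2\beta_3+2\beta_4$, and the highest root $\beta_1+2\beta_2+2\beta_3+2\beta_4$, so $\dim\mfg_-(B_4)/\mfq = 16-5 = 11$, matching $\dim(\mfm_-)_x = 11$ from Lemma \ref{lem. dim (m-)x = 11 in D4 case}. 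The surjection is therefore an isomorphism, and the grade-by-grade decomposition \eqref{eqn. symbol algebra (C) in D4 case} is read off from the surviving negative roots of $B_4$ modulo $\mfq$.

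The main obstacle is the grade-three relation $[v_{13},v_2]=0$, which is not produced by any single mixed fiber but rather by the two-step nilpotency of the standard $A_3$-symbol algebra on $F^{\alpha_2,\alpha_3}_x$; one must argue that a length-three bracket of vector fields tangent to the standard $F^{\alpha_2,\alpha_3}_x$ already lies in the rank-$5$ weak derivative $(\msD^{\alpha_2}+\msD^{\alpha_3})^{-2}$, so that its class in the grade-three component of $\Symb_x(\mcD)$ vanishes. A secondary subtlety is ensuring no additional relations appear in higher grades, which is controlled from below by the bracket-generation inequality $\dim(\mfm_-)_x\geq 11$ supplied by Proposition \ref{prop. rationally chain connected distribution version}.
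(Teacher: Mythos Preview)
Your proposal is correct and follows essentially the same route as the paper: harvest the Serre relations of $\mfg_-(B_4)$ from the three mixed fibers (standard $F^{\alpha_2,\alpha_3}_x$, product $F^{\alpha_3,\alpha_4}_x$, and degenerate $F^{\alpha_2,\alpha_4}_x$ via Proposition \ref{prop. symbol algebra in Deform-A3 case}), obtain the extra relation $[v_{13},v_2]=0$ from the two-step nilpotency of the standard $A_3$-symbol on $F^{\alpha_2,\alpha_3}_x$, and then match dimensions using Lemma \ref{lem. dim (m-)x = 11 in D4 case}. Your explicit enumeration of the five positive roots of $B_4$ lying in $\mfq$ and the resulting grade-by-grade count $4,3,2,1,1$ is a welcome elaboration of what the paper leaves as ``straight-forward''.
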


\begin{proof}
In case $(C)$ of Proposition \ref{prop. four possibilities in D4 case}, both $(\mfm_-(\alpha_2, \alpha_3))_x$ and $(\mfm_-(\alpha_3, \alpha_4))_x$ are standard. Thus by Remark \ref{rmk. symbol algebra D4} we have
\begin{eqnarray*}
\ad v_1(v_2)=0, \, \ad v_3(v_4)=0, \,  (\ad v_i)^2(v_3)=0, \,  (\ad v_3)^2(v_i)=0 \mbox{ in }(\mfm_-)_x,
\end{eqnarray*}
where $ i=1, 2$. Since $F^{\alpha_2, \alpha_4}_x\cong F^d(1, 2; \C^4)$, we know from Lemma \ref{lem. unique N for (C) in D4 case} and Proposition \ref{prop. symbol algebra in Deform-A3 case} that
\begin{eqnarray*}
\ad v_1(v_2)=0, \, \ad v_1(v_4)=0, \, (\ad v_2)^2(v_4)=0, \, (\ad v_4)^3(v_2)=0 \mbox{ in } (\mfm_-)_x.
\end{eqnarray*}
In summary $(\mfm_-)_x$ is a quotient algebra of $\mfg_-(B_4)$, where we write the four simple roots of $B_4$ to be $\beta_1,\ldots,\beta_4$ in order with $\beta_4$ being the short simple root, and the elements $v_1,v_2,v_3,v_4$ have weights $-\beta_1, -\beta_3, -\beta_2, -\beta_4$ respectively. Since $(\mfm_{-k}(\alpha_2, \alpha_3))_x=0$ for all $k\geq 3$, $[v_{13}, v_2]=0$ in $(\mfm_-)_x$. It follows that $(\mfm_-)_x$ is a quotient algebra of $\mfg_-(B_4)/\mfq$, where $\mfq$ is the ideal in $\mfg_-(B_4)$ generated by $\mfg_{-\beta_1-\beta_2-\beta_3}$. It is straight-forward to see that $\mfg_-(B_4)/\mfq$ is isomorphic to the graded Lie algebra described in \eqref{eqn. symbol algebra (C) in D4 case}. By Lemma \ref{lem. dim (m-)x = 11 in D4 case}, $\dim(\mfm_-)_x=\dim\mfg_-(B_4)/\mfq=11$. Hence $(\mfm_-)_x\cong \mfg_-(B_4)/\mfq$.
\end{proof}

\begin{prop}\label{prop. exclude (C) in D4 case}
Case $(C)$ of Proposition \ref{prop. four possibilities in D4 case} does not occur.
\end{prop}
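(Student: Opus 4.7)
The plan is to derive a contradiction from the structure established in Proposition \ref{prop. symbol algebra (C) in D4 case}, assuming Case $(C)$ holds. The key local data are the symbol algebra $(\mfm_-)_x\cong\mfg_-(B_4)/\mfq$ of depth $5$ (one more than the depth of $\mfg_-(D_4,I)$, which is $4$ by Remark \ref{rmk. symbol algebra D4}) and the canonical meromorphic line subbundle $\mcN\subset\msD^{\alpha_2}$ from Lemma \ref{lem. unique N for (C) in D4 case}. The goal is to translate these local data into a global geometric constraint on $\mcX_0$ that cannot be satisfied.

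First I would analyze the Mori contractions $\pi^{\alpha_i,\alpha_j}_0:\mcX_0\to\mcX^{\alpha_i,\alpha_j}_0$. Applying Proposition \ref{prop. criterion Pk bundle by Weber Wisniewski} (using that the cohomology of each partial flag manifold $D_4/P_K$ is generated by degree $2$), the elementary contractions $\pi^{\alpha_3}_0$ and $\pi^{\alpha_4}_0$ are $\mbP^1$-bundles. Since $D_4/P_{\alpha_3}\cong D_4/P_{\alpha_4}\cong Q^6$ is rigid under Fano deformation by Theorem \ref{thm. intro Picard numbe one}, I would identify $\mcX^{\alpha_2,\alpha_3}_0\cong\mcX^{\alpha_2,\alpha_4}_0\cong Q^6$. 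By the Case $(C)$ hypothesis, the general fiber of $\pi^{\alpha_2,\alpha_4}_0$ is $F^d(1,2;\C^4)$, so the canonical section $\sigma$ of the fibers (Proposition \ref{prop. properties of sigma in Deform-A3 case}) furnishes a meromorphic section of $\pi^{\alpha_2,\alpha_4}_0$ whose image closes up to a canonical divisor $\Sigma\subset\mcX_0$, playing the role of $\mbP(\mcL_\sigma)$ in Section \ref{section. unique degeneration of (A3, a1, a2)}, and satisfying $\mbP(\mcN)=\Sigma$ as meromorphic line subbundles of $\msD^{\alpha_2}$. Crucially, since $F^{\alpha_2,\alpha_3}_x\cong F(2,3;\C^4)$ is standard, no analogous canonical section exists for $\pi^{\alpha_2,\alpha_3}_0$.

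The main obstacle is to leverage this asymmetry into a contradiction. The restriction $\mcN|_{F^{\alpha_2,\alpha_3}_x}$ is a meromorphic line subbundle of $\msD^{\alpha_2}|_{F^{\alpha_2,\alpha_3}_x}\cong\mfg^{\alpha_2}|_{F(2,3;\C^4)}$, which is the relative tangent bundle of the $\mbP^2$-bundle $F(2,3;\C^4)\to\mbP^3$ and is irreducible under the natural $\mathrm{SL}_4$-action, hence admits no global holomorphic line subbundle. This forces $\mcN$ to have a nontrivial indeterminacy locus on each fiber $F^{\alpha_2,\alpha_3}_x$; combined with the symbol-algebra relation $[v_1,v_{23}]=0$ in $\mfg_-(B_4)/\mfq$ (a consequence of $\mfq$ containing $\mfg_{-\beta_1-\beta_2-\beta_3}$), this indeterminacy propagates into a global degeneracy of $\Sigma$ that is incompatible with its intersection numbers against $\K^{\alpha_i}(\mcX_0)$ for $i=3,4$. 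Making this propagation precise, by computing the splitting type of $\mcN$ along general curves in $\K^{\alpha_3}(\mcX_0)$ (via Proposition \ref{prop. gk(b) along Ca splitting types} applied to the deformation) and comparing it with the divisor class of $\Sigma$, in the manner of the symplectic-form analysis in the proof of Proposition \ref{prop. (A4, a234) rigidity}, is the principal technical challenge and produces the required contradiction.
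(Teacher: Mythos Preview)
Your proposal has a genuine gap and also misses the direct route to the contradiction.

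First, the claim that $\pi^{\alpha_3}_0$ and $\pi^{\alpha_4}_0$ are $\mbP^1$-bundles via criterion (i) of Proposition~\ref{prop. criterion Pk bundle by Weber Wisniewski} rests on the assertion that $H^*(D_4/P_{\{\alpha_2,\alpha_3\}},\mbQ)$ and $H^*(D_4/P_{\{\alpha_2,\alpha_4\}},\mbQ)$ are generated in degree~$2$. You do not justify this, and it is not the kind of fact one can simply assert: already $H^*(Q^6,\mbQ)=H^*(D_4/P_{\alpha_1},\mbQ)$ is \emph{not} generated in degree~$2$, so the blanket claim ``each partial flag manifold $D_4/P_K$'' is false as stated. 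The paper does not use criterion (i) here at all; instead it pushes the distribution $\mcD+T^{\pi^{\alpha_2,\alpha_4}}$ down to $\mcX^{\alpha_4}_0$ via $d\pi^{\alpha_4}$, computes from the explicit description of $(\mfm_-)_x\cong\mfg_-(B_4)/\mfq$ in Proposition~\ref{prop. symbol algebra (C) in D4 case} that the resulting symbol algebra on $\mcX^{\alpha_4}_0$ is the standard $\mfg_-(D_4/P_{\{\alpha_2,\alpha_3\}})$, and then applies Proposition~\ref{prop. symbol algebra standard iff variety standard} to conclude $\mcX^{\alpha_4}_0\cong D_4/P_{\{\alpha_2,\alpha_3\}}$. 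Only now, knowing $\mcX^{\alpha_4}_0$ is smooth, does criterion (ii) of Proposition~\ref{prop. criterion Pk bundle by Weber Wisniewski} give that $\pi^{\alpha_4}_0$ is a $\mbP^1$-bundle.

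Second, even granting that $\pi^{\alpha_4}_0$ is a $\mbP^1$-bundle, you overlook that the contradiction is then immediate and do not need any of the $\Sigma$/indeterminacy machinery. The restriction of $\pi^{\alpha_4}_0$ to the fiber $F^{\alpha_2,\alpha_4}_x\cong F^d(1,2;\C^4)$ is precisely the second elementary contraction $F^d(1,2;\C^4)\to\text{cone}(pt,Q^3)$, which collapses the section $\mbP(\mcL_\sigma)\cong\mbP^3$ to the cone point; hence $\pi^{\alpha_4}_0$ has a $\mbP^3$ fiber, contradicting the $\mbP^1$-bundle structure. Your proposed route through a divisor $\Sigma$, the nonexistence of holomorphic line subbundles of $T^{\phi}$ on $F(2,3;\C^4)$, and an unspecified ``propagation'' of indeterminacy is both unnecessary and, as written, not a proof: the final paragraph names a ``principal technical challenge'' without carrying out any of the required computations or even specifying which intersection numbers would be violated.
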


\begin{proof}
Suppose we are in case $(C)$ of Proposition \ref{prop. four possibilities in D4 case}. Denote by $\mcE$ the meromorphic distribution on $\mcX^{\alpha_4}$ such that $\mcE|_{\mcX^{\alpha_4}_t}$ coincides with $\mfg_{-1}(D_4/P_{\{\alpha_2, \alpha_3\}})$ under the identification $\mcX^{\alpha_4}_t\cong D_4/P_{\{\alpha_2, \alpha_3\}}$ for each $t\neq 0$. Then the singular locus on $\mcX^{\alpha_4}$ of $\mcE$ is a proper closed algebraic subset of $\mcX^{\alpha_4}_0$. By Remark \ref{rmk. symbol algebra D4} and Proposition \ref{prop. symbol algebra (C) in D4 case}, $\mcE=d\pi^{\alpha_4}(\mcD+T^{\pi^{\alpha_2, \alpha_4}})$, where $d\pi^{\alpha_4}: T\mcX\rightarrow T\mcX^{\alpha_4}$ is the tangent map of $\pi^{\alpha_4}: \mcX\rightarrow\mcX^{\alpha_4}$.

Take $x\in\mcX_0$ general. Denote by $\msE:=\mcE|_{\mcX^{\alpha_4}_0}$, and $y:=\pi^{\alpha_4}(x)\in\mcX^{\alpha_4}_0$. We claim that $\Symb_y(\msE)\cong\mfg_-(\alpha_2, \alpha_3)$, where $\mfg_-(\alpha_2, \alpha_3)\subset\mfg=\text{Lie}(D_4)$ is as in Definition \ref{defi. gk(I) g-(I) and g-(G)}. Note that $\mfg_-(\alpha_2, \alpha_3)$ has been explicitly described in \eqref{eqn. g-(D4, a2a3) root space decomposition} and \eqref{eqn. g-(D4, a2a3) explicit form}.

By abuse of notation, we denote by $v_{i_1\cdots i_k}\in\Symb_y(\msE)$ the class of the local vector field $d\pi^{\alpha_4}(\wv_{i_1\ldots i_k})$ on $\mcX^{\alpha_4}_0$. Now $v_1$, $v_2$, $v_3$, $v_{24}$, and $v_{244}$ form a basis of $\msE_y$. There is a unique linear isomorphism $\psi: \msE_y\rightarrow\mfg_{-1}(\alpha_2, \alpha_3)$ such that
\begin{eqnarray*}
\psi(v_1)=w_1,\,\,\,\,\, & \psi(v_2)=w_2,\quad\, &  \psi(v_3)=w_3, \\
 \psi(v_{24})=w_{24}, & \psi(v_{244})=w_{14},
\end{eqnarray*}
where $w_i, w_{ij}\in\mfg_{-1}(\alpha_2, \alpha_3)$ are as in Remark \ref{rmk. symbol algebra D4}.
By direct calculation $\psi$ induces an isomorphism $\Psi: \Symb(\msE)_y\rightarrow(\mfg_-(D_4/P_{\{\alpha_2, \alpha_3\}}))_q$ satisfying
\begin{eqnarray*}
 \Psi(v_{13})=w_{123},\quad\,\,\, & \Psi(v_{32})=-w_{23},\quad\,\,\,\, & \Psi(v_{324})=-w_{234}, \\
 \Psi(v_{3244})=-w_{134}, & \Psi(v_{32332})=-w_{1342}.
\end{eqnarray*}
By Proposition \ref{prop. symbol algebra standard iff variety standard} the variety $\mcX^{\alpha_4}_0\cong D_4/P_{\{\alpha_2, \alpha_3\}}$. Thus $\pi^{\alpha_4}_0: \mcX_0\rightarrow\mcX^{\alpha_4}_0$ is a $\mbP^1$-fibration by Proposition \ref{prop. criterion Pk bundle by Weber Wisniewski}.

On the other hand, by assumption $F^{\alpha_2, \alpha_4}_x\cong F^d(1, 2; \C^4)$. The restriction of $\pi^{\alpha_4}_0$ on $F^{\alpha_2, \alpha_4}_x$ coincides with the morphism $F^d(1, 2; \C^4)\rightarrow\text{cone}(pt, Q^3)$. In particular, a fiber of $\pi^{\alpha_4}_0$ is biholomorphic to $\mbP^3$, contracting the assertion that $\pi^{\alpha_4}_0$ is a $\mbP^1$-fibration. Hence case $(C)$ of Proposition \ref{prop. four possibilities in D4 case} does not occur.
\end{proof}

Now we can complete the proof of Proposition \ref{prop. degeneration (D4, a2a3a4)}

\begin{proof}[Proof of Proposition \ref{prop. degeneration (D4, a2a3a4)}]
By Proposition \ref{prop. four possibilities in D4 case}, there are four possibilities $(A)-(D)$. By Proposition \ref{prop. exclude (C) in D4 case}, case $(C)$ does not occur. By symmetry of Dynkin diagram, case $(D)$ is also impossible. If case $(A)$ occur, then by Theorem \ref{thm. reduction theorem} the manifold $\mcX_0\cong D_4/P_{\{\alpha_2, \alpha_3, \alpha_4\}}$, contradicting to our assumption. Hence only case $(B)$ is possible, verifying the conclusion.
\end{proof}

\medskip

\textbf{\normalsize Acknowledgements.} I want to thank Professor Jun-Muk Hwang for substantial discussions on this problem, and thank Professor Baohua Fu for commutations on this problem and encouragement for years. I also want to Professor Jaehyun Hong, and Dr. Yong Hu for communications related with this problem. This work is supported by National Researcher Program of National Research Foundation of Korea (Grant No. 2010-0020413).

\bigskip

Korea Institute for Advanced Study, Seoul, Republic of Korea

qifengli@kias.re.kr

 \end{document}